\RenewDocumentCommand\difFracAt{O{1} m m m}{
	\difFrac[#1]{#2}{#3}						
	\mathchoice							
		{\raisebox{0.15ex}{\kern 1pt \rule[-2.15ex]{0.7pt}{5.4ex} \kern -2pt}_{\raisebox{0.1ex}{{$\scriptstyle #4$}}}}   
		{\scalebox{0.9}{\Big|}_{\raisebox{1.5pt}{{$\scriptscriptstyle #4$}}}}
		{UNDEFINED-C}
		{UNDEFINED-D}
	}
\RenewDocumentCommand{\setSymbol}{o}{
	\nonscript \, \mid 
	\allowbreak
	\nonscript \,
	\mathopen{ }
}
\MyNewMathOperator{\GalileanGroup}					{command={\mathrm{Gal}}, sort={Gal}, display={$\GalileanGroup$}, description={Galilean group}}
\begin{document}
	
\MakeTitle

\tableofcontents

\listoftodos

\section{Introduction}

The interplay between Hamiltonian systems with symmetry and complex 
geometry is of paramount importance in symplectic geometry. A 
particularly powerful tool in connecting these areas is the 
norm-squared $\norm{J}^2$ of the momentum map $J$.
The study of $\norm{J}^2$ has seen a wide range of applications. 
For example, Kirwan \parencite{Kirwan1984} used the Morse theory 
of $\norm{J}^2$ to obtain strong results about the cohomology of 
the symplectic quotient, known as Kirwan surjectivity. Witten 
\parencite{Witten1992} demonstrated that certain integrals localize 
to a small neighborhood of the critical set of $\norm{J}^2$ (see 
\parencite{Paradan1999,Paradan2000,Woodward2005,HaradaKarshon2012} for 
similar localization results). Additionally, the Kempf-Ness theorem 
\parencite{KempfNess1979}, which describes the equivalence between 
notions of quotient in symplectic and algebraic geometry, uses 
$\norm{J}^2$ in an essential way.

While being landmarks in their own right, these rigorous results 
about finite-dimensional systems expand their full strength as a 
conceptual framework for the study of geometric partial differential 
equations. Many geometric PDEs can be formulated as a zero 
level-set constraint of a momentum map associated with an 
infinite-dimensional Lie group acting on an infinite-dimensional 
Hamiltonian system. When this is the case, the finite-dimensional 
techniques surrounding $\norm{J}^2$ serve as a blueprint to come 
up with fundamental conjectures about obstructions and stability 
of solutions to the original PDE. Examples include the work of 
Atiyah and Bott \parencite{AtiyahBott1983} on Yang--Mills 
connections on a Riemann surface, the Donaldson-Uhlenbeck-Yau 
correspondence \parencite{Donaldson1985, UhlenbeckYau1986} relating 
stable holomorphic vector bundles and Hermitian Yang--Mills 
connections, the Kobayashi-Hitchin correspondence 
\parencite{Kobayashi1982,Hitchin1979}, and the recent resolution 
of the Yau--Tian--Donaldson conjecture 
\parencite{ChenDonaldsonSun2015,Tian2015}.
All these examples are quite different in nature, but they all 
share the same abstract framework grounded in infinite-dimensional 
symplectic geometry.

In view of the wide success of this \emph{conceptual} picture, it 
is astonishing that no \emph{rigorous} infinite-dimensional framework 
is available yet. In this paper, we initiate the study of the 
norm-squared momentum map as a rigorous tool in infinite dimensions. 
The long term goal is the development of a rigorous theory of the 
Kempf-Ness theorem in infinite dimensions, which encompasses the
above-mentioned examples as specific cases. In particular, we calculate 
the Hessian of the norm-squared momentum map in infinite dimensions
(\cref{prop:normedsquared:hessian,prop:normedsquared:hessianSummary}).
As a direct corollary of an explicit formula for the Hessian, we 
conclude that the Hessian is non-negative definite along the 
infinitesimal complex orbit 
(\cref{prop:normedsquared:hessianPositiveDefAlongComplexOrbit}).
When applied to different PDEs, this provides a unified framework 
explaining various convexity results, such as the well known fact 
that the Calabi energy is locally convex near an extremal K\"ahler 
metric. Extrapolating from the finite dimensional theory, one expects 
that the stabilizer of a critical point of the norm-squared momentum 
map inherits additional structure. We prove that this is indeed the 
case and establish an eigenvalue decomposition (with respect to a 
certain operator derived from the momentum map) of the stabilizer 
of the complex Lie algebra action at a critical point of the 
norm-squared momentum map (\cref{prop:normedsquared:decompositionComplexStab,prop:decomposition:decompositionComplexStab}).
If a critical point is a zero of the momentum map, then this 
decomposition collapses and the stabilizer is reductive.
In this way, we obtain an obstruction for a point to lie in the 
zero set of the momentum map. As explained below, this recovers 
the reductiveness/decomposition of the automorphism algebra at 
an extremal K\"ahler metric due to \textcite{Calabi1982,Matsushima1957}. 
In fact, many similar obstructions are known in a wide range of 
geometric PDEs: Mabuchi solitons \parencite[Theorem~4.1]{Mabuchi2001}, 
K\"ahler--Ricci solitons \parencite[Theorem~A]{TianZhu2000}, coupled 
K\"ahler--Einstein metrics \parencite{Nakamura2023}, \( f \)-extremal 
K\"ahler metrics \parencite{FutakiOno2017,Lahdili2019}, extremal 
Sasakian metrics \parencite[Theorem~11.3.1]{Boyer2008}, Cahen--Gutt 
extremal K\"ahler metrics \parencite[Theorem~4.11]{FutakiOno2018}, and 
solutions of the K\"ahler--Yang--Mills--Higgs equations 
\parencite[Theorem~3.6]{AlvarezConsulGarciaFernandezGarciaPrada2019}.
For all these cases an infinite-dimensional symplectic framework is 
available, it is covered by our general framework, and our results 
provide a unified proof of these obstructions.

Besides providing a unified and rigorous framework, our setup also 
opens up new avenues. In K\"ahler geometry, 
\textcite{Fujiki1992,Donaldson1997} showed that the scalar 
curvature is the momentum map for the action of the group of 
Hamiltonian diffeomorphisms on the space of (almost) complex 
structures compatible with the symplectic form. The norm-squared 
momentum map yields the Calabi energy functional and critical points 
are extremal K\"ahler metrics, with constant scalar curvature and 
K\"ahler-Einstein metrics as important special cases.
From this perspective, it is natural to ask for a generalization 
to the action of the full group of symplectomorphisms.
In \cref{sec:kaehler}, we show that the action of the group of 
symplectomorphisms has a momentum map given by the Chern connection 
on the anti-canonical bundle relative to the Chern connection of a 
fixed compatible almost complex structure. The fixed reference 
complex structure is necessary to ensure that the momentum map 
is well-defined, but it also destroys the equivariance of the 
momentum map. We calculate the associated non-equivariance 
\( 2 \)-cocycle on the Lie algebra of symplectic vector fields 
and show that it vanishes for Calabi--Yau manifolds.
In fact, triviality of the non-equivariance cocycle is only 
slightly weaker than the vanishing of the first Chern class, so 
triviality can be seen as a natural generalization of the 
Calabi--Yau condition. Using an infinite-dimensional version of 
the prequantum construction, we obtain a central extension of 
the group of symplectomorphisms that integrates the non-equivariance 
cocycle. If the non-equivariance cocycle is trivial, then following 
the strategy of \parencite{Shelukhin2014} we show that the universal 
covering of the identity component \( \DiffGroup(M, \omega)_0 \) 
of \( \DiffGroup(M, \omega) \) admits 
a non-trivial quasimorphism and hence has infinite commutator length.
It is natural to conjecture that this quasimorphism is a natural 
generalization of the Entov quasimorphism \parencite{Entov2004} 
constructed under the stronger assumption that the first Chern 
class vanishes. Returning to the Calabi program, the norm-squared 
momentum map for the full group of symplectomorphisms yields a 
natural extension of the Calabi energy functional that contains, 
beside the scalar curvature, a term that measures the deviation 
of the (holonomy of the) Chern connection from the reference 
connection. Critical points thus yield a relative notion of 
extremal K\"ahler metrics and our general results provide a
 Matsushima-type decomposition for such metrics 
(\cref{prop:kaehler:decompositionComplexStabRelativeExtremal}).
Our proof does not rely on any integrability condition, so we 
also obtain a Matsushima/Calabi-type decomposition for extremal 
\emph{almost} K\"ahler metrics 
(\cref{prop:kaehler:decompositionComplexStabAndHol}).

One may consider K\"ahler geometry as a first-order example, in the 
sense that the action of a symplectic vector field on the space of 
almost complex structures compatible with the symplectic form only 
involves the first jet of the vector field. Consequently, upon 
dualizing, the momentum map for the full group of symplectomorphisms 
depends also only on the first jet of the almost complex structure 
(and the momentum map for Hamiltonian diffeomorphisms is second-order).
One can step up the ladder and consider the action of the group of 
symplectomorphisms on the space of symplectic connections, which 
involves the second jet of the symplectic vector field.
In \parencite{CahenGutt2005}, Cahen and Gutt showed that the space  
of symplectic connections is an infinite-dimensional symplectic 
manifold and the group of Hamiltonian diffeomorphisms possesses a 
momentum map (a certain \( 3 \)-rd order operator of the connection).
This momentum map is of importance in deformation quantization where 
it serves as an obstruction for the associated Fedosov star product 
to be closed \parencite{Fuente-Gravy2015}. 
Applied to this setting, our general results yield a decomposition 
of the stabilizer and an expression for the Hessian of a critical 
point of the norm-squared momentum map. This recovers and extends 
the results of 
\parencite{FutakiOno2018,FutakiFuenteGravy2019,Fuente-Gravy2016,Fuente-Gravy2015}.
Again passing from the group of Hamiltonian diffeomorphisms to the 
full group of symplectomorphisms, we calculate the momentum map for 
the full group and show that it is no longer equivariant.
The non-equivariance cocycle is this time related to the first 
Pontryagin class of the underlying manifold and we show that it 
can be integrated to a central extension of the group of 
symplectomorphisms. We work out the Calabi program for the group of 
symplectomorphisms which yields an extension of the notion of 
Cahen--Gutt extremal K\"ahler metrics. From our general results, 
we obtain a Matsushima-type decomposition for such metrics 
(\cref{prop:symplecticConnections:decompositionAut}).
Given the tight relation of the momentum map for Hamiltonian 
diffeomorphisms and the Fedosov star product, it is natural to 
ask for a similar relation between the momentum map for the full 
group of symplectomorphisms and  deformation quantization.

Finite dimensional analogues of our general results are known in the 
literature. For example, X.~Wang \parencite{Wang2004} and L.~Wang 
\parencite{Wang2006} have obtained similar decompositions of 
complex stabilizer algebras using the Hessian of the momentum map 
in the finite-dimensional K\"ahler setting.
One may initially hope that this finite-dimensional analysis can be 
straightforwardly extended to the infinite-dimensional setting.
However, this is not the case, mainly for two reasons.
First, as discussed above, the action of symplectomorphism groups 
often does not have an \emph{equivariant} momentum map. Moreover, 
we are not aware of an \( \AdAction \)-invariant pairing on the Lie 
algebra of symplectic vector fields. A discussion of the Hessian 
of the norm-squared momentum map in such a non-equivariant setting 
is missing in the literature, even in finite dimensions.
Note that it is not possible to circumvent this issue by passing to 
an appropriate central extension (as one does so often with 
non-equivariant momentum maps), as this would only shift the problem 
to a non-equivariant pairing on the Lie algebra, which then generates 
the same kind of difficulties for the norm-squared momentum map.
Second, the infinite-dimensional setting introduces additional technical 
complications. In the finite-dimensional treatment, one assumes 
integrability of the complex structure on the symplectic manifold and 
that the action is preserving the complex structure, hence it extends 
to a holomorphic action of the complexified group.
In the infinite-dimensional setting, the complex structure is not 
necessarily integrable (nor is it a priori clear what the right notion 
of integrability should be) and the construction (or non-existence) 
of complexifications of certain diffeomorphism groups is a 
notoriously difficult open problem.
In fact, the infinite-dimensional setting is so different from the 
finite-dimensional one that we must proceed in a completely different
way. The standard approach is to calculate first the Hessian of the 
norm-squared momentum map and then, invoking the Hessian, conclude that 
the Lichnerowicz--Calabi operators commute. Using the commutativity of 
these operators, one then proceeds to investigate the structure of 
the stabilizer.
In the infinite-dimensional setting, we proceed in the opposite 
direction and first establish the commutativity of the 
Lichnerowicz--Calabi operators by direct means, using some weak 
and natural invariance properties of the complex structure as the main 
ingredient. We then use the commutativity of the Lichnerowicz--Calabi 
operators to calculate the Hessian of the norm-squared momentum map
and to obtain the decomposition of the stabilizer.
Since the commutativity of the Lichnerowicz--Calabi operators is no 
longer coupled to the Hessian of the norm-squared momentum map, 
we also obtain a root-space like decomposition of the complex stabilizer 
with respect to an Abelian subalgebra of the real stabilizer; see 
\cref{prop:decomposition:decompositionComplexStab}.
Our invariance assumptions are so weak that the infinitesimally 
complexified action is not a Lie algebra action and its stabilizer 
is not a Lie algebra. So the existence of such a decomposition 
is quite surprising. In fact, already in the finite-dimensional 
example of the Galilean group acting on one of its coadjoint orbits, 
the complex stabilizer is not a Lie algebra; see 
\cref{ex:momentumMapSquared:galileanGroup}.

\paragraph*{Organization of the paper}
In \cref{sec:contractible}, we calculate the momentum map for 
a symplectic action on a contractible manifold (on abstract grounds, 
such an action must have a momentum map, but we are not aware of 
a reference for its explicit calculation). In general, the momentum 
map depends on a choice of a reference point and this choice renders 
the momentum map non-equivariant.
We use the prequantum bundle construction, specialized to the case 
of a contractible manifold, to obtain a central group extension 
that integrates the non-equivariance cocycle. These results are 
used in subsequent applications to calculate momentum maps 
and non-equivariance cocycles.
In \cref{sec:stabilizer_algebra_of_the complexified_action}, we 
introduce the Lichnerowicz--Calabi operators in the general 
setting of infinite-dimensional symplectic manifolds and 
investigate their properties.
The main conclusion is the general decomposition 
\cref{prop:decomposition:decompositionComplexStab}.
Using these results, in \cref{sec:momentumMapSquared}, we 
calculate the Hessian of the norm-squared momentum map in 
terms of the Lichnerowicz--Calabi operators, see 
\cref{prop:normedsquared:hessian,prop:normedsquared:hessianSummary}, 
and obtain a decomposition of the stabilizer, see 
\cref{prop:normedsquared:decompositionComplexStab}.
Then we apply these results to K\"ahler geometry in 
\cref{sec:kaehler}, to symplectic connections in 
\cref{sec:symplecticConnections}, and to Yang--Mills 
connections in \cref{sec_yang_mills}. 
The appendix contains a summary of notations and conventions, 
especially concerning the Penrose abstract index notation, 
used extensively in \cref{sec:symplecticConnections}.

\paragraph*{Acknowledgments} We are grateful to Simone Gutt
for advice and the challenge to start this project in order to
better understand the Cahen--Gutt momentum map. We   
thank Akito Futaki, Barbara Tumpach, Cornelia Vizman, and 
Fran\c{c}ois Ziegler for fruitful discussions.

\section{Momentum maps on contractible symplectic manifolds}
\label{sec:contractible}

In this section, we establish general results concerning symplectic group 
actions on manifolds that are contractible. As an important special case, we 
study affine actions on symplectic affine spaces. The momentum 
map for affine actions is similar to the quadratic momentum map for linear 
actions, with the important difference that there is an affine term that 
breaks equivariance.

\subsection{The momentum map and its non-equivariance cocycle}
\label{sec:nonequivariant_cocycle}

Let \( (M, \omega) \) be a symplectic manifold.
In the following, we assume that there exists a smooth contraction of \( M \);
that is, a smooth map \( \Lambda: M \times M \times [0,1] \to M \) such 
that \( \Lambda(m_0, m, 0) = m_0 \) and \( \Lambda(m_0, m, 1) = m \) 
for all $m_0, m \in  M$.
On abstract grounds, every symplectic action on a contractible manifold 
possesses a momentum map. The following gives an explicit construction of 
this momentum map under a natural equivariance assumption.
\begin{prop}
\label{prop:contractible:momentumMap}
Let \( (M, \omega) \) be a symplectic manifold, \( G \) a Lie group 
acting symplectically on \( M \), and \( \kappa: \LieA{g}^* \times 
\LieA{g} \to \R \) a non-degenerate pairing\footnotemark{}.
\footnotetext{Here, \( \LieA{g}^* \) is an abstract vector space whose 
role as the dual is embodied only through the pairing \( \kappa \). However, 
intuitively, we think of \( \LieA{g}^* \) as \textquote{the} dual of 
\( \LieA{g} \) even though it is not necessarily the functional analytic 
dual of \( \LieA{g} \).}
Assume that there exists a smooth contraction \( \Lambda: M \times M \times 
[0,1] \to M \) of \( M \) which is equivariant in the sense that 
\( \Lambda(g \cdot m_0, g \cdot m, t) = g \cdot \Lambda(m_0, m, t) \) for 
all \(g \in  G\). For every \( m_0 \in M \), a momentum map 
\( J: M \to \LieA{g}^* \) for the \( G \)-action on \( M \) is given by
	\begin{equation}
\label{eq:contractible:momentumMap}
\kappa\bigl(J(m), \xi\bigr) = \int_0^1 \Bigl( \bigl(\Lambda_{m_0}^* 
\omega\bigr)_{(m, t)} (\difp_t, \xi \ldot m) + \bigl(\bar{\Lambda}_m^* 
\omega\bigr)_{(m_0, t)} (\difp_t, \xi \ldot m_0) \Bigr) \dif t,
\end{equation}
where \( \Lambda_{m_0} = \Lambda(m_0, \cdot, \cdot) \) and 
\( \bar{\Lambda}_m = \Lambda(\cdot, m, \cdot) \).
In infinite dimensions, we need to additionally assume that the 
linear functional on \( \LieA{g} \) defined by the right-hand side can 
indeed be represented by an element of \( \LieA{g}^* \) with respect to 
the pairing \( \kappa \). Moreover, if \( \Lambda(m_0, m_0, t) = m_0 \), 
then \( J \) is the unique momentum map satisfying \( J(m_0) = 0 \).
\end{prop}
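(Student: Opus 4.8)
The plan is to recognise that the right-hand side of~\eqref{eq:contractible:momentumMap}, though written as a sum of two structurally different integrals, is nothing but (up to sign) the line integral of the closed \( 1 \)-form \( \iota_{\xi \ldot m}\, \omega \) along the contraction curve from \( m_0 \) to \( m \). Set \( c(t) := \Lambda(m_0, m, t) \), a smooth curve from \( m_0 \) to \( m \), with velocity \( \dot c(t) := \dif\Lambda_{m_0}(\difp_t) \) (the differential taken at \( (m, t) \)). Unwinding the two pullbacks, the first integrand becomes \( \omega_{c(t)}\bigl( \dot c(t),\ \dif\Lambda_{m_0}(\xi \ldot m) \bigr) \) and the second becomes \( \omega_{c(t)}\bigl( \dot c(t),\ \dif\bar{\Lambda}_m(\xi \ldot m_0) \bigr) \), where \( \dif\bar{\Lambda}_m \) is taken at \( (m_0, t) \); here one uses \( \bar{\Lambda}_m(m_0, t) = \Lambda_{m_0}(m, t) = c(t) \) and the key coincidence \( \dif\bar{\Lambda}_m(\difp_t) = \dif\Lambda_{m_0}(\difp_t) = \dot c(t) \), so that the \emph{same} velocity \( \dot c(t) \) occupies the first slot of \( \omega \) in both terms. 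Differentiating the equivariance relation \( \Lambda(g \cdot m_0, g \cdot m, t) = g \cdot \Lambda(m_0, m, t) \) along a curve through the identity with velocity \( \xi \), the chain rule yields \( \dif\bar{\Lambda}_m(\xi \ldot m_0) + \dif\Lambda_{m_0}(\xi \ldot m) = \xi \ldot \Lambda(m_0, m, t) = \xi \ldot c(t) \). Adding the two integrands therefore collapses the formula to
\[
\kappa\bigl( J(m), \xi \bigr) = \int_0^1 \omega_{c(t)}\bigl( \dot c(t), \xi \ldot c(t) \bigr) \dif t = - \int_0^1 \bigl( \iota_{\xi \ldot c(t)}\, \omega \bigr)\bigl( \dot c(t) \bigr) \dif t .
\]

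From this compact form the momentum map property drops out. Because \( G \) acts by symplectomorphisms, \( \iota_{\xi \ldot m}\, \omega \) is a closed \( 1 \)-form on \( M \), and the display exhibits \( \kappa(J(m), \xi) \) as its integral, up to sign, along a curve from the fixed point \( m_0 \) to \( m \). To differentiate this in \( m \), I would fix \( v \in T_m M \), choose a curve \( s \mapsto m(s) \) with \( m(0) = m \) and \( \dot m(0) = v \), and form the variation \( \gamma(s, t) := \Lambda(m_0, m(s), t) \), a smooth map from the rectangle \( [0, \varepsilon] \times [0, 1] \) into \( M \) with \( \gamma(\cdot, 0) \equiv m_0 \) and \( \gamma(s, 1) = m(s) \). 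Computing the \( s \)-derivative of \( \int_0^1 \gamma(s, \cdot)^* \bigl( \iota_{\xi \ldot m}\, \omega \bigr) \) by Cartan's formula and \( \dif\bigl( \iota_{\xi \ldot m}\, \omega \bigr) = 0 \), the interior contribution vanishes and only the endpoint \( t = 1 \) survives, the endpoint \( t = 0 \) being pinned at \( m_0 \); this gives \( \dif \kappa\bigl( J(\cdot), \xi \bigr) = - \iota_{\xi \ldot m}\, \omega \), the defining relation of a momentum map in the sign convention used here. Linearity in \( \xi \) is immediate from the display, and in infinite dimensions the standing hypothesis of the proposition is precisely what promotes the linear functional \( \xi \mapsto \kappa(J(m), \xi) \) to an element \( J(m) \in \LieA{g}^* \); smoothness of \( m \mapsto J(m) \) is inherited from that of \( \Lambda \).

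For the last assertion, suppose \( \Lambda(m_0, m_0, t) = m_0 \) for all \( t \). Then for \( m = m_0 \) the curve \( c \) is constant, so \( \dot c \equiv 0 \) and the display gives \( \kappa(J(m_0), \xi) = 0 \) for every \( \xi \); by non-degeneracy of \( \kappa \) this forces \( J(m_0) = 0 \). Uniqueness is the usual argument: \( M \) is connected, since every point is joined to \( m_0 \) by the curve \( \Lambda(m_0, \cdot, t) \); if \( J \) and \( J' \) are two momentum maps, then \( \dif \kappa\bigl( J(\cdot) - J'(\cdot), \xi \bigr) = 0 \) for every \( \xi \), hence \( \kappa\bigl( J(m) - J'(m), \xi \bigr) \) is independent of \( m \), and non-degeneracy of \( \kappa \) pins \( J(m) - J'(m) \) to the constant value \( J(m_0) - J'(m_0) \in \LieA{g}^* \). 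Thus a momentum map vanishing at \( m_0 \) is unique, and by the preceding computation it is the one we constructed.

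The only genuinely non-routine step, I expect, is the first: recognising that the asymmetric pair of integrands---one differentiating \( \Lambda \) in the second slot at \( m \), the other in the first slot at \( m_0 \)---recombine, through the diagonal equivariance of \( \Lambda \), into the single curve integral \( \int_0^1 \omega_{c(t)}(\dot c(t), \xi \ldot c(t))\, \dif t \). Everything afterwards is the classical Poincar\'e-lemma / line-integral argument; the differentiation under the integral sign and the Stokes step take place on smooth maps out of a finite-dimensional rectangle, so they carry over verbatim regardless of the dimensions of \( M \) and \( G \), while the sole infinite-dimensional subtlety---representability of the constructed functional by an element of \( \LieA{g}^* \)---is already built into the statement.
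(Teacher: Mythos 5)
Your proof is correct and follows essentially the same route as the paper's: the same differentiated equivariance identity $\tangent\bar{\Lambda}_m(\xi\ldot m_0)+\tangent\Lambda_{m_0}(\xi\ldot m)=\xi\ldot\Lambda(m_0,m,t)$ converts the two-term formula into the single line integral $\int_0^1\omega_{c(t)}(\dot c(t),\xi\ldot c(t))\,\dif t$, and your Stokes-on-the-rectangle variation argument is exactly the fiber-integration Poincar\'e-lemma computation the paper uses to get $\dif J_\xi=-\,\xi^*\contr\omega$. The only difference is the order of presentation (the paper defines $J_\xi$ by the single integral first and then splits it; you merge first and then differentiate), and your treatment of $J(m_0)=0$ and uniqueness matches the paper's.
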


Roughly speaking, the \( 1 \)-form \( \int_0^1 \bigl(\Lambda_{m_0}^* 
\omega\bigr)_{(\cdot, t)} (\difp_t, \cdot) \dif t \) on \( M \) occurring 
in the first summand is a primitive of \( \omega \) and the second summand 
accounts for the fact that this primitive is not \( G \)-invariant, in general.
\begin{proof}
For every \( \xi \in \LieA{g} \), define the function
\begin{equation}
\label{equ:def_J_xi}
J_\xi = - \int_0^1 \difp_t \contr 
\bigl(\Lambda^*_{m_0} (\xi^* \contr \omega)\bigr) \dif t,
\end{equation}
where \( \xi^* \) is the fundamental vector field on \( M \) induced by 
the action of \( \xi \). Similar to the proof of the Poincar\'e lemma, 
we find
\begin{equation}
\begin{split}
\dif J_\xi&= - \bigl(\Lambda^*_{m_0} (\xi^* \contr \omega)\bigr)\big|_{t=1} 
+ \bigl(\Lambda^*_{m_0} (\xi^* \contr \omega)\bigr)\big|_{t=0} 
+ \int_0^1 \difp_t \contr \bigl(\Lambda^*_{m_0} 
\dif (\xi^* \contr \omega)\bigr) \dif t  \\
&= - \, \xi^* \contr \omega,
\end{split}
\end{equation}
where the first equality follows 
from~\parencite[Prop.~IV.2.IX, page 157]{GreubHalperinEtAl1972} and 
the second equality follows from \( \Lambda_{m_0}(\cdot, 0) = m_0 \), 
\( \Lambda_{m_0}(\cdot, 1) = \id_M \), and 
$\dif (\xi^* \contr \omega)=0$ since the action is symplectic.	
Thus, \( J: M \to \LieA{g}^* \) defined by \( \kappa\bigl(J(m), \xi\bigr) 
= J_\xi(m) \) is a momentum map.

Since \( \Lambda \) is equivariant, we find
\begin{equation}
\xi \ldot \Lambda(m_0, m, t) =
\tangent_{(m_0, t)} \bar{\Lambda}_m (\xi \ldot m_0) + 
\tangent_{(m, t)} \Lambda_{m_0} (\xi \ldot m)	
\end{equation}
for any $\xi \in \LieA{g}$. This identity implies
\begin{equation}
\begin{split}
\bigl(\Lambda^*_{m_0} (\xi^* \contr \omega)\bigr)_{m, t}(\difp_t)
&= \omega^{}_{\Lambda(m_0, m, t)} \bigl(\xi \ldot \Lambda(m_0, m, t), 
\tangent_{(m_0, m, t)} \Lambda (\difp_t)\bigr)   
                \\
&= \omega^{}_{\Lambda(m_0, m, t)} \bigl(
\tangent_{(m_0, t)} \bar{\Lambda}_m (\xi \ldot m_0), 
\tangent_{(m_0, t)} \bar{\Lambda}_m (\difp_t)\bigr)
				\\
&\qquad+ \omega^{}_{\Lambda(m_0, m, t)} \bigl(
\tangent_{(m, t)} \Lambda_{m_0} (\xi \ldot m), 
\tangent_{(m, t)} \Lambda_{m_0} (\difp_t)\bigr)
			\\
&= (\bar{\Lambda}_{m}^* \omega)_{(m_0, t)} \bigl(
\xi \ldot m_0, \difp_t\bigr) + 
(\Lambda_{m_0}^* \omega)_{(m, t)} \bigl(\xi \ldot m, \difp_t\bigr).
\end{split}
\end{equation}
Using this identity in the defining equation~\eqref{equ:def_J_xi} of 
\( J_\xi \) yields~\eqref{eq:contractible:momentumMap}.
	
Finally, since \( M \) is connected, the momentum map is uniquely defined 
up to an additive constant. If \( \Lambda(m_0, m_0, t) = m_0 \), then 
\( \tangent_{(m_0, m_0, t)} \Lambda (\difp_t) = 0 \) and 
so~\eqref{eq:contractible:momentumMap} implies \( J(m_0) = 0 \).
\end{proof}

The momentum map \( J \) defined in~\eqref{eq:contractible:momentumMap} 
does not need to be equivariant.
Recall that the \emphDef{non-equivariance one-cocycle} 
\( \sigma: G \to \LieA{g}^* \) associated with \( J \) is defined by
\begin{equation}
\label{group_one_cocycle}
\sigma(g) = J(g \cdot m) - \CoAdAction_{g^{-1}} J(m),
\end{equation}
where \( \CoAdAction \) denotes the coadjoint action, \ie, 
\(\sigma(gh)= \sigma (g) + \operatorname{Ad}^\ast_{g ^{-1}} \sigma(h)\)
for all \(g,h \in G\) and \(\sigma(e) = 0\).
Since \( M \) is connected, the cocycle \( \sigma \) is independent of 
\( m \in M \), see \parencite[Proposition~4.5.21]{OrtegaRatiu2003}.
The associated \textit{non-equivariance \( 2 \)-cocycle} 
\( \Sigma: \LieA{g} \times \LieA{g} \to \R \) defined by
\begin{equation}\begin{split}
\label{eq:normedsquared:nonequiv_first}
\Sigma(\xi, \eta) &\defeq
\kappa\left(\tangent_e \sigma (\xi), \eta \right) =
\kappa(J(m), \commutator{\xi}{\eta}) +
\omega_{m}(\xi \ldot m, \eta \ldot m) \\
&=J_{\commutator{\xi}{ \eta}}(m) + \poisson{J_\xi}{J_\eta}(m), 
\qquad \xi, \eta \in \LieA{g}, \quad m \in M,
\end{split}\end{equation}
where \(J_\xi \defeq \kappa(J(\cdot), \xi)\) for any \(\xi \in \LieA{g}\),
is also independent of \(m \in  M\) if \(M\) is connected;
the second equality follows from the definition of the momentum map,
namely \(\xi^\ast  = X_{J_\xi}\) for all \(\xi \in \LieA{g}\). Recall that
\( \Sigma \) is bilinear, skew-symmetric, and satisfies the \(2\)-cocycle 
identity \( \Sigma([\xi , \eta], \zeta) + \Sigma([\eta , \zeta], \xi) +
\Sigma([\zeta , \xi], \eta) =0 \) for all \(\xi, \eta, \zeta\in \mathfrak{g}\).
\medskip

Returning to our case, assume that 
\( \Lambda(m_0, m_0, t) = m_0 \) for all \(t \in [0,1]\).
\Cref{prop:contractible:momentumMap} guarantees 
that \( J(m_0) = 0 \) and we get from~\eqref{group_one_cocycle}
\begin{equation}
	\label{eq:contractible:oneCocycle}
	\sigma(g) = J(g \cdot m_0).
\end{equation}
Thus, the non-equivariance of \( J \) is a consequence of the fact that 
\( m_0 \) does not need to be fixed by the \( G \)-action. 
Thus~\eqref{eq:normedsquared:nonequiv_first} gives the corresponding infinitesimal 
non-equivariance two-cocycle \( \Sigma: \LieA{g} \times \LieA{g} \to \R \): 
\begin{equation}
	\label{eq:contractible:twoCocycle}
	\Sigma(\xi, \eta) 
		\defeq \kappa\bigl(\tangent_e \sigma (\xi), \eta\bigr)
		= \omega_{m}(\xi \ldot m, \eta \ldot m).
\end{equation}
We will now use the prequantum bundle construction to integrate the 
\( 2 \)-cocycle \( \Sigma \) to a central Lie group extension of \( G \).
For this purpose, recall the following geometric construction of Lie group 
extensions. Let \( (M, \omega) \) be a connected symplectic manifold and 
let \( P \to M \) be a \( \UGroup(1) \)-prequantum bundle with connection 
\( \vartheta \). The smooth identity component \( \AutGroup(P, \vartheta)_0 \) 
of the group \( \AutGroup(P, \vartheta) \) of connection-preserving 
automorphisms of \( P \) is a central \( \UGroup(1) \)-extension of the 
group \( \HamDiffGroup(M, \omega) \) of Hamiltonian diffeomorphisms of \( M \).
Here, \( \HamDiffGroup(M, \omega) \) consists of those symplectomorphisms that 
are endpoints of smooth curves in the kernel of the flux homomorphism
\begin{equation}
\DiffGroup(M, \omega)_0 \to \clDiffFormSpace^1(M) \slash 
\clZDiffFormSpace^1(M), \qquad \phi \mapsto 
\equivClass*{\int_0^1 (\difLog_t \phi_t) \contr \omega \, \dif t},
\end{equation}
where $\DiffGroup(M, \omega)_0$ denotes the smooth identity
component of $\DiffGroup(M, \omega)$, \( \phi_t \) is a smooth curve 
in \( \DiffGroup(M, \omega) \) from \( \id_M \) to \( \phi \), 
\( \difLog_t \phi_t \in \VectorFieldSpace(M) \) is its left logarithmic 
derivative, and \( \clZDiffFormSpace^1(M) \) denotes the space of 
closed \( 1 \)-forms on \( M \) with periods contained in 
\( \Z \subseteq \R \); see  
\parencites{NeebVizman2003}[Section~3]{DiezJanssensNeebVizmannHolPres} 
for details.

Given a Hamiltonian \( G \)-action, the pull-back of this central extension 
along the action \( G \to \HamDiffGroup(M, \omega) \) yields a central 
\( \UGroup(1) \)-extension \( \hat{G} \) of \( G \):
\begin{equationcd}[label={eq:contractible:groupExt}]
	\UGroup(1) 
		\to[r]
		& \AutGroup(P, \vartheta)_0
		\to[r]
		& \HamDiffGroup(M, \omega)
	\\
	\UGroup(1) 
		\to[r]
		\to[u]
		& \hat{G}
		\to[r]
		\to[u]
		& G.
		\to[u]
\end{equationcd}
The Lie algebra \( 2\)-cocycle underlying the infinitesimal central 
extension \( \hat{\LieA{g}} \) of \( \LieA{g} \) is cohomologous to 
the non-equivariance 
\( 2 \)-cocycle of the \( G \)-action on \( M \), see \eg 
\parencite[Remark~3.5]{NeebVizman2003}.
If the manifold \( M \) is infinite-dimensional, then the groups 
\( \AutGroup(P, \vartheta)_0 \) and \( \HamDiffGroup(M, \omega) \) are not 
Lie groups in general. Nonetheless, the pull-back \( \hat{G} \) turns out to 
be a Lie group even in the infinite-dimensional setting, see 
\parencite[Theorem~3.4]{NeebVizman2003}.

Clearly, this construction applies, in particular, to a symplectic group 
action on a contractible manifold.
The prequantum bundle is trivial in this case because the base manifold 
is contractible. This allows to describe the resulting Lie group 
extension \( \hat{G} \) explicitly. Below we write
the group operation in \( \UGroup(1) = \mathbb{R}/\mathbb{Z}\) as 
addition, modulo \(\mathbb{Z}\) being tacitly understood.

\begin{prop}
	\label{prop:contractible:groupExt}
Let \( (M, \omega) \) be a symplectic manifold, \( G \) a Lie group 
acting symplectically on \( M \), and \( \kappa: \LieA{g}^* \times 
\LieA{g} \to \R \) a non-degenerate pairing.
Assume that there exists a smooth contraction \( \Lambda: M 
\times M \times [0,1] \to M \) of \( M \) which is equivariant in 
the sense that \( \Lambda(g \cdot m_0, g \cdot m, t) = 
g \cdot \Lambda(m_0, m, t) \) for all \(g \in  G\) and which 
satisfies \( \Lambda(m_0, m_0, t) = m_0 \) for all \(t \in [0,1]\). 
For every \( m_0 \in M \) and \( g_1, g_2 \in G \), let 
\( \chi_{g_1, g_2}: [0,1] \times [0,1] \to M \) be defined by 
\( \chi_{g_1, g_2}(s, t) = \Lambda_{g_1^{-1} \cdot m_0} 
\bigl(\Lambda_{g_2^{-1} \cdot m_0}(m_0, s), t \bigr) \). Then 
the Lie group \( \hat{G} = G \times \UGroup(1) \) with group 
multiplication
\begin{equation}
	\label{eq:contractible:groupExt:groupMult}
(g_1, z_1) \cdot (g_2, z_2) = \left(g_1 g_2, z_1 + z_2 - 
\int_0^1 \dif s \int_0^1 \dif t \difp_s \contr \difp_t 
\contr \bigl(\chi^*_{g_1g_2, g_2} \omega - 
\chi^*_{g_2, g_2} \omega \bigr) \right)
\end{equation}
is a central Lie group \( \UGroup(1) \)-extension of \( G \) whose 
associated Lie algebra \( 2 \)-cocycle is the non-equivariance 
\( 2 \)-cocycle \( \Sigma \).
\end{prop}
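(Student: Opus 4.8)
The plan is to specialize the prequantum construction recalled before the proposition to the contractible manifold $M$. For $p \in M$ write $\alpha_p \defeq \int_0^1 \difp_t \contr \Lambda_p^*\omega\,\dif t$; the argument used in the proof of \cref{prop:contractible:momentumMap} gives $\dif\alpha_p = \omega$, while $(\alpha_p)_p = 0$ since $\Lambda_p(p,\cdot)$ is constant, and the equivariance of $\Lambda$ in the form $\Lambda_p \circ (\Phi_g \times \id_{[0,1]}) = \Phi_g \circ \Lambda_{g^{-1}\cdot p}$ — where $\Phi_g \colon m \mapsto g\cdot m$ — together with $\Phi_g^*\omega = \omega$ yields $\Phi_g^*\alpha_p = \alpha_{g^{-1}\cdot p}$. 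Since $M$ is contractible the prequantum bundle is trivial, $P = M \times \UGroup(1)$, and we equip it with the prequantum connection $\vartheta$ built from the primitive $\alpha \defeq \alpha_{m_0}$. A connection-preserving automorphism covering a diffeomorphism $\phi$ has the form $(m,z) \mapsto (\phi(m), z + f(m))$ with $\dif f = \alpha - \phi^*\alpha$; for symplectic $\phi$ this $1$-form is closed, hence exact, so $f$ exists and is unique up to a constant in $\UGroup(1)$. The pull-back $\hat G$ of $\AutGroup(P,\vartheta)_0 \to \HamDiffGroup(M,\omega)$ along $g \mapsto \Phi_g$ is a Lie group by \parencite[Theorem~3.4]{NeebVizman2003}, also in infinite dimensions, and its Lie algebra $2$-cocycle is cohomologous to the non-equivariance cocycle $\Sigma$. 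It remains to make this extension explicit and to pin down the cocycle.

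Use the contraction to normalize the ambiguous constant: for each $g$ put
\[
	f_g(m) \defeq \int_0^1 \difp_s \contr \bigl(\Lambda_{m_0}^*(\alpha - \Phi_g^*\alpha)\bigr)_{(m,s)}\,\dif s ,
\]
the unique solution of $\dif f_g = \alpha - \Phi_g^*\alpha$ with $f_g(m_0) = 0$ (by the Poincar\'e-lemma argument, using $\Lambda_{m_0}(m_0,\cdot) = m_0$); note $f_e = 0$, and smoothness of $\Lambda$, the action, and $\omega$ makes $g \mapsto (\Phi_g, f_g)$ a smooth section of $\hat G \to G$. Composing automorphisms, $(\Phi_{g_1}, f_{g_1}) \circ (\Phi_{g_2}, f_{g_2}) = (\Phi_{g_1 g_2},\, f_{g_2} + f_{g_1}\circ\Phi_{g_2})$, and since $f_{g_2} + f_{g_1}\circ\Phi_{g_2}$ solves the same equation as $f_{g_1 g_2}$, the two differ by the constant $c(g_1, g_2) \defeq f_{g_1}(g_2\cdot m_0)$, obtained by evaluating at $m_0$. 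In the induced identification $\hat G \cong G \times \UGroup(1)$ the group law is $(g_1,z_1)(g_2,z_2) = (g_1g_2,\, z_1 + z_2 + c(g_1,g_2))$; as $c(e,\cdot) = c(\cdot,e) = 0$, this is a central $\UGroup(1)$-extension of $G$, with $c$ automatically a smooth group $2$-cocycle.

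To identify $c(g_1,g_2) = f_{g_1}(g_2\cdot m_0)$ with the integral in~\eqref{eq:contractible:groupExt:groupMult}, expand $f_{g_1}$ and substitute $\alpha = \alpha_{m_0}$ and $\Phi_{g_1}^*\alpha = \alpha_{g_1^{-1}\cdot m_0}$. Unwinding the definition of $\alpha_p$ shows that $\int_0^1 \difp_s\contr (\Lambda_{m_0}^*\alpha_p)_{(g_2\cdot m_0, s)}\,\dif s$ equals the integral over $[0,1]^2$ of $\difp_s\contr\difp_t\contr F_p^*\omega$, where $F_p(s,t) = \Lambda_p\bigl(\Lambda_{m_0}(g_2\cdot m_0, s), t\bigr)$; hence $c(g_1,g_2) = \int_0^1\dif s\int_0^1\dif t\,\difp_s\contr\difp_t\contr(F_{m_0}^*\omega - F_{g_1^{-1}\cdot m_0}^*\omega)$. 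Repeated use of $\Lambda_p\circ(\Phi_g\times\id_{[0,1]}) = \Phi_g\circ\Lambda_{g^{-1}\cdot p}$ identifies $F_{m_0} = \Phi_{g_2}\circ\chi_{g_2,g_2}$ and $F_{g_1^{-1}\cdot m_0} = \Phi_{g_2}\circ\chi_{g_1g_2,g_2}$, and since $\Phi_{g_2}^*\omega = \omega$ this is exactly~\eqref{eq:contractible:groupExt:groupMult}. This chain of substitutions — tracking which of $m_0$, $g_2^{-1}\cdot m_0$, $(g_1g_2)^{-1}\cdot m_0$ is the contraction centre at each step — is the main obstacle in the proof.

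Finally, the Lie algebra $2$-cocycle of $\hat G$, known to be cohomologous to $\Sigma$, equals $\Sigma$ on the nose thanks to the normalizations $J(m_0) = 0$ and $f_g(m_0) = 0$: differentiating $c(\exp(s\xi),\exp(t\eta)) = f_{\exp(s\xi)}\bigl(\exp(t\eta)\cdot m_0\bigr)$ in $s$ and $t$ and antisymmetrizing, and using $\dif f_g = \alpha - \Phi_g^*\alpha$, Cartan's formula, $\dif\alpha = \omega$, $\alpha|_{m_0} = 0$, and $\dif J_\xi = -\xi^*\contr\omega$, one arrives at $\omega_{m_0}(\xi\ldot m_0, \eta\ldot m_0) = \Sigma(\xi,\eta)$. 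Throughout, one must keep in mind that $\AutGroup(P,\vartheta)_0$ and $\HamDiffGroup(M,\omega)$ need not be Lie groups in infinite dimensions, so the argument is arranged so that a Lie group structure is claimed only for the pull-back $\hat G$.
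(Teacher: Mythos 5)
Your proposal is correct and follows essentially the same route as the paper's proof: trivialize the prequantum bundle over the contractible $M$ using the primitive $\theta=\int_0^1 \difp_t \contr \Lambda_{m_0}^*\omega \,\dif t$, describe connection-preserving lifts as pairs $(\Phi_g, f_g)$ with $\dif f_g = \theta - \Phi_g^*\theta$ normalized by $f_g(m_0)=0$, read off the group $2$-cocycle as $c(g_1,g_2)=f_{g_1}(g_2\cdot m_0)$, and use the equivariance identity $\Phi_g^*\theta = \int_0^1 \difp_t\contr\Lambda_{g^{-1}\cdot m_0}^*\omega\,\dif t$ to convert the two nested Poincar\'e-lemma integrals into the double integrals over $\Phi_{g_2}\circ\chi_{g_2,g_2}$ and $\Phi_{g_2}\circ\chi_{g_1g_2,g_2}$, exactly as in \eqref{eq:contractible:groupExt:groupCocycle}. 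The only (minor) difference is that you verify the identification of the Lie algebra cocycle with $\Sigma$ by explicitly differentiating $c$, where the paper relies on the general statement from \parencite{NeebVizman2003} that the extension's cocycle is cohomologous to the non-equivariance cocycle; your added check is consistent with the paper and harmless.
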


\begin{proof}
If the prequantum bundle \( P \to M \) is trivial, a bundle automorphism 
\( \phi: P \to P \) is necessarily of the form \( \phi(m, z) = 
\bigl(\check{\phi}(m), \tilde{\phi}(m) + z \bigr) \), \(z \in  
\UGroup(1) \), for some 
diffeomorphism \( \check{\phi}: M \to M \) and a smooth map 
\( \tilde{\phi}: M \to \UGroup(1) \).
Moreover, a \( 1 \)-form \( \theta \) on \( M \) with 
\( \dif \theta = \omega \) gives rise to a connection \( 1 \)-form 
\( \theta + \dif \vartheta \) on \( P = M \times \UGroup(1) \), whose 
curvature is \( \omega \). Here \( \dif \vartheta \) is the natural 
form on \( \UGroup(1) \). Clearly, the bundle 
automorphism \( \phi \) 
preserves \( \theta + \dif \vartheta \) if and only if 
\( \check{\phi}^* \theta + \dif \tilde{\phi} = \theta \).
For every \( m_0 \in M \), we thus obtain a section 
\(\HamDiffGroup(M, \omega)\to \AutGroup(P, \vartheta)_0 \)
in the top row
of~\eqref{eq:contractible:groupExt} by assigning to a 
 Hamiltonian diffeomorphism \( \psi: M \to M \) 
the pair \( (\psi, \tilde{\psi}) \) with \( \tilde{\psi} \) being 
the unique solution of \( \theta - \psi^* \theta = \dif \tilde{\psi} \) 
satisfying \( \tilde{\psi}(m_0) = 0 \).

For every \( g \in G \), let \( \Upsilon_g: M \to M \) be its action 
diffeomorphism. Then the above discussion shows that \( \hat{G} \) is 
identified with tuples \( (g, \tilde{\Upsilon}_g) \), where \( g \in G \) 
and \( \tilde{\Upsilon}_g: M \to \UGroup(1) \) satisfies \( \theta - 
\Upsilon_g^* \theta = \dif  \tilde{\Upsilon}_g \).
Since such a map \( \tilde{\Upsilon}_g \) is unique up to addition 
of a constant, we may identify \( \hat{G} \), as a manifold not as a 
Lie group, with \( G \times \UGroup(1) \) by sending 
\( (g, \tilde{\Upsilon}_g) \) to \( \bigl(g, 
\tilde{\Upsilon}_g(m_0)\bigr) \). A section \( s \) of 
\( \hat{G} \to G \) is given by choosing the unique solution 
\( \tilde{\Upsilon}_g \) satisfying \( \tilde{\Upsilon}_g(m_0) = 0 \).
From the general theory in \parencite{NeebVizman2003}, 
we know that every such section of a central extension defines a 
\( \UGroup(1) \)-valued \( 2 \)-cocycle \( c: G \times G \to \UGroup(1) \)
by \( c(g_1, g_2) = s(g_1) \cdot s(g_2) \cdot s(g_1g_2)^{-1} \).
In the present case, we find
\begin{equation}
c(g_1, g_2) = \tilde{\Upsilon}_{g_1} \circ \Upsilon_{g_2} 
+ \tilde{\Upsilon}_{g_2} - \tilde{\Upsilon}_{g_1g_2}.
\end{equation}
A priori, the right-hand side is a \( \UGroup(1) \)-valued map 
on \( M \). But from the general setting we know it has to define 
an element of the center \( \UGroup(1) \), \ie, it has to be a constant map on \(M\).
This can also be verified by a direct computation of its derivative.
In particular, we may evaluate the right-hand side at \( m_0 \) and 
get \( c(g_1, g_2) = \tilde{\Upsilon}_{g_1} \bigl(g_2 \cdot m_0\bigr) \).
To obtain the claimed group multiplication, we use the Poincar\'e lemma 
to construct a globally defined \( 1 \)-form \( \theta \) such that 
\( \dif \theta = \omega \) in terms of the contraction \( \Lambda \), 
and then solve the defining equation for \( \tilde{\Upsilon}_g \).

Thus, recall that the Poincar\'e lemma entails that for every closed 
\( k \)-form \( \beta \) on \( M \) the \( (k-1) \)-form defined by
\begin{equation}
\label{equ_Poincare_lemma}
\alpha = \int_0^1 \difp_t \contr \left(\Lambda^*_{m_0} \beta\right) \dif t
\end{equation}
satisfies \( \dif \alpha = \beta \). 
We apply this to \( \beta = \omega \) and obtain, using equivariance 
of \( \Lambda \), that the primitive \( \theta \) of \( \omega \) 
satisfies
\begin{equation}
\label{equ_upsilon_g_star_theta}
\begin{split}
	\Upsilon_g^* \theta 
&= \Upsilon_g^* \int_0^1 \difp_t \contr \left(
\Lambda^*_{m_0} \omega\right) \dif t
= \int_0^1 \difp_t \contr \left(\bigl(\Upsilon_g \circ 
\Lambda_{g^{-1} \cdot m_0}\bigr)^* \omega\right) \dif t
			\\
&= \int_0^1 \difp_t \contr \left(\Lambda^*_{g^{-1} \cdot m_0} 
\omega\right) \dif t \, .
\end{split}\end{equation}
On the other hand, we can use the Poincar\'e lemma
\eqref{equ_Poincare_lemma} again to calculate the primitive 
\( \tilde{\Upsilon}_g \) of the closed \( 1\)-form 
\( \theta - \Upsilon_g^* \theta \).
When inserting formula \eqref{equ_upsilon_g_star_theta} for 
\( \Upsilon_g^* \theta \), we encounter terms involving the following 
map
\begin{equation}
\Lambda_m \Bigl(\Lambda_{m_0}(g_2 \cdot m_0, s), t\Bigr) 
= g_2 \cdot \Lambda_{g_2^{-1} \cdot m} 
\Bigl(\Lambda_{g_2^{-1} \cdot m_0}(m_0, s), t \Bigr),
\end{equation}
where we have used the equivariance of \( \Lambda \).
Note that for \( m = m_0 \) the right-hand side is equal to 
\( g_2 \cdot \chi_{g_2, g_2}(s, t) \), while for 
\( m = g^{-1}_1 \cdot m_0 \) it is equal to 
\( g_2 \cdot \chi_{g_1g_2, g_2}(s, t) \).
Equipped with this observation, we can now compute
\begin{equation}\label{eq:contractible:groupExt:groupCocycle}\begin{split}
	c(g_1, g_2) 
	&= \tilde{\Upsilon}_{g_1}(g_2 \cdot m_0)
	= \Upsilon_{g_2}^* \tilde{\Upsilon}_{g_1}
	\Big|_{m=m_0}
			\\
	&= \Upsilon_{g_2}^* \int_0^1 \dif s \difp_s \contr 
	\Lambda_{m_0}^* \bigl( \theta - \Upsilon_{g_1}^* \theta\bigr) \, 
	\Big|_{m=m_0}
			\\
	&= \Upsilon_{g_2}^* \int_0^1 \dif s \difp_s \contr 
	\Lambda_{m_0}^*  \int_0^1 \dif t \difp_t \contr 
	\left(\Lambda_{m_0}^* \omega - \Lambda_{g^{-1}_1 \cdot m_0}^*
	 \omega\right) \, \Big|_{m=m_0}
			\\
	&= \int_0^1 \dif s \int_0^1 \dif t \difp_s \contr \difp_t \contr
	 \Bigl(\bigl(\Upsilon_{g_2} \circ \chi_{g_2, g_2}\bigr)^* \omega -
	  \bigl(\Upsilon_{g_2} \circ \chi_{g_1g_2, g_2}\bigr)^* 
	  \omega\Bigr)
			\\
	&= \int_0^1 \dif s \int_0^1 \dif t \difp_s \contr\difp_t \contr
	 \Bigl(\chi^*_{g_2, g_2} \omega - \chi^*_{g_1g_2, g_2} \omega\Bigr).
	\end{split}\end{equation}
This shows that the group multiplication~\eqref{eq:contractible:groupExt:groupMult} is indeed 
the one induced by the prequantum bundle construction.
\end{proof}

\begin{remark}
It is surprisingly difficult to establish the cocycle identity 
for \( c \) directly. In the following, we sketch a proof. Let 
\begin{equation}
	\nu_{g_1, g_2} = \int_0^1 \dif s \int_0^1 \dif t \difp_s 
	\contr \difp_t \contr \chi^*_{g_1g_2, g_2} \omega
	\end{equation}
be the symplectic volume of the triangle with vertices \( m_0 \), 
\( g_2^{-1}g_1^{-1} \cdot m_0 \), and \( g_2^{-1} \cdot m_0 \).
Note that \( c(g_1, g_2) = \nu_{e, g_2} - \nu_{g_1, g_2} \), so 
that the cocycle identity \( c(g_1, g_2) + c(g_1g_2, g_3) = 
c(g_2, g_3) + c(g_1, g_2g_3) \) is equivalent to
\begin{equation}
\label{eq:contractible:groupExt:cocycleIdentity}
\nu_{g_1, g_2} + \nu_{g_1g_2, g_3} + \nu_{e, g_2g_3} = 
\nu_{g_2, g_3} + \nu_{g_1, g_2g_3} + \nu_{e, g_2}.
\end{equation}
A \textquote{visual proof} of this identity is given in \cref{fig:contractible:groupExt:cocycleIdentity}.
\end{remark}

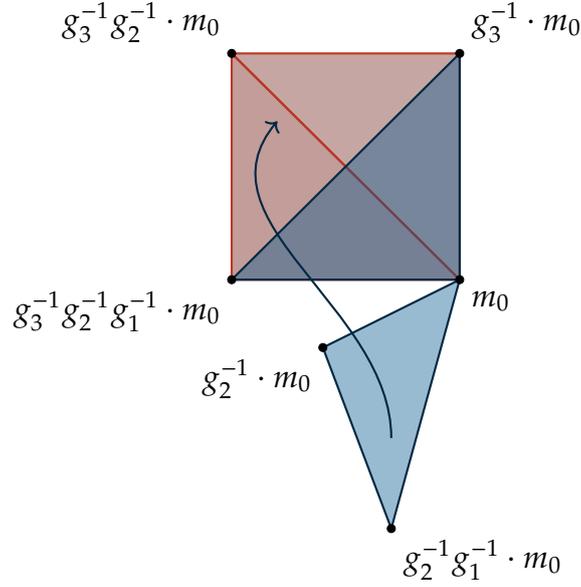
\begin{figure}
	\centering
	\begin{tikzpicture}[scale=3]
		// Triangles
		\filldraw[fill=red3, fill opacity=0.4, draw=red4, thick] (0,0) -- (1,0) -- (0,1) -- (0,0);
		\filldraw[fill=red2, fill opacity=0.4, draw=red4, thick] (1,1) -- (1,0) -- (0,1) -- (1,1);
		\filldraw[fill=blue2, fill opacity=0.4, draw=blue4, thick] (0,0) -- (1,1) -- (1,0) -- (0,0);		
		\filldraw[fill=blue2, fill opacity=0.4, draw=blue4, thick] (1,0) -- (0.7, -1.1) -- (0.4,-0.3) -- (1,0);

		// Points
		\filldraw (0,0) circle (0.5pt) node[anchor=north east] {$g_3^{-1}g_2^{-1}g_1^{-1} \cdot m_0$};
		\filldraw (1,0) circle (0.5pt) node[anchor=north west] {$m_0$};
		\filldraw (0,1) circle (0.5pt) node[anchor=south east] {$g_3^{-1}g_2^{-1} \cdot m_0$};
		\filldraw (1,1) circle (0.5pt) node[anchor=south west] {$g_3^{-1} \cdot m_0$};
		\filldraw (0.4,-0.3) circle (0.5pt) node[anchor=north east] {$g_2^{-1} \cdot m_0$};
		\filldraw (0.7,-1.1) circle (0.5pt) node[anchor=north west] {$g_2^{-1}g_1^{-1} \cdot m_0$};

		// Arrow from lowest triangle to upper left triangle
		\draw[->, thick, blue4] (0.7,-0.7) to[out=90, in=230] (0.2,0.7) ;
	\end{tikzpicture}
	\caption{Visual proof of the cocycle 
	identity~\eqref{eq:contractible:groupExt:cocycleIdentity} in the 
	special case \( \nu(e, g) = 0 \). The red triangles represent 
	\( \nu_{g_2, g_3} \) and \( \nu_{g_1, g_2g_3} \), while the blue 
	triangles represent \( \nu_{g_1, g_2} \) and \( \nu_{g_1g_2, g_3} \). 
	The arrow indicates that upon moving \( \nu_{g_1, g_2} \) by 
	\( g^{-1}_3 \) the blue triangles fill out the same area as the 
	red triangles.}
	\label{fig:contractible:groupExt:cocycleIdentity}
\end{figure}

\begin{remark}
\label{rem:contractible:quasimorhpism}
Assume that \( (M, \omega) \) is a Domic-Toledo space 
\parencite{DomicToledo1987}, \ie the Gromov norm of \( \omega \) 
is bounded. By this we mean that there exists a constant \( C > 0 \) 
such that for all \( m_1, m_2, m_3 \in M \) we have
\begin{equation}
	\abs*{\, \, \, \int_{\triangle(m_1, m_2, m_3)} \omega \,} \leq C,
\end{equation}
where \( \triangle(m_1, m_2, m_3) \) is any disk with boundary given 
by the curves \( m_1 \rightsquigarrow m_2 \), 
\( m_2 \rightsquigarrow m_3 \), \( m_3 \rightsquigarrow m_1 \) 
defined by the contraction \( \Lambda \). Then 
 formula~\eqref{eq:contractible:groupExt:groupCocycle}
implies that the group \( 2 \)-cocycle \( c \) is bounded in the 
sense that there exists a lift of \( c \) to a bounded map \( G \times G \to \R \).

If, in addition, the momentum map is equivariant, then the derivative 
of the group 2-cocycle \( c \) is cohomologous to \( 0 \).
Hence, upon passing to the universal covering \( \tilde{G} \) of 
\( G \), we conclude that \( c \) is the coboundary of a map 
\( \phi: \tilde{G} \to \UGroup(1) \).
The boundedness of \( c \) then implies that \( \phi \) is a 
quasimorphism, \ie, the \( \UGroup(1) \)-valued cocycle 
\( (g,h) \mapsto \phi(gh) - \phi(g) - \phi(h) \) is bounded 
in the sense above. In this way, we recover the construction of 
\textcite{Shelukhin2014} of quasimorphisms from equivariant momentum 
maps\footnotemark{}.
\footnotetext{Note that \textcite{Shelukhin2014} only considers a 
class of paths for which the path from \( m \) to itself is the 
constant path. We do not assume this for \( \Lambda \) and thus get 
the additional integral over \( \chi_{g_2,g_2}([0,1], [0,1]) \), \cf 
\parencite[Equation~(3)]{Shelukhin2014}.}
\end{remark}

\subsection{Momentum maps for affine actions}
\label{sec:affineSymplectic}
An important special case are actions on affine symplectic spaces.

Recall that an \emphDef{affine space modeled on the vector space} \( V \) is a 
set $X$ together with a free transitive right action $X \times V
\ni (x, v) \mapsto x + v \in X$ of the additive Abelian Lie group $(V, +)$ 
underlying the vector space $V$. In particular, given $x, y \in  X$, there 
exists a unique vector, denoted $y-x \in  V$, such that $x+(y-x) = y$.
In the following, we assume that \( V \) is a locally convex space. 
For any $x_0 \in  X$, the map $V\ni v \mapsto x_0+v \in X$ is a bijection.
Put on $X$ the locally convex manifold structure by declaring this bijection 
to be a diffeomorphism. This manifold structure is independent of $x_0$ 
since, for $y_0\in  X$, we have $y_0 + v = x_0 + v + (y_0-x_0)$, \ie, the 
inverse of the diffeomorphisms with base point $x_0$ composed with 
the diffeomorphism with base point $y_0$ is the action of $y_0-x_0 \in V$ 
on $X$. We will shortly refer to \( X \) as a 
\emphDef{locally convex affine space}.
Note, in particular, that \(X\) is connected.
Let $X$ be an affine space modeled on the locally convex space $V$ and $Y$ an 
affine space modeled on the locally convex space $W$. An \emphDef{affine map} 
from $X$ to $Y$ is a pair of maps $f: X \to Y$, $\bar{f}:V \to W$ such that 
$f$ is smooth, $\bar{f}$ is linear and continuous, and $f(x+v)= 
f(x) + \bar{f}(v)$ for all $x \in X$ and $v \in V$. Thus, by simple 
transitivity of the vector space actions, the map $f$ is determined by its 
value on a single point in $X$ and the linear map $\bar{f}$. An affine map 
is an \emphDef{isomorphism} if $\bar{f}$ is a linear isomorphism.
It follows that $f$ is a diffeomorphism.

Let \( (X, \omega) \) be a locally convex affine symplectic space.
That is, the symplectic form \( \omega \) on \( X \) is constant, \ie 
invariant under the $V$-action on $X$, and is 
thereby canonically induced, via any of the standard diffeomorphisms 
associated with a point $x_0 \in X$, by a constant weak symplectic form 
\( \bar{\omega} \) on the linear model space \( V \) of \( X \). 
In particular, \( (V, \bar{\omega}) \) is a weak
symplectic locally convex space.
Here, \textquote{weak symplectic} refers to the fact that the linear 
continuous map $V\ni v \mapsto \omega(v, \cdot ) 
\in V^*$ is injective (but not necessarily an isomorphism of locally convex 
spaces). From now
on, unless otherwise specified, \textquote{symplectic} will always mean
\textquote{weak symplectic}.

A Lie group \(G\) \emphDef{acts affinely} on the affine 
space \(X\) if every \(g \in  G\) acts as an affine isomorphism on \(X\), 
\ie, there is a \(G\)-action on \(X\) and a linear \(G\)-action on \(V\)
such that \(g \cdot (x+v) = g \cdot x + g \cdot v\) for any \(g \in G\) 
and \(v \in  V\).
An affine group action of a Lie group \( G \) on an affine symplectic 
space \( (X, \omega) \) is \emph{symplectic} 
if and only if the associated linear action on \( V \) preserves the 
symplectic form \( \bar{\omega} \).

It is well known that every symplectic 
linear action on a symplectic vector space has a quadratic momentum map.
The following result is the corresponding affine version.

\begin{lemma}
\label{prop:affineSymplectic:momentumMap}
Let \( (X, \omega) \) be an affine symplectic vector space modeled on the 
locally convex space \( V \). Assume that a Lie group \( G \) acts both 
affinely and symplectically on \( X \). Let \( \kappa: \LieA{g}^* 
\times \LieA{g} \to \R \) be a non-degenerate pairing.
For every \( x_0 \in X \), the unique 
momentum map \( J: X \to \LieA{g}^* \) for the \( G \)-action on 
\( X \) vanishing at \( x_0 \) is given by
\begin{equation}
\label{eq:affineSymplectic:momentumMap}
\kappa\bigl(J(x_0 + v), \xi\bigr) = \omega(v, \xi \ldot x_0) + 
\frac{1}{2} \omega(v, \xi \ldot v),
\end{equation}
where \( v \in V \) and \( \xi \in \LieA{g} \). 
In the infinite-dimensional case, one needs to assume that functional 
on \( \LieA{g} \) defined by the right-hand side 
of~\eqref{eq:affineSymplectic:momentumMap} can be represented by 
an element of \( \LieA{g}^* \).
\end{lemma}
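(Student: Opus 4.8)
The plan is to derive this from \cref{prop:contractible:momentumMap} by exhibiting an explicit equivariant contraction of the affine space. Define \( \Lambda \colon X \times X \times [0,1] \to X \) to be the straight-line homotopy \( \Lambda(x_0, x, t) = x_0 + t(x - x_0) \), which is well-defined because \( X \) is modeled on the vector space \( V \). This \( \Lambda \) is manifestly smooth and satisfies \( \Lambda(x_0,x,0) = x_0 \), \( \Lambda(x_0,x,1) = x \), and \( \Lambda(x_0,x_0,t) = x_0 \). It is \( G \)-equivariant precisely because the \( G \)-action is affine: \( g \cdot \Lambda(x_0,x,t) = g \cdot (x_0 + t(x-x_0)) = g \cdot x_0 + t\,(g \cdot x - g \cdot x_0) = \Lambda(g \cdot x_0, g \cdot x, t) \), the middle step being the defining property of an affine action. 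Hence all hypotheses of \cref{prop:contractible:momentumMap} are met, which yields at once the existence of a momentum map, the uniqueness of the one vanishing at \( x_0 \), and the integral formula \eqref{equ:def_J_xi} for it. The remaining task is to evaluate that integral and recognize it as the right-hand side of \eqref{eq:affineSymplectic:momentumMap}.

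For the evaluation I would work from \( J_\xi = - \int_0^1 \difp_t \contr \bigl(\Lambda^*_{x_0}(\xi^* \contr \omega)\bigr)\,\dif t \). Identifying all tangent spaces of the affine space \( X \) with \( V \) in the canonical way, the curve \( t \mapsto \Lambda_{x_0}(x_0 + v, t) = x_0 + tv \) has velocity \( v \), so that \( \bigl(\Lambda^*_{x_0}(\xi^*\contr\omega)\bigr)_{(x_0+v,t)}(\difp_t) = \omega\bigl(\xi \ldot (x_0 + tv), v\bigr) \), with \( \omega \) constant. Affineness of the action gives the decomposition \( \xi \ldot (x_0 + tv) = \xi \ldot x_0 + t\,(\xi \ldot v) \), the first summand being the value at \( x_0 \) of the fundamental vector field of the \( G \)-action on \( X \), and the second the value at \( v \) of the fundamental vector field of the linear \( G \)-action on \( V \), where linearity of \( w \mapsto \xi \ldot w \) on \( V \) is used. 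Plugging in and integrating over \( t \in [0,1] \) yields \( -J_\xi(x_0+v) = \omega(\xi \ldot x_0, v) + \tfrac12 \omega(\xi \ldot v, v) \), and skew-symmetry of \( \omega \) turns this into \eqref{eq:affineSymplectic:momentumMap}.

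As a cross-check — and an alternative route that bypasses the contraction machinery — I would also verify the formula directly: writing \( J_\xi(x_0+v) \) for the right-hand side of \eqref{eq:affineSymplectic:momentumMap} and differentiating in a direction \( w \in V \), the two quadratic terms combine using the invariance identity \( \omega(v, \xi \ldot w) = \omega(w, \xi \ldot v) \) — which holds because the linear \( G \)-action preserves \( \bar\omega \) — to give \( \dif J_\xi(w) = \omega(w, \xi \ldot x_0 + \xi \ldot v) = \omega\bigl(w, \xi \ldot (x_0+v)\bigr) = -(\xi^* \contr \omega)(w) \). Thus \( \dif J_\xi = -\xi^* \contr \omega \), so \( J \) is a momentum map; since \( X \) is connected it is unique up to an additive constant in \( \LieA{g}^* \), and \( J(x_0) = 0 \) fixes that constant.

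I do not expect a serious obstacle: the content is entirely a specialization of the preceding proposition (or a short direct computation). The only points needing care are the bookkeeping — distinguishing the \( G \)-action on \( X \) from the linear \( G \)-action on \( V \), and keeping track of the canonical identification of the tangent spaces of \( X \) with \( V \) so that \( \xi \ldot (x_0 + tv) = \xi \ldot x_0 + t\,\xi \ldot v \) is justified — and the standing infinite-dimensional caveat, explicitly assumed in the statement, that the linear functional \( \xi \mapsto \omega(v,\xi\ldot x_0) + \tfrac12\omega(v,\xi\ldot v) \) be representable by an element of \( \LieA{g}^* \) through the pairing \( \kappa \).
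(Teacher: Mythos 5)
Your proof is correct and takes essentially the same route as the paper: both specialize \cref{prop:contractible:momentumMap} to the straight-line contraction \( \Lambda(x_0,x,t)=x_0+t(x-x_0) \), with only the cosmetic difference that you integrate the primitive formula \eqref{equ:def_J_xi} directly while the paper evaluates the two pullback terms of \eqref{eq:contractible:momentumMap} separately. Your added direct verification via \( \dif J_\xi = -\xi^*\contr\omega \) is a sound (if redundant) cross-check.
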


The term linear in \( v \) is special to the affine setting: in the case 
when \( X \) is a vector space, one has the natural choice \( x_0 = 0 \) 
which is a fixed point of every linear action and so \( \xi \ldot x_0 =0\).

\begin{proof}
This follows directly from \cref{prop:contractible:momentumMap} using 
the equivariant contraction \( \Lambda(x_0, x, t) = x_0 + t (x-x_0) \).
In fact, we find
\begin{equation}
\bigl(\Lambda^*_{x_0} \omega\bigr)_{(x,t)}(\difp_t, v) = 
t \omega\bigl((x-x_0), v\bigr)
\end{equation}
and
\begin{equation}
\bigl(\bar{\Lambda}^*_{x} \omega\bigr)_{(x_0,t)}(\difp_t, w) = 
(1-t) \omega\bigl((x-x_0), w\bigr).
\end{equation}
Thus, \cref{eq:contractible:momentumMap} simplifies to
\begin{equation}
\kappa\bigl(J(x_0 + v), \xi\bigr) = 
\frac{1}{2}\omega\bigl(v, \xi \ldot (x_0+v)\bigr) 
+ \frac{1}{2} \omega(v, \xi \ldot x_0),
\end{equation}
from which~\eqref{eq:affineSymplectic:momentumMap} follows immediately.
\end{proof}

In contrast to the linear case, the momentum map \( J \) for affine 
actions does not need to be equivariant. As \( J(x_0) = 0 \), the 
\emphDef{non-equivariance one-cocycle} \( \sigma: G \to \LieA{g}^* \) 
associated with \( J \) is 
\begin{equation}
	\label{eq:affineSymplectic:oneCocycle}
	\sigma(g) = J(g \cdot x_0) = J\bigl(x_0 + (g \cdot x_0 - x_0)\bigr).
\end{equation}
Thus, the non-equivariance of \( J \) is a consequence of the fact that 
\( x_0 \) does not need to be a fixed point of the \( G \)-action.
As in the general case~\eqref{eq:contractible:twoCocycle}, the corresponding 
infinitesimal non-equivariance two-cocycle \( \Sigma: \LieA{g} \times 
\LieA{g} \to \R \) is given by
\begin{equation}
	\label{eq:affineSymplectic:twoCocycle}
	\Sigma(\xi, \eta) 
		= \dualPair{\tangent_e \sigma (\xi)}{\eta}
		= \omega(\xi \ldot x_0, \eta \ldot x_0).
\end{equation}

\begin{prop}
	\label{prop:affineSymplectic:groupExt}
The Lie group \( \hat{G} = G \times \UGroup(1) \) with group multiplication
\begin{equation}
\label{prop_group_two_cocycle_affine}
	(g_1, z_1) \cdot (g_2, z_2) = \left(g_1 g_2, z_1 + z_2 - \frac{1}{2} 
	\omega(x_0 - g_1^{-1} \cdot x_0, g_2 \cdot x_0 - x_0)\right)
	\end{equation}
is a central Lie group \( \UGroup(1) \)-extension of \( G \) whose
associated Lie algebra \( 2 \)-cocycle is the non-equivariance 
\( 2 \)-cocycle \( \Sigma \) given by~\eqref{eq:affineSymplectic:twoCocycle}.
\end{prop}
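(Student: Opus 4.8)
The plan is to obtain this as the affine specialization of \cref{prop:contractible:groupExt}. First I would check that its hypotheses hold for the linear contraction \( \Lambda(x_0, x, t) = x_0 + t(x - x_0) \) already used in the proof of \cref{prop:affineSymplectic:momentumMap}: it is smooth, it is equivariant because the \( G \)-action is affine, namely \( g\cdot\bigl(x_0 + t(x-x_0)\bigr) = g\cdot x_0 + t\,(g\cdot x - g\cdot x_0) \), and it satisfies \( \Lambda(x_0, x_0, t) = x_0 \). Hence \cref{prop:contractible:groupExt} provides the central extension \( \hat G = G \times \UGroup(1) \) with the group law \eqref{eq:contractible:groupExt:groupMult}, and it identifies the associated Lie algebra cocycle with the non-equivariance cocycle \( \Sigma \); by \eqref{eq:affineSymplectic:twoCocycle} the latter is \( \Sigma(\xi,\eta) = \omega(\xi \ldot x_0, \eta \ldot x_0) \), so that half of the statement is immediate. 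It remains to show that the double-integral correction term in \eqref{eq:contractible:groupExt:groupMult} collapses to \( -\tfrac12\,\omega(x_0 - g_1^{-1}\cdot x_0,\, g_2\cdot x_0 - x_0) \).

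Next I would unwind \( \chi_{g_1,g_2} \) for the linear contraction. One computes \( \Lambda_{g_2^{-1}\cdot x_0}(x_0, s) = g_2^{-1}\cdot x_0 + s\,(x_0 - g_2^{-1}\cdot x_0) \) and therefore \( \chi_{g_1,g_2}(s,t) = g_1^{-1}\cdot x_0 + t\bigl(g_2^{-1}\cdot x_0 - g_1^{-1}\cdot x_0\bigr) + st\bigl(x_0 - g_2^{-1}\cdot x_0\bigr) \). Since \( \omega \) is translation-invariant, \( \chi_{g_1,g_2}^*\omega \) is computed pointwise as \( \omega(\partial_s\chi, \partial_t\chi) \), and because a symplectic form annihilates pairs of parallel vectors the summand of \( \partial_t\chi \) proportional to \( x_0 - g_2^{-1}\cdot x_0 \) drops against \( \partial_s\chi = t\,(x_0 - g_2^{-1}\cdot x_0) \); what survives is proportional to \( t\,\omega\bigl(g_2^{-1}\cdot x_0 - g_1^{-1}\cdot x_0,\, x_0 - g_2^{-1}\cdot x_0\bigr) \). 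Carrying out \( \int_0^1\!\!\int_0^1 \partial_s\contr\partial_t\contr\chi_{g_1,g_2}^*\omega \,\dif s\,\dif t \) then contributes the factor \( \int_0^1 t\,\dif t = \tfrac12 \) and leaves \( \tfrac12\,\omega\bigl(g_2^{-1}\cdot x_0 - g_1^{-1}\cdot x_0,\, x_0 - g_2^{-1}\cdot x_0\bigr) \).

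Finally I would substitute this into \eqref{eq:contractible:groupExt:groupMult}. The term built from \( \chi_{g_2,g_2} \) vanishes because its two endpoints both equal \( g_2^{-1}\cdot x_0 \), and the term built from \( \chi_{g_1g_2,g_2} \) equals \( \tfrac12\,\omega\bigl(g_2^{-1}\cdot x_0 - g_2^{-1}g_1^{-1}\cdot x_0,\, x_0 - g_2^{-1}\cdot x_0\bigr) \). Acting on both arguments by the linear \( G \)-action of \( g_2 \), which preserves \( \bar\omega \) precisely because the affine action is symplectic, rewrites this as \( \tfrac12\,\omega\bigl(x_0 - g_1^{-1}\cdot x_0,\, g_2\cdot x_0 - x_0\bigr) \), so that \eqref{eq:contractible:groupExt:groupMult} becomes exactly \eqref{prop_group_two_cocycle_affine}. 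As a consistency check one can also verify by hand that the resulting \( \R/\Z \)-valued map \( (g_1, g_2) \mapsto -\tfrac12\,\omega(x_0 - g_1^{-1}\cdot x_0,\, g_2\cdot x_0 - x_0) \) satisfies the group \( 2 \)-cocycle identity and that differentiating it twice at the identity reproduces \( \Sigma \) of \eqref{eq:affineSymplectic:twoCocycle}. There is no genuine obstacle here: everything reduces to an elementary integral of a bilinear expression over the unit square, and the only real care needed is bookkeeping the signs in the interior products \( \partial_s\contr\partial_t\contr \) and in the \( G \)-invariance rewriting.
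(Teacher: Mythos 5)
Your proposal is correct and follows essentially the same route as the paper: specialize \cref{prop:contractible:groupExt} to the linear contraction \( \Lambda(x_0,x,t)=x_0+t(x-x_0) \), compute \( \chi_{g_1,g_2} \) explicitly, observe that the \( \chi_{g_2,g_2} \)-term contributes nothing (degenerate triangle), and integrate the surviving bilinear term over the unit square before rewriting via \( g_2 \)-invariance of \( \omega \). The paper even records your suggested consistency check — verifying the cocycle identity for \eqref{prop_group_two_cocycle_affine} directly — as an alternative proof.
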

\begin{proof}
This follows directly from \cref{prop:contractible:groupExt} using 
the contraction \( \Lambda(x_0, x, t) = x_0 + t (x-x_0) \).
In fact, then \( \chi_{g_1, g_2}(s, t) = g_1^{-1} \cdot x_0 + 
t g_2^{-1} \cdot x_0 - tg_1^{-1} \cdot x_0 + ts x_0 - ts g_2^{-1}x_0 \) 
and so
\begin{equation}
\left(\chi^*_{g_1g_2, g_2} \omega - \chi^*_{g_2, g_2} \omega\right)
(\difp_t, \difp_s) = t \omega(g_2^{-1} \cdot x_0 - 
g_2^{-1}g_1^{-1} \cdot x_0, x_0 - g_2^{-1}\cdot x_0).
\end{equation}
The integration over \( t \) and \( s \) is now straightforward 
and yields the desired result
\begin{equation}
\label{prop:affineSymplectic:group2cocycle}
c(g_1, g_2) = - \frac{1}{2}\omega(x_0 - g_1^{-1} \cdot x_0, 
g_2 \cdot x_0 - x_0)
\end{equation}
for the group \( 2 \)-cocycle.
\end{proof}

An equally easy proof is to check directly
the cocycle identity for~\eqref{prop_group_two_cocycle_affine}.

A particular important special case is given by affine actions on a 
symplectic vector space. In fact, the general setting considered 
above can be reduced to this special case by choosing a reference 
point. Let \( (V, \omega) \) be a symplectic vector space and let 
\( \rho: G \to \SpGroup(V, \omega) \) be a linear action of \( G \)
 on \( V \) preserving the symplectic form \( \omega \). 
Let \(\rho' \defeq \tangent_e \rho: \LieA{g} \to 
\operatorname{sp}(V, \omega) \) be the induced linear Lie algebra
action. Every 
 affine action of \( G \) on \( V \) 
with linear part \( \rho \) is of the form
\begin{equation}
	g \cdot v = \rho(g) \, v + \tau(g)
\end{equation}
for some \( V \)-valued group one-cocycle \( \tau: G \to V \), \ie, 
\( \tau(g_1g_2) = \tau(g_1) + \rho(g_1) \tau(g_2) \), hence 
\( \tau(e) = 0 \) 
and \( \tau(g^{-1}) = - \rho(g^{-1}) \tau(g) \). Choosing \( x_0 = 0 \) 
as reference point and using these identities, the group 
two-cocycle~\eqref{prop:affineSymplectic:group2cocycle} takes the form
\begin{equation}
\label{eq:affineSymplectic:linear:group2Cocycle}
\begin{split}
c(g_1, g_2) 
	&= \frac{1}{2}\omega\bigl(-g_1^{-1} \cdot 0, g_2 \cdot 0\bigr) \\
	&= \frac{1}{2}\omega\bigl(- \tau(g_1^{-1}), \tau(g_2)\bigr) \\
	&= \frac{1}{2}\omega\bigl(- \tau(g_1^{-1}), - \rho(g_2) \tau(g_2^{-1})
	   \bigr) \\
	&= \frac{1}{2}\omega\bigl(\tau(g_2^{-1}g_1^{-1}), \tau(g_2^{-1})\bigr)
	= \frac{1}{2}\omega\bigl(\tau(g_1), \tau(g_1 g_2)\bigr).
\end{split}
\end{equation}
A straightforward calculation shows that the associated Lie algebra 
cocycle~\eqref{eq:affineSymplectic:twoCocycle} is given by
\begin{equation}
	\label{eq:affineSymplectic:linear:twoCocycle}
	\Sigma = \omega \circ \tau',
\end{equation}
where \( \tau' \defeq \tangent_e \tau: \LieA{g} \to V \).
We note that the cocycles \( c \) and \( \Sigma \) depend only on the 
symplectic form \( \omega \) and on the cocycle \( \tau \), but not 
(directly) on the representation \( \rho \). In contrast, the 
non-equivariance one-cocycle \( \sigma = J \circ \tau: G \to 
\LieA{g}^* \) defined 
in~\eqref{eq:affineSymplectic:oneCocycle} does depend on \( \rho \) since, 
according to~\eqref{eq:affineSymplectic:momentumMap}, the momentum map is 
given by
\begin{equation}
\label{eq:affineSymplectic:linear:momentumMap}
\kappa\bigl(J(v), \xi\bigr) 
= \omega\bigl(v, \tau'(\xi)\bigr) + 
\frac{1}{2} \omega\bigl(v, \rho'(\xi)\,v\bigr).
\end{equation}

In summary, \textit{this symplectic framework yields a construction of the 
cocycles \( c, \Sigma \), and \( \sigma \) using the symplectic form 
\( \omega \), the symplectic representation \( \rho \), and the 
one-cocycle \( \tau \) as ingredients.}

Among others, we recover the following two fundamental group extensions 
as special cases of \cref{prop:affineSymplectic:groupExt}. 
\begin{example}[Heisenberg group]
	\label{ex:affineSymplectic:heisenbergGroup}
Let \( (V, \omega) \) be a symplectic vector space. The natural action 
of \( V \) on itself by translation is an affine symplectic action. 
In this case, the Lie group extension constructed in 
\cref{prop:affineSymplectic:groupExt} coincides with the Heisenberg 
group of \( (V, \omega) \), where the latter is 
considered as an extension by \( \UGroup(1) \).
\end{example}

\begin{example}[Galilean group]
	\label{ex:affineSymplectic:galileanGroup}
Let \( \GalileanGroup = (\SOGroup(3) \lSemiProduct \R^3)\lSemiProduct \R^4 
\) denote the group of Galilean transformations, \ie, the semi-direct 
product of the group \( \SOGroup(3) \) of rotations, the Abelian group 
\( \R^3 \) of boosts and the Abelian group \( \R^4 \) of spacetime 
translations.
We will write elements of \( \GalileanGroup \) as 4-tuples 
\( (R, \vec{v}, \vec{a}, \tau) \) where \( R \in \SOGroup(3) \), 
\( \vec{v} \in \R^3 \), \( \vec{a} \in \R^3 \) and \( \tau \in \R \).
We write elements of the Lie algebra \(\mathfrak{gal}
= (\mathfrak{so}(3)\lSemiProduct \R^3)\lSemiProduct \R^4\)
of  \( \GalileanGroup \) as 4-tuples \((\vec{\alpha}, \vec{\beta},
\vec{\gamma}, \delta)\). The Lie bracket is given by
\begin{equation}
\label{Lie_bracket_on_gal}
\begin{aligned}
&\left[ (\vec{\alpha}_1, \vec{\beta}_1,\vec{\gamma}_1, \delta_1), 
(\vec{\alpha}_2, \vec{\beta}_2,\vec{\gamma}_2, \delta_2)\right] 
\\ 
&\quad = \left(\vec{\alpha}_1\times\vec{\alpha}_2 ,\,
\vec{\alpha}_1\times\vec{\beta}_2-
\vec{\alpha}_2\times\vec{\beta}_1 ,\,
\vec{\alpha}_1\times\vec{\gamma}_2 -
\vec{\alpha}_2\times\vec{\gamma}_1 - 
\delta_1\vec{\beta}_2 + \delta_2\vec{\beta}_1,\,0
\right) .
\end{aligned}
\end{equation}

Consider a non-relativistic particle with mass \( m \neq 0 \) and 
spin \( s > 0 \) moving in three-dimensional Euclidean space 
\( \R^3 \).	This corresponds to the action of \( \GalileanGroup \) 
on \( \R^3 \times \R^3 \times S^2 \) given by 
\begin{equation}
(R, \vec{v}, \vec{a}, \tau) \cdot \bigl(\vec{q}, \vec{p}, \vec{x}\bigr) 
= \left(R \left(\vec{q} - \frac{\tau}{m} \vec{p}\right) - 
\vec{v} \tau + \vec{a}, R\vec{p} + m \vec{v}, R \vec{x}\right).
\end{equation}
The action is symplectic with respect to the symplectic form 
\( \omega = \dif \vec{p} \wedge \dif \vec{q} + s \vol_{S^2} \). 
The action on \( S^2 \) factors through the 
standard action of \( \SOGroup(3) \) on \( S^2 \) by rotations, which 
is symplectic with equivariant momentum map \( \vec{x} \mapsto 
-\frac{s}{2} \vec{x} \) relative to the pairing 
\( (\rho, \alpha) \mapsto 2 \rho \cdot \alpha \) on 
\( \SOAlgebra(3) \isomorph \R^3 \).
Moreover, the action on the first factor is clearly affine, with 
linear action and \( \R^6 \)-valued cocycle given by
\begin{equation}\begin{aligned}
(R, \vec{v}, \vec{a}, \tau) \cdot (\difp_{\vec{q}}, \difp_{\vec{p}}) 
&= \left(R \left(\difp_{\vec{q}} - 
\frac{\tau}{m} \difp_{\vec{p}}\right), R\difp_{\vec{p}}\right),
		\\
(R, \vec{v}, \vec{a}, \tau) &\mapsto \bigl(-\vec{v} \tau 
+ \vec{a}, m\vec{v}\bigr).
\end{aligned}\end{equation} 
Thus, using~\eqref{eq:affineSymplectic:linear:momentumMap} or by 
direct calculation, the momentum map \( J \) for the action on 
\( \R^3 \times \R^3 \times S^2 \) is given by
\begin{equation}
\label{eq:affineSymplectic:galileanGroup:momentumMap}
J(\vec{q}, \vec{p}, \vec{x}) = 
\left(\frac{1}{2} \vec{q} \times \vec{p} - 
\frac{s}{2} \vec{x}, -m\vec{q}, \vec{p}, 
-\frac{1}{2m} \norm{\vec{p}}^2\right),
\end{equation}
relative to the pairing 
\begin{equation}
\label{eq:affineSymplectic:galileanGroup:pairing}
\kappa\bigl((\alpha_1, \beta_1, \gamma_1, \delta_1), 
(\alpha_2, \beta_2, \gamma_2, \delta_2)\bigr) = 
2 \alpha_1 \cdot \alpha_2 + \beta_1 \cdot \beta_2 + 
\gamma_1 \cdot \gamma_2 + \delta_1 \delta_2.
\end{equation}
Since the part of the momentum map corresponding to the action 
on \( S^2 \) is equivariant, the non-equivariance group two-cocycle 
\( c \) can be calculated, 
using~\eqref{eq:affineSymplectic:linear:group2Cocycle}, to be
\begin{equation}
c\bigl((R_1, \vec{v}_1, \vec{a}_1, \tau_1), 
(R_2, \vec{v}_2, \vec{a}_2, \tau_2)\bigr) 
= \frac{m}{2} \Bigl( \vec{v}_1 \cdot 
(R_1 \vec{a}_2) - \vec{a}_1 \cdot (R_1 \vec{v}_2) 
- \tau_2 \, \vec{v}_1 \cdot (R_1 \vec{v}_2) \Bigr).
\end{equation}
This is the Bargmann cocycle \parencite[Equation~(6.28)]{Bargmann1954}, 
and hence the central \( \UGroup(1) \)-extension of \( \GalileanGroup \) 
constructed in \cref{prop:affineSymplectic:groupExt} is the 
Bargmann group.
\end{example}

\begin{example}[Virasoro group]
	\label{ex:affineSymplectic:virasoroGroup}
Let \( V \) denote the vector space of smooth functions \( f: S^1 \to \R \) 
(\ie, smooth functions $\R \to \R$ of period 1) modulo 
constants. The skew-symmetric form on \( V \) defined by
	\begin{equation}
		\omega(\equivClass{f}, \equivClass{g}) = \int_{S^1} f \dif g
	\end{equation}
	is non-degenerate, and thus defines a linear symplectic structure on \( V \).
	Consider the affine symplectic action
	\begin{equation}
		\phi \cdot \equivClass*{f} = \equivClass*{f \circ \phi^{-1} 
		+ \log  \bigl( (\phi^{-1})'\bigr) }
	\end{equation}
on \( V \) of the group \( \DiffGroup_+(S^1) \) of orientation-preserving 
diffeomorphisms of the circle. Here, \( \phi' \in \sFunctionSpace(S^1, \R) \) 
denotes the strictly positive function uniquely determined by 
\( \phi^* \dif \varphi = \phi' \dif \varphi \) for the natural line element 
\( \dif \varphi \) on \( S^1 \). Thus, in the 
notation above, the representation and one-cocycle are given by 
\( \rho(\phi) \equivClass*{f} = \equivClass*{f \circ \phi^{-1}} \) and 
\( \tau(\phi) = \equivClass*{\log\left((\phi^{-1})'\right)} \), respectively. 
According to~\eqref{eq:affineSymplectic:linear:group2Cocycle}, the group 
\( 2 \)-cocycle on \( \DiffGroup_+(S^1) \) is given by
	\begin{equation}
	c(\phi_1, \phi_2) 
	= \frac{1}{2}\omega\bigl(\tau(\phi_2^{-1}\phi_1^{-1}), \tau(\phi_2^{-1})\bigr)
	= \frac{1}{2}\int_{S^1} \log\left((\phi_1\phi_2)'\right) 
	\dif \left(\log\left(\phi_2'\right)\right),
	\end{equation}
which is the Bott--Thurston cocycle, see \parencite[Equation~2]{Bott1977} 
and \eg \parencite[Definition~4.5.1]{GuieuRoger2007}. Thus, the central 
\( \UGroup(1) \)-extension of \( \DiffGroup_+(S^1) \) constructed in 
\cref{prop:affineSymplectic:groupExt} yields the Virasoro group.
Since \( \tau'(X \difp_\varphi) = - \equivClass{X'} \) for \( X \in 
\sFunctionSpace(S^1) \) with \( X' \defeq \dif X (\difp_\varphi) \), the 
associated Lie algebra two-cocycle~\eqref{eq:affineSymplectic:linear:twoCocycle} 
is given by
	\begin{equation}
		\Sigma(X \difp_\varphi, Y \difp_\varphi)
			= \omega\bigl(\equivClass{X'}, \equivClass{Y'}\bigr)
			= \int_{S^1} X' \dif Y'
			= - \int_{S^1} X Y''' \dif \varphi ,
	\end{equation}
which is the Gelfand--Fuchs cocycle (in an appropriate normalization); 
see \eg \parencite[Equation~4.9]{GuieuRoger2007}. Finally, the momentum map 
is given by
	\begin{equation}
		\label{eq:affineSymplectic:virasoroGroup:momentumMap}
		\SectionMapAbb{J}\bigl(\equivClass{f}\bigr) = 
		\Bigl(-f'' + \frac{1}{2} (f')^2\Bigr) \dif \varphi^2
	\end{equation}
where we have identified the (regular) dual of \( \VectorFieldSpace(S^1) \) 
with the space of quadratic differential forms using the pairing
	\begin{equation}
		\kappa(\alpha \dif \varphi^2, X \difp_\varphi) 
		= \int_{S^1} \alpha X \dif \varphi, \qquad 
		\alpha, X \in \sFunctionSpace(S^1).
	\end{equation}
Indeed, using~\eqref{eq:affineSymplectic:linear:momentumMap}, we find by partial 
integration
\begin{equation}\begin{split}
\kappa\Bigl(\SectionMapAbb{J}\bigl(\equivClass{f}\bigr), X \difp_\varphi\Bigr)
&= \omega\bigl(\equivClass{f}, -\equivClass{X'}\bigr) + \frac{1}{2} 
\omega\bigl(\equivClass{f}, -\equivClass{X f'}\bigr) \\
&= - \int_{S^1} f X'' \dif \varphi - 
\frac{1}{2} \int_{S^1} f (X' f' + X f'') \dif \varphi \\
&= - \int_{S^1} f'' \, X \dif \varphi + 
\frac{1}{2} \int_{S^1} (f')^2 \, X \dif \varphi .
\end{split}\end{equation}
Thus, the non-equivariance one-cocycle \( \sigma = \SectionMapAbb{J} \circ \tau: 
\DiffGroup_+(S^1) \to \VectorFieldSpace(S^1)^* \) satisfies
	\begin{equation}
		\sigma\bigl(\phi^{-1}\bigr) = - \left(\frac{\phi'''}{\phi'} - 
		\frac{3}{2} \biggl(\frac{\phi''}{\phi'}\biggr)^2\right) \dif \varphi^2,
	\end{equation}
where the expression between the brackets is the Schwarzian derivative 
of \( \phi \). 

In summary, \textit{the Bott--Thurston cocycle, the Gelfand--Fuchs 
cocycle, the Schwarzian derivative, and the Virasoro group are directly 
and intrinsically derived from the affine symplectic action of 
\( \DiffGroup_+(S^1) \) on the space of smooth functions.}
\end{example}

\section{Structure of the stabilizer algebra of the complexified action}
\label{sec:stabilizer_algebra_of_the complexified_action}

The goal of this section is to derive a structure theorem about the 
stabilizer Lie algebra of the complexified action.  
Similar results in the finite-dimensional K\"ahler 
setting have been obtained in
\parencite{GeorgoulasRobbinEtAl2018,Wang2004,Wang2006} 
at critical points of the norm-squared momentum map.
However, it is quite remarkable that such a decomposition 
theorem can be obtained at an arbitrary point under a rather 
mild compatibility assumptions of the symplectic action 
with the complex structure.

Throughout this section, \( (M, \omega) \) is a connected (weak)
symplectic Fr\'echet manifold endowed with a symplectic action of a 
Fr\'echet Lie group \( G \).
We assume that the action has a Lie algebra-valued momentum map 
\( J: M \to \LieA{g} \) relative to a non-degenerate, symmetric, 
not necessarily \( \AdAction_G \)-invariant, pairing 
\( \kappa: \LieA{g} \times \LieA{g} \to \R \); that is, \( J \) 
satisfies
\begin{equation}
\label{tangent_momentum}
\omega_m (\xi \ldot m, X) + 
\kappa\bigl(\tangent_m J (X), \xi \bigr) = 0
\end{equation}
for all \( m \in M \), \( X \in \TBundle_m M \), and 
\( \xi \in \LieA{g} \).
Recall the notation $J_\xi \defeq \kappa(J(\cdot), \xi)$ 
for any $\xi \in \LieA{g}$ and hence the definition of the 
momentum map is equivalent to $\xi^\ast  = X_{J_\xi}$.
We do not assume \( J \) to be equivariant with respect to 
the coadjoint action (relative to $\kappa$).
So the non-equivariance 
one-cocycle~\eqref{group_one_cocycle} and the non-equivariance 
\( 2 \)-cocycle~\eqref{eq:normedsquared:nonequiv_first}, namely,
\begin{equation}\begin{split}
\label{eq:normedsquared:nonequiv}
\Sigma(\xi, \eta) &\defeq
\kappa\left(\tangent_e \sigma (\xi), \eta \right) =
\kappa(J(m_0), \commutator{\xi}{\eta}) +
\omega_{m_0}(\xi \ldot m_0, \eta \ldot m_0) \\
&=J_{\commutator{\xi}{ \eta}}(m_0) + \poisson{J_\xi}{J_\eta}(m_0), 
\qquad \xi, \eta \in \LieA{g}
\end{split}\end{equation}
need not vanish. Since \(M\) is connected, \(\sigma \) and
\( \Sigma \) do not depend on the reference point \( m_0 \in M \) 
used in their definition (see, \eg, 
\parencite[Theorem~4.5.25]{OrtegaRatiu2003}).

As in finite dimensions, an \emphDef{almost complex structure} 
on a Fr\'echet manifold \( M \) is a collection of linear 
maps \( j_m: \TBundle_m M \to \TBundle_m M \) satisfying 
\( j_m^2 = - \id \). Moreover, we require \( j_m \) to be smooth 
in \( m \in M \), that is, relative to every chart 
\( M \supseteq U \to V \subseteq E \) on \( M \), where \(E\) 
is the model Fr\'echet space 
of \(M\),  the induced map \( V \times E \to E	\) is 
smooth. If the base point is clear from the 
context, then we simply write \( j \) in place of \( j_m \).
An almost complex structure \( j \) on \( M \) is said to 
be \emphDef{compatible} with the symplectic structure 
\( \omega \) if \( \omega(j \,\cdot, j \,\cdot) = 
\omega(\cdot, \cdot)  \) and \( \omega(X, j X) > 0 \) for all 
non-zero \( X \in \TBundle M \).
If a Lie group \( G \) acts on \( M \), it is naturally to assume 
that \( j \) is invariant under the action. However, this is not 
the case in the example of symplectic connections studied in 
\cref{sec:symplecticConnections}. In this example, the action 
is compatible with the almost complex structure only in the 
weak sense that the stabilizer of a point \( m \) leaves \( j_m \) 
invariant. It turns out that this is enough to obtain the structure 
theorem of the complex stabilizer and the Hessian.
Let \( \tau_j: G \to \EndBundle(\TBundle M) \) be the one-cocycle
\begin{equation}
\label{group_cocyle_in_terms_of_j}
	\tau_j(g) = \tangent \Upsilon_g \circ j \circ \tangent \Upsilon_{g^{-1}} - j
\end{equation}
measuring the non-equivariance of \( j \),
\ie, it satisfies $\tau_j (gh) = \tau_j (g) + g \cdot \tau_j(h)$ 
(which is easily checked) and $\tau_j(e) = 0$, where
$G$ acts on \(\EndBundle(\TBundle M)\) by \(g \cdot  S \defeq 
\tangent \Upsilon_g \circ S \circ \tangent \Upsilon_{g^{-1}}\).
Note that \(\tau_j (g)_m: \TBundle_m M \to \TBundle_m M\)
is a linear map for every \(m \in  M\).   
Let \( \tau_j': \LieA{g} \to \EndBundle(\TBundle M) \) be the 
associated Lie algebra  one-cocycle, \ie, 
\begin{equation}
\label{LA_one_cocycle_j}
\tau_j'(\xi)(v_m) \defeq \left.\frac{d}{dt}\right|_{t=0} \tau_j(\exp t \xi)(v_m)
\in  \TBundle_m M,
\end{equation} 
for all \(v_m \in \TBundle_m M\).
The Lie algebra one-cocycle identity is 
\begin{equation}
\label{LA_one_cocycle_identity_j}
\xi \ldot \tau_j'(\eta) - \eta\ldot \tau_j'(\xi) = 
\tau_j'([\xi, \eta]) \quad \text{for all} \quad \xi, \eta \in \mathfrak{g},  
\end{equation} 
where \(\mathfrak{g}\) acts on \(\EndBundle(\TBundle M) \) by 
\((\xi\ldot S)(v_m)\defeq \xi \ldot S(v_m) - S(\xi \ldot  v_m)\)
and the action of \(\mathfrak{g}\) on \( \TBundle_m M \) is given
by the tangent lift of the original \(G\)-action on \(M\), \ie, 
\[
\xi \ldot  v_m \defeq \left.\frac{d}{dt}\right|_{t=0} 
\tangent_m \Upsilon_{\exp t \xi}v_m \in  \TBundle_m M,
 \quad \text{for all} 
\quad v_m \in  \TBundle_m M.
\]
For a point \( m \in M \), we say that \( j_m \) is 
\emphDef{\( \LieA{g}_m \)-invariant} if 
\( \tau_j'(\xi)_m: \TBundle_m M \to \TBundle_m M \) vanishes 
for every \( \xi \in \LieA{g}_m \). Since \( \xi \in \LieA{g}_m \), 
this property indeed only depends on \( j_m \) and not on the 
equivariance behavior of \( j \) at other points.

We need one more notational convention: the \emphDef{adjoint} 
\(A^*\) of a linear continuous operator \(A:\mathfrak{g} \rightarrow  
\mathfrak{g}\), if it exists, is always taken relative to 
\( \kappa \) and is uniquely determined, \ie, 
\(\kappa (A \xi , \eta) = \kappa(\xi, A^* \eta)\) for all 
\(\xi , \eta \in \mathfrak{g}\). We assume that the adjoints 
\( \CoadAction_\xi \) of the adjoint operators exist for 
all \( \xi \in \LieA{g} \) (this is automatic in finite dimensions). 
For \(\sigma \in  \mathfrak{g}\), we say that $\kappa$ is 
\( \operatorname{ad}_\sigma \)-invariant if
\( \operatorname{ad}_\sigma ^* = - \operatorname{ad}_\sigma \). 
If \(\mathfrak{k} \subset \mathfrak{g}\) is a subset, we 
say that \( \kappa \) is \emphDef{
\( \operatorname{ad}_\mathfrak{k} \)-invariant} if
\( \operatorname{ad}_\sigma ^* = - \operatorname{ad}_\sigma \) 
for all \(\sigma \in  \mathfrak{k}\).

The following operators will play an essential role:
\begin{equation}\begin{split}
	\label{L_m_L_mu}
	L_m \xi &\defeq \TBundle_m J\bigl(j \, (\xi \ldot m)\bigr),\\
	Z_m \xi &\defeq \TBundle_m J\bigl(\xi \ldot m\bigr) = \Sigma_\kappa(\xi) - 
	\CoadAction_\xi J(m),
\end{split}\end{equation}
for \( m \in M \) and \( \xi \in \LieA{g} \).
Here the map \( \Sigma_\kappa: \LieA{g} \to \LieA{g} \) is defined 
by \( \kappa\bigl(\Sigma_\kappa(\xi), \eta\bigr) = \Sigma(\xi, \eta) \).
As we will see in \cref{sec:kaehler}, in the K\"ahler example, the 
operator \( L_m \) coincides with the operator introduced by 
Lichnerowicz. For this reason, we will also refer to it as 
the \emphDef{Lichnerowicz operator}.
The following summarizes some important properties of these operators.
\begin{prop}
\label{prop:normedsquared:propertiesL}
The following holds:
\begin{thmenumerate}
\item
\label{prop:normedsquared:propertiesL:symmetry}
\( L_m \) is symmetric and \( Z_m \) is skew-symmetric with respect 
to \( \kappa \).
\item
\label{prop:normedsquared:propertiesL:derivations}
For all \( \xi, \eta, \rho \in \LieA{g} \),
\begin{equation}
\label{eq:normedsquared:propertiesL:L1derivation}
\begin{split}
\kappa\bigl(L_m \commutator{\xi}{\eta}, \rho\bigr)
&= \kappa\bigl(\CoadAction_{\eta}{L_m\xi}, \rho\bigr) + 
\kappa\bigl(\CoadAction_{\xi} L_m\eta, \rho\bigr) \\
&\quad+ (\difLie_{j \xi^*} \omega)_m \bigl(\eta \ldot m, 
\rho \ldot m\bigr) \\
&\quad+ \kappa\bigl(\tangent_m J \bigl(\tau_j'(\eta) \, 
\xi \ldot m\bigr), \rho\bigr) - 
\kappa\bigl(\eta, \tangent_m J \bigl(\tau_j'(\rho) \, 
\xi \ldot m\bigr)\bigr).
\end{split}\end{equation}
\item
\label{prop:normedsquared:propertiesL:commute}
If \( \mu \in \LieA{g}_m \), then
\begin{subequations}
	\label{eq:normedsquared:propertiesL:commute}
	\begin{align}
		L_m^{} \adAction_\mu^{} \xi &= 
		- \adAction_\mu^* L_m^{} \xi 
		- \tangent_m J \Bigl(\tau_j'\bigl(\mu
		\bigr) \, \xi \ldot m\Bigr), \\
		Z_m^{} \adAction_\mu^{} \xi 
		&= - \adAction_\mu^* Z_m^{} \xi.		
	\end{align}
\end{subequations}
In particular, if \( j_m \) is \( \LieA{g}_m \)-invariant 
and \( \kappa \) is \( \adAction_{\LieA{g}_m} \)-invariant, 
then \( \adAction_\mu \) commutes with \( L_m \) and \( Z_m \).
\qedhere
\end{thmenumerate}
\end{prop}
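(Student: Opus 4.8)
The plan is to derive all three statements from the single analytic input, the defining relation $\dif J_\xi = -\xi^\ast \contr \omega$ of the momentum map, rewritten as $\kappa\bigl(\tangent_m J(X),\xi\bigr) = -\omega_m(\xi \ldot m, X)$. Write $\xi^\ast$ for the fundamental vector field of $\xi \in \LieA{g}$, so $[\xi^\ast,\eta^\ast] = -\commutator{\xi}{\eta}^\ast$ (anti-homomorphism property of a left action), and set $g_m(\cdot,\cdot) \defeq \omega_m(\cdot,j_m \cdot)$, which is symmetric because $j_m$ is $\omega_m$-compatible. Then the momentum-map relation gives
\[
\kappa(L_m\xi,\eta) = -\omega_m\bigl(\eta \ldot m, j(\xi \ldot m)\bigr) = -g_m(\xi \ldot m,\eta \ldot m), \qquad \kappa(Z_m\xi,\eta) = \omega_m(\xi \ldot m,\eta \ldot m),
\]
from which part~(i) is immediate, using symmetry of $g_m$, skew-symmetry of $\omega_m$, and symmetry of $\kappa$; the alternative expression $Z_m\xi = \Sigma_\kappa(\xi) - \CoadAction_\xi J(m)$ is just~\eqref{eq:normedsquared:nonequiv} rearranged.

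For part~(ii) the key observation is that $\kappa(L_m\commutator{\xi}{\eta},\rho)$ cannot be computed head-on: decomposing $j\commutator{\xi}{\eta}^\ast$ into brackets of fundamental vector fields and pairing with $\dif J_\rho$ merely regenerates $L_m\commutator{\xi}{\eta}$ and collapses to a tautology. Instead I would compute the a priori unrelated quantity $(\difLie_{j\xi^\ast}\omega)_m(\eta \ldot m,\rho \ldot m)$ --- its appearance in the statement being the hint --- from the Lie-derivative identity
\[
(\difLie_{j\xi^\ast}\omega)(\eta^\ast,\rho^\ast) = (j\xi^\ast)\bigl(\omega(\eta^\ast,\rho^\ast)\bigr) - \omega\bigl([j\xi^\ast,\eta^\ast],\rho^\ast\bigr) - \omega\bigl(\eta^\ast,[j\xi^\ast,\rho^\ast]\bigr)
\]
and then \emph{solve} for $\kappa(L_m\commutator{\xi}{\eta},\rho)$. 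Three ingredients feed in: (a) $\omega(\eta^\ast,\rho^\ast) = \poisson{J_\eta}{J_\rho} = \Sigma(\eta,\rho) - J_{\commutator{\eta}{\rho}}$, so differentiating along $j\xi^\ast$ produces $-\kappa(L_m\xi,\commutator{\eta}{\rho}) = -\kappa(\CoadAction_\eta L_m\xi,\rho)$; (b) the twisted commutator $[j\xi^\ast,\eta^\ast] = \tau_j'(\eta)\,\xi^\ast - j\commutator{\xi}{\eta}^\ast$, obtained from $\difLie_{\eta^\ast}j = -\tau_j'(\eta)$ (the infinitesimal version of~\eqref{group_cocyle_in_terms_of_j}), the anti-homomorphism property, and the Leibniz rule for $\difLie_{\eta^\ast}(j\xi^\ast)$; (c) $\omega_m\bigl(j(\commutator{\xi}{\eta}\ldot m),\rho \ldot m\bigr) = \kappa(L_m\commutator{\xi}{\eta},\rho)$, from $\omega_m$-compatibility of $j$ and the symmetry of $L_m$ established in part~(i). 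Collecting terms reproduces the asserted identity once the residual terms $\omega_m(\tau_j'(\eta)\xi\ldot m,\rho\ldot m)$ and $\omega_m(\eta\ldot m,\tau_j'(\rho)\xi\ldot m)$ are rewritten as $\kappa\bigl(\tangent_m J(\tau_j'(\eta)\xi\ldot m),\rho\bigr)$ and $-\kappa\bigl(\eta,\tangent_m J(\tau_j'(\rho)\xi\ldot m)\bigr)$ via the momentum-map relation and symmetry of $\kappa$.

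For part~(iii) fix $\mu \in \LieA{g}_m$. Then the isotropy representation $v \mapsto \mu\ldot v$ on $\TBundle_m M$ is well defined and lies in $\operatorname{sp}(\TBundle_m M,\omega_m)$ (differentiate that $\tangent_m\Upsilon_{\exp t\mu}$ is a linear symplectomorphism of $\TBundle_m M$), and $\mu\ldot(\rho\ldot m) = \commutator{\mu}{\rho}\ldot m$ for every $\rho \in \LieA{g}$ (differentiate $\tangent_m\Upsilon_{\exp t\mu}(\rho^\ast(m)) = (\AdAction_{\exp t\mu}\rho)^\ast(m)$). Combining the momentum-map relation with $\omega_m$-skewness of $\mu\ldot$ gives, for all $v \in \TBundle_m M$,
\[
\kappa\bigl(\tangent_m J(\mu\ldot v),\rho\bigr) = -\omega_m(\rho\ldot m,\mu\ldot v) = \omega_m\bigl(\commutator{\mu}{\rho}\ldot m,v\bigr) = -\kappa\bigl(\adAction_\mu^\ast \tangent_m J(v),\rho\bigr),
\]
that is, $\tangent_m J(\mu\ldot v) = -\adAction_\mu^\ast \tangent_m J(v)$; with $v = \xi\ldot m$ and $\mu\ldot(\xi\ldot m) = \commutator{\mu}{\xi}\ldot m$ this is precisely $Z_m\adAction_\mu\xi = -\adAction_\mu^\ast Z_m\xi$. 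For $L_m$, one first commutes $j_m$ past the isotropy representation: for $\mu \in \LieA{g}_m$ the cocycle reduces to $\tau_j'(\mu)_m v = \mu\ldot(j_m v) - j_m(\mu\ldot v)$, so $j_m(\mu\ldot w) = \mu\ldot(j_m w) - \tau_j'(\mu)_m w$ with $w = \xi\ldot m$; applying $\tangent_m J$ and the intertwining identity yields $L_m\adAction_\mu\xi = -\adAction_\mu^\ast L_m\xi - \tangent_m J(\tau_j'(\mu)\,\xi\ldot m)$. The ``in particular'' follows at once: $\LieA{g}_m$-invariance of $j_m$ kills the $\tau_j'$-term and $\adAction_{\LieA{g}_m}$-invariance of $\kappa$ turns $\adAction_\mu^\ast$ into $-\adAction_\mu$, so both identities become $L_m\adAction_\mu = \adAction_\mu L_m$ and $Z_m\adAction_\mu = \adAction_\mu Z_m$.

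I expect part~(ii) to be the real obstacle. Since $j\xi^\ast$ is not a fundamental vector field, $\difLie_{j\xi^\ast}\omega$ does not vanish and must be carried along explicitly, and the identity only closes after correctly merging the Poisson-bracket/cocycle computation, the $\tau_j'$-twisted bracket of $j\xi^\ast$ with fundamental vector fields, and the $j$-compatibility of $\omega$; this is a computation in which sign slips propagate silently. Parts~(i) and~(iii), by contrast, are short once one isolates the isotropy representation and the intertwining property $\tangent_m J(\mu\ldot v) = -\adAction_\mu^\ast\tangent_m J(v)$.
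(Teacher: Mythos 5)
Your parts (i) and (ii) follow the paper's own route essentially verbatim: the same reduction of $\kappa(L_m\xi,\eta)$ and $\kappa(Z_m\xi,\eta)$ to $\omega_m$-expressions via the defining relation of the momentum map, and for (ii) the same strategy of expanding $(\difLie_{j\xi^*}\omega)_m(\eta\ldot m,\rho\ldot m)$ by the Leibniz rule, feeding in $\omega(\eta^*,\rho^*)=\Sigma(\eta,\rho)-J_{\commutator{\eta}{\rho}}$, the twisted bracket $\commutator{\eta^*}{jX}=j\commutator{\eta^*}{X}-\tau_j'(\eta)X$, and then solving for $\kappa(L_m\commutator{\xi}{\eta},\rho)$. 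The signs you report all check out against the paper's computation.

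Part (iii) is where you genuinely diverge, and there is a caveat you should be aware of. Your argument routes everything through the isotropy representation $v\mapsto\mu\ldot v=\tau_m\mu^*(v)$ on $\TBundle_m M$ and the intertwining identity $\tangent_m J(\mu\ldot v)=-\adAction_\mu^*\tangent_m J(v)$; this is clean and correct in finite dimensions, and it gives both identities of (iii) in one stroke. However, to define $\tau_m\mu^*$ and to prove that it is $\omega_m$-skew and satisfies $\tau_m\mu^*(\rho\ldot m)=\commutator{\mu}{\rho}\ldot m$, you must differentiate $t\mapsto\tangent_m\Upsilon_{\exp(t\mu)}$, which presupposes that $G$ has a smooth exponential map and that $\mu\ldot m=0$ forces $\exp(t\mu)\cdot m=m$ for all $t$. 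In the Fr\'echet setting of \cref{sec:stabilizer_algebra_of_the complexified_action} neither is assumed (integral curves of $\mu^*$ need not be unique), and the paper deliberately postpones exactly these hypotheses to the Hessian computation in \cref{prop:normedsquared:hessian} so that the decomposition \cref{prop:decomposition:decompositionComplexStab} remains free of them. The paper's own proof of (iii) is purely infinitesimal: the $L_m$-identity is obtained by specializing your identity (ii) to $\eta=\mu\in\LieA{g}_m$ (all terms containing $\mu\ldot m$ or $L_m\mu$ drop out), and the $Z_m$-identity follows from the Jacobi identity together with the $2$-cocycle identity for $\Sigma$ and the skew-symmetry of $Z_m$ from part (i). If you intend the statement in its full infinite-dimensional generality, you should either adopt the paper's infinitesimal argument for (iii) or add the exponential-map and stabilizer-subgroup hypotheses explicitly; as written, your (iii) proves a slightly weaker statement than the one claimed.
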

\begin{proof}
	(i): For all \( \xi, \eta \in \LieA{g} \), we have
\begin{equation}
\label{L_m_j}
\kappa(L_m \xi, \eta) 
= \kappa\left(\tangent_m J \bigl(j \, 
(\xi \ldot m) \bigr), \eta\right)
\stackrel{\eqref{tangent_momentum}}= 
\omega_m\bigl(j \, (\xi \ldot m), \eta \ldot m\bigr). 
\end{equation}
Thus,
\begin{equation}
	\kappa(L_m \xi, \eta) 
		= \omega_m\bigl(j \, (\xi \ldot m), \eta \ldot m\bigr)
		= \omega_m\bigl(j \, (\eta \ldot m), \xi \ldot m\bigr)
		= \kappa(L_m \eta, \xi),
\end{equation}
showing that \(L_m\) is \(\kappa \)-symmetric.

On the one hand, symmetry of \(\kappa \) implies
\begin{equation}
\begin{split}
	 - \kappa\bigl(\CoadAction_\xi J(m), \eta\bigr)
		= \kappa\bigl(J(m), \commutator{\eta}{\xi}\bigr)
		= \kappa\bigl(\CoadAction_\eta {J(m)}, \xi \bigr)
		= \kappa\bigl(\xi, \CoadAction_\eta {J(m)}\bigr).
\end{split}
\end{equation} 
On the other hand, \( \Sigma_\kappa \) is 
\(\kappa \)-skew-symmetric because \( \Sigma \) is 
skew-symmetric. Together these facts show that \(Z_m\) 
is \(\kappa \)-skew-symmetric.

(ii): For all \( \xi, \eta, \rho \in \LieA{g} \), the 
Leibniz rule for the Lie derivative implies
\begin{equation}
\label{equ_Lie_derivative_almost_complex}
\begin{split}
(\difLie_{j \xi^*} \omega)_m \bigl(\eta \ldot m, 
\rho \ldot m\bigr)
&= j \xi^* \bigl(\omega(\eta^*, \rho^*)\bigr) (m) \\
&\quad- \omega_m\bigl(\commutator{j \xi^*}{\eta^*}_m, 
\rho \ldot m\bigr)
- \omega_m \bigl(\eta \ldot m, 
\commutator{j \xi^*}{\rho^*}_m\bigr).
\end{split}
\end{equation}
By~\eqref{eq:normedsquared:nonequiv}, we have 
\( \omega_m \bigl(\eta \ldot m, \rho \ldot m \bigr) = 
\Sigma\bigl(\eta, \rho\bigr) - 
\kappa\bigl(J(m), \commutator{\eta}{\rho}\bigr) \).
Thus, the first term equals
\begin{equation}
\label{equ_Sec3_first_term}
j \xi^* \bigl(\omega(\eta^*, \rho^*)\bigr) (m)
= - \kappa\Bigl(\tangent_m J\bigl(j(\xi \ldot m)\bigr), 
\commutator{\eta}{\rho}\Bigr)
= - \kappa\bigl(\CoadAction_{\eta}{\tangent_m J(j(\xi \ldot m))}, 
\rho\bigr).
\end{equation}
For the other terms, we need the identity
\begin{equation}
\label{j_commutator}
\commutator{\eta^*}{j X} = j \, \commutator{\eta^*}{X} 
- \tau_j'(\eta) \, X
\end{equation}  
for any vector field \(X \in \VectorFieldSpace(M)\) and 
\( \eta \in \LieA{g} \). Indeed, by the 
definition~\eqref{group_cocyle_in_terms_of_j} of \( \tau_j \), 
we have 
\begin{equation}
\label{eq:normedsquared:commutatorActionjSec3}
\tangent_{g^{-1} \cdot m} \Upsilon_{g} \circ j_{g^{-1} \cdot m}
= j_m \circ \tangent_{g^{-1} \cdot m} \Upsilon_g + 
\tau_j(g) \circ \tangent_{g^{-1} \cdot m} \Upsilon_g
\end{equation}
for all \( g \in G \). Put here \(g=\exp(-t \eta)\) and apply
the resulting identity to the vector \(X(\exp(t \eta) \cdot m)\) 
to get
\begin{align*}
\left( \Upsilon_{\exp (t \eta)}^{\,*} (jX)\right) (m) &=
\left(\tangent\Upsilon_{\exp(-t \eta)} \circ jX \circ  
\Upsilon_{\exp (t \eta)} \right)(m) \\
&=\tangent_{\exp(t \eta) \cdot m} \Upsilon_{\exp(-t \eta)} 
\left( j_{\exp(t \eta)\cdot m}X(\exp(t \eta) \cdot m) \right)\\
&= j_m \left( \tangent_{\exp(t \eta) \cdot m} \Upsilon_{\exp(-t \eta)}
X(\exp(t \eta) \cdot m) \right) \\
&\quad + \tau_j(\exp(-t \eta)) \left( 
\tangent_{\exp(t \eta) \cdot m} \Upsilon_{\exp(-t \eta)}
X(\exp(t \eta) \cdot m) \right) \\
&=j_m \bigl(\Upsilon_{\exp(t \eta)}^* X \bigr) (m) +
\tau_j(\exp(-t \eta))_m \left(\bigl( 
\Upsilon_{\exp(t \eta)}^* X \bigr) (m)\right),   
\end{align*} 
that is,
\begin{equation}
	\Upsilon_{\exp (t \eta)}^{\,*} (jX) = j \, \Upsilon_{\exp (t \eta)}^{\,*} X 
	+ \tau_j\bigl(\exp (-t \eta)\bigr) \, \Upsilon_{\exp (t \eta)}^{\,*} X.
\end{equation}
Taking the \(t\)-derivative of this relation and recalling that 
\(j: \TBundle M \to \TBundle M\) is linear on the fibers and \(\tau(e) = 0\), 
yields~\eqref{j_commutator}. 

Summarizing, using~\eqref{equ_Lie_derivative_almost_complex},~\eqref{equ_Sec3_first_term},~\eqref{j_commutator}, the identity 
\( \commutator{\xi^*}{\eta^*} = - \commutator{\xi}{\eta}^*\) 
for all \( \xi, \eta \in \LieA{g} \),~\eqref{L_m_j},~\eqref{tangent_momentum}, 
and (i), we obtain
\begin{equation}
\begin{split}
(\difLie_{j \xi^*} \omega)_m &\bigl(\eta \ldot m, \rho \ldot m\bigr)\\
	&= - \kappa\bigl(\CoadAction_{\eta}{\tangent_m J(j(\xi \ldot m))}, \rho\bigr) \\
		&\qquad+ \omega_m\bigl(j \, \commutator{\xi}{\eta} \ldot m - 
		  \tau_j'(\eta) \, \xi \ldot m, \rho \ldot m\bigr) \\
		&\qquad+ \omega_m\bigl(\eta \ldot m, j \, \commutator{\xi}{\rho} \ldot m 
		- \tau_j'(\rho) \, \xi \ldot m\bigr)   \\
		&= - \kappa\bigl(\CoadAction_{\eta}{L_m\xi}, \rho\bigr)
			+ \kappa\bigl(L_m \commutator{\xi}{\eta}, \rho\bigr)
			- \kappa\bigl(L_m \commutator{\xi}{\rho}, \eta\bigr) \\
			&\qquad- \kappa\bigl(\tangent_m J \bigl(\tau_j'(\eta) \, 
			\xi \ldot m\bigr), \rho\bigr)
			+ \kappa\bigl(\eta, \tangent_m J \bigl(\tau_j'(\rho) \, 
			\xi \ldot m\bigr)\bigr)   \\
		&= - \kappa\bigl(\CoadAction_{\eta}{L_m\xi}, \rho\bigr)
			+ \kappa\bigl(L_m \commutator{\xi}{\eta}, \rho\bigr)
			- \kappa\bigl(\rho, \CoadAction_{\xi} L_m\eta\bigr) \\
			&\qquad- \kappa\bigl(\tangent_m J \bigl(\tau_j'(\eta) \, 
			\xi \ldot m\bigr), \rho\bigr)
			+ \kappa\bigl(\eta, \tangent_m J \bigl(\tau_j'(\rho) \, 
			\xi \ldot m\bigr)\bigr).
\end{split}
\end{equation}
This establishes~\eqref{eq:normedsquared:propertiesL:L1derivation}.

(iii): Since \( \mu \in \LieA{g}_m \), 
using~\eqref{eq:normedsquared:propertiesL:L1derivation},~\eqref{tangent_momentum}, 
and the definition of \(L_m\) 
in~\eqref{L_m_L_mu} with 
\( \eta = \mu \), yields the first identity 
in~\eqref{eq:normedsquared:propertiesL:commute}. For the second 
identity, first observe that the definition of \(Z_m\) in~\eqref{L_m_L_mu}, 
the Jacobi identity and the 2-cocycle identity 
of \( \Sigma \) imply
\begin{equation}
\kappa(Z_m \adAction_\mu \xi, \eta) = 
- \kappa(Z_m \adAction_\eta \mu, \xi) 
- \kappa(Z_m \adAction_\xi \eta, \mu)
\end{equation}
for all \( \xi, \eta, \mu \in \LieA{g} \).
If \( \mu \in \LieA{g}_m \), then the second term vanishes 
(use~\eqref{tangent_momentum}).
Hence, in this case, the skew-symmetry of \( Z_m \) yields
\begin{equation}
\kappa(Z_m \adAction_\mu \xi, \eta) = 
- \kappa(Z_m \adAction_\eta \mu, \xi) = 
- \kappa(\eta, \CoadAction_\mu Z_m \xi).
\end{equation}
Since \( \eta \) was arbitrary, we conclude \( Z_m \adAction_\mu \xi 
= - \CoadAction_\mu Z_m \xi \).
\end{proof}

\begin{remark}
If \( J \) is \( \adAction \)-equivariant, then 
\( Z_m = - \adAction_{J(m)} \) and 
\cref{prop:normedsquared:propertiesL:commute} 
implies that \( L_m \) and \( Z_m \) commute 
if \( J(m) \in \LieA{g}_m \).

However, without this additional assumption on the 
equivariance of \( J \), there is no hope that 
\( L_m \) and \( Z_m \) still commute even if \( j_m \) 
is \( \LieA{g}_m \)-invariant and \( \kappa \) is 
\( \AdAction \)-invariant. In fact, without the 
equivariance one cannot control the relation between 
the non-equivariance cocycle \( \Sigma \) and the 
almost-complex structure \( j \). For example, in the 
affine setting considered at the end of 
\cref{sec:affineSymplectic}, an affine action preserves 
a constant almost-complex structure if and only if its 
linear part does so; at the same time, the non-equivariance 
cocycle is completely controlled by the affine part of the action.
\end{remark}

\begin{remark}
Often one can cast a problem involving a non-equivariant 
momentum map into questions in an equivariant setting by 
passing to a central extension. In fact, if 
\( J: M \to \LieA{g} \) is a non-equivariant momentum map 
with 2-cocycle \( \Sigma \), then the map 
\begin{equation}
\hat{J}: M \to \hat{\LieA{g}}, \qquad 
\hat{J}(m) \defeq \bigl(J(m), -1 \bigr)
\end{equation} 
is an equivariant momentum map for the natural action 
of the centrally extended Lie algebra \( \hat{\LieA{g}} 
= \LieA{g} \oplus_{\Sigma} \R \) associated with the 
cocycle \( \Sigma \) (\ie, the bracket
on \(\hat{\LieA{g}}\) is given by \([(\xi, s),(\eta , t)]
\defeq([\xi, \eta], \Sigma(\xi , \eta))\)). However, the pairing
\begin{equation}
\hat{\kappa}: \hat{\LieA{g}} \times \hat{\LieA{g}} \to \R, 
\qquad \hat{\kappa}\bigl((\xi, s), (\eta, t)\bigr) 
= \kappa(\xi, \eta) + st
\end{equation}
is not \( \AdAction \)-invariant even if \( \kappa \) is.
For this reason, the associated operators 
\( \hat{L}_m, \hat{Z}_m \) do not commute with 
\( \hat{\adAction}_\mu \)	on \( \hat{\LieA{g}} \).
Thus, perhaps somewhat surprisingly, the strategy of 
passing to central extensions is not helpful in this 
case, as one only trades non-equivariance of the momentum 
map with non-invariance of the inner product.
\end{remark}

At this point, it is convenient to turn to the complex picture.
Thus, let \( \LieA{g}_\C = \LieA{g} \oplus \I \LieA{g} \) be the 
complexification of \( \LieA{g} \).
We will also encounter \( \R \)-linear, but not complex-linear, 
operators \( \LieA{g}_\C \to \LieA{g}_\C \), and then write 
them in matrix form as
\begin{equation}
\label{T_operator}
\Matrix{T_{11} & T_{12} \\ T_{21} & T_{22}} (\xi_1 + \I \xi_2) 
= T_{11} \xi_1 + T_{12} \xi_2 + \I T_{21} \xi_1 + \I T_{22} \xi_2
\end{equation}
for \( \mathbb{R} \)-linear operators \( T_{11}, T_{12}, T_{21}, 
T_{22}: \LieA{g} \to \LieA{g} \). In particular, the 
\(\mathbb{C}\)-linear operator \( S + \I T \) is 
written as \( \smallMatrix{S & -T \\ T & S} \) in matrix form;
the \( \mathbb{C} \)-linear operator given by multiplication 
by \( \I \) is hence given by the matrix 
\( \smallMatrix{0 & -I \\ I & 0} \).

We introduce the complex-linear operators 
\( C^\pm_m: \LieA{g}_\C \to \LieA{g}_\C \) for every \( m \in M \) by
\begin{equation}
	\label{eq:decomposition:calabiOperatorDef}
	C^\pm_m \defeq L_m \pm \I Z_m.
\end{equation}
As we shall see in \cref{sec:kaehler}, in the K\"ahler 
example, these operators are generalizations of the 
operators introduced by Calabi. For this reason, we will 
also refer to them as the \emphDef{Calabi operators}.
The Lie algebra action of \(\LieA{g}\) on \( M \) extends to 
an \( \I \)-\( j \)-complex-linear map
\begin{equation}
\label{extended_complex_action}
\Upsilon_m: \LieA{g}_\C \to \TBundle_m M, \qquad 
(\xi_1 + \I \xi_2) \mapsto (\xi_1 + \I \xi_2) \ldot m 
\defeq \xi_1 \ldot m + j \, (\xi_2 \ldot m).
\end{equation}
Note that~\eqref{extended_complex_action} is 
equivalent to defining the infinitesimal generator of an 
imaginary Lie algebra element  \( \I \xi \), \( \xi \in  
\LieA{g} \), by \( (\I \xi)^\ast \defeq j \xi^\ast \).
Since we do not assume that the \( G \)-action leaves \( j \) 
invariant nor that \( j \) is integrable (\ie, 
the Nijenhuis tensor of \(j\) vanishes), the resulting 
map \( \LieA{g}_\C \to \VectorFieldSpace(M) \) is not 
necessarily a Lie algebra homomorphism.
In particular, the kernel of \( \Upsilon_m \), denoted by 
\( (\LieA{g}_\C)_m \), is not necessarily a complex Lie subalgebra
of \( \LieA{g}_\C \).

Let \( \kappa_\C: \LieA{g}_\C \times \LieA{g}_\C \to \C \) be the extension 
of \( \kappa \) to a Hermitian inner 
product on \( \LieA{g}_\C \), 
\begin{equation}
\label{kappa_C}
\kappa_\C (\xi_1 + \I \xi_2, \eta_1 + \I \eta_2) \defeq 
\kappa (\xi_1, \eta_1) + \I \kappa(\xi_2, \eta_1) - 
\I \kappa(\xi_1, \eta_2) + \kappa(\xi_2, \eta_2),
\end{equation}
which is complex-linear in the first argument and 
complex-antilinear in the second argument.

Finally, let \( h\defeq g - \I \omega \) denote the Hermitian metric 
associated with \( \omega \) and \( j \); recall, 
\( g(\cdot  , \cdot )\defeq \omega(\cdot, j \cdot) \).

In terms of these operators, we obtain the following result.

\begin{prop}
\label{prop_C_m}
	Let \( m \in M \)  such that \( j_m \) is invariant under \( \LieA{g}_m \).
	Then the following statements hold:
	\begin{thmenumerate}
\item
	\label{i:normedsquared:calabiSelfAdjoint}
	\( C^\pm_m \) are Hermitian with respect to \( \kappa_\C \).
	\item
	\label{i:normedsquared:calabiCommuteAdjoint}
	For every \( \mu \in \LieA{g}_m \), 
	\begin{equation}
		C^\pm_m \adAction_\mu = -\adAction_\mu^* C^\pm_m \, .
	\end{equation}
	In particular, if \( \kappa \) is \( \adAction_{\mu} \)-invariant, 
	then \( \commutator{C^\pm_m}{\adAction_\mu} = 0 \).
\item 
\label{i:normedsquared:calabiCommuteEachOther}
If \( J \) is \( \adAction \)-invariant, \( J(m) \in \LieA{g}_m \), 
and \( \kappa \) is \( \adAction_{J(m)} \)-invariant, then 
\( \commutator{C^+_m}{C^-_m} = 0 \).
\item
\label{i:normedsquared:kernelsIdentified}
For all \( \zeta, \gamma \in \LieA{g}_\C \),
\begin{subequations}\label{eq:normedsquared:dualL}
	\begin{align}
		\label{eq:normedsquared:dualLPlus}
		\kappa_\C (C^+_m \zeta, \gamma) &= 
		- h(\zeta \ldot m, \gamma \ldot m),
		\\
		\label{eq:normedsquared:dualLMinus}
		\kappa_\C (C^-_m \zeta, \gamma) &= 
		- h(\bar{\gamma} \ldot m, \bar{\zeta} \ldot m).
	\end{align}
\end{subequations}
In particular, the operators \( C^\pm_m \) on \( \LieA{g}_\C \) 
are negative\footnotemark{}.\footnotetext{An operator 
\( L: \LieA{g}_\C \to \LieA{g}_\C \) is called negative 
if \( \kappa_\C(L \zeta, \zeta) \leq 0 \) for all 
\( \zeta \in \LieA{g}_\C \).}
Moreover,
\begin{equation}
\label{eq:kernel_relations_L_plus_L_minus}
	\ker C^+_m  = (\LieA{g}_\C)_m\, , \qquad 
	\ker C^+_m \intersect \ker C^-_m
	= (\LieA{g}_m)_\C \, .
\end{equation}

\item
\label{i:normedsquared:LPlusAlternativeDef}
If \( \Upsilon_m^*: \TBundle_m M \to \LieA{g}_\C \) denotes the adjoint of 
\( \Upsilon_m: \LieA{g}_\C \to \TBundle_m M \) with respect to \( \kappa_\C \) 
and the Hermitian metric \( h \), then
	\begin{equation}
		\label{eq:normedsquared:LPlusAlternativeDef}
		C^+_m = - \Upsilon_m^* \Upsilon_m \, .
		\qedhere
	\end{equation}
\item 
\label{i:normedsquared:calabiImaginary}
\( \Im C^+_m = \tangent_m J \circ \Upsilon_m \).
\end{thmenumerate}
\end{prop}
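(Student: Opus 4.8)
The plan is to prove the two dual formulas~\eqref{eq:normedsquared:dualL} first and then deduce every other item from them; the whole argument rests on the two elementary identities $\kappa(Z_m\xi,\eta)=\omega_m(\xi\ldot m,\eta\ldot m)$ and $\kappa(L_m\xi,\eta)=\omega_m(j\,(\xi\ldot m),\eta\ldot m)$ (the latter being~\eqref{L_m_j}), both immediate from~\eqref{tangent_momentum}, together with $g=\omega(\cdot,j\cdot)$ and the compatibility relations $\omega(j\cdot,j\cdot)=\omega$, $\omega(X,jX)>0$. For~\eqref{eq:normedsquared:dualLPlus} I would first check it on real arguments $\xi,\eta\in\LieA{g}$: the left-hand side equals $\kappa(L_m\xi,\eta)+\I\,\kappa(Z_m\xi,\eta)=\omega_m(j\,(\xi\ldot m),\eta\ldot m)+\I\,\omega_m(\xi\ldot m,\eta\ldot m)$, which equals $-g_m(\xi\ldot m,\eta\ldot m)+\I\,\omega_m(\xi\ldot m,\eta\ldot m)=-h(\xi\ldot m,\eta\ldot m)$ because $\omega_m(j\,(\xi\ldot m),\eta\ldot m)=-g_m(\xi\ldot m,\eta\ldot m)$. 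Then I would note that both sides of~\eqref{eq:normedsquared:dualLPlus} are complex-linear in $\zeta$ and conjugate-linear in $\gamma$ — using $\Upsilon_m(\I\zeta)=j\,\Upsilon_m\zeta$ from~\eqref{extended_complex_action} and the $j$-(anti)linearity of the two slots of $h=g-\I\omega$ — so that, $\LieA{g}$ spanning $\LieA{g}_\C$ over $\C$, the identity on real arguments propagates to all of $\LieA{g}_\C$. Formula~\eqref{eq:normedsquared:dualLMinus} then follows by complex conjugation, using $C^-_m\zeta=\overline{C^+_m\bar\zeta}$ (since $L_m,Z_m$ are complexifications of real operators) and the conjugation symmetries $\kappa_\C(\bar a,\bar b)=\overline{\kappa_\C(a,b)}$, $h(\bar a,\bar b)=\overline{h(a,b)}$. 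Negativity is immediate, $\kappa_\C(C^\pm_m\zeta,\zeta)=-h(\Upsilon_m\zeta,\Upsilon_m\zeta)=-g(\Upsilon_m\zeta,\Upsilon_m\zeta)\le 0$, and~\eqref{eq:kernel_relations_L_plus_L_minus} drops out: $C^+_m\zeta=0$ forces $g(\Upsilon_m\zeta,\Upsilon_m\zeta)=0$ and hence $\Upsilon_m\zeta=0$ because $g$ is positive-definite, while conversely $\Upsilon_m\zeta=0$ makes the right-hand side of~\eqref{eq:normedsquared:dualLPlus} vanish for all $\gamma$, so $C^+_m\zeta=0$ by non-degeneracy of $\kappa_\C$; thus $\ker C^+_m=(\LieA{g}_\C)_m$, and likewise $\ker C^-_m=\overline{(\LieA{g}_\C)_m}$ from~\eqref{eq:normedsquared:dualLMinus}; for $\zeta=\xi_1+\I\xi_2$, membership in both kernels reads $\xi_1\ldot m\pm j\,(\xi_2\ldot m)=0$, i.e.\ $\xi_1,\xi_2\in\LieA{g}_m$, i.e.\ $\zeta\in(\LieA{g}_m)_\C$.

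Three of the remaining items are corollaries of this master formula. Hermiticity of $C^\pm_m$ follows from $\kappa_\C(C^+_m\zeta,\gamma)=-h(\Upsilon_m\zeta,\Upsilon_m\gamma)=-\overline{h(\Upsilon_m\gamma,\Upsilon_m\zeta)}=\overline{\kappa_\C(C^+_m\gamma,\zeta)}$ (and symmetrically for $C^-_m$), or equivalently from~\cref{prop:normedsquared:propertiesL:symmetry} on noting that a $\kappa$-symmetric operator complexifies to a $\kappa_\C$-Hermitian one and that $\I$ times a $\kappa$-skew operator is again $\kappa_\C$-Hermitian. The identity $C^+_m=-\Upsilon_m^*\Upsilon_m$ follows because the defining property of the adjoint gives $\kappa_\C(\Upsilon_m^*\Upsilon_m\zeta,\gamma)=\overline{h(\Upsilon_m\gamma,\Upsilon_m\zeta)}=h(\Upsilon_m\zeta,\Upsilon_m\gamma)=-\kappa_\C(C^+_m\zeta,\gamma)$ for every $\gamma$. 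Finally, the last identity $\Im C^+_m=\tangent_m J\circ\Upsilon_m$ needs nothing more than unwinding definitions: for $\zeta=\xi_1+\I\xi_2$ one has $C^+_m\zeta=(L_m\xi_1-Z_m\xi_2)+\I(Z_m\xi_1+L_m\xi_2)$ by~\eqref{eq:decomposition:calabiOperatorDef}, so $\Im(C^+_m\zeta)=Z_m\xi_1+L_m\xi_2=\tangent_m J(\xi_1\ldot m)+\tangent_m J\bigl(j\,(\xi_2\ldot m)\bigr)=\tangent_m J\bigl(\Upsilon_m\zeta\bigr)$ by the definitions~\eqref{L_m_L_mu} of $L_m$, $Z_m$ and~\eqref{extended_complex_action} of $\Upsilon_m$.

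The two commutation statements are the only places where the standing assumption that $j_m$ be $\LieA{g}_m$-invariant is actually used. For $C^\pm_m\adAction_\mu=-\adAction_\mu^*C^\pm_m$ with $\mu\in\LieA{g}_m$: the $\LieA{g}_m$-invariance of $j_m$ makes $\tau_j'(\mu)_m$ vanish, so the correction term in~\eqref{eq:normedsquared:propertiesL:commute} disappears and~\cref{prop:normedsquared:propertiesL:commute} reads $L_m\adAction_\mu=-\adAction_\mu^*L_m$, $Z_m\adAction_\mu=-\adAction_\mu^*Z_m$; forming $L_m\pm\I Z_m$ and extending complex-linearly yields the claim, and if $\kappa$ is $\adAction_\mu$-invariant then $\adAction_\mu^*=-\adAction_\mu$ and the commutator vanishes. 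For $\commutator{C^+_m}{C^-_m}=0$ in the equivariant case: then $\Sigma=0$, hence $Z_m=-\adAction_{J(m)}$ by the Remark following~\cref{prop:normedsquared:propertiesL}; with $J(m)\in\LieA{g}_m$ and $\kappa$ being $\adAction_{J(m)}$-invariant, the previous point (applied with $\mu=J(m)$) shows $\adAction_{J(m)}$ commutes with $L_m$, i.e.\ $\commutator{L_m}{Z_m}=0$, so $\commutator{C^+_m}{C^-_m}=\commutator{L_m+\I Z_m}{L_m-\I Z_m}=-2\I\,\commutator{L_m}{Z_m}=0$.

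All the computations are short; the single point needing genuine care is the step, inside the proof of~\eqref{eq:normedsquared:dualLPlus}, from the verification on real arguments to the full statement on $\LieA{g}_\C$ — one must pin down the conventions for $\kappa_\C$, for $h=g-\I\omega$ relative to $j$, and for the $\I$-$j$-linear map $\Upsilon_m$ precisely enough that $\kappa_\C(C^+_m\,\cdot,\cdot)$ and $-h(\Upsilon_m\,\cdot,\Upsilon_m\,\cdot)$ are seen to be sesquilinear of the same type; once~\eqref{eq:normedsquared:dualL} is in hand, everything else is a one- or two-line deduction. The remaining, purely technical, caveat is that in infinite dimensions the adjoints $\adAction_\mu^*$ and $\Upsilon_m^*$ need not exist, but the proposition already builds their existence into its hypotheses, so this causes no difficulty.
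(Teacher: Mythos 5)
Your proposal is correct and follows essentially the same route as the paper: both establish the master formula~\eqref{eq:normedsquared:dualL} by verifying it on real arguments via \( \kappa(L_m\xi,\eta)=\omega_m(j\,(\xi\ldot m),\eta\ldot m) \) and \( \kappa(Z_m\xi,\eta)=\omega_m(\xi\ldot m,\eta\ldot m) \) and extending by sesquilinearity, deduce negativity, the kernels, \( C^+_m=-\Upsilon_m^*\Upsilon_m \), and Hermiticity from it, read off \( \Im C^+_m \) by unwinding definitions, and obtain the commutation statements from \cref{prop:normedsquared:propertiesL:commute}. The only cosmetic differences (obtaining~\eqref{eq:normedsquared:dualLMinus} by conjugation rather than direct computation, and phrasing item~\labeliref{i:normedsquared:calabiCommuteEachOther} via \( \commutator{L_m}{Z_m}=0 \) instead of \( \commutator{C^+_m}{\adAction_{J(m)}}=0 \)) are trivially equivalent to the paper's argument.
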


\begin{proof}
Using \cref{prop:normedsquared:propertiesL:symmetry}, a
direct verification shows that \( C^\pm_m \) is Hermitian.
Moreover, by \cref{prop:normedsquared:propertiesL:commute}, we have
\begin{equation}
	C^\pm_m \adAction_\mu = (L_m \pm \I Z_m) \adAction_\mu
	= - \adAction_\mu^* (L_m \pm \I Z_m)
	= - \adAction_\mu^* C^\pm_m \, .
\end{equation}
This proves point~\labeliref{i:normedsquared:calabiCommuteAdjoint}.

Concerning~\labeliref{i:normedsquared:calabiCommuteEachOther}, 
if \( J \) is \( \adAction \)-invariant, then \( Z_m = - \adAction_{J(m)} \).
Thus,
\begin{equation}
	\commutator{C^+_m}{C^-_m}
		= \commutator{C^+_m}{C^+_m - 2 \I Z_m}
		= 2 \I \commutator{C^+_m}{\adAction_{J(m)}}.
\end{equation}
Since \( \adAction_{J(m)} \) is \( \adAction \)-invariant 
and \( J(m) \in \LieA{g}_m \), point 
\labeliref{i:normedsquared:calabiCommuteAdjoint} implies 
that \( C^+_m \) commutes with \( \adAction_{J(m)} \), and 
thus with \( C^-_m \).
 
We now prove \labeliref{i:normedsquared:kernelsIdentified} and 
\labeliref{i:normedsquared:LPlusAlternativeDef}.
Since both sides of~\eqref{eq:normedsquared:dualL} are complex-linear 
in \( \zeta \) and complex-antilinear in \( \gamma \), it suffices to 
consider the case where \( \zeta, \gamma \in \LieA{g} \).
Using the Riemannian metric \( g(\cdot, \cdot) = \omega(\cdot, j \cdot) \), 
we obtain
\begin{equation}
\kappa(L_m \zeta, \gamma)
\stackrel{\eqref{L_m_j}}= \omega_m\bigl(j \, (\zeta \ldot m), \gamma \ldot m\bigr)
= - g_m\bigl(\zeta \ldot m, \gamma \ldot m\bigr). 
\end{equation}
Moreover,
\begin{equation}
\kappa(Z_m \zeta, \gamma)
\stackrel{\eqref{tangent_momentum}}=
\omega_m\bigl(\zeta \ldot m, \gamma \ldot m\bigr).
\end{equation}
These identities directly imply
\begin{equation}\begin{split}
\kappa_\C\bigl((L_m \pm \I Z_m) \zeta, \gamma\bigr)
&= - g_m\bigl(\zeta \ldot m, \gamma \ldot m\bigr) \pm \I \omega_m\bigl(\zeta 
\ldot m, \gamma \ldot m\bigr).
\end{split}\end{equation}
Rewriting this in terms of the Hermitian metric \( h = g - \I \omega \) 
yields~\eqref{eq:normedsquared:dualL}. Negativity of 
\( C^\pm_m \) and the expression for the kernels 
follows directly from~\eqref{eq:normedsquared:dualL} by considering the 
case \( \gamma = \zeta \). Moreover, expressed using the operator 
\( \Upsilon_m: \LieA{g}_\C \to \TBundle_m M \), \cref{eq:normedsquared:dualLPlus} 
reads \( \kappa_\C (C^+_m \zeta, \gamma) = 
- h(\Upsilon_m \zeta, \Upsilon_m \gamma) = 
- \kappa_\C(\Upsilon_m^* \Upsilon_m \zeta, \gamma) \), which 
verifies~\eqref{eq:normedsquared:LPlusAlternativeDef}.

Finally, for every \( \xi_1, \xi_2 \in \LieA{g} \), we have
\begin{equation}
	\Im C^+_m (\xi_1 + \I \xi_2) = Z_m \xi_1 + L_m \xi_2 
	= \tangent_m J\bigl(\xi_1 \ldot m + j (\xi_2 \ldot m)\bigr) \, ,
\end{equation}
which proves~\labeliref{i:normedsquared:calabiImaginary}.
\end{proof}

The upshot of the next theorem is that one obtains a 
root-space-like decomposition of \( (\LieA{g}_\C)_m \) 
with respect to an Abelian subalgebra of \( \LieA{g}_m \).
This is quite surprising since, under our weak assumptions, 
\( (\LieA{g}_\C)_m \) is not even a Lie subalgebra of 
\( \LieA{g}_\C \), in general.

\begin{thm}
\label{prop:decomposition:decompositionComplexStab}
Let \( (M, \omega) \) be a connected symplectic Fr\'echet 
manifold endowed with a symplectic action of a Fr\'echet 
Lie group \( G \) and let \( j \) be an almost complex 
structure on \( M \) compatible with \( \omega \). Assume 
that the action has a momentum map \( J: M \to \LieA{g} \) 
with non-equivariance cocycle \( \Sigma \) relative to a 
non-degenerate symmetric pairing \( \kappa: \LieA{g} 
\times \LieA{g} \to \R \). For \( m \in M \), let 
\( \LieA{t} \subseteq \LieA{g}_m \) be an Abelian 
subalgebra such that \( j_m \) and \( \kappa \) are 
invariant under \( \LieA{t} \). In infinite dimensions, 
additionally assume the following:
\begin{enumerate}
\item The adjoints of \( \adAction_\xi: \LieA{g} \to \LieA{g} \) 
exist for all \( \xi \in \LieA{g} \).
\item The map \( \Sigma_\kappa: \LieA{g} \to \LieA{g} \) 
defined by \( \kappa\bigl(\Sigma_\kappa(\xi), \eta\bigr) 
= \Sigma(\xi, \eta) \) exists.
\item The stabilizer \( (\LieA{g}_\C)_m \) is finite-dimensional.
\end{enumerate} 
Then the following decomposition holds:
\begin{equation}
\label{eq:decomposition:decompositionComplexStab}
(\LieA{g}_\C)_m = \aCentralizer(\LieA{t}) \oplus 
\bigoplus_{\lambda \neq 0} \LieA{k}_\lambda,
\end{equation}
where:
\begin{thmenumerate}
\item
\( \aCentralizer(\LieA{t}) \) is the centralizer of 
\( \LieA{t} \), \ie the subspace of \( (\LieA{g}_\C)_m \) 
consisting of elements that commute with all \( \mu \in \LieA{t} \);
\item \( \LieA{t}_\C \subseteq \aCentralizer(\LieA{t}) \);
\item \( \lambda \) ranges over those real-valued linear functionals 
on \( \LieA{t} \) for which \( \LieA{k}_\lambda = 
\set{\zeta \in (\LieA{g}_\C)_m \given \I \adAction_\mu \zeta 
= \lambda(\mu) \zeta \text{ for all } \mu \in \LieA{t}} \) 
is non-trivial; in particular, $\aCentralizer(\LieA{t}) = \LieA{k}_0$;
\item
\( \commutator{\LieA{k}_\lambda}{\LieA{k}_\nu} 
\intersect (\LieA{g}_\C)_m \subseteq 
\LieA{k}_{\lambda + \nu} \) for all 
\( \lambda, \nu \in \LieA{t}^* \).
\item If \( \lambda \neq \nu \), then \( \LieA{k}_\lambda \) and \( \LieA{k}_\nu \) are orthogonal with respect to \( \kappa_\C \).\qedhere
\end{thmenumerate}
\end{thm}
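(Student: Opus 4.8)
The plan is to realise the decomposition~\eqref{eq:decomposition:decompositionComplexStab} as the joint eigenspace decomposition of the commuting family of operators \( \I \adAction_\mu \), \( \mu \in \LieA{t} \), acting on the finite-dimensional complex space \( (\LieA{g}_\C)_m \), and then to read off properties (i)--(v) from the spectral theorem together with the Jacobi identity. Thus, the first step is to show that, for every \( \mu \in \LieA{t} \), the complex-linear operator \( \adAction_\mu \) preserves \( (\LieA{g}_\C)_m \) and is \( \kappa_\C \)-skew-Hermitian. Since \( \LieA{t} \subseteq \LieA{g}_m \), \cref{prop:normedsquared:propertiesL:commute} applies with such a \( \mu \), and the \( \LieA{t} \)-invariance of \( j_m \) (\ie \( \tau_j'(\mu)_m = 0 \)) annihilates the correction term \( \tangent_m J\bigl(\tau_j'(\mu)\,\xi \ldot m\bigr) \) in~\eqref{eq:normedsquared:propertiesL:commute}; combined with the \( \adAction_\mu \)-invariance of \( \kappa \), which gives \( \adAction_\mu^* = - \adAction_\mu \) (using hypothesis~(1) that \( \adAction_\mu^* \) exists), this yields \( \commutator{L_m}{\adAction_\mu} = 0 \) and \( \commutator{Z_m}{\adAction_\mu} = 0 \), hence \( \commutator{C^+_m}{\adAction_\mu} = 0 \) by~\eqref{eq:decomposition:calabiOperatorDef}. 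Because \( \ker C^+_m = (\LieA{g}_\C)_m \) (as in \cref{prop_C_m}; the underlying identity \( \kappa_\C(C^+_m \zeta, \gamma) = - h(\zeta \ldot m, \gamma \ldot m) \) from~\eqref{eq:normedsquared:dualLPlus} uses only the momentum map property and positivity of \( h \), so only \( \LieA{t} \)-invariance is needed here), the operator \( \adAction_\mu \) restricts to \( (\LieA{g}_\C)_m \); and the complexification of the \( \kappa \)-skew operator \( \adAction_\mu \) is \( \kappa_\C \)-skew-Hermitian, so \( \I \adAction_\mu \) is \( \kappa_\C \)-Hermitian.

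Next I would invoke the spectral theorem. Since \( \LieA{t} \) is Abelian, \( \commutator{\adAction_\mu}{\adAction_\nu} = \adAction_{\commutator{\mu}{\nu}} = 0 \) for \( \mu, \nu \in \LieA{t} \), so the \( \I \adAction_\mu \), \( \mu \in \LieA{t} \), form a commuting family of Hermitian operators on the finite-dimensional Hermitian inner product space \( \bigl((\LieA{g}_\C)_m, \kappa_\C\bigr) \) (finite-dimensionality being hypothesis~(3), and hypothesis~(2) guaranteeing that \( \Sigma_\kappa \), hence \( Z_m \) and \( C^+_m \), are defined). By the spectral theorem for commuting Hermitian operators, \( (\LieA{g}_\C)_m \) is the \( \kappa_\C \)-orthogonal direct sum of the joint eigenspaces \( \LieA{k}_\lambda = \set{\zeta \in (\LieA{g}_\C)_m \given \I \adAction_\mu \zeta = \lambda(\mu)\zeta \text{ for all } \mu \in \LieA{t}} \); the eigenvalues are real because the operators are Hermitian, and \( \mu \mapsto \lambda(\mu) \) is \( \R \)-linear because \( \mu \mapsto \adAction_\mu \) is. This establishes~\eqref{eq:decomposition:decompositionComplexStab} together with~(i), (iii) and~(v), with \( \aCentralizer(\LieA{t}) = \LieA{k}_0 \).

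It then remains to verify~(ii) and~(iv). For~(ii), note \( \LieA{t}_\C \subseteq (\LieA{g}_m)_\C \subseteq (\LieA{g}_\C)_m \) (the latter inclusion by~\eqref{eq:kernel_relations_L_plus_L_minus}), and \( \adAction_\nu \mu = \commutator{\nu}{\mu} = 0 \) for \( \mu, \nu \in \LieA{t} \), so \( \LieA{t} \subseteq \LieA{k}_0 = \aCentralizer(\LieA{t}) \) and hence \( \LieA{t}_\C \subseteq \aCentralizer(\LieA{t}) \) by complex linearity. For~(iv), the Jacobi identity makes each \( \adAction_\mu \), \( \mu \in \LieA{t} \), a derivation of the complex Lie bracket on \( \LieA{g}_\C \), so for \( \zeta \in \LieA{k}_\lambda \) and \( \eta \in \LieA{k}_\nu \) a short computation gives \( \I \adAction_\mu \commutator{\zeta}{\eta} = (\lambda + \nu)(\mu)\,\commutator{\zeta}{\eta} \); consequently, if \( \commutator{\zeta}{\eta} \) happens to lie in \( (\LieA{g}_\C)_m \), it lies in \( \LieA{k}_{\lambda + \nu} \) (which may be trivial, in which case the intersection is zero).

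I expect the main obstacle to be the first step: confirming that \( \adAction_\mu \) genuinely preserves the stabilizer \( (\LieA{g}_\C)_m \) — which is \emph{not} a subalgebra of \( \LieA{g}_\C \) — and keeping precise track of exactly which weak invariance is used, namely only \( \LieA{t} \)-invariance of \( j_m \) and \( \kappa \), just enough to kill the \( \tau_j' \)-correction in~\eqref{eq:normedsquared:propertiesL:commute} and to make \( \adAction_\mu \) skew, together with the existence in infinite dimensions of \( \adAction_\mu^* \) and \( \Sigma_\kappa \) (hypotheses~(1)--(2)). Once this is secured, the remainder is finite-dimensional linear algebra.
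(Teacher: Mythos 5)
Your proposal is correct and follows essentially the same route as the paper: use the commutation identities of \cref{prop:normedsquared:propertiesL:commute} (with the \( \LieA{t} \)-invariance of \( j_m \) killing the \( \tau_j' \)-term and the \( \LieA{t} \)-invariance of \( \kappa \) making \( \adAction_\mu \) skew) to show \( \adAction_\mu \) commutes with \( C^+_m \) and hence restricts to \( \ker C^+_m = (\LieA{g}_\C)_m \), then simultaneously diagonalize the commuting Hermitian operators \( \I \adAction_\mu \) on the finite-dimensional stabilizer and deduce (ii), (iv), (v) from Abelianness, the Jacobi identity, and \( \kappa \)-invariance. The only (harmless) difference is that you obtain the orthogonality (v) directly from the spectral theorem, whereas the paper verifies it by a one-line computation.
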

\begin{proof}
For every \( \mu \in \LieA{t} \), \( \kappa \) is 
\( \adAction_\mu \)-invariant by assumption.
Thus, \cref{i:normedsquared:calabiCommuteAdjoint} implies 
that \( \adAction_\mu \) commutes with \( C^+_m \) and so 
it restricts to an operator on \( \ker C^+_m = (\LieA{g}_\C)_m \), 
\cf~\eqref{eq:kernel_relations_L_plus_L_minus}.
Since \( (\LieA{g}_\C)_m \) is a finite-dimensional complex 
vector space, the Hermitian operators \( \I \adAction_\mu: 
(\LieA{g}_\C)_m \to (\LieA{g}_\C)_m \) are simultaneously 
diagonalizable (for \( \mu \in \LieA{t} \)).
In other words, \( (\LieA{g}_\C)_m \) is a direct sum 
of the subspaces 
\begin{equation}
\LieA{k}_\lambda = \set{\zeta \in (\LieA{g}_\C)_m 
\given \I \adAction_\mu \zeta = \lambda(\mu) \zeta 
\text{ for all } \mu \in \LieA{t}},
\end{equation}
where \( \lambda \) ranges over real-valued functionals 
in \( \LieA{t}^* \). Define $\aCentralizer(\LieA{t}) = 
\LieA{k}_0$ and note that it equals the set of elements
in \( (\LieA{g}_\C)_m \) that commute with $\mu \in \LieA{t}$.
Since \( \LieA{t} \) is Abelian, its complexification is 
clearly contained in \( \aCentralizer(\LieA{t}) \). 
The inclusion \( \commutator{\LieA{k}_\lambda}{\LieA{k}_\nu}
\intersect (\LieA{g}_\C)_m \subseteq \LieA{k}_{\lambda + \nu}\) 
follows from the Jacobi identity.
Finally, if \( \lambda \neq \nu \), then there exists 
\( \mu \in \LieA{t} \) such that \( (\lambda - \nu)(\mu) \neq 0 \). 
But by \( \LieA{t} \)-invariance of \( \kappa \), we have for 
every \( \zeta \in \LieA{k}_\lambda \) and \( \eta \in \LieA{k}_\nu \):
\begin{equation}
\bigl(\lambda(\mu) - \nu(\mu)\bigr) \kappa_\C\bigl(\zeta, \eta\bigr) 
= \kappa_\C\bigl( \I \adAction_\mu \zeta, \eta \bigr) 
- \kappa_\C\bigl(\zeta, \I \adAction_\mu \eta \bigr)
= 0.
\end{equation}
Thus, \( \kappa_\C(\zeta, \eta) = 0 \).
\end{proof}

\begin{remark}
	\label{rem:decomposition:decompositionComplexStab:nonInvariantKappa}
If \( \kappa \) is not invariant under \( \LieA{t} \), then by 
\cref{i:normedsquared:calabiCommuteAdjoint} the operators 
\( \adAction_\mu \) still restrict to operators on 
\( \ker C^+_m = (\LieA{g}_\C)_m \), but they are no 
longer necessarily diagonalizable. Thus, a similar result 
holds in this case by replacing \( \LieA{k}_\lambda \) 
by generalized eigenspaces.
\end{remark}

\section{Norm-squared of the momentum map}
\label{sec:momentumMapSquared}

In this section we investigate the norm-squared of the momentum map 
and calculate its Hessian.
We thereby expand upon the results in 
\parencite{GeorgoulasRobbinEtAl2018,Wang2004,Wang2006}
which studied similar problems in the finite-dimensional K\"ahler setting.
Our approach differs in the following notable points.
First, having the application 
to momentum maps on contractible spaces in mind, we give up the assumption that the 
momentum map is equivariant. 
Second, we work in the framework of 
Fr\'echet manifolds and address the 
functional analytical problems arising from the transition to the 
infinite-dimensional setting. We refer to 
\parencite{DiezThesis,DiezSingularReduction} for background information 
concerning symplectic geometry on Fr\'echet manifolds.
Finally, we generalize the 
treatment in the aforementioned works to the more general case where the 
complex structure is not integrable.
This is mainly to circumvent the technical difficulties arising from 
the infinite-dimensional setting, where the correct notion of 
integrability is no longer clear.

We continue to work in the general setting of the previous section.
Thus, \( (M, \omega) \) is a connected symplectic Fr\'echet manifold 
with a symplectic action of a Fr\'echet Lie group \( G \), and we 
assume that the action has a Lie algebra-valued momentum map 
\( J: M \to \LieA{g} \) relative to a non-degenerate, symmetric, 
not necessarily \( \AdAction_G \)-invariant, pairing 
\( \kappa: \LieA{g} \times \LieA{g} \to \R \).
 We are interested in the norm-squared 
\( \norm{J}^2_\kappa: M \to \R \):
\begin{equation}
	\norm{J}^2_\kappa(m) \defeq \kappa\bigl(J(m), J(m)\bigr).
\end{equation}
The following gives a first hint that the behavior of 
\( \norm{J}^2_\kappa \) at a critical point is tightly connected 
to the structure of the stabilizer Lie algebra at this point.
\begin{prop}
\label{prop:normedsquared:criticalPoints}
A point \( m \in M \) is a critical point of \( \norm{J}^2_\kappa \) 
if and only if \( J(m) \) is an element of the stabilizer algebra 
\( \LieA{g}_m \) of \( m \).
In particular, every point fixed by the \( G \)-action is a 
critical point of \( \norm{J}^2_\kappa \).
\end{prop}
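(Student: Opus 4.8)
The plan is to compute the differential of \( \norm{J}^2_\kappa \) explicitly at a point \( m \in M \) and then read off the critical point condition using the weak non-degeneracy of \( \omega \). Since \( \norm{J}^2_\kappa = \kappa\bigl(J(\cdot), J(\cdot)\bigr) \) with \( \kappa \) bilinear and symmetric, the chain rule gives, for every \( X \in \TBundle_m M \),
\[
\tangent_m \norm{J}^2_\kappa (X) = 2 \, \kappa\bigl(\tangent_m J(X), J(m)\bigr).
\]
Here \( J(m) \in \LieA{g} \), so it is a legitimate argument in the defining relation~\eqref{tangent_momentum} of the momentum map.

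Next I would substitute \( \xi = J(m) \) into~\eqref{tangent_momentum}, which yields \( \kappa\bigl(\tangent_m J(X), J(m)\bigr) = - \omega_m\bigl(J(m) \ldot m, X\bigr) \). Hence
\[
\tangent_m \norm{J}^2_\kappa (X) = - 2 \, \omega_m\bigl(J(m) \ldot m, \, X\bigr)
\qquad \text{for all } X \in \TBundle_m M.
\]
Therefore \( m \) is a critical point of \( \norm{J}^2_\kappa \) if and only if the linear functional \( X \mapsto \omega_m\bigl(J(m) \ldot m, X\bigr) \) vanishes identically on \( \TBundle_m M \). Since \( \omega \) is (weakly) symplectic, the map \( v \mapsto \omega_m(v, \cdot\,) \) on \( \TBundle_m M \) is injective, so the vanishing of this functional is equivalent to \( J(m) \ldot m = 0 \), that is, to \( J(m) \in \LieA{g}_m \). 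This establishes both implications simultaneously. For the final assertion, if \( m \) is fixed by the \( G \)-action then \( \LieA{g}_m = \LieA{g} \), so \( J(m) \in \LieA{g}_m \) trivially, and \( m \) is a critical point.

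The argument is short, and there is no genuine obstacle: the only point meriting attention in the infinite-dimensional setting is that \( \omega \) is merely weakly non-degenerate, but weak non-degeneracy is precisely injectivity of \( v \mapsto \omega_m(v, \cdot\,) \), which is exactly what is needed to pass from the vanishing of \( \omega_m\bigl(J(m) \ldot m, \cdot\,\bigr) \) to \( J(m) \ldot m = 0 \).
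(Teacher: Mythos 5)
Your proof is correct and follows essentially the same route as the paper: compute \( \tangent_m \norm{J}^2_\kappa(X) = 2\,\kappa\bigl(\tangent_m J(X), J(m)\bigr) \), substitute \( \xi = J(m) \) into~\eqref{tangent_momentum} to get \( -2\,\omega_m\bigl(J(m)\ldot m, X\bigr) \), and invoke weak non-degeneracy of \( \omega \). Your explicit remark that weak non-degeneracy (injectivity of \( v \mapsto \omega_m(v,\cdot) \)) is exactly what is needed here is a correct and welcome clarification of the same argument.
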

\begin{proof}
We have
\begin{equation*}
	\tangent_m \norm{J}^2_\kappa \, (X)
		= 2 \, \kappa\bigl(\tangent_m J (X), J(m)\bigr)
		\stackrel{\eqref{tangent_momentum}}
		= - 2 \, \omega_m \bigl(J(m) \ldot m, X\bigr)
\end{equation*}
for all \( m \in M \) and \( X \in \TBundle_m M \). By (weak)
non-degeneracy of \( \omega \), the point \( m \) is a critical point of 
\( \norm{J}^2_\kappa \) if and only if \( J(m) \ldot m \) vanishes, 
\ie, \( J(m) \in \LieA{g}_m \).
\end{proof}

The upshot of the next theorem is that, at a critical point 
\( m \), the operator \( \I \adAction_{J(m)} \) defines a 
grading of the stabilizer of the complexified \textquote{action}.
Note that under our weak assumptions on the equivariance of 
\( j \) under the \( G \)-action, the complex-linear extension 
\( \Upsilon_m: \LieA{g}_\C \to \TBundle_m M \) does not 
necessarily give rise to an action of \( \LieA{g}_\C \) and 
the stabilizer \( (\LieA{g}_\C)_m = \ker \Upsilon_m \) is not 
necessarily a Lie algebra.
\begin{thm}
\label{prop:normedsquared:decompositionComplexStab}
Let \( (M, \omega) \) be a connected symplectic Fr\'echet manifold endowed with 
a symplectic action of a Fr\'echet Lie group \( G \) and let \( j \) be an almost 
complex structure on \( M \) compatible with \( \omega \). Assume that the action 
has a momentum map \( J: M \to \LieA{g} \) with non-equivariance cocycle 
\( \Sigma \) relative to a non-degenerate symmetric pairing \( \kappa: \LieA{g} 
\times \LieA{g} \to \R \). Let \( m \in M \) be a critical point of 
\( \norm{J}^2_\kappa \) such that \( j_m \) and \( \kappa \) are invariant 
under \( J(m) \). In infinite dimensions, additionally assume the following:
\begin{enumerate}
	\item The adjoints of \( \adAction_\xi: \LieA{g} \to \LieA{g} \) 
	exist for all \( \xi \in \LieA{g} \).
	\item The map \( \Sigma_\kappa: \LieA{g} \to \LieA{g} \) 
	defined by \( \kappa\bigl(\Sigma_\kappa(\xi), \eta\bigr) 
	= \Sigma(\xi, \eta) \) exists.
	\item The stabilizer \( (\LieA{g}_\C)_m \) is finite-dimensional.
\end{enumerate} 
Then the following decomposition holds:
\begin{equation}
\label{eq:normedsquared:decompositionComplexStab}
(\LieA{g}_\C)_m = \LieA{c}_m \oplus \bigoplus_{\lambda \neq 0} \LieA{k}_\lambda,
\end{equation}
where:
\begin{thmenumerate}
\item
\( \LieA{c}_m \) is the subspace of \( (\LieA{g}_\C)_m \) 
consisting of all elements that commute with \( J(m) \);
\item \( \C J(m) \subseteq \LieA{c}_m \);
\item
\( \LieA{k}_\lambda \) are eigenspaces of \( \I \adAction_{J(m)} \) 
with eigenvalue \( \lambda \in \R \) {\rm(}with the convention that 
\( \LieA{k}_\lambda = \set{0} \) if \( \lambda \) is not an 
eigenvalue{\rm)}; in particular, $\LieA{c}_m = \LieA{k}_0$;
\item
\( \commutator{\LieA{k}_\lambda}{\LieA{k}_\mu} \intersect 
(\LieA{g}_\C)_m \subseteq \LieA{k}_{\lambda + \mu} \) if 
$\lambda + \mu$ is an eigenvalue 
of \( \I \adAction_{J(m)} \); otherwise 
\( \commutator{\LieA{k}_\lambda}{\LieA{k}_\mu} \intersect 
(\LieA{g}_\C)_m = 0 \).
\item If \( \lambda \neq \nu \), then \( \LieA{k}_\lambda \) 
and \( \LieA{k}_\nu \) are orthogonal with respect to \( \kappa_\C \).
\end{thmenumerate}
If \( J \) is \( \adAction \)-equivariant, then the 
decomposition~\eqref{eq:normedsquared:decompositionComplexStab} 
is refined as:
\begin{equation}
	\label{eq:normedsquared:decompositionComplexStabEquiv}
(\LieA{g}_\C)_m = (\LieA{g}_m)_\C \oplus 
\bigoplus_{\lambda < 0} \LieA{k}_\lambda.
\end{equation}
If, additionally, \( \LieA{g}_m \) is a compact subalgebra 
of \( \LieA{g} \), then \( (\LieA{g}_m)_\C \) is a maximal 
reductive complex Lie algebra contained in \( (\LieA{g}_\C)_m \).
\end{thm}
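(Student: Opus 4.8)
The plan is to obtain the decomposition~\eqref{eq:normedsquared:decompositionComplexStab} directly from \cref{prop:decomposition:decompositionComplexStab}, and then to sharpen it in the equivariant case by exploiting the explicit shape of the Calabi operators. Since \( m \) is a critical point of \( \norm{J}^2_\kappa \), \cref{prop:normedsquared:criticalPoints} gives \( J(m) \in \LieA{g}_m \), so \( \LieA{t} \defeq \R J(m) \) is an Abelian subalgebra of \( \LieA{g}_m \) under which, by hypothesis, \( j_m \) and \( \kappa \) are invariant. I would apply \cref{prop:decomposition:decompositionComplexStab} to this \( \LieA{t} \): because \( \dim \LieA{t} \leq 1 \), a real-valued linear functional on \( \LieA{t} \) is the same thing as its value at \( J(m) \), so the spaces \( \LieA{k}_\lambda \) of that theorem are exactly the \( \I\adAction_{J(m)} \)-eigenspaces inside \( (\LieA{g}_\C)_m \) and \( \aCentralizer(\LieA{t}) = \LieA{k}_0 = \LieA{c}_m \). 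This produces~\eqref{eq:normedsquared:decompositionComplexStab} together with items~(i), (iii), (iv), (v), which transfer verbatim; item~(ii) follows from \( \LieA{t}_\C = \C J(m) \subseteq \aCentralizer(\LieA{t}) \) once one notes that \( J(m) \ldot m = 0 \), which places \( \C J(m) \) inside \( (\LieA{g}_\C)_m \). The three infinite-dimensional hypotheses required by \cref{prop:decomposition:decompositionComplexStab} are precisely those assumed here.

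For the equivariant refinement, suppose \( J \) is \( \adAction \)-equivariant. Then \( \Sigma = 0 \), hence \( Z_m = -\adAction_{J(m)} \) by~\eqref{L_m_L_mu}, so \( C^\pm_m = L_m \mp \I\adAction_{J(m)} \) and \( C^+_m - C^-_m = -2\I\adAction_{J(m)} \). Using the kernel identities \( \ker C^+_m = (\LieA{g}_\C)_m \) and \( \ker C^+_m \intersect \ker C^-_m = (\LieA{g}_m)_\C \) from \cref{i:normedsquared:kernelsIdentified}, one checks \( \LieA{c}_m = (\LieA{g}_m)_\C \): if \( \zeta \in \LieA{c}_m = \LieA{k}_0 \) then \( C^-_m\zeta = C^+_m\zeta + 2\I\adAction_{J(m)}\zeta = 0 \), so \( \zeta \in (\LieA{g}_m)_\C \); conversely \( \zeta \in (\LieA{g}_m)_\C \) forces \( (C^+_m - C^-_m)\zeta = 0 \), i.e.\ \( \adAction_{J(m)}\zeta = 0 \). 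For the sign, note that on \( (\LieA{g}_\C)_m = \ker C^+_m \) one has \( C^-_m = 2\I\adAction_{J(m)} = 2L_m \); thus for a nonzero \( \zeta \in \LieA{k}_\lambda \) with \( \lambda \neq 0 \), \cref{eq:normedsquared:dualLMinus} gives \( 2\lambda\,\kappa_\C(\zeta,\zeta) = \kappa_\C(C^-_m\zeta,\zeta) = -g(\bar{\zeta}\ldot m, \bar{\zeta}\ldot m) \leq 0 \), with equality excluded since it would force \( \bar\zeta\ldot m = \zeta\ldot m = 0 \), i.e.\ \( \zeta \in (\LieA{g}_m)_\C = \LieA{k}_0 \). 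Hence \( \kappa_\C \) is anisotropic, so definite, on each \( \LieA{k}_\lambda \); combining this with the negativity of \( L_m \) (equivalently of \( C^-_m \)) pins down \( \kappa_\C|_{\LieA{k}_\lambda} \) as positive definite and \( \lambda < 0 \), which is~\eqref{eq:normedsquared:decompositionComplexStabEquiv}.

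Finally, assume in addition that \( \LieA{g}_m \) is compact, so \( (\LieA{g}_m)_\C \) is a complex reductive Lie algebra sitting inside \( (\LieA{g}_\C)_m \). To prove maximality, let \( \LieA{r} \supseteq (\LieA{g}_m)_\C \) be a reductive complex Lie subalgebra contained in \( (\LieA{g}_\C)_m \) (i.e.\ a subset closed under the bracket of \( \LieA{g}_\C \) and reductive for it). Since \( J(m) \in \LieA{g}_m \subseteq \LieA{r} \), the operator \( \adAction_{J(m)} \) restricts to a derivation of \( \LieA{r} \), and by~\eqref{eq:normedsquared:decompositionComplexStabEquiv} this restriction is diagonalizable with eigenvalues in \( \set{0} \cup \I\R_{>0} \). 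Decomposing \( \LieA{r} = \LieA{z}(\LieA{r}) \oplus \commutator{\LieA{r}}{\LieA{r}} \) with \( \commutator{\LieA{r}}{\LieA{r}} \) semisimple, \( \adAction_{J(m)} \) annihilates \( \LieA{z}(\LieA{r}) \) and acts on \( \commutator{\LieA{r}}{\LieA{r}} \) as \( \adAction_x \) for the \( \commutator{\LieA{r}}{\LieA{r}} \)-component \( x \) of \( J(m) \). In the Jordan decomposition \( x = x_s + x_n \), the eigenvalues of \( \adAction_{x_s} \) are the values \( \alpha(x_s) \) of the roots of \( \commutator{\LieA{r}}{\LieA{r}} \) (for a Cartan subalgebra through \( x_s \)) together with \( 0 \); since \( \alpha \) and \( -\alpha \) are both roots, these are symmetric under negation, and being contained in \( \set{0} \cup \I\R_{>0} \) they must all vanish, so \( x_s = 0 \). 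Then \( \adAction_{J(m)}|_{\LieA{r}} = \adAction_{x_n} \) is nilpotent; being also diagonalizable, it is zero, so \( \LieA{r} \) centralizes \( J(m) \) and \( \LieA{r} \subseteq \LieA{c}_m = (\LieA{g}_m)_\C \), forcing equality.

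The bookkeeping with \( C^\pm_m \) and the transfer of items~(i)--(v) are routine; the delicate step is the sign determination \( \lambda < 0 \) when \( \kappa \) is only non-degenerate rather than positive definite — it is here that the negative-semidefiniteness of \( L_m \) (with radical exactly \( (\LieA{g}_m)_\C \)) and the anisotropy argument must be combined carefully — and the maximality statement rests in turn on having \( \I\adAction_{J(m)} \) act with nonpositive spectrum on \( (\LieA{g}_\C)_m \).
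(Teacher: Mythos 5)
Your treatment of the main decomposition and of the equivariant refinement follows the paper's own route: apply \cref{prop:decomposition:decompositionComplexStab} to the line \( \LieA{t} = \R J(m) \subseteq \LieA{g}_m \), then use \( C^-_m = C^+_m + 2\I\adAction_{J(m)} \) together with \cref{i:normedsquared:kernelsIdentified} to identify \( \LieA{c}_m = (\LieA{g}_m)_\C \) and to control the sign of the eigenvalues. One caveat on the sign step: your \textquote{anisotropy plus negativity of \( L_m \)} argument is circular, because on \( \ker C^+_m \) one has \( L_m = \I\adAction_{J(m)} \), so the negativity of \( L_m \) is literally the inequality \( \lambda\,\kappa_\C(\zeta,\zeta) \leq 0 \) you started from and adds nothing. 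To exclude \( \lambda > 0 \) one genuinely needs \( \kappa_\C(\zeta,\zeta) > 0 \) on the relevant eigenvectors, i.e.\ positive definiteness of \( \kappa \); the paper makes the same tacit assumption (it calls \( \kappa_\C \) a Hermitian inner product, and in all applications \( \kappa \) is an \( \LFunctionSpace^2 \)-inner product), so you are no worse off, but you should state this hypothesis rather than suggest the anisotropy trick circumvents it.

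Where you genuinely diverge is the maximality of \( (\LieA{g}_m)_\C \). The paper decomposes an element \( \xi \in \LieA{a} \setminus (\LieA{g}_m)_\C \) into its \( \I\adAction_{J(m)} \)-eigencomponents, uses \( \AdAction_{\exp(tJ(m))} \) and the extraction \cref{lemma_decompositionComplexStab} to place each component in \( \LieA{a} \), and then argues that a nonzero \( \xi_\lambda \) with \( \lambda < 0 \) generates, together with \( J(m) \), a solvable non-Abelian subalgebra, \textquote{contradicting reductiveness}. You instead restrict \( \adAction_{J(m)} \) to the reductive subalgebra, split off its center, and use the \( \pm\alpha \) symmetry of the root values to show that a semisimple derivation of a reductive algebra with one-sided spectrum must vanish, whence the subalgebra centralizes \( J(m) \). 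Your version is the more robust one: a reductive Lie algebra does contain two-dimensional non-Abelian solvable subalgebras (any Borel of an \( \LieA{sl}(2,\C) \)), so the paper's final contradiction needs precisely the spectral-symmetry input you supply in order to be airtight; your route also bypasses the Vandermonde-type \cref{lemma_decompositionComplexStab} entirely, since you never need the individual eigencomponents of a general element to lie in the subalgebra.
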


\begin{proof}
By \cref{prop:normedsquared:criticalPoints}, the point \( m \) 
is a critical point of \( \norm{J}^2_\kappa \) if and only 
if \( J(m) \in \LieA{g}_m \).
Thus, the first part of the theorem concerning the 
decomposition~\eqref{eq:normedsquared:decompositionComplexStab} 
follows from \cref{prop:decomposition:decompositionComplexStab} 
applied to the \( 1 \)-dimensional subalgebra 
\( \LieA{t} \subseteq \LieA{g_m} \) spanned by \( J(m) \).

If \( J \) is \( \adAction \)-equivariant, then 
\( C^-_m = C^+_m + 2 \I \adAction_{J(m)} \).
Thus, on \( \ker C^+_m = (\LieA{g}_\C)_m \), the 
operator \( C^-_m \) acts as \( 2 \I \adAction_{J(m)} \).
In particular, the eigenspace \( \LieA{k}_\lambda \) is 
an eigenspace of \( C^-_m \) with eigenvalue \( 2 \lambda \).
By \cref{i:normedsquared:kernelsIdentified}, 
the eigenvalues of \( C^-_m \) are non-positive and the 
\( 0 \)-eigenspace is \( \ker C^+_m \intersect \ker C^-_m 
= (\LieA{g}_m)_\C \). This establishes the 
decomposition~\eqref{eq:normedsquared:decompositionComplexStabEquiv}.
Moreover, if \( \LieA{g}_m \) is a compact subalgebra of 
\( \LieA{g} \), then it is reductive and so is its 
complexification \( (\LieA{g}_m)_\C \).
For the sake of contradiction, assume that \( (\LieA{g}_m)_\C \) 
is not a maximal reductive algebra contained in \( (\LieA{g}_\C)_m \).
Then there exists a reductive complex subalgebra 
\( \LieA{a} \subseteq (\LieA{g}_\C)_m \) of \( \LieA{g}_\C \) 
that properly contains \( (\LieA{g}_m)_\C \).
Let \( \xi \in \LieA{a} \), but \(\xi \notin (\LieA{g}_m)_\C \), 
and decompose it relative 
to~\eqref{eq:normedsquared:decompositionComplexStabEquiv} as 
\( \xi = \sum_{\lambda \leq 0} \xi_\lambda \) with 
\( \xi_\lambda \in \LieA{k}_\lambda \).
By \cref{prop:normedsquared:criticalPoints}, \( J(m) \) is an 
element of \( \LieA{g}_m \subseteq \LieA{a} \) and thus 
\( \commutator{J(m)}{\xi} \) lies in \( \LieA{a} \) again.
Moreover, evaluating 
\begin{equation}
\AdAction_{\exp\bigl(t J(m)\bigr)} \xi 
= e^{\adAction_{tJ(m)}} \xi 
= \sum_{\lambda \leq 0} e^{\adAction_{tJ(m)}} \xi_\lambda
= \sum_{\lambda \leq 0} e^{- \I t \lambda} 
	\xi_\lambda \in \LieA{a}
\end{equation}
at conveniently chosen values of \( t \), 
we conclude that \( \xi_\lambda \in \LieA{a} \) for 
all \( \lambda \leq 0 \); see \cref{lemma_decompositionComplexStab}. 
Since \( \xi \) is not an element of 
\( (\LieA{g}_\C)_m = \LieA{k}_0 \), there exists 
a \( \lambda < 0 \) with \( \xi_\lambda \neq 0 \).
But then \( \I \commutator{J(m)}{\xi_\lambda} = 
\lambda \xi_\lambda \) implies that the complex 
subalgebra of \( \LieA{a} \) generated by \( J(m) \) 
and \( \xi_\lambda \) is solvable and non-Abelian, 
contracting the reductiveness of \( \LieA{a} \).
This completes the proof that \( (\LieA{g}_m)_\C \) 
is maximal reductive in \( (\LieA{g}_\C)_m \).
\end{proof}

\begin{lemma}\label{lemma_decompositionComplexStab}
In the notations of \cref{prop:normedsquared:decompositionComplexStab}
and its proof assume that 
\(\sum_{\lambda \leq 0} e^{- \I t \lambda} 
	\xi_\lambda \in \LieA{a}\)
for all \(t \in  \mathbb{R}\), where \( \xi_\lambda \in 
\mathfrak{k}_\lambda \). Then \( \xi_\lambda 
\in  \mathfrak{a} \) for all eigenvalues \( \lambda \) 
{\rm(}all are \( \leq 0\){\rm)}.
\end{lemma}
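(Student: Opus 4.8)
The plan is a routine Vandermonde argument that leverages two features of \( \mathfrak{a} \): it is a \emph{complex} linear subspace of \( \LieA{g}_\C \), and — since \( (\LieA{g}_\C)_m \) is finite-dimensional by assumption — it is automatically a closed subspace.

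First I would observe that, \( (\LieA{g}_\C)_m \) being finite-dimensional, only finitely many eigenvalues \( \lambda \) occur with \( \mathfrak{k}_\lambda \neq \set{0} \); enumerate those appearing in the given sum as pairwise distinct reals \( \lambda_1, \dots, \lambda_N \) (all \( \leq 0 \), though this plays no role). Consider the smooth curve \( v(t) \defeq \sum_{k=1}^N e^{-\I t \lambda_k} \xi_{\lambda_k} \) in \( \LieA{g}_\C \), which by hypothesis takes values in \( \mathfrak{a} \). Because \( \mathfrak{a} \) is a closed linear subspace, every derivative \( v^{(n)}(0) \) lies in \( \mathfrak{a} \) as well, and a direct computation gives \( v^{(n)}(0) = \sum_{k=1}^N (-\I \lambda_k)^n \xi_{\lambda_k} \) for \( n = 0, 1, \dots, N-1 \).

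Next I would invoke invertibility of the Vandermonde matrix \( \bigl((-\I \lambda_k)^n\bigr)_{0 \le n \le N-1,\ 1 \le k \le N} \), which holds because the nodes \( -\I \lambda_1, \dots, -\I \lambda_N \) are pairwise distinct. Inverting the linear system above then writes each \( \xi_{\lambda_k} \) as a \( \C \)-linear combination of the vectors \( v^{(n)}(0) \in \mathfrak{a} \); since \( \mathfrak{a} \) is closed under complex linear combinations, \( \xi_{\lambda_k} \in \mathfrak{a} \) for all \( k \), as claimed. As an alternative to differentiating, one may instead evaluate \( v \) at \( N \) real times \( t_0, \dots, t_{N-1} \) chosen so that the numbers \( e^{-\I t_j \lambda_k} \) are pairwise distinct in \( k \) — possible since the exceptional set of such times is countable — and invert the corresponding Vandermonde system; the derivative version is slightly cleaner as it avoids selecting sample times.

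There is no serious obstacle here: the only subtleties are using \( \mathfrak{a} \) as a \( \C \)-subspace (so that complex combinations of its elements remain inside) and as a closed subspace (so that differentiating a curve valued in \( \mathfrak{a} \) does not leave \( \mathfrak{a} \)), both of which are granted in the setting of \cref{prop:normedsquared:decompositionComplexStab} and its proof.
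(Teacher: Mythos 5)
Your proof is correct, but it takes a genuinely different route from the one in the paper. The paper isolates the top eigencomponent by dividing out $e^{-\I t\lambda_0}$, evaluating the hypothesis at the specially chosen times $t=\pi/\mu_k$, $t=\pi/\mu_k+\pi/\mu_l$, and so on, and then summing all the resulting relations in an inclusion--exclusion fashion so that the cross terms cancel and a positive integer multiple of $\xi_0$ survives; it then peels off eigenvalues one at a time by induction. You instead differentiate the curve $v(t)=\sum_k e^{-\I t\lambda_k}\xi_{\lambda_k}$ at $t=0$ and invert the Vandermonde system $v^{(n)}(0)=\sum_k(-\I\lambda_k)^n\xi_{\lambda_k}$, $n=0,\dots,N-1$, recovering all components at once. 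Your two supporting observations are exactly the ones needed and are available in the setting of \cref{prop:normedsquared:decompositionComplexStab}: $\LieA{a}$ is a complex subspace of the finite-dimensional space $(\LieA{g}_\C)_m$, hence closed, so difference quotients and their limits stay in $\LieA{a}$; and the nodes $-\I\lambda_k$ are pairwise distinct, so the Vandermonde matrix is invertible. What your approach buys is brevity and the absence of any delicate cancellation bookkeeping; what the paper's approach buys is that it uses only evaluation of the hypothesis at finitely many times and purely algebraic manipulations, with no appeal to differentiation or closedness. One small caveat on your aside: for $N$ arbitrary sample times $t_0,\dots,t_{N-1}$ the matrix $\bigl(e^{-\I t_j\lambda_k}\bigr)_{j,k}$ is not literally a Vandermonde matrix, and requiring the $e^{-\I t_j\lambda_k}$ to be pairwise distinct in $k$ does not by itself guarantee invertibility; the clean way to run that variant is to take $t_j=jt_0$ for a single $t_0$ with $e^{-\I t_0\lambda_1},\dots,e^{-\I t_0\lambda_N}$ pairwise distinct (possible outside a countable set of $t_0$), which produces an honest Vandermonde matrix in those nodes. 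Since you present the derivative version as the primary argument, this does not affect the correctness of your proof.
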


\begin{proof}
Let \(\lambda_0 \leq 0\) be the largest eigenvalue
occurring in the sum \( \sum_{\lambda \leq 0} \xi_\lambda \)
and denote the corresponding eigenvector by \(\xi_0\).
Denote the other eigenvalues occurring in this sum
by \( \lambda_1, \ldots,\lambda_N \). Divide 
\( \sum_{\lambda \leq 0} e^{- \I t \lambda}\xi_\lambda \) 
by \(e^{- \I t \lambda_0}\) and conclude that 
\[
\xi_0 + \sum_{k=1}^N e^{- \I t \mu_k}\xi_{\lambda_k} \in  \mathfrak{a},
\quad \text{for all} \quad t \in \mathbb{R}, \quad
\text{where} \quad \mu_k \defeq \lambda_k - \lambda_0. 
\]
For \(t=0\) we get
\[
\xi_0 + \sum_{k=1}^N \xi_k \in  \mathfrak{a} .
\]
Next, put \(t= \frac{\pi}{\mu_k}\) for all \(k=1, \ldots, N\)
and add the resulting relations to get
\[
N \xi_0 - \sum_{k=1}^N \xi_k + \sum_{k\neq l} 
e^{- \pi \I \frac{\mu_k}{\mu_l}}\xi_k
\in  \mathfrak{a}; 
\] 
the last summand is a double sum.
Now put \(t=\frac{\pi}{\mu_k} + \frac{\pi}{\mu_l}\) for all
\(k,l=1, \ldots , N\) and
take the double sum over all pairs \((k,l)\) satisfying
\(k<l\) to get
\[
N_2 \xi_0 - \sum_{k<l} 
\left( e^{- \pi \I  \frac{\mu_k}{\mu_l}}\xi_k +
e^{- \pi \I  \frac{\mu_l}{\mu_k}}\xi_l \right) +
\sum_{r\neq k,l;\, k<l}e^{- \pi \I \frac{\mu_r}{\mu_k}}
e^{- \pi \I \frac{\mu_r}{\mu_l}}\xi_r
\in  \mathfrak{a}; 
\]
the last summand is a triple sum and \(N_2\) denotes the number
of choices of pairs \((k,l)\) satisfying \(k<l\) as \(k,l=1,
\ldots, N\). Continue in this way by taking \(t=\frac{\pi}{\mu_k} 
+ \frac{\pi}{\mu_l}+\frac{\pi}{\mu_m}\) for \(k<l<m\) and summing all
resulting relations, etc. The general term, when taking \(t = 
\frac{\pi}{\mu_{k_1}} + \cdots + \frac{\pi}{\mu_{k_p}}\) for all
\(p\)-tuples \((k_1, \ldots, k_p)\) satisfying \(k_1< \cdots< k_p\)
and adding the resulting relations, is 
\begin{align*} 
N_p \xi_0 &- \sum_{\substack{l_1 \neq \cdots \neq l_{p-1}\\ k \neq l_q;\,
q=1, \ldots, p-1}}
e^{- ~\pi \I \frac{\mu_k}{\mu_{l_1}}}\cdots
e^{- \pi \I \frac{\mu_k}{\mu_{l_{p-1}}}}\xi_k
+
\sum_{\substack{l_1 <\cdots <l_p;\\ k \neq l_r;\,
r=1, \ldots, p}}
e^{- \pi \I \frac{\mu_k}{\mu_{l_1}}}\cdots
e^{- \pi \I \frac{\mu_k}{\mu_{l_p}}}\xi_k \in  \mathfrak{a}; 
\end{align*}
the second summand is over \(p\) indices, the third 
summand is over \(p+1\) indices, and \(N_p\) denotes the
number of choices of \(p\)-tuples \((k_1, \ldots, k_p)\) 
satisfying \(k_1< \cdots< k_p\) as \(k_1, \ldots, k_p = 1,
\ldots, N\). Adding all the displayed relations yields 
\(M\xi_0 \in  \mathfrak{a}\) for some \(M \in \mathbb{N},
\, M\geq 1\), whence \(\xi_0 \in  \mathfrak{a}\).

Hence in the hypothesis \(\sum_{\lambda \leq 0} e^{- \I t \lambda} 
\xi_\lambda \in \LieA{a}\) we can eliminate the term for
the largest \( \lambda \). Now repeat the procedure for the
resulting sum having one less term. Inductively we conclude that
each \( \xi_\lambda \in  \mathfrak{a} \).
\end{proof}

\begin{remark}
In the equivariant case,~\eqref{eq:normedsquared:decompositionComplexStabEquiv} 
implies that \( (\LieA{g}_\C)_m \) is reductive when 
\( m \) is a critical point such that \( J(m) \) lies 
in the center of \( \LieA{g} \) and such that 
\( \mathfrak{g}_m \) is a compact subalgebra of 
\( \LieA{g} \). Such a conclusion does not seem to 
hold in the non-equivariant case. Indeed, the proof of 
\cref{prop:normedsquared:decompositionComplexStab} shows 
that \( \LieA{c}_m \) is the joint kernel of \( C^+_m \) 
and \( C^-_m + 2 \I (Z_m + \adAction_{J(m)}) \).
Thus,  \( \LieA{c}_m \) is invariant under complex 
conjugation only if \( Z_m + \adAction_{J(m)} \) vanishes.
Hence, it is unlikely that \( \LieA{c}_m \) is reductive 
(it is certainly not always a complexification of a compact algebra).
In fact, the example of the Galilean group 
(\cref{ex:momentumMapSquared:galileanGroup} below) shows 
that \( (\LieA{g}_C)_m \) and \( \LieA{c}_m \) may not even 
be Lie algebras in the non-equivariant case. 
\end{remark}

\begin{example}[Heisenberg group]
We continue the example of the Heisenberg group; see 
\cref{ex:affineSymplectic:heisenbergGroup}.
Let \( (V, \omega) \) be a symplectic vector space 
and consider the affine action of \( V \) on itself 
by translation. Moreover, assume \( V \) is 
endowed with a constant complex structure \( j \) 
compatible with \( \omega \) and denote the associated 
Riemannian metric by \( g \). So, in this case, we choose,
$\kappa =g$. Therefore, the	norm-squared 
of the momentum map is \( \norm{J}^2(v) = g(v, v) \) and the
only critical point is \( 0 \). A direct calculation shows 
that \( L_m = - \id_V \) and \( \Sigma_\kappa = j \). 
The stabilizer of \( 0 \) under the complexified action is
\begin{equation}
(V_\C)_{0} = \set{\xi + \I \, j \xi \given \xi \in V}.
\end{equation}
This is clearly the kernel of \( C^+_m = - \id_V + \I j \), 
in agreement with \cref{i:normedsquared:kernelsIdentified}. 
Note that the decomposition~\eqref{eq:normedsquared:decompositionComplexStab} of this 
stabilizer collapses to the first summand since \( V \) 
is Abelian.
\end{example}

\begin{example}[Galilean group]
	\label{ex:momentumMapSquared:galileanGroup}
We continue the example of the Galilean group discussed in 
\cref{ex:affineSymplectic:galileanGroup}.
The norm-squared of the momentum map 
(\cref{eq:affineSymplectic:galileanGroup:momentumMap}) with 
respect to the pairing in 
\cref{eq:affineSymplectic:galileanGroup:pairing} is
\begin{equation}
\norm{J}^2_\kappa\bigl(\vec{q}, \vec{p}, \vec{x}\bigr) 
= \frac{1}{2} \norm{\vec{q} \times \vec{p} - s \vec{x}}^2 
+ m^2 \norm{\vec{q}}^2 + \norm{\vec{p}}^2 
+ \frac{1}{4m^2} \norm{\vec{p}}^4.
\end{equation}
The critical points are given by the solutions of the equations
\begin{equation}
\label{equ_critical_points_gal}
\begin{aligned}
0 &= \vec{p} \times (\vec{q} \times \vec{p} 
- s \vec{x}) + 2 m^2 \vec{q}, \\
0 &= (\vec{q} \times \vec{p} - s \vec{x}) \times \vec{q} 
+ 2 \vec{p} + \frac{1}{m^2} \norm{\vec{p}}^2 \vec{p}, \\
0 &= s (\vec{q} \times \vec{p} - s \vec{x}) \times \vec{x}.
\end{aligned}\end{equation}
This system  has been obtained by imposing the condition
\( J(m) \in \mathfrak{g}_m\) for a critical point \(m\) in 
\cref{prop:normedsquared:criticalPoints}.
Of course, the calculations can be done directly, but then the 
third equation becomes \(s\vec{v} \cdot  
(\vec{q} \times \vec{p} - s \vec{x}) = 0\) for all \(\vec{v}\perp 
\vec{x}\) which again implies that \(\vec{q} \times \vec{p} - s \vec{x}\)
is parallel to \(\vec{x}\).

Clearly, \(\vec{q} =\vec{0}\), \(\vec{p} = \vec{0}\), and 
\(\vec{x} \in S^2\) arbitrary, is a solution of this system.
In fact, it is the only solution with \( \vec{q} \times \vec{p} = 0 \).
This is the first set of critical points.

The second set of critical points is obtained by assuming that
 \(\vec{q} \times \vec{p} \neq \vec{0}\). The
last equation implies that \( \vec{x} \) 
is a multiple of \( \vec{q} \times \vec{p} \). 
So, let \(\vec{x} = c \vec{q} \times \vec{p}\) for some \( 0 \neq c \in  
\mathbb{R}\). The first two equations in~\eqref{equ_critical_points_gal} imply then
\(\vec{q}\cdot\vec{p} = 0\),
\(\vec{q}\cdot\vec{x} = 0\), \(\vec{p}\cdot\vec{x} = 0\), and
\begin{align*}
\|\vec{p}\|^2 = \frac{2m^2}{cs-1} >0, \qquad  
\|\vec{q}\|^2 = \frac{2cs}{(cs-1)^2} >0.  
\end{align*}
So, we must have \(c>1/s >0\). 
The factor $c$ is determined from the condition
\(\vec{x} = c \vec{q} \times \vec{p}\) which, together
with \(\vec{q} \cdot \vec{p} = 0\), gives
\[
\begin{array}{rccccl}
c&=& \displaystyle{\frac{1}{s-(4m^2s)^{1/3}}\,,} &
\qquad cs-1 &=&\displaystyle{\frac{(4m^2s)^{1/3}}{s-(4m^2s)^{1/3}}\,,} \\ \\
\|\vec{p}\|^2 &=& \displaystyle{
\frac{2m^2\left( s-(4m^2s)^{1/3} \right)}{(4m^2s)^{1/3}}\,,}  
&\qquad \|\vec{q}\|^2 &=& \displaystyle{
\frac{2s\left( s-(4m^2s)^{1/3} \right)}{(4m^2s)^{2/3}}\,.}
\end{array} 
\] 
Taking the cross product of the second
equation in~\eqref{equ_critical_points_gal}
with \( \vec{x} \) and recalling that \( \norm{\vec{x}}=1\), 
\(\vec{q} \cdot \vec{x} =0\) yields
\((2m^2 + \|\vec{p}\|^2) \vec{p} \times \vec{x} 
= m^2(4m^2s)^{1/3}\vec{q}\).
Since $2m^2 + \|\vec{p}\|^2 = 2m^2s/(4m^2s)^{1/3}$, we get
the second set of critical points:
\begin{align*}
\vec{q} = \frac{(4m^2s)^{1/3}}{2m^2} \vec{p} \times \vec{x},
\quad \norm{\vec{p}}^2 = \displaystyle{
\frac{2m^2\left( s-(4m^2s)^{1/3} \right)}{(4m^2s)^{1/3}}\,,}  
\quad \vec{p}\cdot\vec{x}=0, \quad \|\vec{x}\| = 1.
\end{align*}

The images of these critical points under the momentum 
map~\eqref{eq:affineSymplectic:galileanGroup:momentumMap} are
\begin{equation}\begin{aligned}
J(0, 0, x) &= \left(-\frac{s}{2} \vec{x}, 0, 0, 0\right),\\
J(\vec{q}, \vec{p}, \vec{x}) &= \left(- \sqrt[3]{\frac{m^2 s}{2}}\vec{x},\,
-\sqrt[3]{\frac{s}{2m}} \vec{p} \times \vec{x},\, \vec{p},\, 
- \frac{m\left( s-(4m^2s)^{1/3} \right)}{(4m^2s)^{1/3}}
\right),
\end{aligned}\end{equation}
	respectively.

On \( \R^3 \times  \R^3 \times S^2 \) we introduce the complex structure
\begin{equation}
j_{(\vec{q}, \vec{p}, \vec{x})}\bigl(\diF\vec{q}, \diF\vec{p}, 
\diF\vec{x}\bigr) = 
\bigl(\diF\vec{p}, -\diF\vec{q}, \vec{x} \times \diF\vec{x}\bigr).
\end{equation}
A direct calculation shows that this complex structure has 
non-equivariance cocycle, \cf \cref{group_cocyle_in_terms_of_j},
\begin{equation}
	(R, \vec{v}, \vec{a}, \tau) \mapsto \frac{\tau}{m} \bigl(\id,\id,0\bigr)
	\in  \EndBundle(\TBundle(\R^3 \times  \R^3 \times S^2)).
\end{equation}
Hence, our standing assumption that the complex structure 
is \( J(m) \)-invariant only holds at the critical point 
\( m=(0, 0, \vec{x}) \).
Moreover, a direct calculation shows that (see~\eqref{Lie_bracket_on_gal})
\begin{equation}
\CoadAction_{J(0,0,\vec{x})} \bigl(\vec{\alpha}, \vec{\beta}, 
\vec{\gamma}, \delta\bigr) 
= \frac{s}{2} \left(\vec{x} \times \vec{\alpha}, 
 \vec{x} \times \vec{\beta}, 
 \vec{x} \times \vec{\gamma}, 0\right) 
= -\adAction_{J(0,0,\vec{x})} \bigl(\vec{\alpha}, \vec{\beta}, 
\vec{\gamma}, \delta\bigr),
\end{equation}
which implies that \( \kappa \) is invariant under \( J(0,0,\vec{x}) \).
Hence, all assumptions of 
\cref{prop:normedsquared:decompositionComplexStab} are satisfied 
at the critical point \( (0, 0, \vec{x}) \). In this case, the 
decomposition~\eqref{eq:normedsquared:decompositionComplexStab} 
takes the form
\begin{equation}
(\LieA{gal}_\C)_{(0,0,\vec{x})} = 
\LieA{c}_{(0,0,\vec{x})} \oplus \LieA{k}_{\frac{s}{2}} \oplus 
\LieA{k}_{-\frac{s}{2}},
\end{equation}
where the stabilizer \( (\LieA{gal}_\C)_{(0,0,\vec{x})} \) is 
\( 12 \)-dimensional and consists of points of the form
\begin{equation}
\bigl(\vec{\alpha} \times \vec{x} + \I \vec{\alpha} + a\vec{x}, 
\vec{\beta}, \I m \vec{\beta}, \theta\bigr), \quad 
\vec{\alpha} \in \R^3, \;\;
\vec{x} \cdot \vec{\alpha} = 0, \;\; a \in \C,\;\; \vec{\beta} \in \R^3,
\;\; \theta \in \C.
\end{equation}
The summands are given by\footnotemark{}
\footnotetext{Let \( \norm{\vec{x}} = 1 \). Note 
that the eigenvalue equation \( i\vec{x} \times 
\varepsilon = \lambda \varepsilon \) for \( \varepsilon \in \C^3 \) 
has solutions \( \varepsilon =  \vec{e} \times \vec{x} \pm \I \vec{e}\) 
with \( \vec{e} \in \R^3 \) and \( \vec{x} \cdot \vec{e} = 0 \) 
for \( \lambda = \pm 1 \), and \( \varepsilon = e \vec{x} \) with 
\( e \in \C \) for \( \lambda = 0 \).}
\begin{equation}\begin{aligned}
\LieA{c}_{(0,0,\vec{x})} &= \set*{(a \vec{x}, b \vec{x}, 
\I m b \vec{x}, \theta) \given a, b, \theta \in \C},\\
\LieA{k}_{\frac{s}{2}} &= \set*{\bigl(\vec{\alpha} \times \vec{x} + 
\I \vec{\alpha}, \vec{\beta} \times \vec{x} + \I \vec{\beta}, 
\I m (\vec{\beta} \times \vec{x} 
+ \I \vec{\beta}), 0\bigr) \given \vec{\alpha}, \vec{\beta} \in \R^3, 
\vec{x} \cdot \vec{\alpha} = 0 = \vec{x} \cdot \vec{\beta}},\\
\LieA{k}_{-\frac{s}{2}} &= \set*{\bigl(0, \vec{\beta} \times \vec{x} 
- \I \vec{\beta}, \I m (\vec{\beta} \times \vec{x} - \I \vec{\beta}), 
0\bigr) \given \vec{\beta} \in \R^3, \vec{x} \cdot \vec{\beta} = 0}.
\end{aligned}\end{equation}
Note that (the complexification of) the real stabilizer subalgebra 
\( \LieA{gal}_{(0,0,\vec{x})} = \set*{\bigl(a\vec{x},0,0,\theta\bigr) 
\given a, \theta \in \R} \) is contained in 
\( \LieA{c}_{(0,0,\vec{x})} \), but we no longer have 
\( \bigl(\LieA{gal}_{(0,0,\vec{x})}\bigr)_\C = 
\LieA{c}_{(0,0,\vec{x})} \) as in the equivariant case.
Moreover, by~\eqref{Lie_bracket_on_gal}, we have 
\begin{equation}
\commutator*{\bigl(a_1 \vec{x}, b_1 \vec{x}, \I m b_1 \vec{x}, \theta_1\bigr)}
{\bigl(a_2 \vec{x}, b_2 \vec{x}, \I m b_2 \vec{x}, \theta_2\bigr)}
= \bigl(0, 0, b_1 \theta_2 \vec{x} - b_2 \theta_1 \vec{x}, 0\bigr).
\end{equation}
Hence \( \LieA{c}_{(0,0,\vec{x})} \) and 
\( (\LieA{gal}_\C)_{(0,0,\vec{x})} \) are not even Lie 
subalgebras of \( \LieA{gal}_\C \). 
\end{example}
	
\begin{example}[Virasoro group]
Continuing \cref{ex:affineSymplectic:virasoroGroup} of
the Virasoro group, the norm-squared of the momentum map is 
by~\eqref{eq:affineSymplectic:virasoroGroup:momentumMap}
\begin{equation}
\norm{\SectionMapAbb{J}}^2_\kappa\bigl(\equivClass{f}\bigr) = 
\int_{S^1}\Bigl(f'' - \frac{1}{2} (f')^2\Bigr)^2 \dif \varphi
\end{equation}
with respect to the pairing
\begin{equation}
\kappa\bigl(X \difp_\varphi, Y \difp_\varphi\bigr) = 
\int_{S^1} X Y \dif \varphi, \qquad X,Y \in \sFunctionSpace(S^1).
\end{equation}
The critical points \( \equivClass{f} \) are solutions of 
the equation
\begin{equation}
f''' - \frac{1}{2} (f')^3 = c,
\end{equation}
for some constant \( c \in \R \). For \( c = 0 \), 
the solutions can be given explicitly in terms of 
the Jacobi elliptic sine function. In this setting, the 
formula~\eqref{eq:normedsquared:hessianOrbit} for 
the Hessian can be checked through a straightforward 
but lengthy calculation using integration by parts\footnote{In 
fact, the calculation is so tedious that we used the open-source 
computer algebra system SageMath.}.
Moreover, the Hilbert transform yields an almost complex 
structure compatible with the symplectic form, see 
\parencite{Pressley1982,AtiyahPressley1983,BlochMorrisonRatiu2012}.
However, \cref{prop:normedsquared:decompositionComplexStab} 
does not apply since neither the pairing \( \kappa \) nor the 
Hilbert transform are equivariant with respect to the natural 
actions of the diffeomorphism group. Indeed, we have
\begin{equation}
\kappa\bigl(\adAction_{Z \difp_\varphi} (X \difp_\varphi), 
(Y \difp_\varphi)\bigr)
+ \kappa\bigl(X \difp_\varphi, \adAction_{Z \difp_\varphi} 
(Y \difp_\varphi)\bigr)
= 3 \int_{S^1} Z' XY  \dif \varphi.
\end{equation}
Thus, \( \kappa \) is only invariant under rotations, \( Z' = 0 \).
The rotation \( S^1 \)-action has the energy functional
\begin{equation}
\SectionMapAbb{J}_{S^1}\bigl(\equivClass{f}\bigr) = 
\frac{1}{2}\int_{S^1}{(f')}^2 \dif \varphi
\end{equation}
as its momentum map, and the Hessian of 
\( \SectionMapAbb{J}_{S^1} \) has been investigated 
in detail in \parencite{Pressley1982}.
\end{example}

Our next aim is to calculate the Hessian of \( \norm{J}^2_\kappa \) 
at a critical point. To avoid serious pathological behavior in 
infinite dimensions, we \emph{assume that \( G \) has a smooth 
exponential map}. Moreover, we also \emph{assume that the stabilizer 
\( G_m \) of every point \( m \in M \) is a Lie subgroup of \( G \).} 
Since \(G_m\) is a closed subset of \(G\), this means that 
\( G_m \) is a submanifold, not just injectively immersed. This is, 
for example, the case when \( G_m \) is compact (\eg, the action 
is proper) and \( G \) is locally exponential 
\parencite[Theorem~7.3.14]{GloecknerNeeb2013}. 
Under these assumptions, \( \sigma \in \LieA{g} \) is an 
element of \( \LieA{g}_m \) if and only if 
\( \exp(t \sigma) \cdot m = m \) for all \( t \in [0,1] \), 
see \parencite[Proposition~II.6.3]{Neeb2006}. 
Moreover, \( m \) is in the vanishing locus of the fundamental 
vector field \( \sigma^*: M \to \TBundle M, \sigma^*(m) 
\defeq \sigma \ldot m \) for every \( \sigma \in \LieA{g}_m \).
The \emphDef{linearization} \( \tau_m\sigma^*: \TBundle_m M \to 
\TBundle_m M \) of \( \sigma^* \) at the point \( m \) is the 
linear operator defined by
\begin{equation}
\label{equ_linearization_fundamental_vector_field}
\tau_m \sigma^* (X) \defeq 
\difFracAt{}{t}{t=0} \tangent_m 
\Upsilon_{\exp(t \sigma)} (X), \qquad X \in \TBundle_m M,
\end{equation}
where \( \Upsilon_g: M \to M \) denotes the action of \( g \in G \).
As \( \Upsilon_{\exp(t \sigma)}(m) = m \) for 
\( \sigma \in \LieA{g}_m \) and  \( t \in [0,1] \), the assignment 
\( t \mapsto \tangent_m \Upsilon_{\exp(t \sigma)} (X) \) defines a 
curve in \( \TBundle_m M \), which shows that \( \tau_m \sigma^* \) 
takes indeed values in \( \TBundle_m M \). In finite dimensions, 
the resulting representation of \( \LieA{g}_m \) 
on \( \TBundle_m M \), called the \textit{isotropy representation}, 
is Hamiltonian with momentum map
\begin{equation}
\label{j_hat_momentum}
\hat{J}: \TBundle_m M \to \LieA{g}_m, \qquad 
\kappa\bigl(\hat{J}(X), \sigma\bigr) = 
\frac{1}{2} \omega_m (X, \tau_m \sigma^* (X)).
\end{equation}
In infinite dimensions, we have to assume that the 
functional on \( \LieA{g}_m \) defined by the right-hand 
side can indeed be represented by an element 
\( \hat{J}(X) \in \LieA{g}_m \). 

Finally, recall that the Hessian 
\( \Hessian_m f: \TBundle_m M \to \R \) of 
a function \( f: M \to \R \) at a critical point 
\( m \in M \) is the intrinsically defined quadratic 
form given by
\begin{equation}
\label{Hessian_definition}
\Hessian_m f \, (X) = \difFracAt[2]{}{t}{t=0} f(\gamma(t)),
\end{equation}
where \( \gamma: (-\varepsilon, \varepsilon) \to M \), 
\(\varepsilon >0\), is a smooth map with \( \gamma(0) = m \) 
and \( \dot{\gamma}(0) = X \in \TBundle_m M\). Since \( m \) 
is a critical point of \( f \), the right-hand side does 
not depend on the chosen curve \( \gamma \).

With this preparation, we can state the first important 
result of this section.
\begin{prop}
\label{prop:normedsquared:hessian}
Let \( (M, \omega) \) be a connected symplectic Fr\'echet 
manifold endowed with a symplectic action of a Fr\'echet 
Lie group \( G \). Assume that the action has a momentum 
map \( J: M \to \LieA{g} \) relative 
to a non-degenerate symmetric pairing 
\( \kappa: \LieA{g} \times \LieA{g} \to \R \).
Let \( m \in M \) be a critical point of \( \norm{J}^2_\kappa \).
In infinite dimensions, assume the following conditions {\rm(}which
always hold in finite dimensions{\rm)}:
\begin{enumerate}
\item
\( G \) has a smooth exponential map.
\item
\( G_m \) is a Lie subgroup of \( G \).
\item
The isotropy representation of \( \LieA{g}_m \) on \( \TBundle_m M \) 
has a momentum map \( \hat{J}: \TBundle_m M \to \LieA{g}_m \). 
\end{enumerate}
Then the Hessian of \( \norm{J}^2_\kappa \) at \( m \) is given by
\begin{equation}
\label{eq:normedsquared:hessian}
\frac{1}{2} \, \Hessian_m \norm{J}^2_\kappa \, (X) = 
\norm{\TBundle_m J(X)}^2_\kappa + 
2 \, \kappa\bigl(\hat{J}(X), J(m)\bigr)
\end{equation}
and the associated symmetric bilinear form is given by
\begin{equation}
\frac{1}{2} \, \Hessian_m \norm{J}^2_\kappa \, (X, Y) 
=	\kappa\bigl(\tangent_m J(X), \tangent_m J(Y)\bigr) 
+ \omega_m\bigl(X, \tau_m \bigl(J(m)^*\bigr) \, Y\bigr).
\qedhere
\end{equation}
\end{prop}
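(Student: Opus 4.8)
The plan is to evaluate the Hessian straight from its definition~\eqref{Hessian_definition}. Fix a smooth curve $\gamma\colon(-\varepsilon,\varepsilon)\to M$ with $\gamma(0)=m$ and $\dot{\gamma}(0)=X$, and set $c(t)\defeq J(\gamma(t))$, a smooth curve in the Fr\'echet space $\LieA{g}$. Since $\kappa$ is bilinear and continuous,
\[
\left.\frac{d^2}{dt^2}\right|_{t=0}\norm{J}^2_\kappa(\gamma(t))
=\left.\frac{d^2}{dt^2}\right|_{t=0}\kappa\bigl(c(t),c(t)\bigr)
=2\,\kappa\bigl(\ddot{c}(0),J(m)\bigr)+2\,\kappa\bigl(\dot{c}(0),\dot{c}(0)\bigr).
\]
The last summand is $2\norm{\tangent_m J(X)}^2_\kappa$ because $\dot{c}(0)=\tangent_m J(X)$. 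For the first summand, by continuity and linearity of $\kappa(\cdot,J(m))$ we have $\kappa(\ddot{c}(0),J(m))=\left.\frac{d^2}{dt^2}\right|_{t=0}\kappa\bigl(J(\gamma(t)),J(m)\bigr)=\left.\frac{d^2}{dt^2}\right|_{t=0}J_{J(m)}(\gamma(t))$, where $J_{J(m)}=\kappa(J(\cdot),J(m))$. By \cref{prop:normedsquared:criticalPoints} the criticality of $m$ forces $J(m)\in\LieA{g}_m$, and then~\eqref{tangent_momentum} gives $\tangent_m J_{J(m)}=0$; hence $m$ is a critical point of $J_{J(m)}$ and the first summand equals $\Hessian_m J_{J(m)}(X)$, which is in particular independent of the chosen curve $\gamma$ (confirming that the right-hand side is a genuine Hessian).

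It remains to identify $\Hessian_m J_\xi(X)$ for $\xi\defeq J(m)\in\LieA{g}_m$. From~\eqref{tangent_momentum}, $\tangent_p J_\xi(Y)=\omega_p(Y,\xi\ldot p)$, so $\tfrac{d}{dt}J_\xi(\gamma(t))=\omega_{\gamma(t)}\bigl(\dot{\gamma}(t),\xi\ldot\gamma(t)\bigr)$. Differentiating once more and evaluating at $t=0$, the two contributions obtained by differentiating $\omega_{\gamma(t)}$ and $\dot{\gamma}(t)$ are multiplied by $\xi\ldot m=0$ and drop out, leaving $\Hessian_m J_\xi(X)=\omega_m\bigl(X,\left.\tfrac{d}{dt}\right|_{t=0}(\xi\ldot\gamma(t))\bigr)$. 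The curve $t\mapsto\xi\ldot\gamma(t)=\xi^*(\gamma(t))$ lies over $\gamma$ and passes through the zero vector of $\TBundle_m M$ at $t=0$, so, exactly as in the discussion around~\eqref{equ_linearization_fundamental_vector_field}, its derivative is a well-defined element of $\TBundle_m M$, namely the linearization $\tau_m\xi^*(X)$ (the linearization of a vector field at a zero coincides with $\left.\tfrac{d}{dt}\right|_{t=0}$ of the derivative of its flow). Hence $\Hessian_m J_{J(m)}(X)=\omega_m\bigl(X,\tau_m(J(m)^*)\,X\bigr)=2\,\kappa\bigl(\hat{J}(X),J(m)\bigr)$ by the defining formula~\eqref{j_hat_momentum} of $\hat{J}$. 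Combining with the previous paragraph yields~\eqref{eq:normedsquared:hessian}.

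For the associated bilinear form I would polarize~\eqref{eq:normedsquared:hessian}. The term $\norm{\tangent_m J(\cdot)}^2_\kappa$ polarizes to $\kappa\bigl(\tangent_m J(X),\tangent_m J(Y)\bigr)$. For the term $\omega_m\bigl(X,\tau_m(J(m)^*)X\bigr)$ note that $\Upsilon_{\exp(tJ(m))}$ fixes $m$ and preserves $\omega$, so $\tangent_m\Upsilon_{\exp(tJ(m))}\in\SpGroup(\TBundle_m M,\omega_m)$ for all $t$ and therefore $\tau_m(J(m)^*)$ is infinitesimally symplectic; consequently $\omega_m\bigl(Y,\tau_m(J(m)^*)X\bigr)=\omega_m\bigl(X,\tau_m(J(m)^*)Y\bigr)$, so no symmetrization is needed and the polarization is $\omega_m\bigl(X,\tau_m(J(m)^*)Y\bigr)$, giving the second displayed identity. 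The infinite-dimensional hypotheses enter only mildly: smoothness of $\exp$ together with $G_m$ being a Lie subgroup is what makes the flow $t\mapsto\Upsilon_{\exp(tJ(m))}$ well-behaved and $\tau_m(J(m)^*)$ a genuine continuous linear operator on $\TBundle_m M$, and the representability assumption on $\hat{J}$ is exactly what allows rewriting $\omega_m(X,\tau_m(J(m)^*)X)$ as $2\kappa(\hat{J}(X),J(m))$; everything else is identical to the finite-dimensional case. The only step that really requires care — and hence the main (mild) obstacle — is the identification of $\left.\tfrac{d}{dt}\right|_{t=0}(\xi\ldot\gamma(t))$ as an element of $\TBundle_m M$ equal to $\tau_m\xi^*(X)$, which hinges on the vanishing $\xi^*(m)=0$ ensuring chart-independence of the fibre derivative.
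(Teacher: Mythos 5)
Your proof is correct and follows essentially the same route as the paper's: both evaluate \( \Hessian_m \norm{J}^2_\kappa \) along a curve through \( m \), split the result into \( \norm{\tangent_m J(X)}^2_\kappa \) plus a term governed by the linearization \( \tau_m\bigl(J(m)^*\bigr) \), and invoke~\eqref{j_hat_momentum}; your reorganization (identifying \( \kappa(\ddot{c}(0), J(m)) \) as \( \Hessian_m J_{J(m)}(X) \) at the critical point of \( J_{J(m)} \)) is just a repackaging of the paper's product-rule identity for \( \tfrac{d}{dt}\bigl(J(\gamma(t)) \ldot \gamma(t)\bigr) \). Your explicit remark that \( \tau_m\bigl(J(m)^*\bigr) \) is infinitesimally symplectic, which makes the polarized form manifestly symmetric, is a nice touch that the paper leaves implicit.
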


\begin{proof}
Let \( X \in \TBundle_m M \) and let 
\( \gamma: (-\varepsilon, \varepsilon) \to M \) be 
a smooth map with \( \gamma(0) = m \) and \( \dot{\gamma}(0) = X \). 
Using the definition of a momentum map, we obtain
\begin{equation}\begin{split}
\frac{1}{2} \, \Hessian_m \norm{J}^2_\kappa \, (X)
&= \frac{1}{2} \, \difFracAt[2]{}{t}{t=0}\kappa\Bigl(J\bigl(\gamma(t)\bigr), 
J\bigl(\gamma(t)\bigr)\Bigr)
			\\
&= \difFracAt{}{t}{t=0}
 \kappa\Bigl(\tangent_{\gamma(t)} 
J \bigl(\dot{\gamma}(t)\bigr), J\bigl(\gamma(t)\bigr)\Bigr)
			\\
&= \difFracAt{}{t}{t=0}
 \omega_{\gamma(t)}\Bigl(\dot{\gamma}(t), 
J\bigl(\gamma(t)\bigr) \ldot \gamma(t)\Bigr).
\end{split}\end{equation}
Using \( \dif \omega = 0 \), \( J(m) \ldot m = 0 \) 
(see
\cref{prop:normedsquared:criticalPoints}), and the identity
\begin{equation}
\difFracAt{}{t}{t=0}
 J\bigl(\gamma(t)\bigr) \ldot \gamma(t)
= \bigl(\tangent_m J (X)\bigr) \ldot m + \tau_m \bigl(J(m)^*\bigr) \, X
\end{equation}
(for the second summand use the definition of the
fundamental vector field and~\eqref{equ_linearization_fundamental_vector_field})
we continue
\begin{equation}\begin{split}
\frac{1}{2} \, \Hessian_m \norm{J}^2_\kappa \, (X)
&= \omega_m \Bigl(X, \bigl(\tangent_m J (X)\bigr) \ldot m\Bigr) + 
\omega_m \Bigl(X, \tau_m \bigl(J(m)^*\bigr) \, X\Bigr)
	\\
&= \kappa\Bigl(\tangent_m J(X), \tangent_m J(X)\Bigr) + 
2 \, \kappa\Bigl(\hat{J}(X), J(m)\Bigr)
\end{split}
\end{equation}
by~\eqref{j_hat_momentum}. This
proves~\eqref{eq:normedsquared:hessian}.
The associated bilinear form is given by the polarization identity
\begin{equation}\begin{split}
\frac{1}{2} \, \Hessian_m \norm{J}^2_\kappa \, (X, Y)
&=\frac{1}{4} \Bigl(\Hessian_m \norm{J}^2_\kappa \, (X + Y) \\
&\qquad- \Hessian_m \norm{J}^2_\kappa \, (X)
- \Hessian_m \norm{J}^2_\kappa \, (Y)\Bigr)\\
&=\kappa\bigl(\tangent_m J(X), \tangent_m J(Y)\bigr) 
+ \omega_m\bigl(X, \tau_m \bigl(J(m)^*\bigr) \, Y\bigr).
	\qedhere
\end{split}\end{equation}
\end{proof}

\begin{remark}
A different, and perhaps more conceptional, proof of the 
identity~\eqref{eq:normedsquared:hessian} for the Hessian can be obtained 
by working with the Marle--Guillemin--Sternberg normal form of \( J \).
Note, however, that in infinite dimensions the construction of a normal 
form for the momentum map is a complex endeavor (see
\parencite{DiezThesis,DiezSingularReduction}) which is why we preferred the
direct proof given above.
\end{remark}

In the presence of an almost complex structure on \( M \), the Hessian 
of \( \norm{J}^2_\kappa \) in the direction of the complexified orbit 
is of particular interest.
\begin{lemma}
\label{prop:normedsquared:hessianExpanded}
In the setting of \cref{prop:normedsquared:hessian}, let \( j \) be an almost 
complex structure on \( M \) compatible with \( \omega \).
Let \(\Sigma : \LieA{g} \times  \LieA{g} 
\to  \R \) be the non-equivariance 2-cocycle of the 
momentum map \(J: M \to \LieA{g} \) given by~\eqref{eq:normedsquared:nonequiv}.
Assume that the adjoint \( \CoadAction_\xi: \LieA{g} \to \LieA{g} \) 
of \( \adAction_\xi: \LieA{g} \to \LieA{g} \) and the map \( \Sigma_\kappa: 
\LieA{g} \to \LieA{g} \) defined by \( \kappa\bigl(\Sigma_\kappa(\xi), \eta\bigr)
\defeq \Sigma(\xi, \eta) \) exist {\rm(}this is automatic in finite dimensions{\rm)}.
For every critical point \( m \in M \) of \( \norm{J}^2_\kappa \),
\begin{subequations}
	\label{eq:normedsquared:hessianExpanded}
	\begin{align}
\begin{split}
\label{eq:normedsquared:hessianOrbit}
\frac{1}{2} \, \Hessian_m \norm{J}^2_\kappa \, 
&\bigl(\xi \ldot m, \eta \ldot m\bigr)\\
&= \kappa\bigl(\Sigma_\kappa(\xi), \Sigma_\kappa(\eta)\bigr) - 
\kappa\bigl(\Sigma_\kappa(\xi), \CoadAction_\eta J(m)\bigr)
			\\
&\quad- \kappa\bigl(\CoadAction_\xi J(m), \Sigma_\kappa(\eta)\bigr)
+ \kappa\bigl(\CoadAction_\xi J(m), \CoadAction_\eta J(m)\bigr)
			\\
&\quad+ \kappa\bigl(\Sigma_\kappa \adAction_\eta J(m), \xi\bigr) - 
\kappa\bigl(\CoadAction_{\commutator*{\eta}{J(m)}} J(m),\xi \bigr)
\end{split}\\
\begin{split}
			\label{eq:normedsquared:hessianOrbitMixed}
\frac{1}{2} \, \Hessian_m \norm{J}^2_\kappa \, 
&\bigl(\xi \ldot m, j \, (\eta \ldot m)\bigr)\\
&= \kappa\bigl(\Sigma_\kappa(\xi), \
\tangent_m J(j \, (\eta \ldot m))\bigr) \\
&\quad- \kappa\bigl(\CoadAction_\xi J(m), 
\tangent_m J(j \, (\eta \ldot m))\bigr)
			\\
&\quad+\kappa\bigl(\CoadAction_{J(m)} 
\tangent_m J(j \, (\eta \ldot m)),\xi \bigr)
\end{split}\\
		\intertext{and}
\begin{split}
			\label{eq:normedsquared:hessianComplex}
\frac{1}{2} \, \Hessian_m \norm{J}^2_\kappa \, 
&\bigl(j \, (\xi \ldot m), j \, (\eta \ldot m)\bigr) \\
&= \kappa\bigl(\tangent_m J\bigl(j \, (\xi \ldot m)\bigr), 
\tangent_m J\bigl(j \, (\eta \ldot m)\bigr)\bigr)
			\\ &\qquad+  
\Sigma(\xi, \commutator*{J(m)}{\eta}) - 
\kappa(J(m), \commutator{\xi}{\commutator*{J(m)}{\eta}})
				\\ 
&\qquad+ \kappa\Bigl(\tangent_m J(j \tau_j'(J(m)) 
(\eta \ldot m)), \xi\Bigr)
\end{split}
	\end{align}
\end{subequations}
for all \( \xi \in \LieA{g} \) and \( \eta \in \LieA{g} \).
\end{lemma}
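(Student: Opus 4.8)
The plan is to feed the four combinations of the tangent vectors $\xi \ldot m$, $j\,(\xi \ldot m)$, $\eta \ldot m$, $j\,(\eta \ldot m)$ into the bilinear-form identity of \cref{prop:normedsquared:hessian},
\begin{equation}
\frac{1}{2}\,\Hessian_m \norm{J}^2_\kappa(X, Y) = \kappa(\tangent_m J(X), \tangent_m J(Y)) + \omega_m(X, \tau_m(J(m)^*)\,Y),
\end{equation}
and to simplify each of the two summands separately. For the first summand I would rewrite $\tangent_m J$ of these vectors through the operators of~\eqref{L_m_L_mu}, using $\tangent_m J(\zeta \ldot m) = Z_m \zeta = \Sigma_\kappa(\zeta) - \CoadAction_\zeta J(m)$ and $\tangent_m J(j\,(\zeta \ldot m)) = L_m \zeta$; then the bilinear expansion of $\kappa(Z_m \xi, Z_m \eta)$ produces exactly the first four terms of~\eqref{eq:normedsquared:hessianOrbit}, $\kappa(Z_m \xi, L_m \eta)$ produces the first two terms of~\eqref{eq:normedsquared:hessianOrbitMixed}, and $\kappa(L_m \xi, L_m \eta)$ is the first term of~\eqref{eq:normedsquared:hessianComplex}.

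All the remaining content sits in the second summand $\omega_m(X, \tau_m(J(m)^*)\,Y)$. Since $m$ is a critical point, $J(m) \in \LieA{g}_m$ by \cref{prop:normedsquared:criticalPoints}, so $\exp(tJ(m))$ fixes $m$ and $\tau_m(J(m)^*)$ is a genuine endomorphism of $\TBundle_m M$. The two facts I would use are: first, the standard identity $\tangent_m \Upsilon_g(\zeta \ldot m) = (\AdAction_g \zeta)\ldot(g \cdot m)$ together with $\exp(tJ(m))\cdot m = m$ gives $\tau_m(J(m)^*)(\zeta \ldot m) = \commutator{J(m)}{\zeta}\ldot m$ on fundamental vectors; second, evaluating~\eqref{eq:normedsquared:commutatorActionjSec3} at the fixed point $m$ and applying the product rule gives $\tau_m(J(m)^*)(j\,(\eta \ldot m)) = j\,(\commutator{J(m)}{\eta}\ldot m) + \tau_j'(J(m))(\eta \ldot m)$. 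For~\eqref{eq:normedsquared:hessianOrbit} this turns the $\omega$-term into $\omega_m(\xi \ldot m, \commutator{J(m)}{\eta}\ldot m)$, which the non-equivariance identity~\eqref{eq:normedsquared:nonequiv} rewrites as $\Sigma(\xi, \commutator{J(m)}{\eta}) - \kappa(J(m), \commutator{\xi}{\commutator{J(m)}{\eta}})$; reorganizing this via skew-symmetry of $\Sigma$ and the $\kappa$-adjoint conventions yields the last two terms. For~\eqref{eq:normedsquared:hessianComplex} I would additionally use $\omega(j\,\cdot, j\,\cdot) = \omega(\cdot, \cdot)$ on the $j$-part (giving the two $\Sigma$-terms again) and $\omega(j\,\cdot, \cdot) = -\omega(\cdot, j\,\cdot)$ together with~\eqref{tangent_momentum} to convert $\omega_m(j\,(\xi \ldot m), \tau_j'(J(m))(\eta \ldot m))$ into $\kappa(\tangent_m J(j\,\tau_j'(J(m))(\eta \ldot m)), \xi)$.

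The one point needing care is~\eqref{eq:normedsquared:hessianOrbitMixed}: evaluating $\omega_m(\xi \ldot m, \tau_m(J(m)^*)(j\,(\eta \ldot m)))$ directly introduces the $\tau_j'(J(m))$ term, which does not occur in the target formula. The fix is to use that $\frac{1}{2}\Hessian_m \norm{J}^2_\kappa$ is a symmetric bilinear form — equivalently, that $\tau_m(J(m)^*)$ is $\omega_m$-skew, which follows by differentiating the $G$-invariance of $\omega$ along $\exp(tJ(m))$ — and to evaluate instead $\frac{1}{2}\Hessian_m \norm{J}^2_\kappa(j\,(\eta \ldot m), \xi \ldot m)$. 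Now $\tau_m(J(m)^*)$ acts on the fundamental vector $\xi \ldot m$ as $\commutator{J(m)}{\,\cdot\,}$, and the surviving term $\omega_m(j\,(\eta \ldot m), \commutator{J(m)}{\xi}\ldot m) = \kappa(L_m \eta, \commutator{J(m)}{\xi})$ is moved across the $\kappa$-adjoint to $\kappa(\CoadAction_{J(m)}\tangent_m J(j\,(\eta \ldot m)), \xi)$, the third term of~\eqref{eq:normedsquared:hessianOrbitMixed}. Apart from making this choice of argument order, everything else is routine bookkeeping with the $\kappa$-adjoints, the symmetry of $\kappa$, and the skew-symmetry of $\Sigma$, so the main (modest) obstacle is organizing the substitutions so each of the three displays emerges in precisely the stated form.
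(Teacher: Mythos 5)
Your proposal is correct and follows essentially the same route as the paper's proof: both start from the bilinear Hessian formula of \cref{prop:normedsquared:hessian}, use \( \tau_m(J(m)^*)(\zeta \ldot m) = \commutator{J(m)}{\zeta} \ldot m \), the relation \( \tau_m(J(m)^*) \circ j_m = j_m \circ \tau_m(J(m)^*) + \tau_j'(J(m)) \) obtained by differentiating~\eqref{eq:normedsquared:commutatorActionjSec3}, the non-equivariance identity~\eqref{eq:normedsquared:nonequiv}, and, for the mixed case, the \( \omega_m \)-skewness of \( \tau_m(J(m)^*) \) to move the operator onto the fundamental vector \( \xi \ldot m \) — which is precisely the manipulation the paper performs. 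The only cosmetic difference is that you phrase the mixed-case fix via symmetry of the Hessian rather than directly transposing \( \tau_m(J(m)^*) \) across \( \omega_m \); these are equivalent.
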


\begin{proof}
For every \( \sigma \in \LieA{g}_m \) and \( \xi \in \LieA{g} \), 
using the identities \( \Upsilon_{\exp(-t \sigma)}(m) = m \),  
\( \Upsilon_g^{\,*} \xi^* = (\AdAction_{g^{-1}}\xi)^*\),
the linearity of the operation \(\LieA{g} \ni \eta \mapsto
\eta^* \in  \VectorFieldSpace(M)\), 
and~\eqref{equ_linearization_fundamental_vector_field}, we get
\begin{equation}\label{eq:normedsquared:tauOnAction}\begin{split}
\tau_m\sigma^*(\xi \ldot m) &=
\difFracAt{}{t}{t=0} \tangent_m 
\Upsilon_{\exp(t \sigma)}\bigl(\xi^*(m)\bigr) \\
&= \difFracAt{}{t}{t=0} \tangent_m 
\Upsilon_{\exp(t \sigma)}\Bigl(\xi^*
\bigl(\Upsilon_{\exp(-t \sigma)}(m)\bigr)\Bigr)  \\
&=\difFracAt{}{t}{t=0} 
\Upsilon_{\exp(-t \sigma)}^{\,*}(\xi^*)(m) \\
&=\difFracAt{}{t}{t=0} 
\left( \AdAction_{\exp(t \sigma)} \xi \right)^*(m) \\
&= \commutator{\sigma}{\xi} \ldot m \, .
\end{split}\end{equation}
Therefore, using~\eqref{eq:normedsquared:nonequiv}, we obtain
	\begin{equation}\label{eq:normedsquared:omegatau}\begin{split}
		\omega_m \bigl(\xi \ldot m, \tau_m \sigma^* (\eta \ldot m)\bigr)
			&= \omega_m \bigl(\xi \ldot m, \commutator{\sigma}{\eta} \ldot m\bigr) \\
			&= \Sigma\bigl(\xi, \commutator{\sigma}{\eta}\bigr) - 
			\kappa\bigl(J(m), \commutator{\xi}{\commutator{\sigma}{\eta}}\bigr) \\
			&= \Sigma\bigl(\adAction_\eta \sigma, \xi\bigr) - 
			\kappa\bigl(J(m), \adAction_{\commutator*{\eta}{\sigma}}\xi \bigr)\\
			&= \kappa\bigl(\Sigma_\kappa \adAction_\eta \sigma, \xi\bigr) - 
			\kappa\bigl(\CoadAction_{\commutator*{\eta}{\sigma}} J(m),\xi \bigr).
	\end{split}\end{equation}
Moreover, again by~\eqref{eq:normedsquared:nonequiv}, we find
	\begin{equation}
		\label{eq:normedsquared:nonequivinf}
		\tangent_m J(\xi \ldot m) = \Sigma_\kappa(\xi) - \CoadAction_\xi J(m) .
	\end{equation}
Inserting the expressions~\eqref{eq:normedsquared:omegatau} 
and~\eqref{eq:normedsquared:nonequivinf} into~\eqref{eq:normedsquared:hessian} 
yields~\eqref{eq:normedsquared:hessianOrbit}.

Furthermore, using~\eqref{eq:normedsquared:tauOnAction} 
and~\eqref{tangent_momentum}, we obtain
\begin{equation}\begin{split}
\omega_m\Bigl(\xi \ldot m, \tau_m \bigl(J(m)^*\bigr) j (\eta \ldot m)\Bigr)
&= - \omega_m\Bigl(\tau_m \bigl(J(m)^*\bigr) \xi \ldot m,  j (\eta \ldot m)\Bigr)\\
&= - \omega_m\Bigl(\commutator*{J(m)}{\xi} \ldot m, j (\eta \ldot m)\Bigr)\\
&= \kappa\Bigl(\tangent_m J(j (\eta \ldot m)), \commutator*{J(m)}{\xi}\Bigr)\\
&= \kappa\Bigl(\CoadAction_{J(m)}\tangent_m J(j (\eta \ldot m)), \xi\Bigr).
\end{split}\end{equation}
Using this equation and~\eqref{eq:normedsquared:nonequivinf} 
in~\eqref{eq:normedsquared:hessian} yields~\eqref{eq:normedsquared:hessianOrbitMixed}.

By the definition~\eqref{group_cocyle_in_terms_of_j} 
of \( \tau_j \), we have
	\begin{equation}
		\label{eq:normedsquared:commutatorActionj}
		\tangent_{g^{-1} \cdot m} \Upsilon_{g} \circ j_{g^{-1} \cdot m}
			= j_m \circ \tangent_{g^{-1} \cdot m} \Upsilon_g + 
			\tau_j(g) \circ \tangent_{g^{-1} \cdot m} \Upsilon_g
	\end{equation}
for all \( g \in G \). Using this identity for \( g = \exp(t \sigma) \), 
where \( \sigma \in \LieA{g}_m \), and differentiating in \( t \) 
yields \( \tau_m \sigma^* \circ j_m = j_m \circ \tau_m \sigma^* + 
\tau_j'(\sigma) \). Thus, again suppressing the dependency of \( j_m \) 
on \( m \), we get
	\begin{equation}\begin{split}
		\omega_m \bigl(j X, \tau_m \sigma^* (j Y)\bigr)
		& = \omega_m \bigl(j X, j \, \tau_m \sigma^*(Y) + 
		\tau_j'(\sigma) \, Y \bigr) \\
		& = \omega_m \bigl(X, \tau_m \sigma^*(Y)\bigr) + 
		\omega_m \bigl(j X, \tau_j'(\sigma) \, Y \bigr).
	\end{split}\end{equation}
Summarizing, setting \(X=\xi \ldot m\), \( Y = \eta \ldot m \), 
\(\sigma = J(m)\), and using~\eqref{eq:normedsquared:tauOnAction},
~\eqref{eq:normedsquared:nonequiv}, 
and~\eqref{tangent_momentum}, we find
\begin{equation}\begin{split}
&\omega_m(j \, (\xi \ldot m), \tau_m (J(m)^*) j \, (\eta \ldot m))\\
&\; = \omega_m \bigl(\xi \ldot m, \tau_m J(m)^*(\eta \ldot m)\bigr) + 
\omega_m \bigl(j \xi \ldot m, \tau_j'(J(m)) \, \eta \ldot m \bigr)\\
&\; = \omega_m \bigl(\xi \ldot m, \commutator*{J(m)}{\eta} \ldot m\bigr) - 
\omega_m \bigl(\xi \ldot m, j \tau_j'(J(m)) \, \eta \ldot m \bigr)\\
&\; = \Sigma(\xi, \commutator*{J(m)}{\eta}) - \kappa(J(m), 
\commutator{\xi}{\commutator*{J(m)}{\eta}})
+ \kappa\Bigl(\tangent_m J(j \tau_j'(J(m)) (\eta \ldot m)), \xi\Bigr).
\end{split}\end{equation}
Inserting this expression into~\eqref{eq:normedsquared:hessian} completes the 
proof of~\eqref{eq:normedsquared:hessianComplex}.
\end{proof}

The Hessian~\eqref{eq:normedsquared:hessianExpanded} can be 
expressed in terms of the Lichnerowicz operator introduced 
in~\eqref{L_m_L_mu} and in terms of the Calabi 
operator~\eqref{eq:decomposition:calabiOperatorDef}.
For this purpose, recall from~\eqref{T_operator} the 
representation of real-linear operators on \( \LieA{g}_\C \) 
in terms of matrices.

\begin{prop}
	\label{prop:normedsquared:propertiesL:Hessian}
	\label{prop:normedsquared:hessianSummary}
In the setting of \cref{prop:normedsquared:hessianExpanded}, 
at a critical point \( m \) of \( \norm{J}^2_\kappa  \), we have
\begin{subequations}
\label{eq:normedsquared:hessianSummary:usingL}
\begin{align}
\begin{split}
\frac{1}{2} \, \Hessian_m \norm{J}^2_\kappa \, 
\bigl(\xi \ldot m, \eta \ldot m\bigr)
&= - \kappa\bigl(\xi, Z_m(\adAction_{J(m)} + Z_m) \, \eta\bigr).
\end{split}\\
\begin{split}
\frac{1}{2} \, \Hessian_m \norm{J}^2_\kappa \, 
\bigl(\xi \ldot m, j \, (\eta \ldot m)\bigr)
&= \kappa\bigl(\xi, (\adAction_{J(m)}^* - Z_m) L_m\eta\bigr)
\end{split}
\intertext{and}
\begin{split}
\frac{1}{2} \, \Hessian_m \norm{J}^2_\kappa \,
\bigl(j \, (\xi \ldot m), j \, (\eta \ldot m)\bigr)
&= \kappa\bigl(\xi, L_m^2 - Z_m \adAction_{J(m)} \eta\bigr)\\
&\qquad + \kappa\Bigl(\tangent_m J\bigl(j \, 
\tau_j'(J(m)) \, \eta \ldot m\bigr), \xi\Bigr)
\end{split}
\end{align}
\end{subequations}
Equivalently, the Hessian satisfies
\begin{equation}
\label{eq:normedsquared:hessianComplexOrbitFinal}
\frac{1}{2} \, \Hessian_m \norm{J}^2_\kappa (\zeta \ldot m, 
\gamma \ldot m) = \Re \kappa_\C(\zeta, C^+_m R \gamma),
\end{equation}
where \( \zeta, \gamma \in \LieA{g}_\C \) and 
\begin{equation}
\label{R_operator}
R = \Matrix{0 & -\adAction_{J(m)} \\ \adAction_{J(m)} + Z_m & L_m}
= C^-_m \Matrix{0 & 0 \\ 0 & 1} + \I \, \bigl(\adAction_{J(m)} + Z_m\bigr).
\end{equation}
If \( J \) is equivariant with respect to the \( \adAction \)-action, 
then \( R = C^-_m \smallMatrix{0 & 0 \\ 0 & 1} \).
\end{prop}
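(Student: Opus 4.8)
The plan is to read off the three identities of~\eqref{eq:normedsquared:hessianSummary:usingL} as purely algebraic rewritings of~\eqref{eq:normedsquared:hessianOrbit},~\eqref{eq:normedsquared:hessianOrbitMixed} and~\eqref{eq:normedsquared:hessianComplex}, and then to reassemble them into the single complex bilinear identity~\eqref{eq:normedsquared:hessianComplexOrbitFinal}. Write $A \defeq \adAction_{J(m)}$ and recall from~\eqref{L_m_L_mu} and~\eqref{L_m_j} that $Z_m \xi = \Sigma_\kappa(\xi) - \CoadAction_\xi J(m) = \tangent_m J(\xi \ldot m)$ and $L_m \eta = \tangent_m J\bigl(j(\eta \ldot m)\bigr)$; by \cref{prop:normedsquared:propertiesL:symmetry} the operator $L_m$ is $\kappa$-symmetric and $Z_m$ is $\kappa$-skew-symmetric, and $\CoadAction_{J(m)} = \adAction_{J(m)}^* = A^*$ by definition of the $\kappa$-adjoint. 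For the first identity I would observe that the first four terms of~\eqref{eq:normedsquared:hessianOrbit} are precisely $\kappa(Z_m\xi, Z_m\eta) = -\kappa(\xi, Z_m^2\eta)$, while, since $\adAction_\eta J(m) = \commutator{\eta}{J(m)} = -A\eta$, the last two terms equal $\kappa\bigl(Z_m(-A\eta), \xi\bigr) = -\kappa(\xi, Z_m A\eta)$; summing yields $-\kappa\bigl(\xi, Z_m(A + Z_m)\eta\bigr)$. For the second, substituting $\tangent_m J(j(\eta \ldot m)) = L_m\eta$ into~\eqref{eq:normedsquared:hessianOrbitMixed} and invoking skew-symmetry of $Z_m$ and $\CoadAction_{J(m)} = A^*$ gives $\kappa\bigl(\xi, (A^* - Z_m)L_m\eta\bigr)$. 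For the third, the first term of~\eqref{eq:normedsquared:hessianComplex} becomes $\kappa(\xi, L_m^2\eta)$ by symmetry of $L_m$, while the combination $\Sigma(\xi, \commutator{J(m)}{\eta}) - \kappa\bigl(J(m), \commutator{\xi}{\commutator{J(m)}{\eta}}\bigr)$ reduces, through the definition~\eqref{eq:normedsquared:nonequiv} of $\Sigma$, to $\omega_m\bigl(\xi \ldot m, (A\eta)\ldot m\bigr) = -\kappa(\xi, Z_m A\eta)$ by~\eqref{tangent_momentum}; the $\tau_j'$-term is carried along unchanged.

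For the compact form~\eqref{eq:normedsquared:hessianComplexOrbitFinal} I would take $\zeta = \zeta_1 + \I\zeta_2$ and $\gamma = \gamma_1 + \I\gamma_2$ in $\LieA{g}_\C$, expand $\zeta \ldot m = \zeta_1 \ldot m + j(\zeta_2 \ldot m)$ and likewise for $\gamma$ using~\eqref{extended_complex_action}, and expand the real symmetric form $\Hessian_m\norm{J}^2_\kappa$ on these vectors into its four blocks, substituting the three identities just established. In parallel I would form the matrices of $C^+_m = L_m + \I Z_m$ and of $R$ according to the convention~\eqref{T_operator}, compute the product $C^+_m R$, and expand $\Re\kappa_\C(\zeta, C^+_m R\gamma) = \kappa(\zeta_1, \Re(C^+_m R\gamma)) + \kappa(\zeta_2, \Im(C^+_m R\gamma))$ into the corresponding four blocks. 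Three of them then match using only skew-symmetry of $Z_m$, symmetry of $L_m$, and the adjoint relation $\kappa(A^*\alpha, \beta) = \kappa(\beta, A\alpha)$. The remaining block, which mixes $L_m$ and $A$, is where I expect the real work to lie: matching it requires replacing $L_m A$ by $-A^* L_m$, which is exactly \cref{prop:normedsquared:propertiesL:commute} applied with $\mu = J(m) \in \LieA{g}_m$, namely $L_m A = -A^* L_m - \tangent_m J\bigl(\tau_j'(J(m))(\cdot)\ldot m\bigr)$. Under invariance of $j_m$ under $\LieA{g}_m$ — that is, $\tau_j'(J(m)) = 0$, which is also the hypothesis under which the Calabi operators $C^\pm_m$ are well behaved — this correction vanishes and the $\tau_j'$-term of the third identity drops out simultaneously, so the two expansions agree and~\eqref{eq:normedsquared:hessianComplexOrbitFinal} follows.

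It remains to reconcile the two forms of $R$ in~\eqref{R_operator} and to handle the equivariant case. For the first, writing $C^-_m = L_m - \I Z_m$ in matrix form as $\smallMatrix{L_m & Z_m \\ -Z_m & L_m}$ gives $C^-_m\smallMatrix{0 & 0 \\ 0 & 1} = \smallMatrix{0 & Z_m \\ 0 & L_m}$, while $\I(\adAction_{J(m)} + Z_m)$ has matrix $\smallMatrix{0 & -(A + Z_m) \\ A + Z_m & 0}$, and their sum is $\smallMatrix{0 & -A \\ A + Z_m & L_m} = R$. For the second, if $J$ is $\adAction$-equivariant then $Z_m = -\adAction_{J(m)}$ by the remark following \cref{prop:normedsquared:propertiesL}, so $\adAction_{J(m)} + Z_m = 0$ and $R$ reduces to $C^-_m\smallMatrix{0 & 0 \\ 0 & 1}$. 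Beyond the $L_m A$ versus $-A^* L_m$ block and its $\tau_j'$-bookkeeping, everything here is routine sign-chasing through the $\kappa$- and $\kappa_\C$-adjoint conventions and the $2 \times 2$ matrix calculus for $\R$-linear operators on $\LieA{g}_\C$.
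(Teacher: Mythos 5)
Your proposal is correct and follows essentially the same route as the paper's proof: derive the three block identities from \eqref{eq:normedsquared:hessianExpanded} using the definitions and ($\kappa$-)symmetry properties of \( L_m \) and \( Z_m \), read off the matrix \( T \) of the Hessian in the convention \eqref{T_operator}, and match it with \( C^+_m R \) via \eqref{eq:normedsquared:propertiesL:commute}. You are in fact slightly more careful than the paper in pinpointing that the passage from the block form to \eqref{eq:normedsquared:hessianComplexOrbitFinal} needs \( \tau_j'(J(m)) = 0 \) (so that the correction term in \( L_m \adAction_{J(m)} = -\adAction_{J(m)}^* L_m \) and the \( \tau_j' \)-term in the third identity both vanish) — a hypothesis the paper's proof uses tacitly.
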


\begin{proof} 
For~\eqref{eq:normedsquared:hessianSummary:usingL}, use the 
definition~\eqref{L_m_L_mu} of the operators \( L_m, Z_m \) and their 
symmetry properties in~\eqref{eq:normedsquared:hessianExpanded}.

Let \( T: \LieA{g}_\C \to \LieA{g}_\C \) be the \( \R \)-linear 
operator defined by
\begin{equation}
\frac{1}{2} \, \Hessian_m \norm{J}^2_\kappa (\zeta \ldot m, \gamma \ldot m) 
= \Re \kappa_\C(\zeta, T \gamma),
\end{equation}
where \( \zeta, \gamma \in \LieA{g}_\C \).
Writing \( T = \smallMatrix{T_{11} & T_{12} \\ T_{21} & T_{22}} \) 
and using~\eqref{T_operator},~\eqref{kappa_C}, this identity is equivalent to
\begin{equation}\begin{split}
\frac{1}{2} \, \Hessian_m \norm{J}^2_\kappa&(\xi_1 \ldot m + j \, 
(\xi_2 \ldot m), \eta_1 \ldot m + j \, (\eta_2 \ldot m)) \\ 
\qquad &= \kappa(\xi_1, T_{11} \eta_1 + T_{12} \eta_2) + 
\kappa(\xi_2, T_{21} \eta_1 + T_{22} \eta_2).
\end{split}\end{equation}
Comparing with \cref{eq:normedsquared:hessianSummary:usingL}, we read 
off that \( T_{11} = - Z_m (\adAction_{J(m)} + Z_m) \), \( T_{12} 
= (\adAction_{J(m)}^* - Z_m)L_m \), 
\( T_{21} = L_m(\adAction_{J(m)} + Z_m) \), and \( T_{22} = 
L_m^2 - Z_m \adAction_{J(m)} \).
By straightforward matrix multiplication and 
using~\eqref{eq:normedsquared:propertiesL:commute}, one then sees 
that \( T = C^+_m R \), with \( R \) as defined in~\eqref{R_operator}.
If \( J \) is \( \adAction \)-equivariant, then 
\( Z_m = - \adAction_{J(m)} \) and
\( R = \smallMatrix{0 & -\adAction_{J(m)} \\ 0 & L_m} = 
\smallMatrix{L_m & \adAction_{J(m)} \\ \adAction_{J(m)} & 
L_m} \smallMatrix{0 & 0 \\ 0 & 1} \). 
This establishes~\eqref{eq:normedsquared:hessianComplexOrbitFinal}.
\end{proof}

\begin{remark}
In the finite-dimensional setting, the 
expression~\eqref{eq:normedsquared:hessianComplexOrbitFinal} for the 
Hessian along the complex orbit has been established in 
\parencite[Theorem~3.8]{Wang2006} under the additional assumption 
that the momentum map is equivariant and \( \kappa \) is 
\( \AdAction \)-invariant. In~\parencite{Wang2006}, the commutativity 
of \( C^+_m \) and \( C^-_m \) was read off from 
formula~\eqref{eq:normedsquared:hessianComplexOrbitFinal} for 
the Hessian. Our proof proceeds, however, by first establishing 
the commutativity of \( C^+_m \) and 
\( C^-_m = C^+_m + 2 \I \adAction_{J(m)} \) in 
\cref{prop:normedsquared:propertiesL:commute} and then using this 
fact to express the Hessian in terms of \( C^+_m C^-_m \). 
\Cref{eq:normedsquared:propertiesL:commute} shows that 
the commutativity of \( C^+_m \) and \( C^-_m \) is a direct 
consequence of the \( \LieA{g}_m \)-invariance of \( j_m \) and 
of \( \kappa \), and that their difference is the adjoint action 
of an element of the stabilizer. This argument is completely 
independent of the norm-squared of the momentum map and its 
Hessian. In fact, from~\eqref{eq:normedsquared:hessianComplex} 
one sees that it is a \textquote{lucky coincidence} that under 
the same equivariance assumptions the formula for the Hessian 
simplifies considerably.

As an important consequence, 
\cref{prop:normedsquared:decompositionComplexStab} 
does not rely on the additional assumptions 
of \cref{prop:normedsquared:hessian} that are needed 
to calculate the Hessian in infinite dimensions. Moreover, 
this observation allowed us to establish the general decomposition 
result in \cref{prop:decomposition:decompositionComplexStab}, 
independently of the Hessian and for all points (not only the 
critical ones). 
\end{remark}

The completion \( \bar{\LieA{g}}_\C \) of \( \LieA{g}_\C \) with respect 
to the norm induced by \( \kappa_\C \) is a complex Hilbert space.
We continue to denote the inner product on  \( \bar{\LieA{g}}_\C \) 
by \( \kappa_\C \). By \cref{i:normedsquared:calabiSelfAdjoint}, the 
operators \( C^\pm_m \) give rise to densely defined, Hermitian 
operators \( C^\pm_m: \bar{\LieA{g}}_\C \supset \LieA{g}_\C \to 
\bar{\LieA{g}}_\C \).  We say that \( C^\pm_m: \LieA{g}_\C \to \LieA{g}_\C \) 
are \emphDef{essentially self-adjoint} if the closures of the operators 
\( C^\pm_m: \bar{\LieA{g}}_\C \supset \LieA{g}_\C \to \bar{\LieA{g}}_\C \) 
are self-adjoint. This is the case, for example, if \( C^\pm_m \) are 
elliptic operators defined on a closed manifold.

\begin{thm}
\label{prop:normedsquared:hessianPositiveDefAlongComplexOrbit}
Let \( (M, \omega) \) be a connected symplectic Fr\'echet 
manifold endowed with a symplectic action of a Fr\'echet Lie 
group \( G \). Assume that the action has an 
\( \adAction \)-equivariant momentum map \( J: M \to \LieA{g} \) 
relative to a non-degenerate symmetric pairing 
\( \kappa: \LieA{g} \times \LieA{g} \to \R \).
Let \( j \) be an almost complex structure on \( M \) 
compatible with \( \omega \). Let \( m \in M \) be a 
critical point of \( \norm{J}^2_\kappa \) such that 
\( j_m \) is invariant under \( \LieA{g}_m \).
In infinite dimensions, assume the following conditions {\rm(}which
always hold in finite dimensions{\rm)}:
	\begin{enumerate}
	\item
	\( G \) has a smooth exponential map.
	\item
	\( G_m \) is a Lie subgroup of \( G \).
	\item
	The isotropy representation of \( \LieA{g}_m \) on \( \TBundle_m M \) 
	has a momentum map \( \hat{J}: \TBundle_m M \to \LieA{g}_m \).
	\item
	The operators \( C^\pm_m \) are essentially self-adjoint.
	\end{enumerate}
Then the restriction of the Hessian of \( \norm{J}^2_\kappa \)
at \( m \) to the subspace \( \LieA{g}_\C \ldot m = 
\LieA{g} \ldot m + j \, (\LieA{g} \ldot m) \) of 
\( \TBundle_m M \) is positive semi-definite.
\end{thm}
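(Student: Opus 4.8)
The plan is to read positivity directly off the closed formula for the Hessian along the complexified orbit in \cref{prop:normedsquared:hessianSummary}. Since $m$ is a critical point, \cref{prop:normedsquared:criticalPoints} gives $J(m) \in \LieA{g}_m$; as $j_m$ is invariant under $\LieA{g}_m$ by assumption, the term $\tau_j'(J(m))$ vanishes, so by \eqref{eq:normedsquared:hessianComplexOrbitFinal}--\eqref{R_operator} (whose remaining hypotheses are among those assumed here), and using that $J$ is $\adAction$-equivariant so that $R = C^-_m \smallMatrix{0 & 0 \\ 0 & 1}$, one has
\begin{equation*}
	\frac{1}{2} \, \Hessian_m \norm{J}^2_\kappa (\zeta \ldot m, \gamma \ldot m) = \Re \kappa_\C\bigl(\zeta, \, C^+_m C^-_m P \gamma\bigr), \qquad \zeta, \gamma \in \LieA{g}_\C,
\end{equation*}
where $P \defeq \smallMatrix{0 & 0 \\ 0 & 1}$, \ie\ $P(\xi_1 + \I \xi_2) = \I \xi_2$ for $\xi_1, \xi_2 \in \LieA{g}$.

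Next I would record the two properties of $B \defeq C^+_m C^-_m$ that do the work. First, $C^+_m$ and $C^-_m$ commute: by equivariance $C^-_m - C^+_m = 2 \I \adAction_{J(m)}$, which is $\kappa_\C$-Hermitian, being a difference of Hermitian operators by \cref{i:normedsquared:calabiSelfAdjoint}; hence $\adAction_{J(m)}$ is $\kappa$-skew, so $\kappa$ is invariant under $\adAction_{J(m)}$, and as $J(m) \in \LieA{g}_m$ \cref{i:normedsquared:calabiCommuteEachOther} gives $[C^+_m, C^-_m] = 0$. Consequently $[L_m, Z_m] = 0$, so expanding $C^\pm_m = L_m \pm \I Z_m$ yields $B = L_m^2 + Z_m^2$; in particular $B$ is \emph{real}, $B(\LieA{g}) \subseteq \LieA{g}$. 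Second, $B$ is positive semi-definite: $C^+_m$ and $C^-_m$ are negative operators by \cref{i:normedsquared:kernelsIdentified}, and by the assumed essential self-adjointness their closures are commuting, negative, self-adjoint operators on $\bar{\LieA{g}}_\C$, so by the functional calculus their product $B$ is positive semi-definite, \ie\ $\kappa_\C(B \zeta, \zeta) \geq 0$ for all $\zeta \in \LieA{g}_\C$. I expect this second point to be the only genuinely delicate step in infinite dimensions: commutativity of $C^\pm_m$ on $\LieA{g}_\C$ does not by itself force the \emph{strong} commutativity of their unbounded closures needed for the spectral argument -- this is precisely the role of the essential self-adjointness hypothesis, and it is automatic in the motivating examples where $C^\pm_m$ are elliptic operators on a closed manifold.

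Finally I would substitute $\gamma = \zeta$ and split $\zeta = \zeta_1 + \I \zeta_2$ with $\zeta_1, \zeta_2 \in \LieA{g}$. Then $P \zeta = \I \zeta_2$ and, since $B$ is complex-linear, $B P \zeta = \I\, B \zeta_2$ with $B \zeta_2 \in \LieA{g}$ by the first point. Using $\Re \kappa_\C(\zeta_1, \I w) = 0$ and $\kappa_\C(\I \zeta_2, \I w) = \kappa_\C(\zeta_2, w)$ for real $\zeta_1, \zeta_2, w$ (immediate from \eqref{kappa_C}), the cross term drops out and
\begin{equation*}
	\frac{1}{2} \, \Hessian_m \norm{J}^2_\kappa (\zeta \ldot m) = \kappa_\C\bigl(\zeta_2, \, C^+_m C^-_m \zeta_2\bigr) \geq 0
\end{equation*}
by the second point. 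Since every element of $\LieA{g}_\C \ldot m = \LieA{g} \ldot m + j\,(\LieA{g} \ldot m)$ is of the form $\zeta \ldot m$ for some $\zeta \in \LieA{g}_\C$, this proves the claim.
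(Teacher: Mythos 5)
Your overall route is the paper's: invoke \cref{prop:normedsquared:hessianSummary} to write the Hessian along the complexified orbit as \( \Re \kappa_\C\bigl(\zeta, C^+_m C^-_m \smallMatrix{0 & 0 \\ 0 & 1}\gamma\bigr) \) and reduce everything to positivity of \( C^+_m C^-_m \). Your treatment of the projection \( \smallMatrix{0 & 0 \\ 0 & 1} \) (using that \( [L_m,Z_m]=0 \) makes \( C^+_m C^-_m = L_m^2 + Z_m^2 \) a real operator, so the cross term drops and the quadratic form becomes \( \kappa(\zeta_2, C^+_m C^-_m\zeta_2) \)) is correct and in fact more explicit than the paper, which passes over this point. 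Your derivation that \( \kappa \) is \( \adAction_{J(m)} \)-invariant from skew-symmetry of \( Z_m = -\adAction_{J(m)} \) is also fine, so the appeal to \cref{i:normedsquared:calabiCommuteEachOther} is legitimate.

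The gap is in the one step you yourself flag as delicate. From essential self-adjointness of \( C^\pm_m \) and their commutativity on the common invariant domain \( \LieA{g}_\C \) you conclude that the closures are \emph{strongly} commuting self-adjoint operators and then apply the joint functional calculus. That inference is false in general: by Nelson's classical example, two essentially self-adjoint operators can commute on a common dense invariant core while their (unique) self-adjoint closures fail to commute in the spectral sense, so no joint functional calculus is available. Essential self-adjointness pins down the closures but does not upgrade formal to strong commutativity, and nothing in the hypotheses supplies that upgrade. The paper avoids this entirely: using \cref{i:normedsquared:calabiCommuteAdjoint} with \( \mu = J(m) \) one gets the operator identity \( C^+_m C^-_m = (C^-_m)^* C^+_m \) on \( \LieA{g}_\C \); essential self-adjointness and negativity of \( C^-_m \) give that \( \lambda\,\id + C^-_m \) is boundedly invertible for \( \lambda < 0 \); and then a theorem of Sebesty\'en and Stochel on products of positive operators yields positivity of \( (-C^+_m)(-C^-_m) \) without ever invoking strong commutativity of the closures. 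To repair your argument you should either substitute this product criterion or prove strong commutativity directly in the situation at hand (e.g.\ via ellipticity and joint eigenspace decompositions in the applications), but as written the functional-calculus step is unjustified.
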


Morally speaking, this theorem shows that the restriction of 
\( \norm{J}^2_\kappa \) to an orbit of the complexification 
\( G_\C \) of \( G \) is locally convex near a critical point.
Note, however, that in many infinite-dimensional examples of 
interest the complexified group \( G_\C \) does not exist.

\begin{proof}
By \cref{eq:normedsquared:hessianComplexOrbitFinal}, the Hessian 
of \( \norm{J}^2_\kappa \) at \( m \) satisfies
\begin{equation}\begin{split}
\frac{1}{2} \, \Hessian_m \norm{J}^2_\kappa \, 
\bigl(\zeta \ldot m, \gamma \ldot m \bigr) = 
\Re \kappa_\C\bigl(\zeta, C^+_m C^-_m 
\smallMatrix{0 & 0 \\ 0 & 1} \gamma\bigr).
\end{split}\end{equation}
Thus, we have to show that \( C^+_m C^-_m \) is a positive operator.
By \cref{i:normedsquared:kernelsIdentified}, the operators 
\( - C^+_m \) and \( - C^-_m \) are positive and, by 
\cref{i:normedsquared:calabiCommuteAdjoint}, they satisfy 
\( C^+_m C^-_m = (C^-_m)^* C^+_m \) on \( \LieA{g}_\C \).
Since \( - C^-_m \) is a positive, essentially self-adjoint 
operator, the operators \( \lambda \, \id_{\LieA{g}_\C} + 
C^-_m: \LieA{g}_\C \to \LieA{g}_\C \) are invertible with 
bounded inverse for all \( \lambda < 0 \).
Hence \parencite[Theorem~3.1~(viii)]{SebestyenStochel2003} 
implies that the composition \( (- C^+_m) (- C^-_m) = C^+_m C^-_m \) 
is a positive operator.
This finishes the proof.
\end{proof}

\section{Application: K\"ahler geometry}
\label{sec:kaehler}

\subsection{Momentum map for the action of 
\texorpdfstring{$\DiffGroup(M, \omega)$}{Diff(M, w)}}
\label{sec:kaehler:momentumMap}

Let \( (M, \omega) \) be a compact symplectic \( 2n \)-dimensional manifold.
An almost complex structure \( j \) on \( M \) is said to be 
\textit{compatible} with \( \omega \) if 
\( g_j \defeq \omega(\cdot, j \cdot) \) is a Riemannian 
metric, \ie, \( \omega(X, j X) > 0 \) for all \( X \neq 0 \), 
and \(j\) is a symplectic isomorphism on every fiber, \ie, 
\( \omega(j \cdot, j \cdot) = \omega \).
Consider the Fr\'echet manifold \( \SectionSpaceAbb{I} \) of all almost 
complex structures on \( M \) compatible with \( \omega \).
Each compatible almost complex structure \( j \in \SectionSpaceAbb{I} \) 
reduces the symplectic frame bundle to an \( \UGroup(n) \)-bundle.
Hence \( \SectionSpaceAbb{I} \) is identified with the space of smooth 
sections of a bundle over \( M \) with typical fiber 
\( \SpGroup(2n, \R) \slash \UGroup(n) \).
This homogenous space is the Siegel upper half space, and thus 
carries a symplectic structure.
The symplectic structure on the fiber induces naturally a symplectic 
structure \( \Omega \) on \( \SectionSpaceAbb{I} \); see 
\parencite{DiezRatiuAutomorphisms,Donaldson2003} for details.
Note that the tangent space \( \TBundle_j \SectionSpaceAbb{I} \) 
is the space of \( g_j \)-symmetric endomorphisms of \( \TBundle M \) 
that anti-commute with \( j \).
In an appropriate normalization, we have then
\begin{equation}
	\label{eq:kaehler:symplecticForm}
\Omega_j (A, B) = \frac{1}{4} \int_M \tr (A \, j \, B) \, \mu_\omega 
= \frac{1}{4} \int_M \tensor{A}{_p^q} \, \tensor{j}{_r^p} \, 
\tensor{B}{_q^r} \, \mu_\omega
\end{equation}
where \( A, B \in \TBundle_j \SectionSpaceAbb{I} \) and 
\( \mu_\omega = \frac{\omega^n}{n!} \).
Moreover, the almost complex structure
\begin{equation}
	\label{eq:kaehler:almostComplexStructure}
	\SectionSpaceAbb{j}_j (A) = -j A = Aj
\end{equation}
on \( \SectionSpaceAbb{I} \) is compatible with \( \Omega \) and the 
induced Riemannian metric is just the \( \LFunctionSpace^2 \)-inner product.

The group \( \DiffGroup(M, \omega) \) of symplectomorphisms acts naturally 
on \( \SectionSpaceAbb{I} \) by push-forward and leaves the symplectic form 
\( \Omega \) invariant. \Textcite{Fujiki1992,Donaldson1997} showed that 
the scalar curvature furnishes a momentum map for the action of the subgroup 
of \emph{Hamiltonian} diffeomorphisms. As a first step, we extend this result 
and determine the momentum map for the action of the full group of 
symplectomorphisms.

For the construction of the momentum map, we need to clarify what we mean by 
the dual space of \( \VectorFieldSpace(M, \omega)\defeq \set{\xi \in 
\VectorFieldSpace(M) \given \difLie_\xi \omega = 0} \). Note that 
the map \( \xi \mapsto \xi \contr \omega \) identifies 
\( \VectorFieldSpace(M, \omega) \) with the space of closed \( 1 \)-forms 
on \( M \). 
This suggests the choice \( {\VectorFieldSpace(M, \omega)}^* \defeq 
\DiffFormSpace^{2n-1}(M) \slash \dif \DiffFormSpace^{2n-2}(M) \) for the dual 
space of \( \VectorFieldSpace(M, \omega) \) relative to the pairing
\begin{equation}
\label{eq:kaehler:pairing}
\kappa\bigl(\equivClass{\alpha},\xi\bigr) = 
\frac{1}{(n-1)!}\int_M \alpha \wedge (\xi \contr\omega),
\end{equation}
where \( \equivClass{\alpha} \in \DiffFormSpace^{2n-1}(M) \slash \dif 
\DiffFormSpace^{2n-2}(M) \) and \( \xi \in \VectorFieldSpace(M, \omega) \).

\begin{prop}
\label{prop:kaehler:momentumMap}
The action of \( \DiffGroup(M, \omega) \) on \( \SectionSpaceAbb{I} \) 
leaves the symplectic form \( \Omega \) invariant and has a momentum 
map \( \SectionMapAbb{J}: \SectionSpaceAbb{I} \to 
\DiffFormSpace^{2n-1}(M) \slash \dif \DiffFormSpace^{2n-2}(M) \) relative 
to the pairing~\eqref{eq:kaehler:pairing}.
For every \( j_0 \in \SectionSpaceAbb{I} \), the unique momentum map 
\( \SectionMapAbb{J} \) satisfying \( \SectionMapAbb{J}(j_0) = 0 \) 
is given by assigning to \( j \in \SectionSpaceAbb{I} \) the equivalence 
class of \( J(j_0, j) \wedge \omega^{n-1} \), where the \( 1 \)-form 
\( J(j_0, j) \) is defined by
\begin{equation}
	\label{eq:kaehler:momentumMap:definitionJ}
J(j_0, j)(Y) = -\frac{1}{2} \tr \Bigl(\nabla (j-j_0) Y\Bigr) - 
\frac{1}{4} \tr \left((j+j_0)^{-1}(j-j_0) \, \nabla_Y (j+j_0)\right), 
\quad Y \in \VectorFieldSpace(M).
\end{equation}
Here, \( \nabla \) is a torsion-free connection satisfying 
\( \nabla \mu_\omega = 0 \), and \( \SectionMapAbb{J} \) 
does not depend on the choice of the connection \( \nabla \) used in its 
definition.
\end{prop}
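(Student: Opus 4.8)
The plan is to verify that the prescribed $\SectionMapAbb{J}$ is indeed a momentum map by directly checking the defining condition~\eqref{tangent_momentum}, namely that for every $\xi \in \VectorFieldSpace(M,\omega)$ and every tangent vector $A \in \TBundle_j\SectionSpaceAbb{I}$ one has $\Omega_j(\xi\ldot j, A) + \kappa(\tangent_j\SectionMapAbb{J}(A),\xi) = 0$, together with $\SectionMapAbb{J}(j_0)=0$. Uniqueness then follows from connectedness of $\SectionSpaceAbb{I}$ (a convex, hence contractible, Fréchet manifold). The invariance of $\Omega$ under push-forward is essentially tautological and I would dispatch it quickly: $\DiffGroup(M,\omega)$ preserves $\mu_\omega$ and acts by pulling back the endomorphisms $A$, so the integrand in~\eqref{eq:kaehler:symplecticForm} transforms covariantly. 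The substance is the momentum-map identity.

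First I would fix a torsion-free connection $\nabla$ with $\nabla\mu_\omega = 0$ (such connections exist: take any symplectic connection, or the Levi-Civita connection of $g_{j_0}$ followed by the standard averaging trick) and compute the fundamental vector field $\xi\ldot j = -\difLie_\xi j$ at a point $j \in \SectionSpaceAbb{I}$; in terms of $\nabla$ this is $\xi\ldot j = \nabla_\xi j - [\nabla\xi, j]$ where $\nabla\xi \in \EndBundle(\TBundle M)$ is the covariant derivative of the vector field. Plugging this into $\Omega_j(\xi\ldot j, A) = \tfrac14\int_M \tr(\xi\ldot j \cdot j \cdot A)\,\mu_\omega$ and integrating by parts (legitimate because $M$ is closed and $\nabla\mu_\omega = 0$, so $\int_M \tr(\nabla_Z(\cdots))\,\mu_\omega$ contributes only the divergence term $\int_M \Div(Z)\cdots$), the goal is to massage this into an expression of the form $-\kappa(\tangent_j\SectionMapAbb{J}(A),\xi) = -\tfrac{1}{(n-1)!}\int_M \tangent_j(J(j_0,j))(A)\wedge(\xi\contr\omega)$. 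The natural strategy is to differentiate~\eqref{eq:kaehler:momentumMap:definitionJ} in the $\SectionSpaceAbb{I}$-direction $A$ (noting $\tangent_j((j+j_0)^{-1}) = -(j+j_0)^{-1}A(j+j_0)^{-1}$ and $\tangent_j(\nabla(j-j_0)) = \nabla A$), obtaining a $1$-form $\tangent_j(J(j_0,j))(A)$ quadratic-free in $j_0$ up to the reference terms, and then match term by term.

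The main obstacle I anticipate is twofold. First, the bookkeeping: one must carefully track how $\xi\contr\omega$ interacts with the trace pairing, using that $A$ is $g_j$-symmetric and anti-commutes with $j$, and that contracting $\wedge\omega^{n-1}$ against a $1$-form recovers the $g_j$-inner product with $\xi$ up to the combinatorial factor $(n-1)!$; this is where the normalization in~\eqref{eq:kaehler:pairing} and~\eqref{eq:kaehler:symplecticForm} gets pinned down. Second, and more delicate, is showing the answer is independent of the auxiliary connection $\nabla$: if $\nabla' = \nabla + S$ for a symmetric tensor $S$ with $\tr(S(\cdot)) $ consistent with $\nabla'\mu_\omega = 0$ (i.e.\ the trace part of $S$ vanishes), then the change in the two trace terms of~\eqref{eq:kaehler:momentumMap:definitionJ} must cancel; I expect this cancellation to work precisely because $(j-j_0)$ and $(j+j_0)$ together reassemble the difference in a way that only the $\mu_\omega$-preserving (trace-free) part of $S$ enters, and that part drops out of the two traces simultaneously. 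Verifying this explicitly is the computational heart of the proof. Finally, $\SectionMapAbb{J}(j_0)=0$ is immediate from~\eqref{eq:kaehler:momentumMap:definitionJ} since $j-j_0$ vanishes at $j=j_0$, and uniqueness of the momentum map vanishing at $j_0$ follows from connectedness exactly as in~\cref{prop:contractible:momentumMap}.
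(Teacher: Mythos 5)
Your proposal is correct in outline, but it takes a genuinely different route from the paper's primary argument. The paper \emph{derives} the formula rather than verifying it: it builds an equivariant smooth contraction of \( \SectionSpaceAbb{I} \) from the generalized Cayley transform \( \phi_{j_0}(j) = (j+j_0)^{-1}(j-j_0) \), feeds it into the general \cref{prop:contractible:momentumMap}, computes the fiberwise integrals (\cref{prop:kaehler:contractionLinear}) to get the integral formula \( \kappa(\SectionMapAbb{J}(j),\xi) = -\tfrac14\int_M \tr\bigl((j+j_0)^{-1}(j-j_0)\difLie_\xi(j+j_0)\bigr)\mu_\omega \), and only then integrates by parts via the operator \( \tau^\nabla \) of Garc\'ia-Prada--Salamon--Trautwein to land on~\eqref{eq:kaehler:momentumMap:definitionJ}. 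Your direct verification is exactly the alternative the paper itself flags after the main proof: it reduces to showing \( \tangent_j J(j_0,\cdot)(A) = -\tfrac12\tau^\nabla(j,A) - \tfrac14\dif\tr\bigl(A\,\phi_{j_0}(j)\bigr) \) (the paper's \cref{prop:kaehler:variationJ}) and then invoking the integration-by-parts identity~\eqref{eq:kaehler:integrationByPartsTau}; note that the exact summand contributes nothing to the pairing since \( \dif(\xi\contr\omega)=0 \). What your route buys is economy (no contraction machinery, no non-equivariance bookkeeping); what it loses is any explanation of where the formula comes from and, more importantly for the rest of the paper, the identification of the non-equivariance cocycle via~\eqref{eq:contractible:twoCocycle}, which the contraction approach delivers for free. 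Two smaller points: the connection-independence you single out as ``the computational heart'' is actually free once you have verified the momentum-map identity for an arbitrary admissible \( \nabla \) and noted \( \SectionMapAbb{J}(j_0)=0 \) --- uniqueness of the momentum map vanishing at \( j_0 \) forces the classes to agree (alternatively it is inherited from the connection-independence of \( \tau^\nabla \)); and your formula \( \xi\ldot j = \nabla_\xi j - [\nabla\xi,j] \) has a sign slip, since that expression equals \( \difLie_\xi j \), not \( -\difLie_\xi j = \xi\ldot j \).
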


For the proof, we construct a smooth contraction of the space of almost complex 
structures \( \SectionSpaceAbb{I} \) and apply the general results of 
\cref{sec:contractible}. It will be convenient to first consider the linear 
case and then apply these considerations to each fiber of \( \TBundle M \).
Thus, let \( (V, \omega) \) be a finite-dimensional symplectic vector space.
Similar to the nonlinear setting, the space \( \SectionSpaceAbb{I}(V, \omega) \) 
of complex structures compatible with \( \omega \) carries a symplectic form
\begin{equation}
\label{symplectic_form_on_complex_structures_vector_space}
\Omega_{j}(A, B) = \frac{1}{4} \tr (A \, j \, B), \qquad
j \in \SectionSpaceAbb{I}(V, \omega), \quad  
A,B \in \TBundle_j \SectionSpaceAbb{I}(V, \omega).
\end{equation}
Note that \( \Omega \) is invariant under the action of 
\( \SpGroup(V, \omega) \) given by \( g \cdot j = g j g^{-1} \).
The associated Lie algebra action is \( \xi \ldot j = \xi j - j \xi \), 
where \( \xi \in  \LieA{sp}(V, \omega) \defeq \set{\zeta \in 
\LieA{gl}(V) \given \omega(\zeta \cdot, \cdot) + 
\omega(\cdot, \zeta \cdot) = 0 } \).

Following \parencite[Proposition~II.2.3]{Audin2012}, for each \( j_0 \) 
in \( \SectionSpaceAbb{I}(V, \omega) \), the generalized Cayley transform
\begin{equation}
\label{Cayley_transform_complex_structures_vector_space}
	\phi_{j_0}(j) = (j + j_0)^{-1} (j - j_0)
	=-(j - j_0)(j + j_0)^{-1}
\end{equation}
is a diffeomorphism from \( \SectionSpaceAbb{I}(V, \omega) \) onto the open 
unit ball of the vector space of endomorphisms \( S: V \to V \) that are 
symmetric with respect to \( g_{j_0} = \omega(\cdot, j_0 \cdot) \) and that 
satisfy \( j_0 S + S j_0 = 0 \). We have \( \phi_{j_0}^{-1}(S)
= j_0 (I+S)(I-S)^{-1} \).
The map
\begin{equation}
	\Lambda(j_0, j, t) = \phi_{j_0}^{-1}\bigl(t \phi_{j_0}(j)\bigr)
\end{equation}
is a smooth contraction of \( \SectionSpaceAbb{I}(V, \omega) \).
It can easily be checked that \( \Lambda \) is equivariant with respect to 
the conjugation action by \( \SpGroup(V, \omega) \).
As a preparation for the nonlinear setting, we determine the \( 1 \)-forms 
occurring in the definition of the momentum map 
in~\eqref{eq:contractible:momentumMap}.
\begin{lemma}
\label{prop:kaehler:contractionLinear}
Let \( j_0, j \in \SectionSpaceAbb{I}(V, \omega) \).
For every \( A \in \TBundle_{j} \SectionSpaceAbb{I}(V, \omega) \), 
\begin{equation}
\label{kaehler_contraction_linear_first}
\int_0^1 \left(\Lambda_{j_0}^* \Omega\right)_{(j, t)} (\difp_t, A) \dif t 
= \frac{1}{4} \tr \bigl(\phi_{j_0}(j) A\bigr),
\end{equation}
where \( \Lambda_{j_0}\defeq \Lambda(j_0, \cdot,\cdot) \).
Moreover, for every \( A_0 \in \TBundle_{j_0} \SectionSpaceAbb{I}(V, \omega) \),
\begin{equation}
\label{kaehler_contraction_linear_second}
\int_0^1 \left(\bar{\Lambda}_j^* \, \Omega\right)_{(j_0, t)} 
(\difp_t, A_0) \dif t = \frac{1}{4} \tr \bigl(\phi_{j_0}(j) A_0\bigr),
\end{equation}
where \( \bar{\Lambda}_j \defeq \Lambda(\cdot, j, \cdot) \).
\end{lemma}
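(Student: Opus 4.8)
The plan is to transport both integrals to the Cayley ball, where the contraction $\Lambda_{j_0}$ becomes the straight-line homotopy, and then carry out the $t$-integration by elementary means; since $V$ is finite-dimensional, no functional-analytic subtleties arise.

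For \eqref{kaehler_contraction_linear_first}, set $S \defeq \phi_{j_0}(j)$ and recall from \eqref{Cayley_transform_complex_structures_vector_space} that $\phi_{j_0}$ is a diffeomorphism of $\SectionSpaceAbb{I}(V,\omega)$ onto an open ball $\mathcal{B}$ in the space $W_{j_0}$ of $g_{j_0}$-symmetric endomorphisms anticommuting with $j_0$, with $\phi_{j_0}^{-1}(S) = j_0(I+S)(I-S)^{-1}$. Under $\phi_{j_0}$ the contraction $\Lambda_{j_0}$ becomes the straight-line contraction $(S,t) \mapsto tS$ of $\mathcal{B}$, so that
\[
\int_0^1 \bigl(\Lambda_{j_0}^* \Omega\bigr)_{(j,t)}(\difp_t, A)\, \dif t
= \int_0^1 t\, \tilde\Omega_{tS}\bigl(S, \tangent_j \phi_{j_0}(A)\bigr)\, \dif t,
\qquad \tilde\Omega \defeq (\phi_{j_0}^{-1})^* \Omega .
\]
First I would differentiate the Cayley inverse to get $\tangent_S \phi_{j_0}^{-1}(B) = 2 j_0 (I-S)^{-1} B (I-S)^{-1}$, insert this into $\Omega_{\phi_{j_0}^{-1}(S)}(A_1, A_2) = \frac{1}{4}\tr\bigl(A_1\, \phi_{j_0}^{-1}(S)\, A_2\bigr)$ from \eqref{symplectic_form_on_complex_structures_vector_space}, and simplify. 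Using $S j_0 = -j_0 S$, $S^* = S$, and the resulting identities $(I-S)^{-1} j_0 = j_0 (I+S)^{-1}$ and $(I-S)^{-1}\phi_{j_0}^{-1}(S) = j_0 (I-S)^{-1}$, the pulled-back form should collapse to the clean expression $\tilde\Omega_S(B_1, B_2) = -\tr\bigl(j_0 (I-S^2)^{-1} B_1 (I-S^2)^{-1} B_2\bigr)$.

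With this in hand the $t$-integral is elementary: since $S$ commutes with $(I-t^2 S^2)^{-1}$, the integrand is $-t\, \tr\bigl(j_0 S (I - t^2 S^2)^{-2} B\bigr)$, and the power-series identity $\int_0^1 t (I - t^2 S^2)^{-2}\, \dif t = \frac{1}{2}(I - S^2)^{-1}$ (valid because $\norm{S} < 1$ on $\mathcal{B}$) yields $-\frac{1}{2}\tr\bigl(j_0 S (I-S^2)^{-1} B\bigr)$. It remains to substitute back $B = \tangent_j\phi_{j_0}(A)$, equivalently $A = 2 j_0 (I-S)^{-1} B (I-S)^{-1}$; using once more $S j_0 = - j_0 S$ and $(I-S)^{-1} j_0 = j_0 (I+S)^{-1}$ one checks directly that $-\frac{1}{2}\tr\bigl(j_0 S (I-S^2)^{-1} B\bigr) = \frac{1}{4}\tr(S A) = \frac{1}{4}\tr\bigl(\phi_{j_0}(j) A\bigr)$, which is \eqref{kaehler_contraction_linear_first}.

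For \eqref{kaehler_contraction_linear_second} I would exploit the symmetry $\Lambda(j_0, j, t) = \Lambda(j, j_0, 1-t)$ of the contraction. This is a direct computation with the Cayley formulas: writing $k(t) = \phi_{j_0}^{-1}(tS)$ and expanding $k(t) \pm j$, one finds $k(t) - j = 2(t-1) j_0 S (I-tS)^{-1}(I-S)^{-1}$ and $k(t) + j = 2 j_0 (I - t S^2)(I-tS)^{-1}(I-S)^{-1}$; since every factor is a rational function of $S$ alone these combine to $\phi_j(k(t)) = (k(t)+j)^{-1}(k(t)-j) = (t-1) S = (1-t)\, \phi_j(j_0)$, where we used $\phi_j(j_0) = (j_0+j)^{-1}(j_0-j) = -\phi_{j_0}(j) = -S$. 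Hence $k(t) = \phi_j^{-1}\bigl((1-t)\phi_j(j_0)\bigr) = \Lambda(j, j_0, 1-t)$, so $\bar{\Lambda}_j(j_0, t) = \Lambda_j(j_0, 1-t)$. The reparametrization $t \mapsto 1-t$ reverses $\difp_t$, so
\[
\int_0^1 \bigl(\bar{\Lambda}_j^* \Omega\bigr)_{(j_0, t)}(\difp_t, A_0)\, \dif t
= - \int_0^1 \bigl(\Lambda_j^* \Omega\bigr)_{(j_0, s)}(\difp_s, A_0)\, \dif s
= - \frac{1}{4}\tr\bigl(\phi_j(j_0) A_0\bigr) = \frac{1}{4}\tr\bigl(\phi_{j_0}(j) A_0\bigr),
\]
the middle equality being \eqref{kaehler_contraction_linear_first} with the roles of $j_0$ and $j$ interchanged and the last using $\phi_j(j_0) = - \phi_{j_0}(j)$ again. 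The main obstacle I anticipate is the bookkeeping when simplifying $\tilde\Omega$ in the second paragraph: keeping straight the interplay of $S$, $j_0$, the $g_{j_0}$-adjoint, and the Cayley identities. Everything downstream of that clean formula for $\tilde\Omega$ — the $t$-integration, the back-substitution, and the symmetry argument — is routine.
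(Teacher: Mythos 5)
Your treatment of \eqref{kaehler_contraction_linear_first} is correct and amounts to the same computation as the paper's, just packaged differently: the paper pulls back \( \Omega \) along \( \Lambda_{j_0} \) directly, whereas you first compute \( \tilde\Omega = (\phi_{j_0}^{-1})^*\Omega \) in closed form. Your formula \( \tilde\Omega_S(B_1,B_2) = -\tr\bigl(j_0(1-S^2)^{-1}B_1(1-S^2)^{-1}B_2\bigr) \) checks out, and the \( t \)-integration and back-substitution go through exactly as you describe.

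The argument for \eqref{kaehler_contraction_linear_second}, however, rests on a false symmetry. Your intermediate formulas for \( k(t)\pm j \) are right, but in forming the quotient you dropped the factor \( (1-tS^2) \) sitting inside \( k(t)+j \): one actually gets
\begin{equation*}
\phi_j\bigl(k(t)\bigr) = \bigl(k(t)+j\bigr)^{-1}\bigl(k(t)-j\bigr) = (t-1)\,S\,(1-tS^2)^{-1},
\end{equation*}
not \( (t-1)S \). Hence \( \Lambda(j_0,j,t) \neq \Lambda(j,j_0,1-t) \) for \( t\in(0,1) \) in general. Concretely, for \( S = sE \) with \( E^2 = 1 \) one finds \( \phi_j(k(t)) = \tfrac{(t-1)s}{1-ts^2}E \), which differs from \( (t-1)sE \) whenever \( s\neq 0 \); for generic \( S \) in higher dimensions the two paths are not even equal as unparametrized curves. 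Moreover, even if the curves agreed pointwise, the reparametrization step would require the identity \( \Lambda(j_0',j,t) = \Lambda(j,j_0',1-t) \) for all \( j_0' \) in a neighbourhood of \( j_0 \), because \( \bigl(\bar\Lambda_j^*\Omega\bigr)_{(j_0,t)}(\difp_t, A_0) \) involves the derivative of \( \Lambda \) in its \emph{first} slot in the direction \( A_0 \). So the reduction of the second identity to the first collapses, and \eqref{kaehler_contraction_linear_second} is left unproved. The paper handles it by computing \( \tangent_{j_0}\bar\Lambda_j(A_0) = (1-t)(1+tS)^{-1}(A_0 - tSA_0S)(1-tS)^{-1} \) directly and performing a separate (slightly messier) \( t \)-integration; some computation of that kind is unavoidable here.
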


\begin{proof}
Using the identities
\begin{subequations}\label{eq:kaehler:contraction:derivativePhi}\begin{align}
\tangent_j \phi_{j_0} (A) &= 2 (j+j_0)^{-1} A (j+j_0)^{-1} j_0,
		\\
\tangent_S \phi_{j_0}^{-1} (C) &= 2 j_0 (1-S)^{-1} C (1-S)^{-1}
\end{align}\end{subequations}
we find
\begin{equation}
\tangent_j \Lambda_{j_0} (\difp_t) = - 2 S (1+tS)^{-2} j_0
\end{equation}
and 
\begin{equation}
\tangent_j \Lambda_{j_0}(A) = 
4 t (1+tS)^{-1} (j+j_0)^{-1} A (j+j_0)^{-1} (1-tS)^{-1}.
\end{equation}
Here and in the following, we abbreviated \( S \defeq \phi_{j_0}(j) \).
Using these formulas 
and~\eqref{symplectic_form_on_complex_structures_vector_space}, 
we calculate
\begin{equation}\begin{split}
\left(\Lambda_{j_0}^* \Omega\right)_{(j, t)} (\difp_t, A)
&= \Omega_{\Lambda_{j_0}(j,t)}\bigl(\tangent_j \Lambda_{j_0} 
(\difp_t), \tangent_j \Lambda_{j_0}(A)\bigr)
			\\
&= 2t \tr\left(S \bigl(1-t^2 S^2\bigr)^{-2} (j+j_0)^{-1} A (j+j_0)^{-1} \right).
\end{split}\end{equation}
Since
\begin{equation}
\difFrac{}{t} \bigl(1-t^2 S^2\bigr)^{-1} = 2 t S^2 \bigl(1-t^2 S^2\bigr)^{-2},
\end{equation}
we obtain\footnote{Note that it is enough to verify this identity for invertible \( S \) by density of the invertible elements in \( \End(V) \).}
\begin{equation}\begin{split}
\int_0^1 \left(\Lambda_{j_0}^* \Omega\right)_{(j, t)} (\difp_t, A) \dif t
&= \tr \left(S \bigl(1-S^2\bigr)^{-1} (j+j_0)^{-1} A (j+j_0)^{-1}\right)  \\
&= \frac{1}{4} \tr \bigl(S A\bigr).
\end{split}\end{equation}
This establishes~\eqref{kaehler_contraction_linear_first}.
	
The second identity~\eqref{kaehler_contraction_linear_second} follows 
from a similar, but slightly more involved, calculation.
Indeed, the derivative of the map \( \bar{\Lambda}_{j}(j_0, t) \defeq 
\Lambda(j_0, j, t) \) is given by
\begin{equation}
\tangent_{j_0} \bar{\Lambda}_j (A_0) = 
(1-t) (1+tS)^{-1} (A_0 - t S A_0 S) (1-tS)^{-1}.
\end{equation}
Thus, we find
\begin{equation}
\left(\bar{\Lambda}_{j}^* \Omega\right)_{(j_0, t)} (\difp_t, A_0)
= \frac{1}{2} \tr \left(\bigl(1-t^2 S^2\bigr)^{-2} S \bigl(1-tS^2\bigr) 
(1-t) A_0 \right).
\end{equation}
Using 
\begin{equation}
\difFrac{}{t} \bigl(-2 S^2 t+S^2 + 1\bigr)\bigl(1-t^2 S^2\bigr)^{-1} 
= -2 S^2 (1-t) \bigl(1-t S^2\bigr) \bigl(1-t^2 S^2\bigr)^{-2},
\end{equation}
this expression can easily be integrated over \( t \) to obtain
\begin{equation}
\int_0^1 \left(\bar{\Lambda}_{j}^* \Omega\right)_{(j_0, t)} 
(\difp_t, A_0) \dif t
= \frac{1}{4} \tr \bigl(S A_0\bigr).
\end{equation}
This verifies~\eqref{kaehler_contraction_linear_second} and finishes 
the proof.
\end{proof}

As a direct application, let us compute the momentum map \( J \) for the 
action of \( \SpGroup(V, \omega) \) on \( \SectionSpaceAbb{I}(V, \omega) \).
According to \cref{prop:contractible:momentumMap}, the unique momentum map 
vanishing at \( j_0 \) is given by
\begin{equation}
\kappa\bigl(J(j), \xi \bigr) = 
\frac{1}{4} \tr \bigl(\phi_{j_0}(j) (\xi \ldot j + 
\xi \ldot j_0)\bigr) = \frac{1}{2} \tr \bigl((j-j_0)\xi\bigr).
\end{equation}
Thus, identifying \( \SpAlgebra(V, \omega)^* \) with 
\( \SpAlgebra(V, \omega) \) using the pairing \( \kappa(\alpha, \xi) 
= \frac{1}{2} \tr(\alpha \xi) \), we find \( J(j) = j - j_0 \).
\medskip

We now finish the proof of \cref{prop:kaehler:momentumMap}.
In the nonlinear setting, the momentum map for the action of the 
symplectomorphism group \( \DiffGroup(M, \omega) \) on 
\( \SectionSpaceAbb{I} \equiv \SectionSpaceAbb{I}(M, \omega) \) 
is computed in a very similar way, with the twist that the final 
dualization involves integration by parts.
Let us discuss the details.
First, we extend the definition of the generalized Cayley transform 
\( \phi \) and the contraction \( \Lambda \) to \( \SectionSpaceAbb{I} \) 
by applying these maps pointwise.
The resulting map \( \Lambda: \SectionSpaceAbb{I} \times 
\SectionSpaceAbb{I} \times [0,1] \to \SectionSpaceAbb{I} \) is smooth 
in the \( \sFunctionSpace \)-topology by 
\parencite[Theorem~II.2.2.6]{Hamilton1982}.
Since \( \DiffGroup(M, \omega) \) acts on \( \SectionSpaceAbb{I} \) 
by push-forward, the infinitesimal action is given by 
\( \xi \ldot j = - \difLie_{\xi} j \), where \(\xi 
\in  \VectorFieldSpace(M, \omega ) \defeq \set{\zeta \in \VectorFieldSpace(M) \given
\difLie_{\zeta} \omega =0} \).
By \cref{prop:contractible:momentumMap,prop:kaehler:contractionLinear}, 
the momentum map \( \SectionSpaceAbb{J}: \SectionSpaceAbb{I} \to 
\DiffFormSpace^{2n-1}(M) \slash \dif \DiffFormSpace^{2n-2}(M) \) 
vanishing at \( j_0 \in \SectionSpaceAbb{I} \) is uniquely characterized by
\begin{equation}
	\label{eq:kaehler:momentumMapIntegralFormula}
\kappa\bigl(\SectionSpaceAbb{J}(j), \xi \bigr) = -
\frac{1}{4} \int_M \tr \bigl((j+j_0)^{-1}(j-j_0) 
\difLie_\xi (j + j_0)\bigr) \, \mu_\omega.
\end{equation}
In order to eliminate the derivative in \( \xi \)-direction, following 
\parencite{Garcia-PradaSalamonTrautwein2018}, we fix a torsion-free 
connection \( \nabla \) on \( M \) with \( \nabla \mu_\omega = 0 \) and 
introduce \( \tau(j, A) \in \DiffFormSpace^1(M) \), for \( j \in 
\SectionSpaceAbb{I} \) and \( A \in \TBundle_j \SectionSpaceAbb{I} \), 
by
\begin{equation}
	\label{eq:kaehler:tau}
\tau^\nabla(j, A)(Y) = \tr \bigl((\nabla A) Y\bigr) + 
\frac{1}{2} \tr \left(A j \, \nabla_Y j\right) 
= Y^r \nabla_p \tensor{A}{_r^p} + 
\frac{1}{2} Y^r \tensor{A}{_s^p}\tensor{j}{_q^s} \, \nabla_r \tensor{j}{_p^q}, 
\end{equation}
for \( Y \in \VectorFieldSpace(M) \).
By \parencite[Theorem~2.6]{Garcia-PradaSalamonTrautwein2018}, 
\( \tau^\nabla(j, A) \) does not depend on the connection \( \nabla \).
However, for the Levi-Civita connection of \( g_j \), the expression 
of \( \tau^\nabla \) simplifies considerably.
\begin{lemma}
	\label{prop:kaehler:tauForLeviCivita}
Let \( j \in \SectionSpaceAbb{I} \) and 
\( A \in \TBundle_j \SectionSpaceAbb{I} \).
Then, for the Levi-Civita connection \( \nabla \) of \( g_j \), 
we have
\begin{equation}
	\label{eq:kaehler:tauForLeviCivita}
\tau^\nabla(j, A)(Y) = \tr \bigl((\nabla A)Y\bigr) = 
Y^r \nabla_p \tensor{A}{_r^p}.
		\qedhere
	\end{equation}
\end{lemma}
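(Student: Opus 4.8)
The plan is to show that, for the Levi-Civita connection $\nabla$ of $g_j$, the second term in the defining formula~\eqref{eq:kaehler:tau} of $\tau^\nabla$ vanishes identically, i.e.\ that $\tr\bigl(A j \, \nabla_Y j\bigr) = 0$ for every $j \in \SectionSpaceAbb{I}$, every $A \in \TBundle_j \SectionSpaceAbb{I}$, and every $Y \in \VectorFieldSpace(M)$. First one should note that $\nabla$ is an admissible connection in the sense required by~\eqref{eq:kaehler:tau}: it is torsion-free by construction, and since the Riemannian volume of $g_j$ coincides with $\mu_\omega = \omega^n/n!$ by compatibility of $j$ with $\omega$, being metric forces $\nabla \mu_\omega = 0$.

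The argument rests on two facts about $g_j$-adjoints of endomorphisms of $\TBundle_m M$. First, $A j$ is $g_j$-symmetric: using that $A$ is $g_j$-symmetric and anti-commutes with $j$ (which characterizes $\TBundle_j \SectionSpaceAbb{I}$) and that $j$ is $g_j$-skew-symmetric (because $g_j(j X, Y) = \omega(j X, j Y) = \omega(X, Y) = - g_j(X, j Y)$), the $g_j$-adjoint of $A j$ equals $j^\ast A^\ast = (-j)(A) = -jA = Aj$. Second, $\nabla_Y j$ is $g_j$-skew-symmetric. This is the one point requiring a short computation: fixing $p \in M$ and choosing local vector fields $X, Z$ with $\nabla X|_p = \nabla Z|_p = 0$, the identity $\nabla g_j = 0$ combined with $g_j(j X, Z) = - g_j(X, j Z)$ yields, at $p$,
\[
g_j\bigl((\nabla_Y j) X, Z\bigr) \;=\; \nabla_Y\bigl(g_j(jX, Z)\bigr) \;=\; -\,\nabla_Y\bigl(g_j(X, jZ)\bigr) \;=\; -\,g_j\bigl(X, (\nabla_Y j) Z\bigr),
\]
so that $\nabla_Y j$ is $g_j$-skew-symmetric everywhere, $p$ being arbitrary. (Conceptually: the Levi-Civita connection preserves the reduction of the frame bundle defined by $g_j$, and $j$ is a section of the associated bundle of $g_j$-skew-symmetric endomorphisms, hence so is $\nabla_Y j$.)

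Granting these two facts, the conclusion follows from the elementary remark that $\tr(S K) = 0$ whenever $S$ is $g_j$-symmetric and $K$ is $g_j$-skew-symmetric: in a $g_j$-orthonormal frame one has $\tr(S K) = \sum_{a,b} S_{ab} K_{ba} = -\sum_{a,b} S_{ab} K_{ab} = -\sum_{a,b} S_{ba} K_{ab} = -\tr(S K)$. Taking $S = A j$ and $K = \nabla_Y j$ gives $\tr\bigl(A j \, \nabla_Y j\bigr) = 0$, whence $\tau^\nabla(j, A)(Y) = \tr\bigl((\nabla A) Y\bigr)$, as claimed. I do not expect any genuine obstacle here: the only non-formal ingredient is the skew-symmetry of $\nabla_Y j$, which is a three-line consequence of $\nabla$ being metric, and everything else is bookkeeping with adjoints and traces.
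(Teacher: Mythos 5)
Your proof is correct and follows essentially the same route as the paper's: both reduce the claim to the vanishing of \( \tr(A j \, \nabla_Y j) \) via a symmetric-times-antisymmetric trace argument, with the \( g_j \)-skew-symmetry of \( \nabla_Y j \) (a consequence of \( \nabla g_j = 0 \) and the compatibility of \( j \) with \( \omega \)) as the key input. The only cosmetic difference is that the paper pairs the \( g_j \)-symmetric \( A \) with the antisymmetric \( j \nabla_Y j \), whereas you pair the symmetric \( A j \) with the antisymmetric \( \nabla_Y j \); your extra verification that the Levi-Civita connection satisfies \( \nabla \mu_\omega = 0 \) is a welcome detail that the paper leaves implicit.
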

\begin{proof}
By taking the covariant derivative of \( g_j(j \cdot, j \cdot) 
= g_j(\cdot, \cdot) \), we see that \( \nabla_Y j \) is 
antisymmetric with respect to \( g_j \) for every 
\( Y \in \VectorFieldSpace(M) \).
Thus, also \( j \nabla_Y j \) is antisymmetric.
But \( A \in \TBundle_j \SectionSpaceAbb{I} \) is a \( g_j \)-symmetric 
tensor, so \( A j \nabla_Y j \) is trace-free. 
\end{proof}
The importance of \( \tau \) lies in the following integration 
by parts identity \parencite[Theorem~2.6]{Garcia-PradaSalamonTrautwein2018}:
\begin{equation}
	\label{eq:kaehler:integrationByPartsTau}
	\frac{1}{2} \int_M \tr(A \difLie_\xi j) \, \mu_\omega 
	= -\int_M \tau^\nabla(j, Aj) \wedge (\xi \contr \mu_\omega)
	= - \kappa\bigl(\tau^\nabla(j, Aj) \wedge \omega^{n-1}, \xi\bigr),
\end{equation}
for \( \xi \in \VectorFieldSpace(M, \omega) \) and 
\( A \in \TBundle_j \SectionSpaceAbb{I} \).
Using this fact, we read-off from~\eqref{eq:kaehler:momentumMapIntegralFormula} 
that the momentum map is given by
\begin{equation}
\SectionSpaceAbb{J}(j) = 
\frac{1}{2} \Bigl(\tau^\nabla\bigl(j, (j+j_0)^{-1}(j-j_0) j\bigr) 
+ \tau^\nabla\bigl(j_0, (j+j_0)^{-1}(j-j_0) j_0\bigr)\Bigr) \wedge \omega^{n-1}.
\end{equation}
Now, for \( Y \in \VectorFieldSpace(M) \), we have by~\eqref{eq:kaehler:tau}
\begin{equation}\begin{split}
	\tau^\nabla\bigl(j, &(j+j_0)^{-1}(j-j_0) j\bigr)(Y) + 
	\tau^\nabla\bigl(j_0, (j+j_0)^{-1}(j-j_0) j_0\bigr)(Y)
		\\
	&= \tr \left(\nabla \bigl((j+j_0)^{-1}(j-j_0) (j+j_0)\bigr) Y 
	- \frac{1}{2} (j+j_0)^{-1}(j-j_0) \, \nabla_Y (j+j_0)\right)
		\\
	&= -\tr \bigl(\nabla (j-j_0) Y\bigr) - 
	\frac{1}{2} \tr \left((j+j_0)^{-1}(j-j_0) \,\nabla_Y (j+j_0)\right).
\end{split}\end{equation}
This finishes the proof of \cref{prop:kaehler:momentumMap}.

Alternatively, one can directly verify \cref{prop:kaehler:momentumMap} 
using the integration by parts relation~\eqref{eq:kaehler:integrationByPartsTau} 
and the following expression for the variation of the \( 1 \)-form 
\( J(j_0, j) \).
\begin{lemma}
	\label{prop:kaehler:variationJ}
	For every \( j_0, j \in \SectionSpaceAbb{I} \) and \( A \in \TBundle_j 
	\SectionSpaceAbb{I} \), we have
	\begin{equation}
		\label{eq:kaehler:variationJ}
		\tangent_j J(j_0, \cdot)(A) = - \frac{1}{2} \tau^\nabla(j, A) 
		- \frac{1}{4} \dif \tr\bigl(A \phi_{j_0}(j)\bigr).
		\qedhere
	\end{equation}
\end{lemma}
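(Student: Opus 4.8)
The plan is to prove the identity pointwise on $M$: at each point both sides are $1$-forms assembled by finite-dimensional linear algebra from $j$, $j_0$ and the fixed torsion-free volume connection $\nabla$, so no infinite-dimensional subtleties intervene, and it suffices to differentiate the explicit formula \eqref{eq:kaehler:momentumMap:definitionJ} for the $1$-form $J(j_0, j)$ with respect to $j$ in the direction $A \in \TBundle_j\SectionSpaceAbb{I}$, keeping $\nabla$ fixed. Abbreviate $S \defeq \phi_{j_0}(j) = (j+j_0)^{-1}(j-j_0) = \mathrm{id} - 2(j+j_0)^{-1}j_0$. Differentiating the first summand of \eqref{eq:kaehler:momentumMap:definitionJ} gives $-\tfrac12\tr\bigl((\nabla A)Y\bigr)$, since $\nabla$ commutes with the $j$-derivative. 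Differentiating the second summand $-\tfrac14\tr\bigl(S\,\nabla_Y(j+j_0)\bigr)$ by the Leibniz rule, and using the derivative formula \eqref{eq:kaehler:contraction:derivativePhi} for the generalized Cayley transform, namely $\tangent_j\phi_{j_0}(A) = 2(j+j_0)^{-1}A(j+j_0)^{-1}j_0$, yields $-\tfrac14\tr\bigl((\tangent_j\phi_{j_0}(A))\,\nabla_Y(j+j_0)\bigr) - \tfrac14\tr\bigl(S\,\nabla_Y A\bigr)$.

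Next I would expand the claimed right-hand side. Since $\dif\tr(AS)(Y) = \tr\bigl((\nabla_Y A)S\bigr) + \tr\bigl(A\,\nabla_Y S\bigr)$ and $\tau^\nabla(j,A)(Y) = \tr\bigl((\nabla A)Y\bigr) + \tfrac12\tr\bigl(Aj\,\nabla_Y j\bigr)$ by \eqref{eq:kaehler:tau}, the target equals
\[
-\tfrac12\tr\bigl((\nabla A)Y\bigr) - \tfrac14\tr\bigl(Aj\,\nabla_Y j\bigr) - \tfrac14\tr\bigl((\nabla_Y A)S\bigr) - \tfrac14\tr\bigl(A\,\nabla_Y S\bigr).
\]
The divergence term matches the variation of the first summand, and $\tr(S\,\nabla_Y A) = \tr((\nabla_Y A)S)$ by cyclicity of the trace; so the lemma reduces to the pointwise identity
\[
\tr\bigl((\tangent_j\phi_{j_0}(A))\,\nabla_Y(j+j_0)\bigr) = \tr\bigl(Aj\,\nabla_Y j\bigr) + \tr\bigl(A\,\nabla_Y S\bigr).
\]

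To establish this I would compute $\nabla_Y S = 2(j+j_0)^{-1}\bigl(\nabla_Y(j+j_0)\bigr)(j+j_0)^{-1}j_0 - 2(j+j_0)^{-1}\nabla_Y j_0$ from $S = \mathrm{id} - 2(j+j_0)^{-1}j_0$ together with $\nabla_Y(j+j_0)^{-1} = -(j+j_0)^{-1}\bigl(\nabla_Y(j+j_0)\bigr)(j+j_0)^{-1}$, substitute this and the Cayley derivative, and reduce everything by cyclicity of the trace to a statement of the form $\tr(A\,\Phi) = 0$, where $\Phi$ is an explicit endomorphism assembled from $j$, $j_0$, $\nabla_Y j$ and $\nabla_Y j_0$. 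Because $A$ ranges over $g_j$-symmetric endomorphisms anticommuting with $j$, it suffices to check that the $g_j$-symmetric, $j$-anticommuting part of $\Phi$ vanishes, which follows from $j^2 = j_0^2 = -\mathrm{id}$ and the compatibility of $j$ and $j_0$ with $\omega$. The computation streamlines if one takes $\nabla$ to be the Levi-Civita connection of $g_j$: then $\nabla_Y j$ is $g_j$-antisymmetric, hence so is $j\,\nabla_Y j$, and the term $\tr(Aj\,\nabla_Y j)$ drops out (this is exactly \cref{prop:kaehler:tauForLeviCivita}). Since $\tau^\nabla(j,A)$ and $\dif\tr(AS)$ are independent of $\nabla$, and (as one checks directly, using $(j+j_0)j - j_0(j+j_0) = j^2 - j_0^2 = 0$) the $\nabla$-dependent contributions to the variation of \eqref{eq:kaehler:momentumMap:definitionJ} cancel, so is the $1$-form $J(j_0,j)$, and this choice loses no generality. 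The only real obstacle is this last bookkeeping step; it is elementary but error-prone, and I would organize it by systematically rewriting $(j+j_0)^{-1}j_0 = \tfrac12(\mathrm{id} - S)$ throughout. An alternative that avoids recomputing from scratch is to differentiate instead the equivalent expression $J(j_0, j) = \tfrac12\bigl(\tau^\nabla(j, Sj) + \tau^\nabla(j_0, Sj_0)\bigr)$ derived in the proof of \cref{prop:kaehler:momentumMap}, which trades the inverse $(j+j_0)^{-1}$ appearing under $\nabla$ for the already-computed quantity $\tangent_j\phi_{j_0}(A)$.
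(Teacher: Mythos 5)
Your proposal is correct and follows essentially the same route as the paper: differentiate the explicit formula \eqref{eq:kaehler:momentumMap:definitionJ} using \eqref{eq:kaehler:contraction:derivativePhi}, and reduce the claim to the pointwise trace identity \( 2\tr\bigl((j+j_0)^{-1}A(j+j_0)^{-1}j_0\,\nabla_Y(j+j_0)\bigr) = \tr\bigl(Aj\,\nabla_Yj\bigr) + \tr\bigl(A\,\nabla_Y\phi_{j_0}(j)\bigr) \), which is precisely the identity the paper establishes. For the final bookkeeping step you flag as error-prone, the paper's shortcut is to multiply \( \phi_{j_0}(j)\,jA = Aj - 2(j+j_0)^{-1}A(j+j_0)^{-1}(j_0+j) \) by \( \nabla_Yj \) and take traces, noting that \( \tr\bigl(\phi_{j_0}(j)\,jA\,\nabla_Yj\bigr) = 0 \) because \( \phi_{j_0}(j) \), \( A \), and \( \nabla_Yj \) all anticommute with \( j \) --- a one-line substitute for your analysis of the \( g_j \)-symmetric, \( j \)-anticommuting part of \( \Phi \).
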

\begin{proof}
Continuing using the notation \( \phi_{j_0}(j) \) for the Cayley transform 
introduced in~\eqref{Cayley_transform_complex_structures_vector_space}, 
using~\eqref{eq:kaehler:contraction:derivativePhi} we get 
\begin{equation}\begin{split}
\tangent_j J(j_0, \cdot)(A)(Y)
&= -\frac{1}{2} \tr\bigl((\nabla A)Y\bigr) - 
\frac{1}{2} \tr \bigl((j+j_0)^{-1} A (j+j_0)^{-1} j_0 
\nabla_Y (j + j_0)\bigr) \\
&\qquad- \frac{1}{4} \tr \bigl(\phi_{j_0}(j) \nabla_Y A\bigr).
\end{split}\end{equation}
On the other hand,
\begin{equation}
\phi_{j_0}(j) jA = Aj - 2 (j+j_0)^{-1} A (j+j_0)^{-1} (j_0 + j).
\end{equation}
Since \( \phi_{j_0}(j) \), \( A \), and \( \nabla_Y j \) all anti-commute 
with \( j \), we obtain
\begin{equation}
0 = \tr(\phi_{j_0}(j) jA) = \tr\bigl(Aj \nabla_Y j\bigr) - 
2 \tr\bigl((j+j_0)^{-1} A (j+j_0)^{-1} (j_0 + j) \nabla_Y j \bigr)
\end{equation}
and thus
\begin{equation}\begin{split}
\tr\bigl(Aj \nabla_Y j\bigr) &- 2 \tr\bigl((j+j_0)^{-1} A 
(j+j_0)^{-1} j_0 \nabla_Y (j+j_0) \bigr) \\
&= 2 \tr\left((j+j_0)^{-1} A (j+j_0)^{-1} \bigl(j \nabla_Y j 
- j_0 \nabla_Y j_0 \bigr)\right) \\
&= 2 \tr\left(A (j+j_0)^{-1} \bigl(j \nabla_Y j - 
j_0 \nabla_Y j_0 \bigr) (j+j_0)^{-1}\right) \\
&= -\tr(A \nabla_Y \phi_{j_0}(j)).
\end{split}\end{equation}
Hence we conclude
\begin{equation}\begin{split}
\tangent_j J(j_0, \cdot)(A)(Y)
&= -\frac{1}{2} \tr\bigl((\nabla A)Y\bigr) - 
\frac{1}{4} \tr\bigl(Aj \nabla_Y j\bigr) - 
\frac{1}{4}\tr(A \nabla_Y \phi_{j_0}(j)) \\
&\qquad- \frac{1}{4} \tr \bigl(\phi_{j_0}(j) \nabla_Y A\bigr)\\
&= - \frac{1}{2} \tau^\nabla(j, A)(Y) - 
\frac{1}{4} \nabla_Y \tr\bigl(A \phi_{j_0}(j)\bigr).
\end{split}\end{equation}
This finishes the proof.
\end{proof}

In order to give a geometric interpretation of the \( 1 \)-form, and 
thereby of the momentum map \( \SectionMapAbb{J} \), we recall a few 
basic facts from almost complex geometry; see 
\parencite{GarciaPradaSalamon2020,Gauduchon2017} for more details.
For every almost complex structure \( j \), the Levi-Civita 
connection \( \nabla \) associated with the Riemannian metric 
\( g_j = \omega(\cdot, j \cdot) \) induces the Chern connection 
(also called the second canonical Hermitian connection)
\begin{equation}
	\tilde{\nabla}_X Y = \nabla_X Y - \frac{1}{2} j \, (\nabla_X j) Y.
\end{equation}
This is the unique connection that preserves the metric \( g_j \),
the complex structure \( j \), and the symplectic form \( \omega \), 
and whose torsion is the Nijenhuis tensor \( N_j \).
The \emphDef{Chern--Ricci form} is defined by
\begin{equation}
\mathrm{Ric}_j(X, Y) \defeq 
\frac{1}{2} \tr \left(R^{\tilde{\nabla}}(X, Y) j\right),
\end{equation}
where \( R^{\tilde{\nabla}} \) is the curvature of the Chern connection 
(on \( \TBundle M \)). With these conventions, \( \I \, \mathrm{Ric}_j \)
is the curvature of the induced Chern connection on the anti-canonical bundle 
\( \KBundle^{-1}_j M = \ExtBundle^{0,n} (\CotBundle M) \) and 
\( \frac{1}{2\pi}\mathrm{Ric}_j \) represents 
\( c_1(M) = c_1 \bigl(\KBundle^{-1}_j M\bigr) \in \sCohomology^2(M, \Z) \).
Moreover, \( S_j \defeq \tr_\omega \mathrm{Ric}_j \) is 
the \emph{Chern scalar curvature}.
In the following, we also need the normalized version
\begin{equation}
	\label{eq:kaehler:scalarCurvatureNormalized}
	\bar{S}_{j} \defeq S_{j} 
	- \frac{1}{\vol_{\mu_\omega}(M)} \int_M S_{j} \, \mu_\omega \, .
\end{equation}

Since the space \( \SectionSpaceAbb{I} \) of almost complex structures 
compatible with \( \omega \) is contractible, the anti-canonical bundles 
\( \KBundle^{-1}_j M \) are all isomorphic as \( j \) varies 
in \( \SectionSpaceAbb{I} \).
For every \( j_0, j \in \SectionSpaceAbb{I} \), choose an isomorphism 
\( \KBundle^{-1}_{j_0} M \isomorph \KBundle^{-1}_j M \), and 
let \( \widebar{J}(j_0, \cdot) \) be the difference of the 
Chern connections on \( \KBundle^{-1}_{j_0} M \) and 
\( \KBundle^{-1}_j M \) (under this isomorphism).
Choosing a different isomorphism of the anti-canonical bundles changes 
\( \widebar{J}(j_0, \cdot) \) by an exact \( 1 \)-form. This gives 
rise to a well-defined map
\begin{equation}
\widebar{J}(j_0, \cdot): \SectionSpaceAbb{I} \to 
\DiffFormSpace^1(M) \slash \dif \DiffFormSpace^0(M).
\end{equation}
\Textcite{Mohsen2003} showed that the derivative of this map is given by
\begin{equation}
\tangent_j \bigl(\widebar{J}(j_0, \cdot)\bigr)(A) = 
-\frac{1}{2} \tau^\nabla(j, A) \mod \dif \DiffFormSpace^0(M)
\end{equation}
for the Levi-Civita connection \( \nabla \); see also 
\parencites[Proposition~9.5.1]{Gauduchon2017}[Proposition~9]{Vernier2020}.
Clearly, \( \widebar{J}(j_0, \cdot) \) vanishes at \( j_0 \), and 
hence, by \cref{prop:kaehler:variationJ}, the maps 
\( j \mapsto \widebar{J}(j_0, j) \) and 
\( j \mapsto J(j_0, j) \mod \dif \DiffFormSpace^0(M) \) 
have to coincide.
In other words, the \( 1 \)-form \( J(j_0, j) \) is the difference of 
the Chern connections on the anti-canonical bundles 
\( \KBundle^{-1}_{j_0} M \) and \( \KBundle^{-1}_j M \) under the isomorphism 
\( \KBundle^{-1}_{j_0} M \isomorph \KBundle^{-1}_j M \) induced by 
the generalized Cayley transform \( \Lambda \).
Based on this discussion, an equivalent restatement of 
\cref{prop:kaehler:momentumMap}, enlightening the geometric meaning, 
is the following.
\begin{thm}
\label{prop:kaehler:momentumMapViaCanonicalBundle}
For every \( j_0, j \in \SectionSpaceAbb{I} \), let \( \widebar{J}(j_0, j) 
\in \DiffFormSpace^{1}(M) \slash \dif \DiffFormSpace^{0}(M) \) be the 
difference of the Chern connections on the anti-canonical 
bundles \( \KBundle^{-1}_{j_0} M \) 
and \( \KBundle^{-1}_j M \) under an arbitrary isomorphism 
\( \KBundle^{-1}_{j_0} M \isomorph \KBundle^{-1}_j M \).
Then the unique momentum map \( \SectionMapAbb{J}: \SectionSpaceAbb{I} 
\to \DiffFormSpace^{2n-1}(M) \slash \dif \DiffFormSpace^{2n-2}(M) \) 
for the action of \( \DiffGroup(M, \omega) \) on \( \SectionSpaceAbb{I} \) 
satisfying \( \SectionMapAbb{J}(j_0) = 0 \) is given by 
\( \SectionMapAbb{J}(j) = \widebar{J}(j_0, j) \wedge \omega^{n-1} \).
\end{thm}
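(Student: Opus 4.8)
The plan is to reduce the statement to \cref{prop:kaehler:momentumMap} together with the identification, modulo exact forms, of the explicit $1$-form $J(j_0,j)$ from~\eqref{eq:kaehler:momentumMap:definitionJ} with the difference $\widebar{J}(j_0,j)$ of the Chern connections on the anti-canonical bundles. First I would recall that \cref{prop:kaehler:momentumMap} already shows that the unique momentum map for the $\DiffGroup(M,\omega)$-action on $\SectionSpaceAbb{I}$ vanishing at $j_0$ is $j \mapsto \bigl[J(j_0,j)\wedge\omega^{n-1}\bigr]$ in $\DiffFormSpace^{2n-1}(M)\slash\dif\DiffFormSpace^{2n-2}(M)$. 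Since $\omega$ is closed, $\dif\bigl(f\,\omega^{n-1}\bigr)=\dif f\wedge\omega^{n-1}$, so wedging with $\omega^{n-1}$ descends to a well-defined map $\DiffFormSpace^{1}(M)\slash\dif\DiffFormSpace^{0}(M)\to\DiffFormSpace^{2n-1}(M)\slash\dif\DiffFormSpace^{2n-2}(M)$; hence it suffices to prove $J(j_0,j)\equiv\widebar{J}(j_0,j)\bmod\dif\DiffFormSpace^{0}(M)$ for every $j\in\SectionSpaceAbb{I}$.

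Next I would compare the two maps $j\mapsto J(j_0,j)$ and $j\mapsto\widebar{J}(j_0,j)$ with values in $\DiffFormSpace^{1}(M)\slash\dif\DiffFormSpace^{0}(M)$. Both vanish at $j=j_0$: for the former this is immediate from~\eqref{eq:kaehler:momentumMap:definitionJ}, and for the latter it follows from the construction, since the Chern connections of $\KBundle^{-1}_{j_0}M$ and $\KBundle^{-1}_{j}M$ agree when $j=j_0$. Their derivatives also agree: \cref{prop:kaehler:variationJ} gives $\tangent_j J(j_0,\cdot)(A)=-\frac{1}{2}\tau^\nabla(j,A)-\frac{1}{4}\dif\tr\bigl(A\,\phi_{j_0}(j)\bigr)$, which modulo $\dif\DiffFormSpace^{0}(M)$ equals $-\frac{1}{2}\tau^\nabla(j,A)$ for the Levi-Civita connection $\nabla$ of $g_j$, and \textcite{Mohsen2003} gives precisely $\tangent_j\bigl(\widebar{J}(j_0,\cdot)\bigr)(A)=-\frac{1}{2}\tau^\nabla(j,A)\bmod\dif\DiffFormSpace^{0}(M)$. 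Since $\SectionSpaceAbb{I}$ is contractible, in particular path-connected, integrating a common derivative along a smooth path from $j_0$ to $j$ (the fundamental theorem of calculus on the Fr\'echet manifold $\SectionSpaceAbb{I}$) shows the two maps coincide mod exact $1$-forms. Combining this with the first paragraph yields $\SectionMapAbb{J}(j)=\widebar{J}(j_0,j)\wedge\omega^{n-1}$, and uniqueness of the momentum map vanishing at $j_0$ is inherited from \cref{prop:kaehler:momentumMap}.

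The main point requiring care is bookkeeping rather than a genuine obstacle: one must check that $\widebar{J}(j_0,\cdot)$ is in fact well defined as a (smooth) map into $\DiffFormSpace^{1}(M)\slash\dif\DiffFormSpace^{0}(M)$, i.e.\ that a change of the isomorphism $\KBundle^{-1}_{j_0}M\isomorph\KBundle^{-1}_jM$ alters it only by an exact $1$-form, and that the cited variation formula holds verbatim in the non-integrable almost complex setting (where the Chern connection is the second canonical Hermitian connection with torsion the Nijenhuis tensor). One also needs the ``same value at a point plus same derivative on a connected manifold implies equality'' argument, which is routine but must be phrased in the Fr\'echet category using smooth curves in $\SectionSpaceAbb{I}$ and the explicit contraction $\Lambda$ already at hand. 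Everything else is a formal consequence of \cref{prop:kaehler:momentumMap} and \cref{prop:kaehler:variationJ}.
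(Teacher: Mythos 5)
Your proposal is correct and follows essentially the same route as the paper: reduce to \cref{prop:kaehler:momentumMap}, then identify \( J(j_0,\cdot) \) with \( \widebar{J}(j_0,\cdot) \) modulo exact forms by checking that both vanish at \( j_0 \) and have the same derivative \( -\tfrac{1}{2}\tau^\nabla(j,A) \bmod \dif\DiffFormSpace^0(M) \) (via \cref{prop:kaehler:variationJ} and the cited result of Mohsen), using connectedness of \( \SectionSpaceAbb{I} \) to conclude. The bookkeeping points you flag (well-definedness of \( \widebar{J} \) under change of isomorphism, validity in the non-integrable case) are exactly the ones the paper addresses in the discussion preceding the theorem.
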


\begin{remark}
In \parencite{DiezRatiuAutomorphisms}, we have investigated the action 
of \( \DiffGroup(M, \omega) \) on \( \SectionSpaceAbb{I} \) and showed 
that it admits a so-called group-valued momentum map.
Let us briefly outline the construction. Assume that \( \omega \) has 
integral periods so that there exists a prequantum bundle \( L \to M \).
Let \( \KBundle_j M \) be the canonical bundle induced by 
\( j \in \SectionSpaceAbb{I} \), and consider the map 
\begin{equation}
\skew{3}{\tilde}{\SectionMapAbb{J}}: \SectionSpaceAbb{I} \to 
\csCohomology^{2n}(M, \UGroup(1)), \qquad j \mapsto \KBundle_j M \star L^{n-1},
\end{equation}
where \( \csCohomology^{k}(M, \UGroup(1)) \) is the group of Cheeger-Simons 
differential characters, and \( \star: \csCohomology^{k}(M, \UGroup(1)) \times 
\csCohomology^{l}(M, \UGroup(1)) \to \csCohomology^{k+l}(M, \UGroup(1)) \) 
is the natural ring structure; see, \eg, \parencite{BaerBecker2013}.
By construction, \( \KBundle^{-1}_j M \star L^{n-1} \) can be viewed as a 
higher bundle with connection whose curvature is 
\( \mathrm{Ric}_j \wedge \omega^{n-1} = \frac{S_j}{2n} \omega^n \).
By \parencite[Theorem~4.10]{DiezRatiuAutomorphisms}, 
\( \skew{3}{\tilde}{\SectionMapAbb{J}} \) is a group-valued momentum 
map for the action of the group of symplectomorphisms in the sense 
that the left logarithmic derivative 
\( \difLog \skew{3}{\tilde}{\SectionMapAbb{J}} \in 
\DiffFormSpace^1\bigl(\SectionSpaceAbb{I}, 
\DiffFormSpace^{2n-1}(M) \slash \dif \DiffFormSpace^{2n-2}(M)\bigr) \) 
satisfies
\begin{equation}
\xi^* \contr \Omega + \kappa\bigl(\difLog \skew{3}{\tilde}{\SectionMapAbb{J}},
\xi\bigr) = 0,
\end{equation}
where \( \xi^* \) is the fundamental vector field on \( \SectionSpaceAbb{I} \) 
induced by the action of \( \xi \in \VectorFieldSpace(M, \omega) \).

Choose \( j_0 \in \SectionSpaceAbb{I} \).
Since the Chern class of the anti-canonical bundle \( \KBundle^{-1}_j M \) 
is independent 
of the almost complex structure \( j \), there exists a map 
\( \tilde{J}(j_0, \cdot): \SectionSpaceAbb{I} \to \DiffFormSpace^{1}(M) \slash 
\clZDiffFormSpace^{1}(M) \) such that 
\begin{equation}
\label{eq:kaehler:momentumMap:asDifferenceCanonicalBundle}
	\KBundle^{-1}_j M = \KBundle^{-1}_{j_0} M - 
	\iota\bigl(\tilde{J}(j_0, j)\bigr),
\end{equation}
where \( \iota: \DiffFormSpace^{k}(M) \slash \clZDiffFormSpace^{k}(M) 
\to \csCohomology^{k+1}(M, \UGroup(1)) \) is the inclusion of topologically 
trivial characters and \( \clZDiffFormSpace^{k}(M) \) is the space of 
closed forms with integral periods. This identity states that 
\( \KBundle^{-1}_j M \) and \( \KBundle^{-1}_{j_0} M \) are isomorphic, 
and \( \tilde{J}(j_0, j) \) 
is the difference of the Chern connections on these bundles up to gauge 
transformations. Since \( \SectionSpaceAbb{I} \) is contractible, there 
exists a lift \( \widebar{J}(j_0, \cdot): \SectionSpaceAbb{I} 
\to \DiffFormSpace^{1}(M) \slash \dif \DiffFormSpace^{0}(M) \) of 
\( \tilde{J}(j_0, \cdot) \) covering the projection 
\( \pr: \DiffFormSpace^{k}(M) \slash \dif \DiffFormSpace^{k-1}(M) 
\to \DiffFormSpace^{k}(M) \slash \clZDiffFormSpace^{k}(M) \). Thus,
\begin{equation}
\skew{3}{\tilde}{\SectionMapAbb{J}}(j) = 
\skew{3}{\tilde}{\SectionMapAbb{J}}(j_0) + 
\iota \circ \pr \bigl(\widebar{J}(j_0, j) \wedge \omega^{n-1} \bigr).
\end{equation}
Hence, the logarithmic derivative is given by 
\( \difLog \skew{3}{\tilde}{\SectionMapAbb{J}} = 
\bigl(\tangent \widebar{J}(j_0, \cdot)\bigr) \wedge \omega^{n-1} \).
This shows that the map
\begin{equation}
\SectionSpaceAbb{I} \ni j \mapsto \widebar{J}(j_0, j) \wedge \omega^{n-1} 
\in \DiffFormSpace^{2n-1}(M) \slash \dif \DiffFormSpace^{2n-2}(M)
\end{equation}
is a momentum map for the action of \( \DiffGroup(M, \omega) \) 
on \( \SectionSpaceAbb{I} \). Clearly, it vanishes at \( j_0 \) and 
thus by uniqueness has to coincide with the momentum map 
\( \SectionMapAbb{J} \).
In this way, we recover 
\cref{prop:kaehler:momentumMap,prop:kaehler:momentumMapViaCanonicalBundle}.
Note that the group-valued momentum map \( \skew{3}{\tilde}{\SectionMapAbb{J}} \) 
is equivariant under the action of \( \DiffGroup(M, \omega) \), but this 
equivariance is broken for \( \SectionMapAbb{J} \) by choosing a reference 
complex structure. We study this non-equivariance in more detail below 
and will see that it has a topological character.
\end{remark}

\Cref{prop:kaehler:momentumMap} implies that the Chern scalar 
curvature is the momentum map for the action of the subgroup of Hamiltonian 
diffeomorphisms.
In this way we recover the result of \textcite{Fujiki1992,Donaldson1997} 
that the Chern scalar curvature is the momentum map for the action of the 
subgroup of Hamiltonian diffeomorphisms.
\begin{coro}
	\label{prop:kaehler:momentumMapHamiltonian}
The action of the group of Hamiltonian diffeomorphisms on 
\( \SectionSpaceAbb{I} \) has a momentum map
\begin{equation}
\SectionMapAbb{J}_{\HamDiffGroup}(j) = 
\frac{1}{2} \bigl(S_{j} - S_{j_0}\bigr) \, \mu_\omega
\end{equation}
relative to the integration pairing of \( \sFunctionSpace_0(M) \) 
and \( \dif \DiffFormSpace^{2n-1}(M) \).
The non-equivariance one-cocycle 
\begin{equation}
\HamDiffGroup(M, \omega) \to \dif \DiffFormSpace^{2n-1}(M), 
\qquad \phi \mapsto \frac{1}{2} \bigl(S_{j_0} \circ 
\phi^{-1} - S_{j_0}\bigr) \, \mu_\omega
\end{equation}
is a coboundary.
\end{coro}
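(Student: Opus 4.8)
The plan is to deduce \cref{prop:kaehler:momentumMapHamiltonian} from \cref{prop:kaehler:momentumMap} together with its geometric reformulation \cref{prop:kaehler:momentumMapViaCanonicalBundle}, by restriction to the subgroup \(\HamDiffGroup(M, \omega) \subseteq \DiffGroup(M, \omega)\). Since a momentum map for a group action restricts to a momentum map for any subgroup (compose with the dual of the inclusion of Lie algebras), it suffices to compute the image of \(\SectionMapAbb{J}\) under the dual of the inclusion of the Lie algebra of \(\HamDiffGroup(M, \omega)\) into \(\VectorFieldSpace(M, \omega)\). First I would identify that Lie algebra with \(\sFunctionSpace_0(M)\) via \(f \mapsto X_f\), \(X_f \contr \omega = \dif f\), and dualize: a short integration by parts turns \(\kappa(\equivClass{\alpha}, X_f) = \frac{1}{(n-1)!}\int_M \alpha \wedge \dif f\) into \(\frac{1}{(n-1)!}\int_M f\, \dif\alpha\), so the dual of the inclusion sends the class of \(\alpha \in \DiffFormSpace^{2n-1}(M)\) to \(\frac{1}{(n-1)!}\dif\alpha \in \dif\DiffFormSpace^{2n-1}(M)\), paired with \(f\) by \(\int_M f\, \dif\alpha\). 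Hence \(\SectionMapAbb{J}_{\HamDiffGroup}(j) = \frac{1}{(n-1)!}\dif\bigl(J(j_0, j) \wedge \omega^{n-1}\bigr) = \frac{1}{(n-1)!}\bigl(\dif J(j_0, j)\bigr) \wedge \omega^{n-1}\), using \(\dif\omega = 0\).

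The core of the argument is then to rewrite this in terms of scalar curvature. By the discussion preceding \cref{prop:kaehler:momentumMapViaCanonicalBundle}, the \(1\)-form \(J(j_0, j)\) is the difference of the Chern connections on the anti-canonical bundles \(\KBundle^{-1}_{j_0}M\) and \(\KBundle^{-1}_{j}M\); since \(\I\,\mathrm{Ric}_j\) is the curvature of the Chern connection on \(\KBundle^{-1}_{j}M\), taking \(\dif\) gives \(\dif J(j_0, j) = \mathrm{Ric}_j - \mathrm{Ric}_{j_0}\). Combining this with the pointwise identity \(\beta \wedge \omega^{n-1} = \frac{1}{2} (n-1)!\, (\tr_\omega \beta)\, \mu_\omega\) for a \(2\)-form \(\beta\) (this is where the factor \(\frac12\) enters, via the normalization \(\tr_\omega \omega = 2n\)) and the definition \(S_j = \tr_\omega \mathrm{Ric}_j\) yields \(\SectionMapAbb{J}_{\HamDiffGroup}(j) = \frac{1}{2} (S_j - S_{j_0})\, \mu_\omega\). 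Since this formula vanishes at \(j_0\), and since \(\SectionSpaceAbb{I}\) is connected, it is the unique such momentum map, which recovers the theorem of Fujiki and Donaldson.

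For the statement on the non-equivariance one-cocycle, I would evaluate \(\sigma(\phi) = \SectionMapAbb{J}_{\HamDiffGroup}(\phi \cdot j) - \CoAdAction_{\phi^{-1}} \SectionMapAbb{J}_{\HamDiffGroup}(j)\) at \(j = j_0\); since \(\SectionMapAbb{J}_{\HamDiffGroup}(j_0) = 0\), this gives \(\sigma(\phi) = \SectionMapAbb{J}_{\HamDiffGroup}(\phi \cdot j_0) = \frac{1}{2} (S_{\phi \cdot j_0} - S_{j_0})\, \mu_\omega\). By naturality of the Chern connection (hence of \(\mathrm{Ric}\), hence of \(S\)) under the symplectomorphism \(\phi\), together with \(\phi^* \omega = \omega\), one has \(S_{\phi \cdot j_0} = S_{j_0} \circ \phi^{-1}\), which is exactly the claimed cocycle. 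To see it is a coboundary, I would extend the coadjoint action from \(\dif\DiffFormSpace^{2n-1}(M)\) to all of \(\DiffFormSpace^{2n}(M)\) by pullback, \(\CoAdAction_{\phi^{-1}}(h\,\mu_\omega) = (h \circ \phi^{-1})\,\mu_\omega\) (using that \(\phi\) preserves \(\mu_\omega\)), and note that then \(\sigma(\phi) = \CoAdAction_{\phi^{-1}}\bigl(\frac{1}{2} S_{j_0}\, \mu_\omega\bigr) - \frac{1}{2} S_{j_0}\, \mu_\omega\); equivalently, \(\SectionMapAbb{J}_{\HamDiffGroup}(j) + \frac{1}{2} S_{j_0}\, \mu_\omega = \frac{1}{2} S_j\, \mu_\omega\) is an \(\CoAdAction\)-equivariant momentum map, so all the non-equivariance of \(\SectionMapAbb{J}_{\HamDiffGroup}\) comes from the constant shift. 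The main obstacle is not conceptual but a matter of bookkeeping: tracking the normalization constants and signs (from the definitions of \(\Omega\), \(\kappa\), and the Chern--Ricci form) through to the factor \(\frac12\), and handling the mild degeneracy that \(\frac{1}{2} S_{j_0}\, \mu_\omega\) itself does not lie in the dual space \(\dif\DiffFormSpace^{2n-1}(M)\) — its integral over \(M\) is the topological quantity \(\propto c_1 \cdot [\omega]^{n-1}\), generically nonzero — whereas its coboundary does, because \(\phi\) preserves total integrals.
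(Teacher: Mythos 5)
Your proposal is correct and follows essentially the same route as the paper's proof: restrict the momentum map of \cref{prop:kaehler:momentumMap} to the Hamiltonian subgroup by dualizing \( f \mapsto X_f \) (whose adjoint is, up to sign and normalization, the exterior differential), then combine \( \dif J(j_0, j) \) expressed through the Chern--Ricci forms with the identity~\eqref{eq:symplectic:formWedgeOmegaNMinus1} to produce \( \tfrac{1}{2}(S_j - S_{j_0})\,\mu_\omega \), and read off the one-cocycle from \( \SectionMapAbb{J}_{\HamDiffGroup}(j_0) = 0 \); your extra observation that the cocycle is the coboundary of (the mean-normalized) \( \tfrac{1}{2} S_{j_0}\,\mu_\omega \) is a valid elaboration of a point the paper only asserts. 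One caveat: you write \( X_f \contr \omega = \dif f \) and \( \dif J(j_0, j) = \mathrm{Ric}_j - \mathrm{Ric}_{j_0} \), each of which has the opposite sign to the paper's conventions (the appendix fixes \( \dif h = - X_h \contr \omega \), and~\eqref{eq:kaehler:difOfJForm} reads \( \dif J(j_0,j) = \mathrm{Ric}_{j_0} - \mathrm{Ric}_{j} \)); the two slips cancel, so your final formula agrees with the corollary, but as written neither intermediate identity matches the paper.
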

\begin{proof}
Consider the isomorphism of the space 
\(\HamVectorFields(M, \omega)\) of Hamiltonian vector fields with the 
space \( \sFunctionSpace_0(M) \) of smooth functions on \( M \) with zero 
mean given by the map \( f \mapsto X_f \).
By Hodge theory, the natural integration pairing gives a non-degenerate 
pairing of \( \sFunctionSpace_0(M) \) and \( \dif \DiffFormSpace^{2n-1}(M) \).
The following calculation, for 
\( \alpha \in \DiffFormSpace^{2n-1}(M) \) and 
\( f \in \sFunctionSpace_0(M) \),
\begin{equation}
	(n-1)! \, \kappa\bigl(\equivClass{\alpha}, X_f \bigr)
		= \int_M \alpha \wedge (X_f \contr \omega)
		= - \int_M \alpha \wedge \dif f
		= -\int_M f \dif \alpha
		= \dualPair{-\dif \alpha}{f},
\end{equation}
shows that the adjoint of the map \( f \mapsto X_f \) is essentially 
given by the exterior differential \( \dif: \DiffFormSpace^{2n-1}(M) 
\slash \dif \DiffFormSpace^{2n-2}(M) \to \dif \DiffFormSpace^{2n-1}(M) \).
Thus, the momentum map for the action of \( \HamDiffGroup(M, \omega) \) 
on \( \SectionSpaceAbb{I} \) is given by
\begin{equation}
\SectionMapAbb{J}_{\HamDiffGroup}(j) = 
\frac{-1}{(n-1)!} \dif \SectionMapAbb{J}(j) = 
-\dif J(j_0, j) \wedge \frac{\omega^{n-1}}{(n-1)!}.
\end{equation}
Now 
\begin{equation}
	\label{eq:kaehler:difOfJForm}
	\dif J(j_0, j) = \mathrm{Ric}_{j_0} - \mathrm{Ric}_{j}.
\end{equation}
This identity follows either from a direct calculation using identities 
of \parencite[Proof of Theorem~2.6]{Garcia-PradaSalamonTrautwein2018} or 
from the identification of \( J(j_0, j) \) as the difference of the 
Chern connections on the anti-canonical bundle by recalling 
that \( \I \, \mathrm{Ric}_j \) is 
the curvature of the Chern connection on \( K^{-1}_j M \).
Thus, invoking~\eqref{eq:symplectic:formWedgeOmegaNMinus1} and the
definition of the scalar curvature in terms of the Ricci form, we find
\begin{equation}
\SectionMapAbb{J}_{\HamDiffGroup}(j) = 
\bigl(\mathrm{Ric}_{j} - \mathrm{Ric}_{j_0}\bigr) \wedge 
\frac{\omega^{n-1}}{(n-1)!} = \frac{1}{2} (S_{j} - S_{j_0}) \, \mu_\omega.
\end{equation}
The expression for the non-equivariance cocycle follows directly from 
the definition~\eqref{group_one_cocycle}.
This finishes the proof.
\end{proof}

\subsection{Central extension and quasimorphism of \texorpdfstring{$\DiffGroup(M, \omega)$}{Diff(M, w)}}
\label{sec:kaehler:extension}
In contrast to the momentum map \( \SectionMapAbb{J}_{\HamDiffGroup} \) for the 
subgroup of Hamiltonian diffeomorphisms, the momentum map \( \SectionMapAbb{J} \) 
for the full group of symplectomorphisms is not equivariant, in general.
The different equivariance properties of these momentum maps can be succinctly 
captured using the work of \textcite{Vizman2006}, who used the exact sequence
\begin{equationcd}
	0 \to[r]
		& \HamVectorFields(M, \omega) \to[r]
		& \VectorFieldSpace(M, \omega) \to[r]
		& \deRCohomology^1(M) \to[r]
		& 0
\end{equationcd}
to show that the second continuous Lie algebra cohomology 
of \( \VectorFieldSpace(M, \omega) \) consists of sums of extensions of 
certain \( 2 \)-cocycles on \( \HamVectorFields(M, \omega) \) and pull-backs 
of elements of \( \ExtBundle^2 {\deRCohomology^1(M)}^* \); see 
\parencite[Corollary~4.4]{Vizman2006}.

In order to describe the corresponding decomposition of the non-equivariance 
cocycle of \( \SectionMapAbb{J} \), we need the following description of a 
cocycle associated with a closed \( 2 \)-form.
\begin{lemma}
\label{prop:kaehler:LichnerowiczCocyclePullback}
Let \( (M, \omega) \) be a closed \( 2n \)-dimensional symplectic manifold 
with \( n \geq 2 \), and \( \lambda \) be a closed \( 2 \)-form on \( M \).
Then the associated Lichnerowicz \( 2 \)-cocycle\footnotemark{}
\footnotetext{This is a slight abuse of conventions since, usually, the 
name \textquote{Lichnerowicz cocycle} refers to the cocycle defined on 
the Lie algebra of volume-preserving vector fields. However, we are 
mainly interested in its restriction to the subalgebra of symplectic 
vector fields.}
\begin{equation}
\lambda_c (\xi, \eta) \defeq \int_M \lambda(\xi, \eta) \, \mu_\omega
\end{equation}
on \( \VectorFieldSpace(M, \omega) \) is cohomologous to the pull-back
by the map  
\( \VectorFieldSpace(M, \omega) \ni \xi \mapsto  [\xi \contr \omega] 
\in \deRCohomology^1(M) \) of the bilinear form
\begin{equation}
\label{eq:kaehler:LichnerowiczCocyclePullback:onH1}
\lambda_c^H\bigl(\equivClass{\alpha}, \equivClass{\beta}\bigr) 
= \int_M \Biggl(\frac{\mathrm{Av}_\omega (\lambda)}{2 (n-1)}  \, \omega 
- \lambda\Biggr) \wedge \alpha \wedge \beta \wedge 
\frac{\omega^{n-2}}{(n-2)!}
\end{equation}
on \( \deRCohomology^1(M) \), where \( \mathrm{Av}_\omega (\lambda) \defeq 
\frac{1}{\vol_{\mu_\omega}(M)} \int_M \tr_\omega (\lambda) \, \mu_\omega \) and 
\( \tr_\omega (\lambda) = \tensor{\lambda}{_i^i} = \varpi^{ij} \lambda_{ij}\); 
see~\eqref{sharp}.
In particular, the restriction of \( \lambda_c \) to 
\( \HamVectorFields(M, \omega) \) is trivial in Lie algebra cohomology.

Moreover, \( \lambda_c^H \) vanishes if \( \lambda \wedge \omega^{n-2} \) 
is exact. If the cup product yields an isomorphism 
\( \ExtBundle^2 {\deRCohomology^1(M)} = \deRCohomology^2(M) \), then 
exactness of \( \lambda \wedge \omega^{n-2} \) is also necessary for 
\( \lambda_c^H \) to vanish.
\end{lemma}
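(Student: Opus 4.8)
The plan is to pull everything back to de Rham cohomology through the isomorphism $\xi \mapsto \alpha_\xi \defeq \xi \contr \omega$ between $\VectorFieldSpace(M,\omega)$ and the closed $1$-forms, and then to exhibit $\lambda_c$ as $\pi^*\lambda_c^H$ plus an explicit Lie-algebra coboundary by a single pointwise computation. Write $\pi\colon\VectorFieldSpace(M,\omega)\to\deRCohomology^1(M)$, $\xi\mapsto[\alpha_\xi]$. The input is that $\lambda\wedge\alpha_\xi\wedge\omega^{n-1}$ is a $(2n+1)$-form on $M$, hence vanishes identically; contracting it with $\eta$, expanding by the Leibniz rule, and using $\eta\contr(\omega^k)=k\,\alpha_\eta\wedge\omega^{k-1}$, $(\eta\contr\lambda)\wedge\alpha_\xi\wedge\frac{\omega^{n-1}}{(n-1)!}=\lambda(\eta,\xi)\,\mu_\omega$ (itself from $(\eta\contr\lambda)\wedge(\xi\contr\omega^n)=\lambda(\eta,\xi)\,\omega^n$) and $\lambda\wedge\frac{\omega^{n-1}}{(n-1)!}=\frac12\tr_\omega(\lambda)\,\mu_\omega$, one obtains the pointwise identity
\[
\lambda(\xi,\eta)\,\mu_\omega = -\,\lambda\wedge\alpha_\xi\wedge\alpha_\eta\wedge\frac{\omega^{n-2}}{(n-2)!} + \frac12\,\omega(\xi,\eta)\,\tr_\omega(\lambda)\,\mu_\omega .
\]
Integrating, $\lambda_c(\xi,\eta) = -\int_M\lambda\wedge\alpha_\xi\wedge\alpha_\eta\wedge\frac{\omega^{n-2}}{(n-2)!} + \frac12\int_M\omega(\xi,\eta)\,\tr_\omega(\lambda)\,\mu_\omega$.

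\textbf{The coboundary.} The crucial step — and the main obstacle — is the second summand: it is \emph{not} itself a coboundary, but splits into a genuine coboundary plus a multiple of the $\omega$-Lichnerowicz cocycle $(\xi,\eta)\mapsto\int_M\omega(\xi,\eta)\,\mu_\omega$, which is again a pull-back from $\deRCohomology^1(M)$; this is exactly what forces the $\mathrm{Av}_\omega(\lambda)$-correction in $\lambda_c^H$. Concretely, since $[\xi,\eta]\contr\omega=-\dif\,\omega(\xi,\eta)$, every bracket is Hamiltonian and $\omega(\xi,\eta)=h_{[\xi,\eta]}+a(\xi,\eta)$, where $h_\zeta\in\sFunctionSpace_0(M)$ is the normalized Hamiltonian of $\zeta$ and $a(\xi,\eta)=\frac{1}{\vol_{\mu_\omega}(M)}\int_M\omega(\xi,\eta)\,\mu_\omega$ is constant; splitting also $\tr_\omega(\lambda)=\mathrm{Av}_\omega(\lambda)+(\tr_\omega(\lambda)-\mathrm{Av}_\omega(\lambda))$ and using that a mean-zero function integrates to $0$ against a constant (once for $\tr_\omega(\lambda)-\mathrm{Av}_\omega(\lambda)$ against $a(\xi,\eta)$, once for $h_{[\xi,\eta]}$ against $\mathrm{Av}_\omega(\lambda)$), one gets
\[
\frac12\int_M\omega(\xi,\eta)\,\tr_\omega(\lambda)\,\mu_\omega = \frac12\int_M h_{[\xi,\eta]}\bigl(\tr_\omega(\lambda)-\mathrm{Av}_\omega(\lambda)\bigr)\,\mu_\omega + \frac{\mathrm{Av}_\omega(\lambda)}{2}\int_M\omega(\xi,\eta)\,\mu_\omega .
\]
The first term is $-\delta b(\xi,\eta)$, where $b$ is any continuous extension to $\VectorFieldSpace(M,\omega)$ of $\zeta\mapsto-\frac12\int_M h_\zeta(\tr_\omega(\lambda)-\mathrm{Av}_\omega(\lambda))\,\mu_\omega$ on $\HamVectorFields(M,\omega)$ (legitimate, as $\delta b$ sees $b$ only on $[\VectorFieldSpace(M,\omega),\VectorFieldSpace(M,\omega)]\subseteq\HamVectorFields(M,\omega)$, and such a functional is continuous by Hodge theory and extends by Hahn--Banach). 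For the second term substitute $\int_M\omega(\xi,\eta)\,\mu_\omega=\frac1{n-1}\int_M\omega\wedge\alpha_\xi\wedge\alpha_\eta\wedge\frac{\omega^{n-2}}{(n-2)!}$. Collecting terms gives $\lambda_c=\pi^*\lambda_c^H+\delta b$, which is the first assertion; that $\lambda_c^H$ descends to $\deRCohomology^1(M)$ is because $\bigl(\frac{\mathrm{Av}_\omega(\lambda)}{2(n-1)}\omega-\lambda\bigr)\wedge\frac{\omega^{n-2}}{(n-2)!}$ is closed, so changing $\alpha_\xi$ by an exact form changes the integrand by an exact top form. The \textquote{in particular} is then immediate: $\pi$ kills $\HamVectorFields(M,\omega)$, so $\lambda_c$ restricts there to $\delta(b|_{\HamVectorFields(M,\omega)})$. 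Getting the constant $\frac1{2(n-1)}$ right hinges on the paper's normalization $\lambda\wedge\omega^{n-1}=\frac{1}{2n}\tr_\omega(\lambda)\,\omega^n$.

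\textbf{Sufficiency of exactness.} Put $c_\lambda\defeq\bigl(\frac{\mathrm{Av}_\omega(\lambda)}{2(n-1)}\omega-\lambda\bigr)\wedge\frac{\omega^{n-2}}{(n-2)!}$, a closed $(2n-2)$-form, so that $\lambda_c^H([\alpha],[\beta])=\int_M c_\lambda\wedge\alpha\wedge\beta$ depends only on $[c_\lambda]\in\deRCohomology^{2n-2}(M)$ via the Poincar\'e pairing with $[\alpha]\cup[\beta]$. If $\lambda\wedge\omega^{n-2}$ is exact, then so is $\lambda\wedge\omega^{n-1}=(\lambda\wedge\omega^{n-2})\wedge\omega$, hence $\int_M\lambda\wedge\omega^{n-1}=0$ and therefore $\mathrm{Av}_\omega(\lambda)=0$ (using $\int_M\tr_\omega(\lambda)\,\mu_\omega=\frac{2}{(n-1)!}\int_M\lambda\wedge\omega^{n-1}$); consequently $[c_\lambda]=-\frac1{(n-2)!}[\lambda\wedge\omega^{n-2}]=0$ and $\lambda_c^H$ vanishes.

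\textbf{Necessity of exactness.} Suppose the cup product $\deRCohomology^1(M)\otimes\deRCohomology^1(M)\to\deRCohomology^2(M)$ is onto and $\lambda_c^H=0$; then $\int_M c_\lambda\wedge\gamma=0$ for every closed $2$-form $\gamma$ (since every class in $\deRCohomology^2(M)$ is a sum of products $[\alpha]\cup[\beta]$). Taking $\gamma=\omega$ and computing $\int_M c_\lambda\wedge\omega=\frac12\mathrm{Av}_\omega(\lambda)\,\vol_{\mu_\omega}(M)$ forces $\mathrm{Av}_\omega(\lambda)=0$; hence $[c_\lambda]=-\frac1{(n-2)!}[\lambda\wedge\omega^{n-2}]$ pairs trivially with all of $\deRCohomology^2(M)$, so by Poincar\'e duality $[\lambda\wedge\omega^{n-2}]=0$, i.e.\ $\lambda\wedge\omega^{n-2}$ is exact. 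The one subtlety here is that $\mathrm{Av}_\omega(\lambda)$ must be peeled off first, by pairing with $[\omega]$, before Poincar\'e duality can be applied to $[\lambda\wedge\omega^{n-2}]$ itself.
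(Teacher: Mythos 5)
Your proof is correct and follows essentially the same route as the paper: the same pointwise identity $\alpha(\xi,\eta)\,\mu_\omega = \tfrac12\tr_\omega(\alpha)\,\omega(\xi,\eta)\,\mu_\omega - \alpha\wedge(\xi\contr\omega)\wedge(\eta\contr\omega)\wedge\tfrac{\omega^{n-2}}{(n-2)!}$, the same splitting off of a coboundary governed by $\bar\lambda=\tr_\omega(\lambda)-\mathrm{Av}_\omega(\lambda)$, and the same sufficiency/necessity arguments via the trace of $\lambda\wedge\omega^{n-1}$ and Poincar\'e duality. The only cosmetic difference is your presentation of the coboundary through normalized Hamiltonians $h_{[\xi,\eta]}$ plus a Hahn--Banach extension, whereas the paper writes the same functional directly as $\zeta\mapsto\int_M(\zeta\contr\omega)\wedge\tau$ for a primitive $\tau$ of $\bar\lambda\,\mu_\omega$, which is defined and continuous on all of $\VectorFieldSpace(M,\omega)$ without any extension argument.
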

Note that the bilinear form \( \lambda_c^H \) factors through the 
cup product \( \deRCohomology^1(M) \times \deRCohomology^1(M) \to 
\deRCohomology^2(M) \), and as such is closely related to the 
skew-structures on \( \pi_1(M) \) studied by \textcite{JohnsonRees1991}.

\begin{proof}
By construction, \( \bar{\lambda} = 
\tr_\omega (\lambda) - \mathrm{Av}_\omega (\lambda) \) 
has average value zero. Thus, there exists 
\( \tau \in \DiffFormSpace^{2n-1}(M) \) such 
that \( \dif \tau = \bar{\lambda} \mu_\omega \).
Now, the calculation
\begin{equation}
	\int_M \bar{\lambda} \omega(\xi, \eta) \, \mu_\omega 
		= \int_M \omega(\xi, \eta) \dif \tau
		= - \int_M \difLie_{\eta}(\xi \contr \omega) \wedge \tau
		= \int_M (\commutator{\xi}{\eta} \contr \omega) \wedge \tau
\end{equation}
shows that \( \lambda_c \) is cohomologous to the cocycle
\begin{equation}
\tilde{\lambda}_c(\xi, \eta) = \int_M \left(\lambda - 
\frac{\bar{\lambda}}{2} \omega\right) (\xi, \eta) \, \mu_\omega \, .
\end{equation}

On the other hand, using~\eqref{eq:symplectic:formWedgeOmegaNMinus1}, 
we find for every \( 2 \)-form \( \alpha \) that
\begin{equation}\begin{split}
\alpha(\xi, \eta) \, \mu_\omega	
&= (\xi \contr \omega) \wedge (\eta \contr \alpha) 
\wedge \frac{\omega^{n-1}}{(n-1)!}
			\\
&= \alpha \wedge \frac{\omega^{n-1}}{(n-1)!} \, \omega(\xi, \eta) 
- \lambda \wedge (\xi \contr \omega) \wedge (\eta \contr \omega) \wedge 
\frac{\omega^{n-2}}{(n-2)!}
			\\
&= \frac{1}{2} \tr_\omega(\alpha) \, \omega(\xi, \eta) \, \mu_\omega - 
\alpha \wedge (\xi \contr \omega) \wedge (\eta \contr \omega) 
\wedge \frac{\omega^{n-2}}{(n-2)!}.
\end{split}\end{equation}
The first and second identities follow from contracting 
\( (\xi \contr \alpha) \wedge \omega^n = 0 \) and 
\( (\xi \contr \omega) \wedge \alpha \wedge \omega^{n-1} = 0 \) 
with \( \eta \), respectively. 
Hence, using this relation for \( \alpha = \lambda \) 
and in the second line for \( \alpha = \omega \), we obtain
\begin{equation}\begin{split}
\tilde{\lambda}_c(\xi, \eta) 
&= \int_M \frac{1}{2} \mathrm{Av}_\omega (\lambda) \, 
\omega (\xi, \eta) \, \mu_\omega - 
\lambda \wedge (\xi \contr \omega) \wedge 
(\eta \contr \omega) \wedge \frac{\omega^{n-2}}{(n-2)!}
			\\
&= \int_M \Biggl(\frac{1}{2 (n-1)} \mathrm{Av}_\omega(\lambda) \, \omega 
- \lambda\Biggr) \wedge (\xi \contr \omega) \wedge 
(\eta \contr \omega) \wedge \frac{\omega^{n-2}}{(n-2)!}\,.
\end{split}\end{equation}
From this expression, it is evident that \( \tilde{\lambda}_c \) is the 
pull-back from \( \deRCohomology^1(M) \) of \( \lambda_c^H \).

Finally, if \( \lambda \wedge \omega^{n-2} \) is exact, then the average 
of \( \lambda \) vanishes by~\eqref{eq:symplectic:formWedgeOmegaNMinus1}, 
and thus also \( \lambda_c^H = 0 \). Conversely, assume that cup product 
identifies \( \ExtBundle^2 {\deRCohomology^1(M)} \) with 
\( \deRCohomology^2(M) \) and that \( \lambda_c^H = 0 \).
Then, the linear functional \( \deRCohomology^2(M) \to \R \) given by 
integration against \( \sigma \wedge \omega^{n-2} \) with \( \sigma = 
\frac{\mathrm{Av}_\omega(\lambda)}{2 (n-1)} \omega - \lambda \) 
has to vanish, \ie, \( \sigma \wedge \omega^{n-2} \) has to be exact.
But then \( 0 = \mathrm{Av}_\omega(\sigma) = 
\mathrm{Av}_\omega(\lambda) \bigl(\frac{n}{n-1} - 1\bigr) \), 
and thus \( \mathrm{Av}_\omega(\lambda) 
= 0 \). Hence, \( \sigma = \lambda \) and 
\( \lambda \wedge \omega^{n-2} \) has to be exact.
\end{proof}

Applied to the non-equivariance cocycle of \( \SectionMapAbb{J} \), we 
find the following. 
\begin{prop}
The class of the non-equivariance cocycle of \( \SectionMapAbb{J} \) in the second 
continuous Lie algebra cohomology of \( \VectorFieldSpace(M, \omega) \) 
coincides with the pull-back along the natural map 
\( \VectorFieldSpace(M, \omega) \to \deRCohomology^1(M) \) of the 
antisymmetric bilinear form
\begin{equation}
\Sigma^H_{j_0}\bigl(\equivClass{\alpha}, \equivClass{\beta}\bigr) \defeq \int_M 
\left((\mathrm{Ric}_{j_0})_{rs} - \frac{1}{2}\bar{S}_{j_0}\omega_{rs} 
\right) \alpha^r \beta^s \, \mu_\omega
\end{equation}
on \( \deRCohomology^1(M) \), where the indices of \( \alpha \) and 
\( \beta \) are raised using \( \omega \).
Moreover, the non-equivariance cocycle is trivial in Lie algebra 
cohomology if \( c_1(M) \cup \equivClass{\omega}^{n-2} = 0 \), and 
this condition is necessary when the cup product yields an 
isomorphism \( \ExtBundle^2 {\deRCohomology^1(M)} = \deRCohomology^2(M) \).
\end{prop}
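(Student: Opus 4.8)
We assume \( n \geq 2 \) throughout (as in \cref{prop:kaehler:LichnerowiczCocyclePullback}). The plan is to compute the non-equivariance \( 2 \)-cocycle \( \Sigma \) of \( \SectionMapAbb{J} \), show that it coincides with the Lichnerowicz cocycle of the Chern--Ricci form \( \mathrm{Ric}_{j_0} \), and then read off both assertions from \cref{prop:kaehler:LichnerowiczCocyclePullback}. Since the contraction \( \Lambda \) used in the proof of \cref{prop:kaehler:momentumMap} fixes the reference point \( j_0 \), and \( \SectionMapAbb{J}(j_0) = 0 \), formula~\eqref{eq:contractible:twoCocycle} (equivalently~\eqref{eq:normedsquared:nonequiv} evaluated at \( j_0 \)) gives
\[
\Sigma(\xi, \eta) = \Omega_{j_0}\bigl(\xi \ldot j_0,\, \eta \ldot j_0\bigr) = \kappa\bigl(\tangent_{j_0}\SectionMapAbb{J}(\xi \ldot j_0),\, \eta\bigr), \qquad \xi, \eta \in \VectorFieldSpace(M, \omega),
\]
with \( \xi \ldot j_0 = -\difLie_\xi j_0 \).

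By \cref{prop:kaehler:momentumMapViaCanonicalBundle} we have \( \SectionMapAbb{J}(j) = \widebar{J}(j_0, j) \wedge \omega^{n-1} \), where \( \widebar{J}(j_0, j) \in \DiffFormSpace^1(M)/\dif\DiffFormSpace^0(M) \) is the difference of the Chern connections on the anti-canonical bundles \( \KBundle^{-1}_{j_0}M \) and \( \KBundle^{-1}_jM \). Along the curve \( j_t = (\exp t\xi)_* j_0 \) the bundle \( \KBundle^{-1}_{j_t}M \) carries the push-forward of the Chern connection of \( \KBundle^{-1}_{j_0}M \); hence \( \tangent_{j_0}\widebar{J}(j_0, \cdot)(\xi \ldot j_0) \) is the Lie derivative along \( \xi \) of the Chern connection one-form of \( \KBundle^{-1}_{j_0}M \), which by Cartan's formula and the fact that the curvature of that connection is \( \I\,\mathrm{Ric}_{j_0} \) equals \( \xi \contr \mathrm{Ric}_{j_0} \) modulo \emph{exact} one-forms. (Differentiating~\eqref{eq:kaehler:difOfJForm} gives only \( \dif\tangent_{j_0}\widebar{J}(j_0, \cdot)(\xi \ldot j_0) = \difLie_\xi\mathrm{Ric}_{j_0} = \dif(\xi \contr \mathrm{Ric}_{j_0}) \); a bit more, e.g.\ the local expression of \( \tau^\nabla \) from \cref{prop:kaehler:variationJ}, is needed to see the defect is exact and hence drops out below.) Therefore, using the pairing~\eqref{eq:kaehler:pairing} and the contraction identity established inside the proof of \cref{prop:kaehler:LichnerowiczCocyclePullback},
\[
\Sigma(\xi, \eta) = \frac{1}{(n-1)!}\int_M (\xi \contr \mathrm{Ric}_{j_0}) \wedge \omega^{n-1} \wedge (\eta \contr \omega) = \int_M \mathrm{Ric}_{j_0}(\xi, \eta)\, \mu_\omega = (\mathrm{Ric}_{j_0})_c(\xi, \eta).
\]
(As a consistency check: by \cref{prop:kaehler:momentumMapHamiltonian} the non-equivariance cocycle of \( \SectionMapAbb{J}_{\HamDiffGroup} \) is a coboundary, so \( \Sigma \) restricts to a coboundary on \( \HamVectorFields(M, \omega) \); by \textcite[Corollary~4.4]{Vizman2006} the class of \( \Sigma \) in \( H^2_{\mathrm{ct}}(\VectorFieldSpace(M, \omega)) \) is then the pull-back of a bilinear form on \( \deRCohomology^1(M) \), in agreement with the formula above.)

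Now apply \cref{prop:kaehler:LichnerowiczCocyclePullback} with \( \lambda = \mathrm{Ric}_{j_0} \). In its proof \( \lambda_c \) is first replaced by the cohomologous cocycle \( \int_M(\lambda - \tfrac12\bar\lambda\,\omega)(\xi, \eta)\,\mu_\omega \) with \( \bar\lambda = \tr_\omega(\lambda) - \mathrm{Av}_\omega(\lambda) \); since \( \tr_\omega(\mathrm{Ric}_{j_0}) = S_{j_0} \), this \( \bar\lambda \) is exactly \( \bar{S}_{j_0} \) by~\eqref{eq:kaehler:scalarCurvatureNormalized}, and the cocycle is precisely the pull-back along \( \xi \mapsto \equivClass{\xi \contr \omega} \) of the form \( \Sigma^H_{j_0} \) (after raising the indices of \( \alpha, \beta \) with \( \omega \)). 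This yields the first assertion. The Lemma further shows \( \Sigma^H_{j_0} = 0 \) whenever \( \mathrm{Ric}_{j_0} \wedge \omega^{n-2} \) is exact, i.e.\ (since \( \tfrac{1}{2\pi}[\mathrm{Ric}_{j_0}] = c_1(M) \)) whenever \( c_1(M) \cup \equivClass{\omega}^{n-2} = 0 \), so this is sufficient for triviality of the non-equivariance cocycle in Lie algebra cohomology; and when the cup product \( \ExtBundle^2\deRCohomology^1(M) \to \deRCohomology^2(M) \) is an isomorphism, the last part of the Lemma makes exactness of \( \mathrm{Ric}_{j_0} \wedge \omega^{n-2} \) necessary for \( \Sigma^H_{j_0} = 0 \), while injectivity of the pull-back \( \ExtBundle^2{\deRCohomology^1(M)}^* \to H^2_{\mathrm{ct}}(\VectorFieldSpace(M, \omega)) \) from \textcite{Vizman2006} promotes this to necessity of \( c_1(M) \cup \equivClass{\omega}^{n-2} = 0 \) for triviality of the cocycle itself.

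The main obstacle is the identification in the second paragraph, namely that \( \Sigma \) really is the Lichnerowicz cocycle of \( \mathrm{Ric}_{j_0} \) and not merely cohomologous to it up to a correction built from the closed one-forms \( \tangent_{j_0}\widebar{J}(j_0, \cdot)(\xi \ldot j_0) - \xi \contr \mathrm{Ric}_{j_0} \). The straightforward \( \dif \)-computation controls this defect only up to a closed form; one must either use the connection-theoretic interpretation of \( \widebar{J}(j_0, \cdot) \) (Cartan's formula applied to the Chern connection one-form) or grind through \cref{prop:kaehler:variationJ} and the integration-by-parts relation~\eqref{eq:kaehler:integrationByPartsTau} to see that the defect is exact, and hence contributes nothing to the integral. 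Everything after that is bookkeeping with identities already recorded in \cref{prop:kaehler:LichnerowiczCocyclePullback}.
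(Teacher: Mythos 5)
Your proof is correct and follows the paper's route: reduce the non-equivariance cocycle to \( \Sigma(\xi, \eta) = \int_M \mathrm{Ric}_{j_0}(\xi, \eta)\, \mu_\omega \) and then invoke \cref{prop:kaehler:LichnerowiczCocyclePullback} with \( \lambda = \mathrm{Ric}_{j_0} \), for which \( \bar{\lambda} = \bar{S}_{j_0} \) and \( \tilde{\lambda}_c \) is exactly the pull-back of \( \Sigma^H_{j_0} \). The one step you flag as the main obstacle --- seeing that \( \tangent_{j_0}\widebar{J}(j_0,\cdot)(\xi \ldot j_0) - \xi \contr \mathrm{Ric}_{j_0} \) is exact rather than merely closed --- is settled in the paper by evaluating \cref{prop:kaehler:variationJ} at \( j = j_0 \) (where \( \phi_{j_0}(j_0) = 0 \) kills the second term) together with the identity \( \tau^\nabla(j_0, \difLie_X j_0) = 2\, X \contr \mathrm{Ric}_{j_0} + \dif \divergence(j_0 X) \) from \parencite[Theorem~2.7]{Garcia-PradaSalamonTrautwein2018}, which exhibits the defect explicitly as \( -\frac{1}{2}\dif\divergence(j_0 X) \); your Chern-connection/Cartan alternative is also valid.
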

Note that the class of the non-equivariance cocycle \( \Sigma^H_{j_0} \) 
is independent of the reference complex structure \( j_0 \) and thus is 
a well-defined invariant of the symplectic manifold \( (M, \omega) \).
Recall that a K\"ahler manifold with vanishing first real Chern class is 
called a Calabi--Yau manifold. Thus, for Calabi--Yau manifolds, the 
non-equivariance cocycle is trivial in Lie algebra cohomology.
To emphasize the close relationship, we say that a symplectic manifold 
\( (M, \omega) \) is \emph{weakly Calabi--Yau} if \( \Sigma^H_j = 0 \) 
for some compatible almost complex structure \(  j \).
\begin{proof}
Let \( j_0 \in \SectionSpaceAbb{I} \) and let 
\( X, Y \in \VectorFieldSpace(M, \omega) \).
By~\eqref{eq:contractible:twoCocycle}, the non-equivariance 2-cocycle 
\( \Sigma \) is given by \( \Sigma(X, Y) = -
\kappa\Bigl(\tangent_{j_0}\SectionMapAbb{J} \bigl(\difLie_X j_0\bigr), 
Y\Bigr) \). On the other hand, by~\eqref{eq:kaehler:variationJ}, we find
\begin{equation}\begin{split}
\tangent_{j_0}J(j_0, \cdot) \bigl(\difLie_X j_0\bigr)
&= -\frac{1}{2} \tau^\nabla(j_0, \difLie_X j_0)
= - X \contr \mathrm{Ric}_{j_0} -\frac{1}{2} \dif \divergence(j_0X),
\end{split}\end{equation}
where the second equality follows from 
\parencite[Theorem~2.7]{Garcia-PradaSalamonTrautwein2018}.
Thus, in summary,
\begin{equation}\begin{split}
\Sigma(X, Y) 
&= \kappa\bigl((X \contr \mathrm{Ric}_{j_0})\wedge \omega^{n-1}, Y\bigr)
			\\
&= \int_M (X \contr \mathrm{Ric}_{j_0}) \wedge \frac{\omega^{n-1}}{(n-1)!} 
\wedge (Y \contr \omega)
			\\
&= \int_M \mathrm{Ric}_{j_0}(X, Y) \, \mu_\omega.
\end{split}\end{equation}
Alternatively, this identity for the non-equivariance cocycle follows 
directly from~\parencite[Eq.~(2.33)]{Garcia-PradaSalamonTrautwein2018}.
The claim now is a consequence of 
\cref{prop:kaehler:LichnerowiczCocyclePullback}.
\end{proof}

From \cref{prop:contractible:groupExt} we know that the 
non-equivariance cocycle of \( \SectionMapAbb{J} \) integrates to 
a central extension of \( \DiffGroup(M, \omega) \).
In fact, the associated group \( 2 \)-cocycle \( c \) on 
\( \DiffGroup(M, \omega) \) can be explicitly computed 
using~\eqref{eq:kaehler:contraction:derivativePhi}, at least in principle.
The cocycle \( c \) also coincides with the cocycle given in 
\textcite{Reznikov1999} (where it appeared out of thin air).
Moreover, according to \cref{rem:contractible:quasimorhpism}, the 
cocycle \( c \) is bounded in the sense of Gromov.
This follows from the fact that \( \SectionSpaceAbb{I} \) is a 
Domic-Toledo space, essentially because it is the space of sections 
of a bundle whose typical fiber is a Domic-Toledo space; see 
\parencite[Section~1.7]{Shelukhin2014} for details.

Although this prescription yields a direct way to construct the 
central extension of \( \DiffGroup(M, \omega) \), its geometric 
interpretation still remains unclear.
This can be compared to the description of the momentum map above: 
\cref{prop:kaehler:momentumMap} gives a concrete formula for the 
momentum map \( \SectionMapAbb{J} \), but its geometric interpretation 
in terms of the anti-canonical bundle is not obvious from this formula.
A first step towards a geometric interpretation of the central extension 
is to better understand the prequantum bundle of \( \SectionSpaceAbb{I} \).
For example, realize it as a determinant line bundle of certain Dirac 
operators or use the asymptotic prescription of \parencite{FothUribe2007}.

If \( \Sigma^H_{j} = 0  \) for some compatible almost complex structure 
\( j \), \ie, \( (M, \omega) \) is weakly Calabi--Yau, then the momentum 
map \( \SectionMapAbb{J} \) is infinitesimally equivariant.
We can thus apply the general construction of \textcite{Shelukhin2014} 
to obtain a quasimorphism on the universal covering of 
\( \DiffGroup(M, \omega)_0 \).
By construction, the restriction of this quasimorphism to the subgroup 
of Hamiltonian diffeomorphisms is the (non-trivial) quasimorphism 
constructed in \parencite{Shelukhin2014}.
For completeness, let us record this observation.
\begin{prop}
If \( (M, \omega) \) is weakly Calabi--Yau, then the universal 
covering of \( \DiffGroup(M, \omega)_0 \) admits a non-trivial 
quasimorphism and hence has infinite commutator length.
\end{prop}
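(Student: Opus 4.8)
The plan is to assemble tools already developed in \cref{sec:contractible} and to import the non-triviality statement from \parencite{Shelukhin2014}. \textbf{First}, I would record that weak Calabi--Yau-ness makes the momentum map infinitesimally equivariant: by the discussion preceding this proposition, the class of the non-equivariance cocycle $\Sigma$ of $\SectionMapAbb{J}$ in the second continuous Lie algebra cohomology of $\VectorFieldSpace(M,\omega)$ is the pull-back, along $\VectorFieldSpace(M,\omega)\to\deRCohomology^1(M)$, of the bilinear form $\Sigma^H_j$, which vanishes for a suitable compatible $j$ by hypothesis. Hence $\Sigma$ is a coboundary and, after adjusting $\SectionMapAbb{J}$ by a constant in $\VectorFieldSpace(M,\omega)^*$ if necessary, we may assume its non-equivariance Lie algebra cocycle vanishes; equivalently, the derivative of the group $2$-cocycle from the prequantum construction is a coboundary.

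\textbf{Second}, I would produce the quasimorphism exactly as in \cref{rem:contractible:quasimorhpism}. By \cref{prop:contractible:groupExt}, the prequantum construction applied to $(\SectionSpaceAbb{I}, \Omega)$ with the equivariant contraction $\Lambda$ built in the proof of \cref{prop:kaehler:momentumMap} yields a central $\UGroup(1)$-extension of $\DiffGroup(M,\omega)$ described by a group $2$-cocycle $c$ whose derivative is $\Sigma$. Since $\SectionSpaceAbb{I}$ is a Domic--Toledo space---being the space of sections of a bundle whose typical fibre, the Siegel upper half space, is Domic--Toledo, see \parencite[Section~1.7]{Shelukhin2014}---the cocycle $c$ is bounded in the sense of Gromov. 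Writing $\widetilde{G}$ for the universal covering of $\DiffGroup(M,\omega)_0$, infinitesimal equivariance lets us pull $c$ back to $\widetilde{G}$, lift it to an $\R$-valued cocycle, and exhibit it as a coboundary $\delta\phi$ of a map $\phi\colon\widetilde{G}\to\R$; boundedness of $c$ then says precisely that $\phi$ is a quasimorphism. Replacing $\phi$ by its homogenization $\overline\phi(g)=\lim_{k\to\infty}\phi(g^{k})/k$ gives a homogeneous quasimorphism on $\widetilde{G}$.

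\textbf{Third}, I would deduce non-triviality by restriction to the Hamiltonian subgroup. Under $\HamDiffGroup(M,\omega)\hookrightarrow\DiffGroup(M,\omega)_0$ and the induced map of universal coverings, the momentum map $\SectionMapAbb{J}$ restricts to $\SectionMapAbb{J}_{\HamDiffGroup}$ of \cref{prop:kaehler:momentumMapHamiltonian}, and $c$, $\phi$, $\overline\phi$ restrict to precisely the data used by \textcite{Shelukhin2014} to build the quasimorphism attached to the action on compatible almost complex structures. Shelukhin proves that this restricted homogeneous quasimorphism is non-trivial, i.e.\ defines a nonzero element of the space of homogeneous quasimorphisms modulo homomorphisms; hence $\overline\phi$ is non-trivial on $\widetilde{G}$ and, in particular, not a homomorphism. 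Finally, Bavard's duality theorem implies that a group carrying a homogeneous quasimorphism which is not a homomorphism has unbounded commutator length, so the universal covering of $\DiffGroup(M,\omega)_0$ has infinite commutator length.

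\textbf{Main obstacle.} The conceptual content has already been supplied---in \cref{rem:contractible:quasimorhpism} for the construction and in \parencite{Shelukhin2014} for the non-triviality---so the real work is bookkeeping: one must verify that the central extension of \cref{prop:contractible:groupExt} and its $\R$-lift, restricted along $\HamDiffGroup(M,\omega)\hookrightarrow\DiffGroup(M,\omega)$, reproduce Shelukhin's cocycle with matching normalizations (reference complex structure $j_0$, primitive of $\Omega$, basepoint in $\SectionSpaceAbb{I}$), and that the passage to the universal covering together with the lift of the $\UGroup(1)$-valued cocycle to $\R$ is legitimate in the Fr\'echet setting; the latter is handled by \parencite{NeebVizman2003}. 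The footnote in \cref{rem:contractible:quasimorhpism} about the extra $\chi_{g_2,g_2}$-term pinpoints the only discrepancy to reconcile between the two conventions.
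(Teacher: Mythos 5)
Your proposal follows the paper's argument essentially verbatim: weak Calabi--Yau-ness makes the non-equivariance cocycle a coboundary (so the momentum map can be taken infinitesimally equivariant), the Domic--Toledo property of \( \SectionSpaceAbb{I} \) gives boundedness of the prequantum group cocycle as in \cref{rem:contractible:quasimorhpism}, and non-triviality is obtained by restricting to \( \HamDiffGroup(M,\omega) \), where the construction reproduces Shelukhin's quasimorphism. The extra details you supply (shifting \( \SectionMapAbb{J} \) by a constant, homogenization, Bavard duality for the commutator-length conclusion) are exactly the standard steps the paper leaves implicit, so this is the same proof.
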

Under the stronger assumption that the first Chern class vanishes, 
\textcite{Entov2004} constructed a quasimorphism on the universal 
covering of \( \DiffGroup(M, \omega)_0 \) that coincides with the 
Shelukhin quasimorphism on Hamiltonian diffeomorphisms.
Hence, it is natural to conjecture that the quasimorphism constructed 
above is a natural generalization of the Entov quasimorphism; see 
also \parencite[Point~3.2]{Shelukhin2014}.

\subsection{Norm-squared momentum map}

We identify the Lie algebra \( \HamVectorFields(M, \omega) \) of 
Hamiltonian vector fields with the Lie algebra \( \sFunctionSpace_0(M) \) 
of smooth functions with average zero by \( F \mapsto X_F \).
Using this identification, there is a natural inner product on 
\( \HamVectorFields(M, \omega) \) defined by
\begin{equation}
\label{eq:kaehler:pairingOnHam}	
\scalarProd{X_F}{X_G} = \frac{1}{2} \int_M F G \, \mu_\omega.
\end{equation}
To extend this inner product to \( \VectorFieldSpace(M, \omega) \), 
consider the exact sequence
\begin{equation}
\label{eq:kaehler:symplecticVectorFieldsExactSequence}
0 \to \HamVectorFields(M, \omega) \to \VectorFieldSpace(M, \omega) 
\to \deRCohomology^1(M) \to 0.
\end{equation}
We split this sequence by choosing a reference almost complex 
structure \( j_0 \in \SectionMapAbb{I} \).
Let \( \mathfrak{har}_{j_0}(M) \isomorph \deRCohomology^1(M) \) 
be the space of all vector fields \( \xi^h \) such that 
\( \omega^\flat(\xi^h) \) is a \( g_{j_0} \)-harmonic 
\( 1 \)-form. We call such infinitesimally 
symplectic vector fields \textit{harmonic}.
Then every \( \xi \in \VectorFieldSpace(M, \omega) \) 
can be uniquely written as \( \xi = X_F + \xi^h \) for some 
\( F \in \sFunctionSpace_0(M) \) and 
\( \xi^h \in \mathfrak{har}_{j_0}(M) \). Define the following 
inner product on \( \VectorFieldSpace(M, \omega) \):
\begin{equation}
\label{eq:kaehler:pairingOnLieAlgebra}
\scalarProd{\xi}{\eta}_{j_0} = 
\int_M \frac{1}{2} F G \, \mu_\omega \, + \omega^\flat(\xi^h) 
\wedge \hodgeStar_{g_{j_0}} \omega^\flat(\eta^h) = 
\int_M \left(\frac{1}{2} F G + 
g_{j_0}(\xi^h, \eta^h)\right) \, \mu_\omega 
\end{equation}
for \( \xi = X_F + \xi^h \) and \( \eta = X_G + \eta^h \),
where \(\hodgeStar_{g_{j_0}}\) is the
Hodge star operator defined by the Riemannian metric \(g_{j_0}\).
From \cref{prop:kaehler:momentumMapViaCanonicalBundle}, we obtain 
the momentum map relative to the inner 
product \( \scalarProdDot_{j_0} \)
given in~\eqref{eq:kaehler:pairingOnLieAlgebra}.
\begin{thm}
\label{prop:kaehler:momentumMap:LieAlgebraValued}
For every \( j_0 \in \SectionSpaceAbb{I} \), the action 
of \( \DiffGroup(M, \omega) \) on \( \SectionSpaceAbb{I} \) 
has a momentum map 
\( \SectionMapAbb{J}: \SectionSpaceAbb{I} \to 
\VectorFieldSpace(M, \omega) \) relative to the inner 
product \( \scalarProdDot_{j_0} \) given by assigning 
to \( j \in \SectionSpaceAbb{I} \) the symplectic vector field
\begin{equation}
- X_{\bar{S}_j} + \omega^\sharp\bigl(j_0 J(j_0, j)\bigr)^h,
\end{equation}
where \( \bar{S}_j \) is the normalized Chern scalar curvature 
given by~\eqref{eq:kaehler:scalarCurvatureNormalized}, the 
\( 1 \)-form \( J(j_0, j) \) is defined 
in~\eqref{eq:kaehler:momentumMap:definitionJ}, and the 
superscript \( h \) refers to taking the 
\( g_{j_0} \)-harmonic part.

Moreover, \( \SectionMapAbb{J}_\HamDiffGroup: 
\SectionSpaceAbb{I} \ni j \mapsto - X_{\bar{S}_j} 
\in \HamVectorFields(M, \omega) \) is a momentum 
map for the action of \( \HamDiffGroup(M, \omega) \), 
relative to the inner product \( \scalarProdDot \) 
defined in~\eqref{eq:kaehler:pairingOnHam}.
\end{thm}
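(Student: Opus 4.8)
The plan is to transport the $\kappa$-dual--valued momentum map of \cref{prop:kaehler:momentumMap,prop:kaehler:momentumMapViaCanonicalBundle} through the inner product $\scalarProdDot_{j_0}$ of \eqref{eq:kaehler:pairingOnLieAlgebra}. Write $\SectionMapAbb{J}_\kappa \colon \SectionSpaceAbb{I} \to \DiffFormSpace^{2n-1}(M)/\dif\DiffFormSpace^{2n-2}(M)$ for the momentum map $\SectionMapAbb{J}_\kappa(j) = [J(j_0,j) \wedge \omega^{n-1}]$ relative to the pairing $\kappa$ of \eqref{eq:kaehler:pairing}; by those results $\iota_{\xi^*}\Omega = -\dif\kappa(\SectionMapAbb{J}_\kappa,\xi)$ on $\SectionSpaceAbb{I}$ for every $\xi \in \VectorFieldSpace(M,\omega)$. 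Since $\scalarProdDot_{j_0}$ is non-degenerate, the defining relation $\iota_{\xi^*}\Omega = -\dif\scalarProd{\SectionMapAbb{J}(\cdot)}{\xi}_{j_0}$ shows that a smooth map $\SectionMapAbb{J}\colon\SectionSpaceAbb{I}\to\VectorFieldSpace(M,\omega)$ is a momentum map relative to $\scalarProdDot_{j_0}$ exactly when $\dif\scalarProd{\SectionMapAbb{J}(\cdot)}{\xi}_{j_0} = \dif\kappa(\SectionMapAbb{J}_\kappa(\cdot),\xi)$; as $\SectionSpaceAbb{I}$ is connected, this is equivalent to $j \mapsto \scalarProd{\SectionMapAbb{J}(j)}{\xi}_{j_0} - \kappa(\SectionMapAbb{J}_\kappa(j),\xi)$ being independent of $j$ for every fixed $\xi$. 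Hence it suffices to verify this constancy for the candidate $\SectionMapAbb{J}(j) = -X_{\bar S_j} + \omega^\sharp(j_0 J(j_0,j))^h$ — note that this reduces the whole statement to a \emph{pointwise} identity at each $j$, with no differentiation in $j$, and that $\SectionMapAbb{J}(j) \in \VectorFieldSpace(M,\omega)$ since $X_{\bar S_j}$ is Hamiltonian and $\omega^\sharp(j_0 J(j_0,j))^h$ lies in $\mathfrak{har}_{j_0}(M)$.

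First I would use the reference $j_0$ to split the sequence \eqref{eq:kaehler:symplecticVectorFieldsExactSequence} as $\VectorFieldSpace(M,\omega) = \HamVectorFields(M,\omega) \oplus \mathfrak{har}_{j_0}(M)$; by construction of $\scalarProdDot_{j_0}$ the two summands are $\scalarProdDot_{j_0}$-orthogonal, and the Hamiltonian and harmonic parts of $\SectionMapAbb{J}(j)$ lie in the respective summands, so one may test $\xi = X_G$ and $\xi = \xi^h \in \mathfrak{har}_{j_0}(M)$ separately. For $\xi = X_G$ the relevant input is \cref{prop:kaehler:momentumMapHamiltonian}: $\kappa(\SectionMapAbb{J}_\kappa(j),X_G)$ equals the integration pairing of $\tfrac12(S_j - S_{j_0})\mu_\omega$ with $G$. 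Using that $\dif J(j_0,j)\wedge\omega^{n-1} = (\mathrm{Ric}_{j_0}-\mathrm{Ric}_j)\wedge\omega^{n-1}$ is exact (see \eqref{eq:kaehler:difOfJForm}), the total $\int_M S_j\,\mu_\omega$ is independent of $j$, so $S_j - S_{j_0}$ already has zero mean and equals $\bar S_j - \bar S_{j_0}$. Comparing with $\scalarProd{-X_{\bar S_j}}{X_G}_{j_0}$ computed from \eqref{eq:kaehler:pairingOnHam} shows, after keeping track of the sign conventions in the two pairings, that the difference restricted to Hamiltonian test vectors is a $j$-independent multiple of $\int_M \bar S_{j_0}\,G\,\mu_\omega$. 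In particular, testing only against $X_G$ reproduces the last assertion of the theorem, namely that $j\mapsto -X_{\bar S_j}$ is a momentum map for $\HamDiffGroup(M,\omega)$ relative to \eqref{eq:kaehler:pairingOnHam}.

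The main work is the harmonic summand. Here $\kappa(\SectionMapAbb{J}_\kappa(j),\xi^h) = \tfrac{1}{(n-1)!}\int_M J(j_0,j)\wedge\omega^{n-1}\wedge(\xi^h\contr\omega)$, which I would simplify by the pointwise identity $\gamma\wedge(\eta\contr\omega)\wedge\tfrac{\omega^{n-1}}{(n-1)!} = \gamma(\eta)\,\mu_\omega$ for a $1$-form $\gamma$ and vector field $\eta$ — a consequence of \eqref{eq:symplectic:formWedgeOmegaNMinus1} together with \eqref{sharp} — obtaining $\int_M J(j_0,j)(\xi^h)\,\mu_\omega$. On the other side, $\scalarProdDot_{j_0}$ restricted to $\mathfrak{har}_{j_0}(M)$ is, via $\xi \mapsto \omega^\flat(\xi)$, the $g_{j_0}$-Hodge inner product on harmonic $1$-forms; expanding $\scalarProd{\omega^\sharp(j_0 J(j_0,j))^h}{\xi^h}_{j_0}$ with the compatibility relations between $\omega^\flat$, $g_{j_0}$ and $j_0$ (so that $\omega^\sharp$ intertwines the $j_0$-action on forms with $\pm j_0$ on vector fields), and using that $\xi^h$ is harmonic so that the harmonic projection is invisible against it, one again arrives at $\int_M J(j_0,j)(\xi^h)\,\mu_\omega$. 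Thus on $\mathfrak{har}_{j_0}(M)$ the two pairings agree, and together with the Hamiltonian block this makes $j\mapsto\scalarProd{\SectionMapAbb{J}(j)}{\xi}_{j_0}-\kappa(\SectionMapAbb{J}_\kappa(j),\xi)$ independent of $j$, which is what we needed. I expect the genuinely delicate point to be precisely this harmonic computation: reconciling the $\omega$-Hodge splitting implicit in \eqref{eq:kaehler:symplecticVectorFieldsExactSequence} with the $g_{j_0}$-Hodge splitting used to define $\scalarProdDot_{j_0}$, placing the $j_0$-twists and $\sharp/\flat$ raisings correctly, and fixing the global sign so as to land on the formula as stated.
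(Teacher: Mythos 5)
Your proposal is correct and follows essentially the same route as the paper: the paper's proof likewise reduces everything to the pointwise transfer identity \eqref{eq:kaehler:projectionDualSymp}, obtained by splitting a test vector field as \( \xi = X_F + \xi^h \), integrating by parts against the Hamiltonian part, and using \( \alpha(\xi^h) = g_{j_0}\bigl(\omega^\flat(\xi^h), j_0 \alpha\bigr) \) together with the \( \LFunctionSpace^2 \)-orthogonality of the Hodge decomposition for the harmonic part. The only cosmetic difference is that you phrase the conclusion as constancy in \( j \) of the difference of the two pairings (absorbing the \( \bar{S}_{j_0} \)-term and the normalization into the additive ambiguity of the momentum map), whereas the paper identifies the dual element \( \eta_\alpha \) directly and shifts by a constant at the end.
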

\begin{proof}
With respect to the pairing \( \kappa \) 
from~\eqref{eq:kaehler:pairing}, we have for every 
\( \xi = X_F + \xi^h \in \VectorFieldSpace(M, \omega) \) 
and \( \alpha \in \DiffFormSpace^1(M) \): 
\begin{equation}\begin{split}
\kappa\bigl(\equivClass{\alpha \wedge \omega^{n-1}}, \xi\bigr)
&= \frac{1}{(n-1)!} \int_M \alpha \wedge \omega^{n-1} 
\wedge (\xi \contr \omega)
			\\
&= \frac{1}{(n-1)!} \int_M F \, \dif \alpha \wedge \omega^{n-1} 
+ \alpha \wedge (\xi^h \contr \omega) \wedge \omega^{n-1}
			\\
&= \int_M \left(\frac{1}{2} F \tr_\omega(\dif \alpha) 
+ \alpha(\xi^h)\right) \, \mu_\omega \,,
\end{split}\end{equation}
where the last equality follows 
from~\eqref{eq:symplectic:formWedgeOmegaNMinus1}.
On the other hand, we have \( \alpha(\xi^h) = 
g_{j_0}\bigl(\omega^\flat (\xi^h), j_0 \alpha\bigr) \) 
with \( j_0 \alpha \defeq - \alpha(j_0 \cdot) \).
By assumption, \( \omega^\flat (\xi^h) \) is a 
\( g_{j_0} \)-harmonic \( 1 \)-form.
Thus, \( \LFunctionSpace^2 \)-orthogonality of the 
Hodge decomposition implies that only the harmonic 
part \( (j_0\alpha)^h \) of \( j_0 \alpha \) contributes 
to the integral, and we obtain
\begin{equation}\label{eq:kaehler:projectionDualSymp}\begin{split}
\kappa\bigl(\equivClass{\alpha \wedge \omega^{n-1}}, \xi\bigr)
&= \int_M \left(\frac{1}{2} F \tr_\omega(\dif \alpha) + 
g_{j_0}\left(\omega^\sharp\bigl((j_0\alpha)^h\bigr), 
\xi^h\right)\right) \, \mu_\omega
		\\
&= \scalarProd{\eta_\alpha}{\xi}_{j_0},
\end{split}\end{equation}
for \( \eta_\alpha = X_G + \omega^\sharp\bigl((j_0 \alpha)^h\bigr) \) 
with \( G = \tr_\omega(\dif \alpha) -  \mathrm{Av}_\omega(\dif \alpha)\).
Note that if \( \alpha \) is exact, say \( \alpha = \dif f \), then the 
K\"ahler identities, see \parencite[Proposition~1.14.1]{Gauduchon2017},
imply \( j_0 \alpha = \diF (f \omega) \) so that \( (j_0 \alpha)^h = 0 \), 
and hence \( \eta_\alpha = 0 \).
This verifies that \( \eta_\alpha \) depends only on the equivalence 
class of \( \alpha \) modulo exact forms.
	
Finally, \cref{prop:kaehler:momentumMap} and~\eqref{eq:kaehler:difOfJForm} 
imply that \( \SectionMapAbb{J}(j) = X_F + \eta^h \) with
\begin{equation}
F = S_{j_0} - S_j \quad \text{and} \quad \omega^\flat (\eta^h) 
= \bigl(j_0 J(j_0, j)\bigr)^h
\end{equation}
is a momentum map relative to the inner product 
\( \scalarProdDot_{j_0} \).
Clearly, we can shift \( \SectionMapAbb{J} \) by a constant 
and still obtain a momentum map.
The momentum map for the subgroup of Hamiltonian diffeomorphisms 
can be calculated in a similar way.
This finishes the proof.
\end{proof}

\begin{remark}
\label{rem:kaehler:momentumMap:ciAsHolonomies}
Assume that \( \mathrm{Ric}_{j_0} = \mathrm{Ric}_{j} \), 
so that \( J(j_0, j) \) is closed by~\eqref{eq:kaehler:difOfJForm}.
In this situation, the harmonic form \( (j_0 J(j_0, j))^h \) in 
\cref{prop:kaehler:momentumMap:LieAlgebraValued} has a nice 
geometric interpretation.

To find it, choose an orthonormal basis 
\( \set{\alpha_p} \) of \( g_{j_0} \)-harmonic \( 1 \)-forms, 
and expand \( (j_0 J(j_0, j))^h = \sum_p c_p \alpha_p \).
The coefficients \(c_p\) are given by
\begin{equation}\begin{split}
c_p &= \dualPair{j_0 J(j_0, j)}{\alpha_p}_{j_0}
= \int_M g_{j_0}(j_0 J(j_0, j), \alpha_p) \, \mu_\omega
			\\
&= \int_M \omega(J(j_0, j), \alpha_p) \, \mu_\omega
= \int_M J(j_0, j) \wedge \alpha_p \wedge \frac{\omega^{n-1}}{(n-1)!}\,,
\end{split}\end{equation}
where the last equality follows 
from~\eqref{eq:symplectic:symplecticHodge1Form}.
Thus, in summary, \( c_p = \int_{\gamma_p} J(j_0, j) \) 
for the Poincar\'e dual \( \gamma_p \) of the \( (2n-1) \)-form 
\( \alpha_p \wedge \frac{\omega^{n-1}}{(n-1)!} \).
Now recall that \( J(j_0, j) \) is the difference of the Chern 
connections of \( j_0 \) and \( j \) on \( \KBundle_{j_0} M \) 
(relative to an identification of 
\( \KBundle_{j_0} M \isomorph \KBundle_j M \)).
Thus, \( c_p \) is the difference of the holonomies of the 
Chern connections of \( j_0 \) and \( j \) around the 
loop \( \gamma_p \).
\end{remark}

The norm-squared of the momentum map for the 
action of Hamiltonian diffeomorphisms is  the 
\( \LFunctionSpace^2 \)-norm of the (normalized) scalar curvature
(see
\cref{prop:kaehler:momentumMap:LieAlgebraValued}), 
that is, the Calabi energy functional on \( \SectionSpaceAbb{I} \):
\begin{equation}
\label{eq:kaehler:momentumMapHamSquared}
\norm{\SectionMapAbb{J}_\HamDiffGroup}^2(j) = 
\frac{1}{2} \int_M {\bar{S}_j}^2 \mu_\omega \,. 
\end{equation}
Critical points of \( \norm{\SectionMapAbb{J}_\HamDiffGroup}^2 \) 
are called \emph{extremal almost-K\"ahler metrics}; see 
\parencite{Calabi1985} in the K\"ahler setting and 
\parencite{Lejmi2010} without the integrability condition.
According to \cref{prop:normedsquared:criticalPoints}, these 
are precisely the almost complex structures \( j \) for which 
the Hamiltonian vector field \( X_{\bar{S}_j} \) is a real 
holomorphic vector field, \ie, \( \difLie_{X_{\bar{S}_j}} j = 0 \).
Constant scalar curvature metrics constitute an important special 
case of extremal almost-K\"ahler metrics, and they correspond 
to zeros of \( \SectionMapAbb{J}_\HamDiffGroup \).

Similarly, the norm-squared of the momentum map 
for the full group of symplectomorphisms (see
\cref{prop:kaehler:momentumMap:LieAlgebraValued}) 
yields the following 
functional on \( \SectionSpaceAbb{I} \):
\begin{equation}
\label{eq:kaehler:momentumMapSquared}
\norm{\SectionMapAbb{J}}_{j_0}^2(j) = 
\frac{1}{2} \int_M \left({\bar{S}_j}^2 + 
2 \norm{\bigl(j_0 J(j_0, j)\bigr)^h}_{j_0} \right) \mu_\omega \,, 
\end{equation}
where on the right-hand side the norm of the one-form 
\( \bigl(j_0 J(j_0, j)\bigr)^h \) is taken with respect 
to the metric \( g_{j_0} \).
The first summand is again the Calabi energy.
The second summand penalizes the difference between the 
Chern connections of \( j \) and \( j_0 \).
In other words, \( \norm{\SectionMapAbb{J}}_{j_0}^2 \) 
can be viewed as a localized Calabi energy.
Zeros of \( \norm{\SectionMapAbb{J}}_{j_0}^2 \) are, if they exist, 
almost complex structures \( j \) with constant Chern scalar curvature 
and \( j_0 J(j_0, j) \) having no \( g_{j_0} \)-harmonic component.
In analogy with Calabi's extremal metrics, we say that an almost 
complex structure \( j \) is a \emphDef{\( j_0 \)-extremal metric} 
if it is a critical point of \( \norm{\SectionMapAbb{J}}_{j_0} \).
By \cref{prop:normedsquared:criticalPoints}, this is equivalent 
to \( \SectionMapAbb{J}(j) \) being a real holomorphic vector field.

Our general results in 
\cref{sec:momentumMapSquared,sec:stabilizer_algebra_of_the complexified_action} 
require a few technical properties. Let us check that these 
are satisfied in the present situation.
\begin{lemma}
The following holds:
\begin{thmenumerate}
\item The group of symplectomorphisms \( \DiffGroup(M, \omega) \) 
is an infinite-dimensional Fr\'echet Lie group and has a smooth 
exponential map given by the flow.
\item For every \( j \in \SectionSpaceAbb{I} \), the stabilizer 
\( \DiffGroup(M, \omega)_j \) is a finite-dimensional Lie 
subgroup of \( \DiffGroup(M, \omega) \) consisting of isometries 
of \( g_j \).
\item For every \( j \in \SectionSpaceAbb{I} \), the isotropy 
representation of \( \DiffGroup(M, \omega)_j \) on 
\( \TBundle_j \SectionSpaceAbb{J} \) is Hamiltonian with momentum 
map given by
\begin{equation}
\label{eq:kaehler:momentumMapIsotropyRepresentation}
\widehat{\SectionMapAbb{J}_j}(A) = -\frac{1}{8} \pr_j 
\left(X_{\tr_\omega(\dif \alpha)} + 
\omega^\sharp \bigl((j_0 \alpha)^h\bigr)\right),
\end{equation}
where \( \alpha(Y) \defeq \tr(A j \nabla_Y A) - 
2 \tr\bigl(\nabla (A j A) Y\bigr) \) for a torsion-free 
connection \( \nabla \) preserving the volume form and 
\( \pr_j \) is the orthogonal projection onto 
\( \VectorFieldSpace(M, \omega)_j \).
\item \label{i:kaehler:adjointLieDerivative}
For every \( X \in \VectorFieldSpace(M, \omega) \), 
the adjoint of \( \difLie_X: \VectorFieldSpace(M, \omega) \to 
\VectorFieldSpace(M, \omega) \) with respect to 
\( \dualPairDot_{j_0} \) is
\begin{equation}
\label{eq:kaehler:adjointLieDerivative}
X_F + \xi^h \mapsto - \difLie_X X_F + \frac{1}{2} (F j_0 X)^h,
\end{equation}
for \( F \in \sFunctionSpace_0(M) \) and  
\[ \xi^h \in \mathfrak{har}_{j_0}(M) \defeq \{\zeta \in  
\mathfrak{X}(M, \omega) \mid 
\omega^\flat(\zeta ) \text{ is a } g_{j_0}\text{-harmonic } 
1\text{-form}\}. 
\]
Moreover,
\begin{equation}\begin{split}
	\label{eq:kahler:adjointLieDerivative:pairing}
	\dualPair*{\difLie_X (X_F + \xi^h)}{X_G + \eta^h}_{j_0}
		&= - \dualPair*{X_F + \xi^h}{\difLie_X(X_G + \eta^h)}_{j_0}\\
		&\qquad+ \frac{1}{2} \dualPair*{(Fj_0 X)^h}{\eta^h}_{j_0}\\
		&\qquad+ \frac{1}{2} \dualPair*{\xi^h}{(Gj_0 X)^h}_{j_0}.
\end{split}\end{equation}
This shows that \( \dualPairDot_{j_0} \) is 
not invariant under the Lie derivative by vector fields in
\(\VectorFieldSpace(M, \omega) \). However,  
\( \dualPairDot_{j_0} \) is invariant under the Lie 
derivative by elements of the stabilizer 
\( \VectorFieldSpace(M, \omega)_{j_0} \).
\item The almost complex structure \( \SectionMapAbb{j} \) is 
equivariant with respect to the push-forward action of 
\( \DiffGroup(M, \omega) \).
		\qedhere
\end{thmenumerate}
\end{lemma}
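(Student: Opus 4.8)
The plan is to treat the five items essentially independently, reducing each to facts already in the excerpt together with standard properties of diffeomorphism and isometry groups; I will be brief on (i), (ii), (v) and concentrate on (iii). For (i), $\DiffGroup(M,\omega)$ is the pre-image of $0$ under the smooth map $\phi\mapsto\phi^*\omega-\omega$, hence a closed splitting Fr\'echet Lie subgroup of $\DiffGroup(M)$; the exponential map is the time-one flow of an element of $\VectorFieldSpace(M,\omega)$, which is smooth because $M$ is compact and stays in $\DiffGroup(M,\omega)$ since its generator preserves $\omega$. For (ii), the point is that $\phi_* j=j$ together with $\phi^*\omega=\omega$ forces $\phi^* g_j=g_j$, while conversely $j=\omega^\sharp\circ g_j^\flat$ is determined by $\omega$ and $g_j$; thus $\DiffGroup(M,\omega)_j$ is exactly the set of $g_j$-isometries that are also symplectomorphisms. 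By Myers--Steenrod, $\mathrm{Isom}(M,g_j)$ is a compact Lie group embedded in $\DiffGroup(M)$, and $\DiffGroup(M,\omega)_j$ is a closed subgroup of it, hence itself a compact Lie group that is an embedded Lie subgroup of $\DiffGroup(M,\omega)$; its Lie algebra $\VectorFieldSpace(M,\omega)_j$ is therefore finite-dimensional. For (v), note that $\Upsilon_\phi$ acts by push-forward, so $\TBundle_j\Upsilon_\phi(A)=\phi_*A$, and push-forward is a ring homomorphism on endomorphism fields; hence $\TBundle_j\Upsilon_\phi(\SectionSpaceAbb{j}_j A)=\phi_*(Aj)=(\phi_*A)(\phi_*j)=\SectionSpaceAbb{j}_{\Upsilon_\phi(j)}(\TBundle_j\Upsilon_\phi A)$, which is the equivariance claim.

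Item (iv) is a direct computation. Using the paper's convention $X_H\contr\omega=-\dif H$, one gets $\difLie_X X_F=X_{XF}$ and, since a bracket of symplectic vector fields is Hamiltonian, $\difLie_X\xi^h=X_{\omega(X,\xi^h)}$, so $\difLie_X(X_F+\xi^h)=X_{XF+\omega(X,\xi^h)}$ is Hamiltonian. I would then pair this against $X_G+\eta^h$ using that $\scalarProdDot_{j_0}$ is block-diagonal for the Hodge splitting, integrate $\int_M(XF)G\,\mu_\omega=-\int_M F(XG)\,\mu_\omega$ by parts (here $X$ is $\mu_\omega$-divergence-free), and use $\omega^\flat=g_{j_0}^\flat\circ j_0$ together with $L^2$-orthogonality of the harmonic projection to obtain $\scalarProd{\xi^h}{(Gj_0X)^h}_{j_0}=\int_M G\,\omega(X,\xi^h)\,\mu_\omega$. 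Comparing the two sides identifies the adjoint as in \eqref{eq:kaehler:adjointLieDerivative} and yields \eqref{eq:kahler:adjointLieDerivative:pairing} by bookkeeping. Finally, if $X\in\VectorFieldSpace(M,\omega)_{j_0}$ then $X$ is $g_{j_0}$-Killing, so $\difLie_X$ commutes with the Hodge Laplacian, preserves the splitting, and is skew on each block; equivalently, $\beta(X)$ is constant for every $g_{j_0}$-harmonic $1$-form $\beta$, so $(Gj_0X)^h=0$ whenever $G$ has zero mean and the correction terms in \eqref{eq:kahler:adjointLieDerivative:pairing} vanish.

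Item (iii) is the core, and getting every sign and factor to land is where I expect the real work. Since $\Upsilon_{\exp t\xi}$ fixes $j$ and acts by push-forward, the isotropy operator is $\tau_j\xi^*(A)=-\difLie_\xi A$, and it preserves $\TBundle_j\SectionSpaceAbb{I}$ because $\xi$ fixes $g_j$ and $j$. Plugging into the general formula \eqref{j_hat_momentum} with $\kappa=\scalarProdDot_{j_0}$ and $\Omega$ from \eqref{eq:kaehler:symplecticForm}, one gets $\scalarProd{\widehat{\SectionMapAbb{J}_j}(A)}{\xi}_{j_0}=-\tfrac18\int_M\tr\bigl(Aj\,\difLie_\xi A\bigr)\,\mu_\omega$. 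Writing $\difLie_\xi A=\nabla_\xi A-[\nabla\xi,A]$ for a torsion-free $\mu_\omega$-preserving connection and integrating the two terms containing $\nabla\xi$ by parts (the term $\nabla_\xi A$ is already free of derivatives of $\xi$), one finds $\int_M\tr(Aj\,\difLie_\xi A)\,\mu_\omega=\int_M\alpha(\xi)\,\mu_\omega$ with $\alpha$ as in the statement; intrinsicality of $\widehat{\SectionMapAbb{J}_j}$ then gives connection-independence. Since $\alpha\wedge\omega^{n-1}\wedge(\xi\contr\omega)=(n-1)!\,\alpha(\xi)\,\mu_\omega$, we have $\int_M\alpha(\xi)\,\mu_\omega=\kappa\bigl(\equivClass{\alpha\wedge\omega^{n-1}},\xi\bigr)$ in the pairing \eqref{eq:kaehler:pairing}, so by \eqref{eq:kaehler:projectionDualSymp} the $\scalarProdDot_{j_0}$-representative of this functional on $\VectorFieldSpace(M,\omega)$ is $-\tfrac18\eta_\alpha$ with $\eta_\alpha=X_{\tr_\omega(\dif\alpha)}+\omega^\sharp\bigl((j_0\alpha)^h\bigr)$. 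Restricting the functional to $\LieA{g}_j=\VectorFieldSpace(M,\omega)_j$ (a finite-dimensional space, so the representative exists) replaces $\eta_\alpha$ by its orthogonal projection $\pr_j\eta_\alpha$, which is exactly \eqref{eq:kaehler:momentumMapIsotropyRepresentation}. The main obstacle is precisely this last chain of identifications: the integration-by-parts identities of \parencite[Theorem~2.6]{Garcia-PradaSalamonTrautwein2018} are the technical engine, but the algebra of endomorphisms anti-commuting with $j$ and the normalization factors must be tracked carefully.
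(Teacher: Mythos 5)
Your proposal is correct and follows essentially the same route as the paper's proof: the same expansion of \( \difLie_\xi A \) via a torsion-free volume-preserving connection followed by integration by parts and the identification through \eqref{eq:kaehler:projectionDualSymp} for (iii), the same Hamiltonian-vector-field computation with the constancy of \( \beta(X) \) for harmonic \( \beta \) and Killing \( X \) for (iv), and the same push-forward argument for (v). The only differences are cosmetic: for (i)--(ii) the paper simply cites Kriegl--Michor and Kobayashi where you argue directly via Myers--Steenrod, and you spell out why the projection \( \pr_j \) appears in \eqref{eq:kaehler:momentumMapIsotropyRepresentation}, a step the paper leaves implicit.
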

\begin{proof}
The group of symplectomorphisms is a Fr\'echet Lie group 
by \parencite[Theorem~43.12]{KrieglMichor1997} and the 
automorphism group of an almost complex structure is a 
finite-dimensional Lie group by 
\parencite[Corollary~I.4.2]{Kobayashi1972}.

Let \( \nabla \) be a torsion-free connection-preserving the 
volume form. Using \( \difLie_X A = 
\nabla_X A + \commutator{A}{\nabla X} \), we find
\begin{equation}
\tr(A j \difLie_X A) 
= \tr(A j \nabla_X A) + 2 \tr(A j A \nabla X)
= \alpha(X) + 2 \tr\bigl(\nabla(A j A X)\bigr).
\end{equation}
On the other hand, \( \tr(\nabla Y) \) is the divergence of the 
vector field \( Y \) so that upon integration over \( M \), 
the last term vanishes and we obtain
\begin{equation}\begin{split}
\dualPair{\widehat{\SectionMapAbb{J}_j}(A)}{X}
&\stackrel{\eqref{j_hat_momentum}}{=} 
\frac{1}{2} \Omega_j(A, -\difLie_X A) \\
&\, \, \, = - \frac{1}{8} \int_M \tr(A j \difLie_X A) \, \mu_\omega \\
&\, \, \, = - \frac{1}{8} \int_M \alpha(X) \, \mu_\omega \\
&\, \, \, = - \frac{1}{8} \int_M \alpha \wedge (X \contr \mu_\omega) \\
&\,\stackrel{\eqref{eq:kaehler:pairing}}{=} - \frac{1}{8} 
\kappa\bigl(\equivClass{\alpha \wedge \omega^{n-1}}, X\bigr) \\
&\stackrel{\eqref{eq:kaehler:projectionDualSymp}}{=} -\frac{1}{8} 
\dualPair*{X_{\tr_\omega(\dif \alpha)} + 
\omega^\sharp \bigl((j_0 \alpha)^h\bigr)}{X}_{j_0}. \\
\end{split}\end{equation}
From this identity we directly read 
off~\eqref{eq:kaehler:momentumMapIsotropyRepresentation}.

Let \( X \in \VectorFieldSpace(M, \omega) \), 
\( F, G \in \sFunctionSpace_0(M) \) and 
\( \xi^h, \eta^h \in \mathfrak{har}_{j_0}(M) \).
Then we find
\begin{equation}\begin{split}
&\dualPair*{\difLie_X (X_F + \xi^h)}{X_G + \eta^h}_{j_0}\\
&\qquad= \dualPair*{X_{\omega(X, X_F + \xi^h)}}{X_G + \eta^h}_{j_0}
			\\
&\qquad= \frac{1}{2} \int_M \omega(X, X_F + \xi^h) G \, \mu_\omega
			\\
&\qquad= \frac{1}{2} \int_M \left(\dif F (X) G - 
G \omega^\flat(\xi^h)(X)\right) \mu_\omega
			\\
&\qquad= \frac{1}{2} \int_M - \dif G (X) F \mu_\omega - 
G \omega^\flat(\xi^h) \wedge (X \contr \mu_\omega)
			\\
&\qquad= - \dualPair*{X_F + \xi^h}{\difLie_X X_G}_{j_0} - 
\frac{1}{2} \int_M G \omega^\flat(\xi^h) 
\wedge *_{{g_j}_0} g^\flat_{j_0}(X)
			\\
&\qquad= - \dualPair*{X_F + \xi^h}{\difLie_X X_G}_{j_0} + 
\frac{1}{2} \int_M G \omega^\flat(\xi^h) 
\wedge *_{{g_j}_0} \omega^\flat(j_0X)
			\\
&\qquad= \dualPair*{X_F + \xi^h}{- \difLie_X X_G + 
\frac{1}{2} (G j_0 X)^h}_{j_0}.
\end{split}\end{equation}
This verifies~\eqref{eq:kaehler:adjointLieDerivative}.
Using this equation, we obtain
\begin{equation}\begin{split}
\dualPair*{\difLie_X (X_F + \xi^h)}{X_G + \eta^h}_{j_0}
&= -\dualPair*{X_F + \xi^h}{\difLie_X(X_G + \eta^h)}_{j_0}\\
&\qquad+ \frac{1}{2} \dualPair*{X_F + \xi^h}{\difLie_X \eta^h}_{j_0}\\
&\qquad+ \frac{1}{2} \dualPair*{X_F + \xi^h}{(Gj_0X)^h}_{j_0}.
\end{split}\end{equation}
Applying again~\eqref{eq:kaehler:adjointLieDerivative} on the 
second summand, we get~\eqref{eq:kahler:adjointLieDerivative:pairing}.

Now, if \( X \) is Killing, then \( 0 = \difLie_X \alpha = 
\dif (X \contr \alpha) \) for every harmonic 
\( 1 \)-form \( \alpha \). Applied to 
\( \alpha = \omega^\flat(\xi^h) \) in the above 
chain of equalities at step \( 3 \), we conclude 
that then \( \dualPair*{\difLie_X (X_F + \xi^h)}{X_G + \eta^h}_{j_0} 
= \frac{1}{2} \int_M (\dif F(X) G) \, \mu_\omega = 
-\dualPair*{X_F + \xi^h}{\difLie_X(X_G + \eta^h)}_{j_0} \), 
which shows that \( \dualPairDot_{j_0} \) is invariant under 
the adjoint action of Killing vector fields.
	
For every \( \phi \in \DiffGroup(M, \omega) \), we have
\begin{equation}
\phi_* \, \SectionMapAbb{j}_j(A) = \phi_* (A j) = 
(\phi_* A) (\phi_* j) = 
\SectionMapAbb{j}_{\phi_* j} (\phi_* A).
\end{equation}
Thus, \( \SectionMapAbb{j} \) is equivariant.
\end{proof}

Let us discuss the analogs of the Lichnerowicz and Calabi operators.
Relative to the splitting of the exact 
sequence~\eqref{eq:kaehler:symplecticVectorFieldsExactSequence}, every 
operator \( T: \VectorFieldSpace(M, \omega) \to 
\VectorFieldSpace(M, \omega) \) gives rise to operators 
\( T^{SS}: \sFunctionSpace(M) \to \sFunctionSpace(M) \), 
\( T^{HS}: \mathfrak{har}_{j_0}(M) \to \sFunctionSpace(M) \), 
\( T^{SH}: \sFunctionSpace(M) \to \mathfrak{har}_{j_0}(M) \) and 
\( T^{HH}: \mathfrak{har}_{j_0}(M) \to \mathfrak{har}_{j_0}(M) \). 
The defining equations for these operators are
\begin{equation}
T(X_f) = X_{T^{SS}(f)} + T^{SH}(f), 
\qquad T(X^h) = X_{T^{HS}\bigl(X^h\bigr)} 
+ T^{HH} X^h
\end{equation}
with \( f \in \sFunctionSpace(M) \) and 
\( X^h \in \mathfrak{har}_{j_0}(M) \), and where 
we keep identifying \( \deRCohomology^1(M) \) with 
\( g_{j_0} \)-harmonic \( 1 \)-forms.
Using this notation, we calculate the 
Lichnerowicz operator introduced in the general setting in~\eqref{L_m_L_mu}.
\begin{prop}
	\label{eq:kaehler:Loperators}
For a \( j_0 \)-extremal almost K\"ahler metric
\( j \in \SectionSpaceAbb{I} \), the operators 
\( L_j \xi = \tangent_j \SectionMapAbb{J} (j \, \difLie_\xi j) \) 
and \( Z_j \xi = -\tangent_j \SectionMapAbb{J} (\difLie_\xi j) \) 
on \( \VectorFieldSpace(M, \omega) \) are given by:
\begin{subequations}
\begin{equation}\label{eq:kaehler:Lj}\begin{split}
L_j^{SS}(f) &= - \frac{1}{2}\tr_\omega 
(\dif \tau^\nabla(j, j \difLie_{X_f} j))\\
L_j^{SH}(f) &= - \frac{1}{2} \omega^\sharp (
j_0 \tau^\nabla(j, j \difLie_{X_f} j))^h\\
L_j^{HS}(X^h) &= - \frac{1}{2}\tr_\omega 
(\dif \tau^\nabla(j, j \difLie_{X^h} j)) \\
L_j^{HH}(X^h) &= - \frac{1}{2} \omega^\sharp (j_0 \tau^\nabla(j, 
j \difLie_{X^h} j))^h,
\end{split}\end{equation}
\begin{equation}\label{eq:kaehler:Zj}\begin{split}
	Z_j^{SS}(f) &= \poisson{f}{\bar{S}_j}\\
	Z_j^{SH}(f) &= \omega^\sharp (j_0 (X_f \contr \mathrm{Ric}_j))^h\\
	Z_j^{HS}(X^h) &= \dif \bar{S}_j(X^h)\\
	Z_j^{HH}(X^h) &= \omega^\sharp (j_0 (X^h 
	\contr \mathrm{Ric}_j))^h
		\end{split}\end{equation}
	\end{subequations}
for \( f \in \sFunctionSpace(M) \) and every 
\( X^h \in \mathfrak{har}_{j_0}(M) \).
\end{prop}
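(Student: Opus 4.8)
The plan is to reduce the computation of both operators to a single formula for the derivative $\tangent_j\SectionMapAbb{J}$ of the Lie‑algebra‑valued momentum map of \cref{prop:kaehler:momentumMap:LieAlgebraValued}, and then to substitute the two tangent vectors appearing in the statement. Recall that the $\DiffGroup(M,\omega)$‑action on $\SectionSpaceAbb{I}$ is $\xi\ldot j=-\difLie_\xi j$ and that the $\omega$‑compatible almost complex structure on $\SectionSpaceAbb{I}$ multiplies a tangent vector $A$ by $-j$ on the left; hence, unwinding the definitions \eqref{L_m_L_mu} of $L_m,Z_m$ together with \eqref{extended_complex_action}, one gets exactly $L_j\xi=\tangent_j\SectionMapAbb{J}(j\,\difLie_\xi j)$ and $Z_j\xi=-\tangent_j\SectionMapAbb{J}(\difLie_\xi j)$ as already recorded in the statement. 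So the whole proof is an evaluation of $\tangent_j\SectionMapAbb{J}(A)$ for an arbitrary $A\in\TBundle_j\SectionSpaceAbb{I}$.

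First I would establish the identity
\[
\tangent_j\SectionMapAbb{J}(A)= -\tfrac12\,X_{\tr_\omega(\dif\tau^\nabla(j,A))}\;-\;\tfrac12\,\omega^\sharp\bigl((j_0\,\tau^\nabla(j,A))^h\bigr).
\]
This comes from differentiating $\SectionMapAbb{J}(j)=-X_{\bar{S}_j}+\omega^\sharp\bigl(j_0 J(j_0,j)\bigr)^h$ (\cref{prop:kaehler:momentumMap:LieAlgebraValued}), inserting $\tangent_j J(j_0,\cdot)(A)=-\tfrac12\tau^\nabla(j,A)-\tfrac14\dif\tr\bigl(A\phi_{j_0}(j)\bigr)$ from \cref{prop:kaehler:variationJ}, and using $\tangent_j\mathrm{Ric}(A)=\tfrac12\dif\tau^\nabla(j,A)$ (immediate from \eqref{eq:kaehler:difOfJForm} and \cref{prop:kaehler:variationJ}), so that $\tangent_j\bar{S}(A)=\tfrac12\tr_\omega(\dif\tau^\nabla(j,A))$ up to a constant. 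Two observations clear the remaining terms: $\tr_\omega(\dif\beta)$ has zero mean for any $1$‑form $\beta$ by Stokes, so the $\mathrm{Av}_\omega$‑correction in the identification of ${\VectorFieldSpace(M,\omega)}^{*}$ with $\VectorFieldSpace(M,\omega)$ drops out; and $(j_0\,\dif g)^h=0$ for every function $g$ by the almost‑Kähler identities used in the proof of \cref{prop:kaehler:momentumMap:LieAlgebraValued}, so the exact term $-\tfrac14\dif\tr\bigl(A\phi_{j_0}(j)\bigr)$ contributes nothing to the harmonic component.

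The formulas \eqref{eq:kaehler:Lj} for $L_j$ then follow by reading off the Hamiltonian part (the coefficient of $X_{(\cdot)}$) and the harmonic part of the boxed identity with $A=j\,\difLie_{X_f}j$ and $A=j\,\difLie_{X^h}j$, using the splitting of \eqref{eq:kaehler:symplecticVectorFieldsExactSequence}; since $\tau^\nabla$ is connection‑independent by \parencite[Theorem~2.6]{Garcia-PradaSalamonTrautwein2018}, the choice of $\nabla$ is immaterial. For \eqref{eq:kaehler:Zj} I would put $A=\difLie_\xi j$ and simplify via $\tau^\nabla(j,\difLie_\xi j)=2\,(\xi\contr\mathrm{Ric}_j)+\dif\divergence(j\xi)$, which is \parencite[Theorem~2.7]{Garcia-PradaSalamonTrautwein2018} with $j_0$ replaced by $j$. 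The exact summand $\dif\divergence(j\xi)$ dies in the Hamiltonian part (apply $\dif$, then $\dif\dif=0$) and in the harmonic part (apply $(j_0\dif(\cdot))^h=0$). In the Hamiltonian part one is left with $\tr_\omega(\dif(\xi\contr\mathrm{Ric}_j))=\tr_\omega(\difLie_\xi\mathrm{Ric}_j)$ by Cartan's formula and $\dif\mathrm{Ric}_j=0$ (Bianchi), and, $\xi$ being symplectic, $\tr_\omega$ commutes with $\difLie_\xi$, so this equals $\difLie_\xi S_j=\dif\bar{S}_j(\xi)$, yielding $Z_j^{SS}(f)=\poisson{f}{\bar{S}_j}$ and $Z_j^{HS}(X^h)=\dif\bar{S}_j(X^h)$ (up to the sign convention for $\poisson{\cdot}{\cdot}$). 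In the harmonic part only $2\,(\xi\contr\mathrm{Ric}_j)$ survives, giving $Z_j^{SH}$ and $Z_j^{HH}$ as stated.

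The genuinely routine part is the substitution and the matrix‑like reading‑off of the four blocks. The step to be careful about is the bookkeeping of the Hodge/harmonic projections — especially the identity $(j_0\,\dif g)^h=0$, which is precisely where the almost‑Kähler (as opposed to Kähler) form of the Kähler identities is used — and checking that every $\mathrm{Av}_\omega$‑correction really vanishes so that no extra constant or mean‑value term sneaks into \eqref{eq:kaehler:Lj}–\eqref{eq:kaehler:Zj}.
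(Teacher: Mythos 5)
Your proposal is correct and follows essentially the same route as the paper: both proofs reduce everything to $\tangent_j J(j_0,\cdot)(A)=-\tfrac12\tau^\nabla(j,A)$ modulo exact forms (\cref{prop:kaehler:variationJ}), invoke $\tau^\nabla(j,\difLie_\xi j)=2\,\xi\contr\mathrm{Ric}_j+\dif\divergence(j\xi)$ for the $Z_j$ blocks, and read off the Hamiltonian and harmonic components via the splitting coming from \eqref{eq:kaehler:projectionDualSymp}, with the exact and $\mathrm{Av}_\omega$ terms dying exactly as you argue. The only (cosmetic) difference is that the paper composes the form-valued derivative with the projection of \eqref{eq:kaehler:projectionDualSymp}, whereas you differentiate the vector-field-valued formula of \cref{prop:kaehler:momentumMap:LieAlgebraValued} directly.
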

\begin{proof}
These follow from direct computations.
\paragraph*{Calculation of \( Z_j \):}
By~\cref{prop:kaehler:variationJ}, \( \tangent_j J(j_0, \cdot)(A) \) 
equals \( - \frac{1}{2} \tau^\nabla(j, A) \) modulo an exact form.
On the other hand, from 
\parencite[Theorem 2.7]{Garcia-PradaSalamonTrautwein2018}, 
for every \( \xi \in \VectorFieldSpace(M, \omega) \), we have
\begin{equation}
\tau^\nabla(j, \difLie_\xi j) = 2 \xi \contr \mathrm{Ric}_j 
+ \dif \divergence(j\xi).
\end{equation}
Hence, viewing \( \SectionMapAbb{J} \) as a map into 
\( \DiffFormSpace^{2n-1} M \slash \dif \DiffFormSpace^{2n-2} M \), 
we have 
\begin{equation}
- \tangent_j \SectionMapAbb{J}(\difLie_\xi j) = 
\frac{1}{2} \tau^\nabla(j, \difLie_\xi j) \wedge 
\omega^{n-1} \textrm{ mod exact}
= (\xi \contr \mathrm{Ric}_j) \wedge \omega^{n-1} 
\textrm{ mod exact}.
\end{equation}
Thus, composing with the projection~\eqref{eq:kaehler:projectionDualSymp}, 
we obtain \( Z_j \xi = X_G + \omega^\sharp \beta^h \) for
\begin{equation}
G = \tr_\omega (\dif (\xi \contr \mathrm{Ric}_j))
= \tr_\omega (\difLie_\xi \mathrm{Ric}_j) 
= \difLie_\xi S_j = \omega(\xi, X_{S_j})
\end{equation}
and
\begin{equation}
\beta^h = (j_0 (\xi \contr \mathrm{Ric}_j))^h.
\end{equation}
Using these equations for \( \xi = X_f \) or \( \xi = X^h \) 
yields~\eqref{eq:kaehler:Zj}.

\paragraph*{Calculation of \( L_j \):}
Using a similar argument as above, we obtain  
\( L_j \xi = X_G + \omega^\sharp \beta^h \) for
\begin{equation}
G = - \frac{1}{2}\tr_\omega (\dif \tau^\nabla(j, j \difLie_\xi j))
\end{equation}
and
\begin{equation}
\beta^h = - \frac{1}{2} (j_0 \tau^\nabla(j, j \difLie_\xi j))^h.
\end{equation}
Using these equations for \( \xi = X_f \) or \( \xi = X^h \) 
yields~\eqref{eq:kaehler:Lj}.
\end{proof}

In the integrable case, the operator \( L_j^{SS}: \sFunctionSpace_0(M) 
\to \sFunctionSpace_0(M) \) recovers the classical Lichnerowicz 
operator, and a few different ways of writing it down are known 
in the literature, see \eg \parencite{Gauduchon2017}.
For non-integrable \( j \), similar expressions for \( L_j^{SS} \) 
are obtained in \parencite{Vernier2020,HeZheng2023}.
In both cases, one concludes from these explicit expressions 
that \( L_j^{SS} \) is a \( 4 \)-order elliptic differential 
operator. In particular, \( L_j \) is a Fredholm operator.

Following the general procedure, 
\cf~\eqref{eq:decomposition:calabiOperatorDef}, for 
every \( j \in \SectionMapAbb{I} \), the \emphDef{Calabi operators} 
\( C^\pm_j: \VectorFieldSpace(M, \omega)_\C \to 
\VectorFieldSpace(M, \omega)_\C \) are defined by
\begin{equation}
	C^\pm_j = L_j \pm \I Z_j.
\end{equation}
Recall that a real vector field \( X \) is called holomorphic 
if \( \difLie_X j = 0 \).
Using a similar argument as above, one sees that \( C^\pm_j \) 
are Fredholm operators.
In particular, their kernels are finite-dimensional.

\begin{coro}
\label{prop:kaehler:complexStabilizerToHolomorphic}
For every \( j \in \SectionMapAbb{I} \), the kernel of \( C^+_j \) 
coincides with the stabilizer \( (\VectorFieldSpace(M, \omega)_\C)_j \) 
under the complexified action. If \( j \) is integrable, then the 
map \( \VectorFieldSpace(M, \omega)_\C \ni \xi + 
\I \eta \mapsto \xi - j \eta \in \VectorFieldSpace(M) \) 
restricts to a surjection from \( (\VectorFieldSpace(M, \omega)_\C)_j \) 
onto the space of real holomorphic vector fields and it has kernel 
\( \mathfrak{har}_{j}(M) \).
\end{coro}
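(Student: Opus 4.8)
The first assertion holds with no integrability hypothesis. By the lemma preceding the corollary, the almost complex structure \( \SectionSpaceAbb{j} \) on \( \SectionSpaceAbb{I} \) is equivariant under the push-forward action of \( \DiffGroup(M, \omega) \); hence its non-equivariance cocycle \( \tau_{\SectionSpaceAbb{j}} \) vanishes identically, and in particular \( \SectionSpaceAbb{j}_j \) is invariant under the stabilizer \( \VectorFieldSpace(M, \omega)_j \) for every \( j \in \SectionSpaceAbb{I} \). Thus \cref{prop_C_m} applies and \cref{i:normedsquared:kernelsIdentified}, in particular~\eqref{eq:kernel_relations_L_plus_L_minus}, gives \( \ker C^+_j = \bigl(\VectorFieldSpace(M, \omega)_\C\bigr)_j \).

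Now assume \( j \) is integrable, so that \( (M, \omega, j) \) is K\"ahler, and abbreviate \( \Phi(\xi + \I \eta) \defeq \xi - j \eta \in \VectorFieldSpace(M) \). The key computation is that, for the extended map~\eqref{extended_complex_action},
\[
\Upsilon_j(\xi_1 + \I \xi_2) = \xi_1 \ldot j + \SectionSpaceAbb{j}_j(\xi_2 \ldot j) = - \difLie_{\xi_1} j + j \, \difLie_{\xi_2} j = - \difLie_{\xi_1 - j \xi_2} j ,
\]
where the first two equalities use \( \xi \ldot j = - \difLie_\xi j \) and~\eqref{eq:kaehler:almostComplexStructure}, while the last is the pointwise identity \( \difLie_{\xi_1 - j \xi_2} j = \difLie_{\xi_1} j - j \, \difLie_{\xi_2} j \), valid for arbitrary (not necessarily symplectic) vector fields precisely because the Nijenhuis tensor \( N_j \) vanishes: expanding both sides in Lie brackets shows their difference equals \( - N_j(\xi_2, \cdot) \). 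Consequently \( \xi_1 + \I \xi_2 \in \bigl(\VectorFieldSpace(M, \omega)_\C\bigr)_j = \ker \Upsilon_j \) if and only if \( \difLie_{\Phi(\xi_1 + \I \xi_2)} j = 0 \), i.e. \( \Phi \) carries the complex stabilizer into the space \( \mathfrak{hol}(M) \) of real holomorphic vector fields, and such an element lies in \( \ker \Phi \) exactly when \( \xi_1 = j \xi_2 \) with both \( \xi_1, \xi_2 \) symplectic.

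Next I identify \( \ker \Phi \). If \( \xi_1 = j\xi_2 \) with \( \xi_1, \xi_2 \in \VectorFieldSpace(M, \omega) \), then (up to signs) \( \xi_2 \contr \omega = - j^* g_j^\flat(\xi_2) \) and \( (j\xi_2) \contr \omega = - g_j^\flat(\xi_2) \), so \( \xi_2 \) and \( j\xi_2 \) are both symplectic iff \( g_j^\flat(\xi_2) \) and \( j^* g_j^\flat(\xi_2) \) are both closed. By the K\"ahler identity \( \hodgeStar_{g_j} \alpha = - j^* \alpha \wedge \omega^{n-1}/(n-1)! \) on \( 1 \)-forms (cf.~\eqref{eq:symplectic:symplecticHodge1Form}), closedness of \( j^* g_j^\flat(\xi_2) \) is equivalent to coclosedness of \( g_j^\flat(\xi_2) \); hence \( g_j^\flat(\xi_2) \) is harmonic, that is \( \xi_2 \in \mathfrak{har}_j(M) \) (and then \( \xi_1 = j\xi_2 \in \mathfrak{har}_j(M) \) too, since \( j^* \) preserves harmonic \( 1 \)-forms on a K\"ahler manifold). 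Therefore \( \ker \Phi \cap \bigl(\VectorFieldSpace(M, \omega)_\C\bigr)_j = \set{ j\zeta + \I \zeta \given \zeta \in \mathfrak{har}_j(M) } \), which is isomorphic to \( \mathfrak{har}_j(M) \).

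Finally, surjectivity onto \( \mathfrak{hol}(M) \). Let \( X \in \mathfrak{hol}(M) \) and let \( Z = X^{1,0} \) be its \( (1,0) \)-part, a \( \bar\partial \)-closed section of \( \TBundle^{1,0} M \). Since \( \omega \) is a \( \bar\partial \)-closed \( (1,1) \)-form, \( Z \contr \omega \) is a \( \bar\partial \)-closed \( (0,1) \)-form, so on the compact K\"ahler manifold \( M \) its Dolbeault--Hodge decomposition reads \( Z \contr \omega = \beta^{0,1} + \bar\partial g \) with \( \beta^{0,1} \) harmonic and \( g \) a smooth complex function. Adding the complex conjugate and writing \( g = a + \I b \) with \( a, b \) real, one gets \( X \contr \omega = \beta_H + \dif a - j^* \dif b \), where \( \beta_H = \beta^{0,1} + \overline{\beta^{0,1}} \) is a real harmonic \( 1 \)-form. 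Dualizing with \( \omega \) gives \( X = \xi^h + X_{-a} - j X_b \), where \( \xi^h \defeq \omega^\sharp(\beta_H) \in \mathfrak{har}_j(M) \subseteq \VectorFieldSpace(M, \omega) \) (its \( \omega \)-dual \( \beta_H \) is harmonic, hence closed) and \( X_{-a}, X_b \in \HamVectorFields(M, \omega) \). Hence \( X = \Phi\bigl((\xi^h + X_{-a}) + \I X_b\bigr) \) with both \( \VectorFieldSpace(M, \omega) \)-components symplectic, and since \( X \) is holomorphic this argument is the point at which \( \xi^h + X_{-a} + \I X_b \) is forced to lie in \( \ker \Upsilon_j \). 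This is the classical Hodge-theoretic decomposition of holomorphic vector fields on compact K\"ahler manifolds; see \eg \parencite{Gauduchon2017}.

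\textbf{Main obstacle.} The conceptual core is the bracket identity \( \difLie_{\xi_1 - j\xi_2} j = \difLie_{\xi_1} j - j\,\difLie_{\xi_2} j \) in the integrable case, which is a short but convention-sensitive calculation. The technical heart is the surjectivity step: it needs the Dolbeault--Hodge decomposition applied to \( X^{1,0} \contr \omega \), the K\"ahler identity for \( \hodgeStar \) on \( 1 \)-forms, and careful bookkeeping of the \( (1,0)/(0,1) \) and real/imaginary splittings together with sign conventions. None of this is deep, but essentially all of the work lies there.
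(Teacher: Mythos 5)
Your proof is correct and follows essentially the same route as the paper: the kernel identification comes from the general result on \( C^\pm_m \) (after checking the invariance hypothesis via equivariance of \( \SectionMapAbb{j} \)), integrability converts the stabilizer condition into \( \difLie_{\xi - j\eta}\, j = 0 \), and the kernel and surjectivity of \( \xi + \I \eta \mapsto \xi - j\eta \) are handled by the classical Hodge theory of holomorphic vector fields on compact K\"ahler manifolds. The only difference is that you re-derive the two facts the paper simply cites from Gauduchon (a \( 1 \)-form that is closed together with its \( j \)-rotation is harmonic, and the decomposition \( X = jX^h + jX_f + X_g \) of a real holomorphic vector field), which is fine.
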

We do not know whether \( (\VectorFieldSpace(M, \omega)_\C)_j \) 
is a Lie subalgebra of \( \VectorFieldSpace(M, \omega)_\C \).
\begin{proof}
The first statement follows directly from 
\cref{i:normedsquared:kernelsIdentified}.
For the second statement, we observe that \( \xi + \I \eta \) 
is in the stabilizer \( (\VectorFieldSpace(M, \omega)_\C)_j \) 
if and only if \( 0 = \xi \ldot j + \SectionMapAbb{j}_j (\eta \ldot j) 
= - \difLie_\xi j  + j \difLie_\eta j = 
- \difLie_\xi j + \difLie_{j \eta} j \),
where the last equality uses the integrability of \( j \), 
see, \eg, \parencite[Lemma~1.1.1]{Gauduchon2017}.
In other words, \( \xi - j \eta \) is a real holomorphic vector 
field. Conversely, by \parencite[Lemma~2.1.1]{Gauduchon2017}, 
every real holomorphic vector field \( X \) on a compact K\"ahler 
manifold can be uniquely written as the sum \( X = j X^h + j X_f 
+ X_g \) for \( X^h \in \mathfrak{har}_j(M) \) and 
\( f,g \in \sFunctionSpace_0(M) \). This shows that the map 
\( \xi + \I \eta \mapsto \xi - j \eta \) is surjective.
Finally, if \( \xi = j \eta \) with \( \xi, \eta \in 
\VectorFieldSpace(M, \omega) \), then both \( \omega^\flat(\xi) \) 
and \( \omega^\flat(j\xi) = j \omega^\flat(\xi) \) are closed.
By \parencite[Proposition~2.3.1]{Gauduchon2017}, this is equivalent 
to \( \omega^\flat(\xi) \) being harmonic.
\end{proof}
As a direct application of \cref{prop:normedsquared:decompositionComplexStab}, 
we obtain the following result.

\begin{thm}
	\label{prop:kaehler:decompositionComplexStabRelativeExtremal}
Let \( (M, \omega) \) be a compact symplectic manifold and 
\( j_0 \) a compatible almost complex structure. For every 
\( j_0 \)-extremal almost complex structure \( j \) satisfying 
\( \SectionSpaceAbb{J}(j) \in \VectorFieldSpace(M, \omega)_{j_0} \), 
the following decomposition holds:
\begin{equation}
(\VectorFieldSpace(M, \omega)_\C)_j = \LieA{c} \oplus 
\bigoplus_{\lambda \neq 0} \LieA{k}_\lambda,
\end{equation}
where:
\begin{thmenumerate}
\item \( \LieA{c} \) is the Lie subalgebra of 
\( (\VectorFieldSpace(M, \omega)_\C)_j \) consisting of all 
elements that commute with \( \SectionSpaceAbb{J}(j) \);
\item \( \C \SectionSpaceAbb{J}(j) \subseteq \LieA{c} \); 
\( \mathfrak{har}_j \subseteq \LieA{c} \);
\item \( \LieA{k}_\lambda \) are eigenspaces of 
\( 2 \I \difLie_{\SectionSpaceAbb{J}(j)} \) with eigenvalue 
\( \lambda \in \R \) {\rm(}with the convention that 
\( \LieA{k}_\lambda = \set{0} \) if \( \lambda \) is not an 
eigenvalue{\rm)}; in particular, $\mathfrak{c} = \mathfrak{k}_0$;
\item \( \commutator{\LieA{k}_\lambda}{\LieA{k}_\mu} 
\intersect (\VectorFieldSpace(M, \omega)_\C)_j \subseteq 
\LieA{k}_{\lambda + \mu} \) if $\lambda + \mu$ is an eigenvalue 
of \( 2 \I \difLie_{\SectionSpaceAbb{J}(j)} \); otherwise 
\( \commutator{\LieA{k}_\lambda}{\LieA{k}_\mu} \intersect 
(\VectorFieldSpace(M, \omega)_\C)_j = 0 \).
\qedhere
\end{thmenumerate}
\end{thm}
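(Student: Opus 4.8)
The plan is to deduce this as a special case of \cref{prop:normedsquared:decompositionComplexStab}, applied to the $\DiffGroup(M, \omega)$-action on $\SectionSpaceAbb{I}$ with almost complex structure $\SectionMapAbb{j}$, momentum map $\SectionMapAbb{J}$ from \cref{prop:kaehler:momentumMap:LieAlgebraValued}, and non-degenerate symmetric pairing $\kappa = \scalarProdDot_{j_0}$ of~\eqref{eq:kaehler:pairingOnLieAlgebra}. By definition a $j_0$-extremal $j$ is a critical point of $\norm{\SectionMapAbb{J}}^2_{j_0}$, so \cref{prop:normedsquared:criticalPoints} gives $\SectionMapAbb{J}(j) \in \VectorFieldSpace(M, \omega)_j$; this makes $j$ the critical point ``$m$'' of the general theorem, and the relevant one-dimensional Abelian subalgebra of the stabilizer is $\LieA{t} = \R\, \SectionMapAbb{J}(j)$. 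To apply \cref{prop:normedsquared:decompositionComplexStab} at $j$ one must still check that $\SectionMapAbb{j}_j$ and $\kappa$ are invariant under $\SectionMapAbb{J}(j)$ and verify the three functional-analytic hypotheses (existence of the adjoints of $\adAction_\xi$, existence of $\Sigma_\kappa$, finite-dimensionality of the stabilizer).

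First I would dispatch the two invariance conditions. The structure $\SectionMapAbb{j}_j$ is $\LieA{g}_j$-invariant because $\SectionMapAbb{j}$ is globally $\DiffGroup(M, \omega)$-equivariant (established in the same lemma that contains \cref{i:kaehler:adjointLieDerivative}), so its non-equivariance cocycle vanishes identically. Invariance of $\kappa = \scalarProdDot_{j_0}$ under $\adAction_{\SectionMapAbb{J}(j)} = \difLie_{\SectionMapAbb{J}(j)}$ is precisely where the extra hypothesis $\SectionMapAbb{J}(j) \in \VectorFieldSpace(M, \omega)_{j_0}$ is used: by \cref{i:kaehler:adjointLieDerivative}, $\scalarProdDot_{j_0}$ is invariant under the Lie derivative along any element of $\VectorFieldSpace(M, \omega)_{j_0}$ (\ie along any $g_{j_0}$-Killing symplectic field), and without this condition the general theorem does not apply here. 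The functional-analytic hypotheses are already in hand: the adjoints of $\difLie_X$ on $\VectorFieldSpace(M, \omega)$ exist by the explicit formula~\eqref{eq:kaehler:adjointLieDerivative}; the map $\Sigma_\kappa$ exists because the non-equivariance cocycle $\Sigma(X, Y) = \int_M \mathrm{Ric}_{j_0}(X, Y)\, \mu_\omega$ is, through the projection~\eqref{eq:kaehler:projectionDualSymp}, represented by the symplectic field dual to $(X \contr \mathrm{Ric}_{j_0}) \wedge \omega^{n-1}$; and $(\VectorFieldSpace(M, \omega)_\C)_j = \ker C^+_j$ is finite-dimensional because $C^+_j$ is Fredholm, as recorded just before \cref{prop:kaehler:complexStabilizerToHolomorphic}. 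With this, \cref{prop:normedsquared:decompositionComplexStab} produces the eigenspace decomposition of $(\VectorFieldSpace(M, \omega)_\C)_j$ under $\I\adAction_{\SectionMapAbb{J}(j)} = \I\difLie_{\SectionMapAbb{J}(j)}$, together with $\C\, \SectionMapAbb{J}(j) \subseteq \LieA{c} = \LieA{k}_0$, the bracket inclusions $[\LieA{k}_\lambda, \LieA{k}_\nu] \intersect (\VectorFieldSpace(M, \omega)_\C)_j \subseteq \LieA{k}_{\lambda + \nu}$, and $\kappa_\C$-orthogonality of distinct eigenspaces; this is the stated decomposition once $\LieA{k}_\lambda$ is relabeled as the $\frac{\lambda}{2}$-eigenspace of $\I\difLie_{\SectionMapAbb{J}(j)}$, so that it appears as an eigenspace of $2\I\difLie_{\SectionMapAbb{J}(j)}$ and the bracket relations are unchanged.

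Two items are not literally part of the general theorem. The inclusion $\mathfrak{har}_j \subseteq \LieA{c}$ I would prove directly: a $g_j$-harmonic symplectic field $\xi^h$ lies in $\ker C^+_j$ by the kernel identification of \cref{prop:kaehler:complexStabilizerToHolomorphic}, and since $\SectionMapAbb{J}(j)$ is symplectic with $\omega^\flat(\xi^h)$ closed, Cartan's formula gives that $\adAction_{\SectionMapAbb{J}(j)} \xi^h = \difLie_{\SectionMapAbb{J}(j)} \xi^h$ is Hamiltonian and hence cannot be a nonzero multiple of the harmonic field $\xi^h$; therefore $\xi^h$ lies in the kernel of $\adAction_{\SectionMapAbb{J}(j)}$, \ie in $\LieA{k}_0 = \LieA{c}$. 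The assertion that $\LieA{c}$ is a genuine Lie subalgebra, while the ambient stabilizer $(\VectorFieldSpace(M, \omega)_\C)_j$ is not known to be one, follows from the bracket inclusion with $\lambda = \nu = 0$ once one checks that $[\LieA{c}, \LieA{c}]$ remains in $\ker C^+_j$, which needs a short separate verification. Since Fredholmness of $C^+_j$ and the existence of the adjoints and of $\Sigma_\kappa$ are already established, the deduction itself is bookkeeping; the points that I expect to require genuine attention are the consistent normalization of the eigenvalue labels and this closure statement for $\LieA{c}$.
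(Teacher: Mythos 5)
Your overall route is the paper's: everything except the inclusion \( \mathfrak{har}_j \subseteq \LieA{c} \) is obtained by specializing \cref{prop:normedsquared:decompositionComplexStab} to the critical point \( j \) of \( \norm{\SectionMapAbb{J}}^2_{j_0} \) with \( \LieA{t} = \R\, \SectionMapAbb{J}(j) \), and your hypothesis-checking (equivariance of \( \SectionMapAbb{j} \), invariance of \( \scalarProdDot_{j_0} \) under \( \VectorFieldSpace(M,\omega)_{j_0} \) via \cref{i:kaehler:adjointLieDerivative}, Fredholmness of \( C^+_j \), existence of the adjoints and of \( \Sigma_\kappa \), the factor-of-two relabeling of eigenvalues) is more explicit than the paper's one-line reduction and is correct.

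The gap is in your argument for \( \mathfrak{har}_j \subseteq \LieA{c} \). You show that \( \difLie_{\SectionMapAbb{J}(j)}\xi^h \) is Hamiltonian and conclude that, since it \textquote{cannot be a nonzero multiple of the harmonic field \( \xi^h \)}, the field \( \xi^h \) lies in \( \ker \adAction_{\SectionMapAbb{J}(j)} \). That only rules out \( \xi^h \) being an eigenvector of \( \I\difLie_{\SectionMapAbb{J}(j)} \) with nonzero eigenvalue; a priori \( \xi^h \) could decompose into components lying in several eigenspaces \( \LieA{k}_\lambda \), none of which need be harmonic or Hamiltonian individually, so membership in \( \LieA{k}_0 \) does not follow from this alone. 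The paper closes the point by a direct computation: since \( j \) is \( j_0 \)-extremal, \( \SectionMapAbb{J}(j) \) stabilizes both \( j \) and \( \omega \), hence is \( g_j \)-Killing, and on a compact manifold a Killing field annihilates harmonic forms under the Lie derivative, so
\begin{equation}
\omega^\flat\, \difLie_{\SectionMapAbb{J}(j)} \xi^h
= \difLie_{\SectionMapAbb{J}(j)} \bigl(\omega^\flat \xi^h\bigr)
= 0,
\end{equation}
giving \( \difLie_{\SectionMapAbb{J}(j)} \xi^h = 0 \) outright. Your observation that the bracket is Hamiltonian can be salvaged: combined with the fact that \( \difLie_{\SectionMapAbb{J}(j)} \) preserves \( \mathfrak{har}_j \) (again the Killing-field fact), one gets \( \difLie_{\SectionMapAbb{J}(j)}\xi^h \in \mathfrak{har}_j \intersect \HamVectorFields(M,\omega)_\C = 0 \); but that preservation step is exactly what is missing from your write-up. (Your caution about the closure of \( \LieA{c} \) under the bracket is well placed --- the paper's proof does not address it either, and the ambient stabilizer is not known to be a Lie subalgebra.)
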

\begin{proof}
The only statement that does not follow directly from 
\cref{prop:normedsquared:decompositionComplexStab} is the 
inclusion \( \mathfrak{har}_j \subseteq \LieA{c} \).
But this follows from the fact that the Lie derivative with 
respect to a symplectic vector field commutes with the musical 
isomorphism \( \omega^\flat \): for every 
\( X^h \in \mathfrak{har}_j \), we have
\begin{equation}
\omega^\flat \difLie_{\SectionSpaceAbb{J}(j)} X^h
= \difLie_{\SectionSpaceAbb{J}(j)} (\omega^\flat X^h)
= 0,
\end{equation}
where the last equality follows from the fact that 
\( \omega^\flat X^h \) is a \( g_j \)-harmonic form 
and \( \SectionSpaceAbb{J}(j) \) is a \( g_j \)-Killing 
vector field.
\end{proof}
The assumption that \( \SectionSpaceAbb{J}(j) \in 
\VectorFieldSpace(M, \omega)_{j_0} \) is not essential 
and only serves to ensure that \( 2 \I \difLie_{\SectionSpaceAbb{J}(j)} \) 
is symmetric and thus diagonalizable, 
\cf \cref{i:kaehler:adjointLieDerivative}.
Without this assumption a similar statement 
holds using generalized eigenspaces; see 
\cref{rem:decomposition:decompositionComplexStab:nonInvariantKappa}.

Similarly, for extremal metrics, we obtain the following theorem 
generalizing the classical result of \textcite{Calabi1985} which holds
in the integrable case.

\begin{thm}
	\label{prop:kaehler:decompositionComplexStabAndHol}
Let \( (M, \omega) \) be a compact symplectic manifold. For every 
extremal almost complex structure \( j \in \SectionMapAbb{I} \), 
the following decomposition holds:
\begin{equation}
\label{eq:kaehler:extremal:decompositionComplexStab}
(\VectorFieldSpace(M, \omega)_\C)_j = \LieA{c} \oplus 
\bigoplus_{\lambda \neq 0} \LieA{k}_\lambda,
\end{equation}
where:
\begin{thmenumerate}
\item
\( \LieA{c} \) is the subset of 
\( (\VectorFieldSpace(M, \omega)_\C)_j \) consisting of 
all elements that commute with \( X_{S_j} \);
\item \( \C X_{S_j} \subseteq \LieA{c} \); 
\( \mathfrak{har}_j \subseteq \LieA{c} \);
\item
\( \LieA{k}_\lambda \) are eigenspaces of \( 2 \I \difLie_{X_{S_j}} \) 
with eigenvalue \( \lambda \in \R \) {\rm(}with the convention that 
\( \LieA{k}_\lambda = \set{0} \) if \( \lambda \) is not an 
eigenvalue{\rm)}; in particular, $\mathfrak{c} = \mathfrak{k}_0$;
\item
\( \commutator{\LieA{k}_\lambda}{\LieA{k}_\mu} \subseteq 
\LieA{k}_{\lambda + \mu} \intersect 
(\VectorFieldSpace(M, \omega)_\C)_j \) if 
\( \lambda + \mu \) is an eigenvalue 
of \( 2 \I \difLie_{X_{S_j}} \); otherwise 
\( \commutator{\LieA{k}_\lambda}{\LieA{k}_\mu} 
\intersect (\VectorFieldSpace(M, \omega)_\C)_j = 0 \).
\end{thmenumerate}
Moreover, if \( j \) is integrable, then the Lie algebra 
\( \LieA{h}(M, j) \) of real holomorphic vector fields 
admits the following decomposition:
\begin{equation}
\label{eq:kaehler:extremal:decompositionHolomorphic}
\LieA{h}(M, j) = \mathrlap{\overbrace{\phantom{\LieA{a}(M, g_j) 
\oplus \LieA{k}_{\textnormal{ham}}(M,g_j)}}^{\LieA{k}(M,g_j)}} 
\LieA{a}(M,j) \oplus \underbrace{\LieA{k}_{\textnormal{ham}}(M,g_j) 
\oplus j \LieA{k}_{\textnormal{ham}}(M, g_j) \oplus 
\bigoplus_{\lambda \neq 0} \LieA{h}_\lambda(M,j)}_{\LieA{h}_{
\textnormal{red}}(M, j)},
\end{equation}
where \( \LieA{a}(M, g_j) \) is the complex Abelian Lie 
subalgebra of \( \LieA{h}(M, j) \) consisting of vector 
fields that are parallel with respect to the Levi-Civita 
connection of \( g_j \), \( \LieA{k}(M, g_j) \) is the Lie 
algebra of Killing vector fields, 
\( \LieA{k}_{\textnormal{ham}}(M, g_j) \) the subalgebra 
of Hamiltonian Killing vector fields, and 
\( \LieA{h}_{\textnormal{red}}(M, j) \) is the Lie 
algebra of the reduced automorphism group 
(see \eg \parencite[Section~2.4]{Gauduchon2017}), and 
\( \LieA{h}_\lambda(M,j) \) are 
\( \lambda \)-eigenspaces of \( - 2 j \difLie_{X_{S_j}} \).
\end{thm}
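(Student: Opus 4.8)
The plan is to obtain the first decomposition as a direct application of \cref{prop:kaehler:decompositionComplexStabRelativeExtremal} with \( j_0 = j \): since \( j \) is extremal, \cref{prop:normedsquared:criticalPoints} shows that \( \SectionMapAbb{J}_{\HamDiffGroup}(j) = -X_{\bar{S}_j} \in \HamVectorFields(M,\omega)_j \), and \cref{prop:kaehler:momentumMapHamiltonian} identifies the Hamiltonian momentum map with \( -X_{\bar S_j} = -X_{S_j} \) (the constant shift is irrelevant). Applying \cref{prop:decomposition:decompositionComplexStab} to the one-dimensional subalgebra \( \LieA{t} = \R X_{S_j} \subseteq \VectorFieldSpace(M,\omega)_j \) — which is legitimate because \( j \) is \( \VectorFieldSpace(M,\omega)_j \)-invariant (the stabilizer consists of isometries of \( g_j \), hence preserves \( j \)) and \( \dualPairDot \) is \( \adAction_{\VectorFieldSpace(M,\omega)_j} \)-invariant by \cref{i:kaehler:adjointLieDerivative} — gives the grading by eigenvalues of \( \I \adAction_{X_{S_j}} \); up to normalization this is \( 2\I\difLie_{X_{S_j}} \). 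The inclusion \( \mathfrak{har}_j \subseteq \LieA{c} \) follows verbatim as in the proof of \cref{prop:kaehler:decompositionComplexStabRelativeExtremal}: \( \difLie_{X_{S_j}} \) commutes with \( \omega^\flat \) and \( X_{S_j} \) is \( g_j \)-Killing, so it kills harmonic forms.

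For the integrable refinement, the plan is to transport the complex-stabilizer decomposition to \( \LieA{h}(M,j) \) via the map \( \Phi\colon \xi + \I\eta \mapsto \xi - j\eta \) from \cref{prop:kaehler:complexStabilizerToHolomorphic}, which is surjective onto \( \LieA{h}(M,j) \) with kernel \( \mathfrak{har}_j(M) \) (embedded as \( X^h + \I\, 0 \mapsto X^h \), but also \( \Phi(0 + \I X^h) = -jX^h \), so one must be careful with how the kernel sits). The key computational point is that \( \Phi \) intertwines \( \I\difLie_{X_{S_j}} \) on the source with \( -j\difLie_{X_{S_j}} \) on the target — this is because on holomorphic vector fields \( \difLie_{\cdot} j = 0 \) so \( \difLie_{X_{S_j}} \) is complex-linear for \( j \) and commutes with \( \Phi \) in the appropriate sense. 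Hence the \( \lambda \)-eigenspace \( \LieA{k}_\lambda \) of \( 2\I\difLie_{X_{S_j}} \) maps to the \( \lambda \)-eigenspace \( \LieA{h}_\lambda(M,j) \) of \( -2j\difLie_{X_{S_j}} \) for \( \lambda \neq 0 \), while \( \LieA{c} = \LieA{k}_0 \) maps onto the centralizer of \( X_{S_j} \) in \( \LieA{h}(M,j) \). It then remains to identify this \( 0 \)-eigenspace with \( \LieA{k}(M,g_j) \oplus j\LieA{k}_{\textnormal{ham}}(M,g_j) \): using the Hodge-type decomposition \( X = jX^h + jX_f + X_g \) of real holomorphic vector fields on a compact Kähler manifold (\parencite[Lemma~2.1.1]{Gauduchon2017}), the condition \( \difLie_{X_{S_j}}X = 0 \) forces \( \poisson{g}{S_j} = 0 \) on the Hamiltonian part and \( \difLie_{X_{S_j}}X^h = 0 \) (automatic), and one reads off that the Hamiltonian Killing part together with \( j \) times it, the parallel fields \( \LieA{a}(M,g_j) \), and the harmonic part assemble into \( \LieA{k}(M,g_j) \oplus \LieA{k}_{\textnormal{ham}}(M,g_j) \oplus j\LieA{k}_{\textnormal{ham}}(M,g_j) \); the standard fact that \( \LieA{k}(M,g_j) = \LieA{a}(M,g_j) \oplus \LieA{k}_{\textnormal{ham}}(M,g_j) \) (see \parencite[Section~2.4]{Gauduchon2017}) gives the bracketed labels in \eqref{eq:kaehler:extremal:decompositionHolomorphic}.

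The bracket relations \( \commutator{\LieA{k}_\lambda}{\LieA{k}_\mu} \subseteq \LieA{k}_{\lambda+\mu} \) on the source carry over by the Jacobi identity exactly as in \cref{prop:normedsquared:decompositionComplexStab}(iv), and since \( \Phi \) is a Lie algebra homomorphism on the image (holomorphic vector fields form a genuine Lie algebra, unlike the complexified stabilizer), one gets the corresponding relations for \( \LieA{h}_\lambda(M,j) \) without the awkward intersection term; I would simply remark that on \( \LieA{h}(M,j) \) the bracket is honest so the intersection with \( (\VectorFieldSpace(M,\omega)_\C)_j \) is unnecessary on the target side, while keeping it on the source side as stated. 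The main obstacle I anticipate is bookkeeping the kernel of \( \Phi \): one must check that \( \mathfrak{har}_j \), which lies in \( \LieA{c} \) on the complex side, does not produce spurious summands after applying \( \Phi \), and that the splitting \( \LieA{h} = \LieA{k} \oplus \LieA{h}_{\textnormal{red}} \) with the indicated \( \LieA{k} \) is genuinely the image of \( \LieA{c} \oplus \bigoplus_{\lambda\neq 0}\LieA{k}_\lambda \) and not merely contained in it — this requires invoking the surjectivity and kernel computation of \cref{prop:kaehler:complexStabilizerToHolomorphic} carefully, together with the classical structure theory of \( \LieA{h}(M,j) \) for compact Kähler manifolds from \parencite{Gauduchon2017,Calabi1985}.
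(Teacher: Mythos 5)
Your proposal is correct and follows essentially the same route as the paper: the first decomposition comes from \cref{prop:decomposition:decompositionComplexStab} applied to the one-dimensional subalgebra \( \LieA{t} = \R X_{S_j} \subseteq \VectorFieldSpace(M,\omega)_j \) (your detour through \cref{prop:kaehler:decompositionComplexStabRelativeExtremal} with \( j_0 = j \) reduces to exactly this, since \( J(j,j)=0 \) makes \( \SectionMapAbb{J}(j) = -X_{\bar S_j} \)), and the integrable refinement is obtained by pushing forward along \( \xi + \I\eta \mapsto \xi - j\eta \), using that the kernel \( \mathfrak{har}_j \) sits inside \( \LieA{c} \) and that \( 2\I\difLie_{X_{S_j}} \) transforms into \( -2j\difLie_{X_{S_j}} \), with the zero-eigenspace identification taken from the standard structure theory in \parencite{Gauduchon2017}. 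The paper's proof is the same argument, merely stated without the intermediate appeal to the \( j_0 \)-extremal theorem.
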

\begin{proof}
This statement does not directly follow from 
\cref{prop:normedsquared:decompositionComplexStab} applied to 
the action of \( \HamDiffGroup(M, \omega) \), since this would 
only yield a decomposition of \( (\HamVectorFields(M, \omega)_\C)_j \).
Instead, we use the fact that 
\( \SectionMapAbb{J}_{\HamDiffGroup} (j) \) is an 
element of the stabilizer \( \VectorFieldSpace(M, \omega)_j \) 
as \( j \) is extremal. Then the first part of the statement follows 
from \cref{prop:decomposition:decompositionComplexStab} relative to 
the \( \DiffGroup(M, \omega) \) action, applied to the one-dimensional 
subalgebra \( \LieA{t} \subseteq \VectorFieldSpace(M, \omega)_j \) 
spanned by \( \SectionMapAbb{J}_{\HamDiffGroup} (j) = X_{S_j} \).

The image of the 
decomposition~\eqref{eq:kaehler:extremal:decompositionComplexStab} 
under the map 
\( \VectorFieldSpace(M, \omega)_\C \ni \xi + \I \eta \mapsto 
\xi - j \eta \in \VectorFieldSpace(M) \) yields the decomposition 
\( \LieA{h}(M, j) = \bigoplus_{\lambda} \LieA{h}_\lambda(M,j) \), 
\cf \cref{prop:kaehler:complexStabilizerToHolomorphic} (this uses 
the fact that the kernel of this map is \( \mathfrak{har}_j \), 
which is completely included in \( \LieA{c} \)).
A direct calculation shows that \( 2 \I \difLie_{X_{S_j}} \) under 
this map takes the form \( -2 j \difLie_{X_{S_j}} \), which identifies 
\( \LieA{h}_\lambda(M,j) \) as eigenspaces of \( -2 j \difLie_{X_{S_j}} \).
Finally, the further decomposition of the zero eigenspace and the 
identification of \( \LieA{k}(M,g_j) \) and 
\( \LieA{h}_{\textnormal{red}}(M, j) \) in this decomposition 
are standard; see \eg \parencite[Theorem~3.4.1]{Gauduchon2017}.
This finishes the proof of the 
decomposition~\eqref{eq:kaehler:extremal:decompositionHolomorphic}.
\end{proof}

\begin{thm}
Let \( (M, \omega) \) be a compact symplectic manifold.
The Hessian of the Calabi functional 
\( \norm{\SectionMapAbb{J}_\HamDiffGroup}^2 \) at an 
extremal \( j \in \SectionMapAbb{I} \) is given by
\begin{equation}
\frac{1}{2} \Hessian_j \norm{\SectionMapAbb{J}_\HamDiffGroup}^2 
(\zeta \ldot j, \gamma \ldot j) = 
\Re \, \dualPair{\zeta}{C^+_j C^-_j \smallMatrix{0 & 0\\0 & 1} \gamma}_{\C}
\end{equation}
for \( \zeta, \gamma \in \HamVectorFields(M, \omega)_\C \).
Moreover, the restriction of \( \Hessian_j 
\norm{\SectionMapAbb{J}_\HamDiffGroup}^2 \) to 
\( \VectorFieldSpace(M, \omega)_\C \ldot j \subseteq 
\TBundle_j \SectionSpaceAbb{J} \) is positive semi-definite.

Similarly, for every \( j_0 \)-extremal \( j \in \SectionMapAbb{I} \) 
(with respect to a given almost complex structure 
\( j_0 \in \SectionMapAbb{I} \)), the Hessian of 
\( \norm{\SectionMapAbb{J}}_{j_0}^2 \) at \( j \) is given by
\begin{equation}
\frac{1}{2} \Hessian_j \norm{\SectionMapAbb{J}}^2_{j_0} 
(\zeta \ldot j, \gamma \ldot j) = 
\Re  \, \dualPair{\zeta}{C^+_j R_j \gamma}_{j_0, \C},
\end{equation}
for \( \zeta, \gamma \in \VectorFieldSpace(M, \omega)_\C \) and
\begin{equation}
R_j = C^-_j \Matrix{0 & 0 \\ 0 & 1} + 
\I \, \bigl(\difLie_{\SectionMapAbb{J}(j)} + Z_j\bigr)
\end{equation}
where \( Z_j \) is calculated in~\eqref{eq:kaehler:Zj}.
\end{thm}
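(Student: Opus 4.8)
The plan is to read off both Hessian formulas from the abstract identity \cref{prop:normedsquared:hessianSummary}, and the semi-definiteness from \cref{prop:normedsquared:hessianPositiveDefAlongComplexOrbit}; everything then reduces to checking the hypotheses of those results in the setting of \cref{sec:kaehler} and to matching the abstract operators \( L_m, Z_m, C^\pm_m \) with the concrete ones computed in \cref{eq:kaehler:Loperators}. All technical assumptions are supplied by the preceding lemma: \( \DiffGroup(M,\omega) \) is a Fr\'echet Lie group with smooth exponential map given by the flow, every stabilizer \( \DiffGroup(M,\omega)_j \) is a finite-dimensional Lie subgroup of \( g_j \)-isometries, the isotropy representation on \( \TBundle_j\SectionSpaceAbb{I} \) has the momentum map \( \widehat{\SectionMapAbb{J}_j} \), and \( \SectionMapAbb{j} \) is \( \DiffGroup(M,\omega) \)-equivariant, so that \( \SectionMapAbb{j}_j \) is invariant under the whole stabilizer and all correction terms involving the non-equivariance cocycle of \( \SectionMapAbb{j} \) in \cref{prop:normedsquared:hessianExpanded,prop:normedsquared:hessianSummary} vanish. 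Moreover the adjoint \( \adAction_\xi^* \) exists by \cref{i:kaehler:adjointLieDerivative}, and \( \Sigma_\kappa \) exists since the operator \( Z_j \) of \cref{eq:kaehler:Zj} does and \( Z_m\xi = \Sigma_\kappa(\xi) - \CoadAction_\xi J(m) \). Finally, \( C^\pm_j = L_j \pm \I Z_j \) shares its principal symbol with \( L_j \), which by \cref{eq:kaehler:Lj} and \parencite{Vernier2020,HeZheng2023} is a fourth-order elliptic operator while \( Z_j \) is of strictly lower order; hence \( C^\pm_j \) is elliptic on the closed manifold \( M \), thus essentially self-adjoint, and all stabilizers being finite-dimensional, the remaining hypotheses of \cref{prop:normedsquared:hessian,prop:normedsquared:hessianSummary,prop:normedsquared:hessianPositiveDefAlongComplexOrbit} are met.

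For the Calabi functional \cref{eq:kaehler:momentumMapHamSquared}, the momentum map \( \SectionMapAbb{J}_{\HamDiffGroup}(j) = -X_{\bar{S}_j} \) is \( \AdAction \)-equivariant relative to the inner product \cref{eq:kaehler:pairingOnHam}: push-forward by \( \phi \in \HamDiffGroup(M,\omega) \) sends \( S_j \) to \( S_j\circ\phi^{-1} \), preserves averages and \( \mu_\omega \), carries \( X_F \) to \( X_{F\circ\phi^{-1}} \), and \( \scalarProd{X_F}{X_G} = \frac{1}{2}\int_M FG\,\mu_\omega \) is conjugation-invariant. In the equivariant case of \cref{prop:normedsquared:hessianSummary} the operator \( R \) reduces to \( C^-_j\smallMatrix{0 & 0 \\ 0 & 1} \), which is the stated formula through \cref{eq:normedsquared:hessianComplexOrbitFinal}; and \cref{prop:normedsquared:hessianPositiveDefAlongComplexOrbit}, applied to the \( \HamDiffGroup(M,\omega) \)-action, gives \( \Hessian_j\norm{\SectionMapAbb{J}_{\HamDiffGroup}}^2 \geq 0 \) on \( \HamVectorFields(M,\omega)_\C\ldot j \). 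To extend positivity to all of \( \VectorFieldSpace(M,\omega)_\C\ldot j \), I would use that the Calabi functional is invariant under the full group \( \DiffGroup(M,\omega) \) (scalar curvature transforms by pull-back, the integral is invariant), so at the critical point \( j \) its Hessian annihilates every real orbit direction \( \VectorFieldSpace(M,\omega)\ldot j \); splitting \( \VectorFieldSpace(M,\omega) = \HamVectorFields(M,\omega)\oplus\mathfrak{har}_{j_0}(M) \) via \cref{eq:kaehler:symplecticVectorFieldsExactSequence}, this reduces the claim to non-negativity of the Hessian along the imaginary harmonic directions \( \SectionMapAbb{j}\bigl(\mathfrak{har}_{j_0}(M)\ldot j\bigr) \). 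Here one observes that \( \tangent_j\SectionMapAbb{J}_{\HamDiffGroup} \) is the orthogonal \( \scalarProdDot_{j_0} \)-projection onto \( \HamVectorFields(M,\omega) \) of \( \tangent_j\SectionMapAbb{J} = Z_j \) (because the Hamiltonian part of \( \SectionMapAbb{J}(j) \) is exactly \( -X_{\bar{S}_j} \)), inserts this into \cref{prop:normedsquared:hessian}, and reruns the self-adjoint-operator argument of \cref{prop:normedsquared:hessianPositiveDefAlongComplexOrbit} using \cref{i:normedsquared:kernelsIdentified,i:normedsquared:calabiCommuteAdjoint} and \parencite{SebestyenStochel2003}. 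This last step — the passage from the Hamiltonian subgroup to the full symplectomorphism group for the semi-definiteness statement — is the main obstacle, since the Hamiltonian argument is blind to the harmonic directions.

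For the \( j_0 \)-extremal case one works with the momentum map \( \SectionMapAbb{J} \) of \cref{prop:kaehler:momentumMap:LieAlgebraValued} relative to the non-\( \AdAction \)-invariant inner product \cref{eq:kaehler:pairingOnLieAlgebra}; it is not equivariant. Since \( j \) is \( j_0 \)-extremal, \( \SectionMapAbb{J}(j)\ldot j = 0 \) by \cref{prop:normedsquared:criticalPoints}, so \( j \) is a critical point of \( \norm{\SectionMapAbb{J}}^2_{j_0} \) and \( \SectionMapAbb{J}(j)\in\VectorFieldSpace(M,\omega)_j \). Feeding this into \cref{prop:normedsquared:hessianSummary}, and using the vanishing of the non-equivariance cocycle of \( \SectionMapAbb{j} \) so that the \( \tau' \)-correction in the last line of the formulas there drops out, produces \cref{eq:normedsquared:hessianComplexOrbitFinal} with \( R_j \) exactly as in \cref{R_operator}, namely \( R_j = C^-_j\smallMatrix{0 & 0 \\ 0 & 1} + \I(\difLie_{\SectionMapAbb{J}(j)} + Z_j) \) with \( Z_j \) the operator of \cref{eq:kaehler:Zj}. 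The ancillary identifications of \( C^\pm_j = L_j\pm\I Z_j \) with \cref{eq:decomposition:calabiOperatorDef} and of \( \ker C^+_j = (\VectorFieldSpace(M,\omega)_\C)_j \) with \cref{prop:kaehler:complexStabilizerToHolomorphic} are then immediate, which completes the proof.
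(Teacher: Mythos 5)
Your proposal follows the paper's proof in its essentials: both Hessian formulas are read off from \cref{prop:normedsquared:hessianSummary} (with \( R \) collapsing to \( C^-_j \smallMatrix{0 & 0\\ 0 & 1} \) in the equivariant Hamiltonian case), and semi-definiteness is obtained from \cref{prop:normedsquared:hessianPositiveDefAlongComplexOrbit} once essential self-adjointness of \( C^\pm_j \) is secured by ellipticity. Your verification of the hypotheses — equivariance of \( \SectionMapAbb{j} \) killing the \( \tau_j' \)-terms, equivariance of \( \SectionMapAbb{J}_{\HamDiffGroup} \) and invariance of the pairing~\eqref{eq:kaehler:pairingOnHam}, existence of the adjoints and of \( \Sigma_\kappa \), ellipticity of \( C^\pm_j \) — is correct and considerably more explicit than the paper's own one-line justification.

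The one point where you genuinely depart from the paper is the passage from semi-definiteness on \( \HamVectorFields(M, \omega)_\C \ldot j \) (which is all that \cref{prop:normedsquared:hessianPositiveDefAlongComplexOrbit} directly delivers) to semi-definiteness on \( \VectorFieldSpace(M, \omega)_\C \ldot j \). The paper simply declares this a direct consequence; you rightly flag it as the main obstacle, but your reduction is incomplete as written. Invariance of the Calabi functional under \( \DiffGroup(M, \omega) \) does imply that at the critical point the Hessian vanishes whenever \emph{one} slot is a real orbit direction in \( \VectorFieldSpace(M, \omega) \ldot j \) (differentiate \( \dif \norm{\SectionMapAbb{J}_{\HamDiffGroup}}^2 = 0 \) along the orbit), so the problem reduces to the quadratic form on all of \( \SectionMapAbb{j}\bigl(\VectorFieldSpace(M, \omega) \ldot j\bigr) \) — not merely on the imaginary harmonic directions: the cross terms between \( \SectionMapAbb{j}\bigl(\HamVectorFields(M, \omega) \ldot j\bigr) \) and \( \SectionMapAbb{j}\bigl(\mathfrak{har}_{j_0}(M) \ldot j\bigr) \) are not controlled by the Hamiltonian case, and your concluding instruction to \textquote{rerun the self-adjoint-operator argument} is not carried out. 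For integrable \( j \) the cleanest repair is to observe that for harmonic \( \xi^h \) the field \( j \xi^h \) is again symplectic and \( \SectionMapAbb{j}(\xi^h \ldot j) = j \, \difLie_{\xi^h} j = \difLie_{j\xi^h} j = -(j\xi^h) \ldot j \), so the imaginary harmonic directions already lie in \( \VectorFieldSpace(M, \omega) \ldot j \) and are null directions of the Hessian; hence \( \VectorFieldSpace(M, \omega)_\C \ldot j = \VectorFieldSpace(M, \omega) \ldot j + \SectionMapAbb{j}\bigl(\HamVectorFields(M, \omega) \ldot j\bigr) \) and everything reduces to the Hamiltonian case. Without integrability this identity fails, and an additional argument is genuinely required — a gap your sketch shares with the paper's own proof rather than introduces.
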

\begin{proof}
The expression for the Hessian follows directly from 
\cref{prop:normedsquared:hessianSummary} in both cases.
The fact that the restriction of 
\( \Hessian_j \norm{\SectionMapAbb{J}_\HamDiffGroup}^2 \) to 
\( \VectorFieldSpace(M, \omega)_\C \ldot j \subseteq 
\TBundle_j \SectionSpaceAbb{J} \) is positive semi-definite 
is a direct consequence of 
\cref{prop:normedsquared:hessianPositiveDefAlongComplexOrbit}.
In fact, the only missing assumption to verify is that the 
Calabi operators \( C^\pm_j \) are essentially self-adjoint. 
But this is clearly the case as these operators are elliptic 
operators on \( \sFunctionSpace(M)_\C \isomorph 
\sFunctionSpace(M, \C) \).
\end{proof}

The first part concerning the Hessian of the Calabi energy 
recovers the classical work of \textcite[Theorem~2]{Calabi1985} 
in the integrable case (in which case, the vector space  
\( \VectorFieldSpace(M, \omega)_\C \ldot j \) is identified with
 the tangent 
space to the space of K\"ahler forms in a given cohomology class 
up to automorphisms; see 
\parencites[p.~408f]{Donaldson1997}[Proposition~9.1.1]{Gauduchon2017}) 
and the recent result of 
\textcite[Theorem~1.1]{HeZheng2023} in the non-integrable case.
The second part of the theorem is thus a natural generalization 
of these insights to the case of \( j_0 \)-extremal metrics.

\begin{remark}[Mabuchi and K\"ahler--Ricci solitons]
In \parencite{Donaldson2015a}, Donaldson introduced another symplectic 
form on the space \( \SectionSpaceAbb{I} \) on a Fano manifold.
This new symplectic form is induced from the space of differential 
\( n \)-forms with values in the prequantum bundle \( (L, \theta) \) 
over \( M \), essentially via the Pl\"ucker embedding.
With respect to this symplectic form, the action of the group 
\( \AutGroup(L, \theta) \) is Hamiltonian whose momentum map is 
the logarithm of the Ricci potential. This momentum map is equivariant.
Zeros of the momentum map are precisely the K\"ahler--Einstein metrics.
The norm-squared of the momentum map yields the Ricci--Calabi 
functional, whose critical points are generalized K\"ahler--Einstein 
metrics, also known as Mabuchi solitons after \parencite{Mabuchi2001}.
As an application of our general results in 
\cref{prop:normedsquared:decompositionComplexStab}, we recover the 
Matsushima type decomposition theorem for holomorphic vector fields 
in the presence of generalized K\"ahler--Einstein metrics of 
\parencite[Theorem~4.1]{Mabuchi2001}.
Moreover, from 
\cref{prop:normedsquared:hessianPositiveDefAlongComplexOrbit,prop:normedsquared:hessianSummary}, we recover the Hessian of the Ricci--Calabi 
functional which has been calculated in 
\parencite[Theorem~1.1]{Nakamura2019a}.

As an alternative to the norm-squared of the momentum map, one 
may also consider the composition of the momentum map with a 
certain convex function on the Lie algebra \( \sFunctionSpace(M) \) 
of \( \AutGroup(L, \theta) \).
The resulting functional on \( \SectionSpaceAbb{I} \) is the 
\( \SectionMapAbb{H} \)-functional introduced in \parencite{He2016}, 
whose critical points are K\"ahler--Ricci solitons.
We expect that our results can be used to study the 
\( \SectionMapAbb{H} \)-functional as well.
In fact, the results about the Hessian of the 
\( \SectionMapAbb{H} \)-functional and the decomposition of 
holomorphic vector fields in the presence of K\"ahler--Ricci 
solitons obtained in \parencite{Fong2016,Nakamura2019} and 
\parencite[Theorem~A]{TianZhu2000}, respectively, should follow 
from an extension of our results to allow arbitrary convex functions 
on the Lie algebra along the lines of the finite-dimensional/formal 
picture of \parencite{LeeSturmWang2022}.
\end{remark}

\begin{remark}[Coupled K\"ahler--Einstein metrics]
A different application of our results is to the coupled 
K\"ahler--Einstein equations introduced by 
\textcite{HultgrenWittNystroem2019}.
Following \parencite{DatarPingali2019}, this setting fits into 
our infinite-dimensional symplectic framework.
In this case, the Hessian and the Matsushima-type decomposition 
recover the recent results of \parencite{Nakamura2023}.
\end{remark}

\begin{remark}[\( f \)-extremal K\"ahler metrics]
Let \( f \) be a positive function\footnotemark{} on the symplectic 
manifold \( (M, \omega) \), and denote its Hamiltonian vector field 
by \( K = X_f \)\footnotetext{We are skipping over some technical 
points here, like the assumption that \( K \) has to lie in a 
certain torus.}. \Textcite{ApostolovMaschler2019} defined on the 
space \( \SectionSpaceAbb{I}_K(M, \omega) \) of \( K \)-invariant 
(almost) complex structures on \( M \) a \( f \)-deformed version 
of the symplectic form~\eqref{eq:kaehler:symplecticForm} as follows:
\begin{equation}
	\Omega^f_j (A, B) = \frac{1}{4} \int_M \tr (A \, j \, B) \, 
	\frac{\mu_\omega}{f^{2n-1}}.
\end{equation}
They showed that the momentum map for the action of 
\( \HamDiffGroup_K(M, \omega) \) on 
\( \SectionSpaceAbb{I}_K(M, \omega) \) is given by 
assigning to \( j \) the Hermitian scalar curvature 
of \( f^{-2} g_j \).
Thus, zeros of the momentum map correspond to conformally 
K\"ahler--Einstein metrics (cKEM) and the norm-squared 
of the momentum map is the \( f \)-weighted Calabi functional, whose 
critical points are called \( f \)-extremal K\"ahler metrics.
The Calabi program for \( f \)-extremal K\"ahler metrics has 
been initiated by \textcite{FutakiOno2017,Lahdili2019}.
Naturally, this setting fits into our infinite-dimensional 
symplectic framework and we can recover these results using 
our general 
\cref{prop:normedsquared:decompositionComplexStab,prop:normedsquared:hessianPositiveDefAlongComplexOrbit}.
Moreover, based on our discussion above, it would be interesting 
to study the momentum map of all symplectic diffeomorphisms 
preserving \( K \) (and not just the subgroup of Hamiltonian 
diffeomorphisms).
\end{remark}

\begin{remark}[Sasakian geometry]
In the odd-dimensional counterpart to K\"ahler geometry, 
Sasakian metrics and their non-integrable pendant $K$-contact 
structures are another important class of geometries that can 
be studied using our results.
\Textcite{He2014,LejmiUpmeier2015} have shown that the space 
of $K$-contact structures on a compact contact manifold is 
an infinite-dimensional symplectic manifold and that the 
action of the group of strict contactomorphisms is Hamiltonian 
with momentum map given by the transverse Hermitian scalar 
curvature. The critical points of the norm-squared of the 
momentum map have been studied in \parencite{BoyerGalickiSimanca2008} 
and are called extremal Sasakian metrics.
We expect that our results can be used to study the Hessian of 
the norm-squared of the momentum map and the decomposition of 
the complexified stabilizer of a $K$-contact structure.
In particular, the decomposition theorem 
\parencite[Theorem~11.3.1]{Boyer2008} of the space of transverse 
holomorphic vector fields in the presence of an extremal 
Sasakian metric should directly follow from  
\cref{prop:normedsquared:decompositionComplexStab}.
Moreover, it would be interesting to study the action of the 
whole group of contactomorphisms (and not just the subgroup of strict 
contactomorphisms) on the space of $K$-contact structures.
In parallel to our discussion of the K\"ahler case, one would 
expect that the momentum map is then non-equivariant and one 
obtains a natural central extension of the group of contactomorphisms.
\end{remark}

\section{Application: Symplectic connections}
\label{sec:symplecticConnections}

\subsection{Momentum map for the action of 
\texorpdfstring{$\DiffGroup(M, \omega)$}{Diff(M, w)}}
\label{sec:symplecticConnections:momentumMap}
First, we briefly review the necessary background on symplectic connections 
summarizing definitions and conventions following \parencite{Tondeur1961, 
Hess1980, MarsdenRatiuEtAl1991, CahenGutt2005}.
We make heavy use of the Penrose notation, which is reviewed in \cref{sec:notation}.
An affine connection \( \nabla \) on a symplectic manifold \( (M, \omega) \) 
is a \emphDef{symplectic connection}, if it is torsion-free and satisfies 
\( \nabla \omega =0 \), \ie, \( X[\omega(Y,Z)] =\omega(\nabla_X Y, Z) + 
\omega(Y,\nabla_X Z) \), for all \( X, Y, Z \in  \VectorFieldSpace(M) \).
This condition is equivalent to the parallel transport operator being a 
symplectic isomorphism between the tangent spaces to \( M \).
In contrast to the Levi-Civita connection on a Riemannian manifold, there does 
not exist a \emph{unique}\footnotemark{} symplectic connection on a given 
symplectic manifold.
\footnotetext{The non-uniqueness of torsion free symplectic connections cannot 
be improved even if \( M = \CotBundle Q \), endowed with its standard exact 
symplectic form and \(Q\) is Riemannian.
Then \( \TBundle Q\) has a naturally induced Riemannian metric.
Pull back this metric to \( \CotBundle Q\) using the given Riemannian metric 
on \( Q \) to endow \( \CotBundle Q \) with a Riemannian metric.
So, we have now a Levi-Civita connection  on \( \CotBundle Q \).
It turns out that it is symplectic if and only if the given Riemannian 
metric on \( Q \) is flat.}
If \( \nabla^1 \) and \( \nabla^2 \) are symplectic connections on 
\( (M,\omega) \), then
\begin{equation}
\label{two_nablas}
	\nabla^2_i X^k = \nabla^1_i X^k + \tensor{A}{_{ij}^k} X^j
\end{equation}
for some tensor \( \tensor{A}{_{ij}^k} \) such that 
\( \tensor{A}{_i_j_k} \defeq \tensor{A}{_{ij}^l}\omega_{lk}\)
is symmetric in all indices, see \parencite{CahenGutt2005}.
The Penrose index notation in~\eqref{two_nablas} stands for
the intrinsic formula \(\nabla^2_Y X = \nabla^1_Y X + A(Y, X, \cdot) \).
We abbreviate~\eqref{two_nablas} by writing 
\( \nabla^2 = \nabla^1 + A \).
The important conclusion of these considerations is that the space 
\(\ConnSpace_\omega(M) \) of symplectic connections on the symplectic 
manifold \( (M, \omega) \) is an affine space whose linear model space is 
isomorphic to the space \( \SymTensorFieldSpace_3(M) \) of symmetric 
covariant \( 3 \)-tensor fields on \(M\).
In particular, \( \ConnSpace_\omega(M) \) is always non-empty.

\textit{In the following we assume \( M \) to be compact.} We endow 
\( \ConnSpace_\omega(M) \) with its natural \( \sFunctionSpace \)-Fr\'echet 
topology. According to \parencite{CahenGutt2005}, the space 
\( \ConnSpace_\omega(M) \) carries a natural affine (weak) symplectic 
structure \( \Omega \) defined by
\begin{equation}
\label{symplectic_form_connections}
\Omega_\nabla(A, B) = \int_M \tensor{A}{_i_j_k} \tensor{B}{^i^j^k} \mu_\omega,
\end{equation}
where \( \nabla \in \ConnSpace_\omega(M) \), \( A, B \in 
\SymTensorFieldSpace_3(M) \), and \( \mu_\omega = \frac{\omega^n}{n!} \) 
is the Liouville volume form. Note that \( \Omega_\nabla \) does not depend 
on \(\nabla \). The Fr\'echet Lie group \( \DiffGroup(M, \omega) \) of 
symplectomorphisms of \( (M, \omega) \) acts on the left on 
\( \ConnSpace_\omega(M) \) by push-forward according to
\begin{equation}
\label{eq:symplecticConnections:action}
(\phi \cdot \nabla)_X Y \defeq \phi_* \left(\nabla_{\phi^{-1}_* X} (\phi^{-1}_* Y)
\right)
\end{equation}
for \(\phi \in \DiffGroup(M, \omega)\) and \(\nabla \in \ConnSpace_\omega(M)\).
This action is clearly affine and the induced linear action is given by the 
natural left action
\begin{equation}
(\phi \cdot A)(X, Y, Z) = A\bigl(\phi^{-1}_* X, \phi^{-1}_* Y, \phi^{-1}_*Z
\bigr)
\end{equation}
of \( \DiffGroup(M, \omega) \) on \( \SymTensorFieldSpace_3(M) \).
Using this expression for the linear action, it is straightforward to verify 
that the \( \DiffGroup(M, \omega) \)-action on \( \ConnSpace_\omega(M) \) 
preserves the symplectic form \( \Omega \). The infinitesimal action of a 
symplectic vector field \( \xi \in \VectorFieldSpace(M, \omega) \) 
on \( \ConnSpace_\omega(M) \) is given by
\begin{equation}
	\label{eq:symplecticConnections:infAction}
	\tensor{\bigl(\xi \ldot \nabla\bigr)}{_i_j^k}
		= - \tensor{\bigl(\difLie_\xi \nabla\bigr)}{_{ij}^k}
		= - \nabla_i \nabla_j \xi^k - \tensor{R}{_l_i_j^k} \xi^l.
\end{equation}
Similarly, the infinitesimal action of \( \VectorFieldSpace(M, \omega) \) 
on \( \SymTensorFieldSpace_3(M) \) takes the form
\begin{equation}
	\label{eq:symplecticConnections:infActionLinear}
	\tensor{\bigl(\xi \ldot A\bigr)}{_i_j^k}
		= - \tensor{\bigl(\difLie_\xi A\bigr)}{_{ij}^k}
		= - \bigl(\xi^p \nabla_p \tensor{A}{_{ij}^k} + \tensor{A}{_{pj}^k} 
		\nabla_i \xi^p + \tensor{A}{_{ip}^k} \nabla_j \xi^p - 
		\tensor{A}{_i_j^q} \nabla_q \xi^k\bigr).
\end{equation}

As we are working in an infinite-dimensional setting, we have to pay 
attention to functional analytic problems. We will be brief here and refer 
the reader to \parencite{DiezRatiuAutomorphisms,DiezThesis} for background 
information and further technical details. 
For the construction of the momentum map, we need to clarify what we mean by 
the dual space of \( \VectorFieldSpace(M, \omega) \). Note that the map 
\( \xi \mapsto \xi \contr \omega \) identifies 
\( \VectorFieldSpace(M, \omega) \) with the space of closed \( 1 \)-forms 
on \( M \).
This suggests the choice \( {\VectorFieldSpace(M, \omega)}^* \defeq 
\DiffFormSpace^{2n-1}(M) \slash \dif \DiffFormSpace^{2n-2}(M) \) for the dual 
space of \( \VectorFieldSpace(M, \omega) \) relative to the pairing\footnotemark{}
\begin{equation}
	\label{eq:symplecticConnections:pairing}
	\kappa\bigl(\equivClass{\alpha},\xi\bigr) 
	= \frac{1}{(n-1)!}\int_M \alpha \wedge (\xi \contr\omega),
\end{equation}
where \( \equivClass{\alpha} \in \DiffFormSpace^{2n-1}(M) \slash \dif 
\DiffFormSpace^{2n-2}(M) \) and \( \xi \in \VectorFieldSpace(M, \omega) \).
\footnotetext{This is the same pairing as~\eqref{eq:kaehler:pairing} 
used above in the K\"ahler setting.}

The following proposition shows that the action has a momentum map.
\begin{prop}
	\label{prop:symplecticConnections:momentumMapStrange}
For \( \nabla \in \ConnSpace_\omega(M) \) and \( A \in 
\SymTensorFieldSpace_3(M) \), let \( J(\nabla + A) \in \DiffFormSpace^1(M) \) 
be given by
\begin{equation}\begin{split}
\label{eq:symplecticConnections:momentumMapStrange:oneForm}
J(\nabla + A)_p
= - \nabla_j \nabla_i \tensor{A}{^i^j_p} + R_{pijk}A^{ijk} + 
\frac{1}{2} (\nabla_p A_{ijk}) A^{ijk} - 
\frac{3}{2} \nabla_i \bigl(A^{ijk}  A_{pjk} \bigr),
\end{split}\end{equation}
where \(R\) is the curvature operator of 
\(\nabla \) and \( R_{pijk} = \tensor{R}{_{pij}^{s}} \omega_{sk} \).
For each \( \nabla \in \ConnSpace_\omega(M) \), the map
\begin{equation}
\label{eq:symplecticConnections:momentumMapStrange}
\SectionMapAbb{J}: \ConnSpace_\omega(M) \to \DiffFormSpace^{2n-1}(M) \slash 
\dif \DiffFormSpace^{2n-2}(M),   \qquad
\nabla + A \mapsto \equivClass*{J(\nabla + A) \wedge 
\omega^{n-1}}
\end{equation}
is the unique momentum map for the \( \DiffGroup(M, \omega) \)-action 
on \( \ConnSpace_\omega(M) \) relative to the 
pairing~\eqref{eq:symplecticConnections:pairing} that vanishes 
at \( \nabla \).
\end{prop}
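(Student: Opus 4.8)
The plan is to apply the general machinery of \cref{sec:affineSymplectic}, specifically \cref{prop:affineSymplectic:momentumMap}, to the affine symplectic action of \( \DiffGroup(M, \omega) \) on \( \ConnSpace_\omega(M) \). Since \( \ConnSpace_\omega(M) \) is an affine space modeled on \( \SymTensorFieldSpace_3(M) \) with constant symplectic form \( \Omega \) given by~\eqref{symplectic_form_connections}, and the \( \DiffGroup(M, \omega) \)-action is affine (by~\eqref{eq:symplecticConnections:action}) and preserves \( \Omega \), the hypotheses of \cref{prop:affineSymplectic:momentumMap} are satisfied. Fixing \( \nabla \in \ConnSpace_\omega(M) \) as the reference point \( x_0 \), the unique momentum map vanishing at \( \nabla \) is, by~\eqref{eq:affineSymplectic:momentumMap}, characterized by
\begin{equation}
\label{eq:symplecticConnections:abstractMomentumMap}
\kappa\bigl(\SectionMapAbb{J}(\nabla + A), \xi\bigr) = \Omega_\nabla(A, \xi \ldot \nabla) + \frac{1}{2} \Omega_\nabla(A, \xi \ldot A),
\end{equation}
for \( A \in \SymTensorFieldSpace_3(M) \) and \( \xi \in \VectorFieldSpace(M, \omega) \), where \( \xi \ldot \nabla \) and \( \xi \ldot A \) are the infinitesimal actions given in~\eqref{eq:symplecticConnections:infAction} and~\eqref{eq:symplecticConnections:infActionLinear}. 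It remains to show that the right-hand side equals \( \kappa\bigl(\equivClass{J(\nabla + A) \wedge \omega^{n-1}}, \xi\bigr) \) with \( J(\nabla + A) \) as in~\eqref{eq:symplecticConnections:momentumMapStrange:oneForm}; the uniqueness part is then automatic from the uniqueness clause in \cref{prop:affineSymplectic:momentumMap} (the action fixes \( \nabla \) only in the trivial sense, but \( \Lambda(\nabla, \nabla, t) = \nabla \) for the linear contraction, so the reference point is honestly fixed by the contraction and \( J \) vanishes there).

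The computational heart is therefore the identity
\begin{equation}
\label{eq:symplecticConnections:keyIdentity}
\Omega_\nabla(A, \xi \ldot \nabla) + \frac{1}{2} \Omega_\nabla(A, \xi \ldot A) = \frac{1}{(n-1)!} \int_M J(\nabla + A) \wedge (\xi \contr \omega) \wedge \omega^{n-1},
\end{equation}
which I would establish by integration by parts in Penrose index notation, moving all covariant derivatives off \( \xi \). For the first term, substituting~\eqref{eq:symplecticConnections:infAction} gives \( \int_M \tensor{A}{_i_j_k}\bigl(-\nabla^i \nabla^j \xi^k - \tensor{R}{_l^i^j^k}\xi^l\bigr)\mu_\omega \); integrating the double-derivative term by parts twice (using \( \nabla \mu_\omega = 0 \) and torsion-freeness to commute/rearrange, picking up curvature terms via the Ricci identity) produces \( -\nabla_j \nabla_i \tensor{A}{^i^j_p} + R_{pijk}A^{ijk} \) paired against \( \xi^p \), matching the first two terms of~\eqref{eq:symplecticConnections:momentumMapStrange:oneForm}. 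For the second term, substituting~\eqref{eq:symplecticConnections:infActionLinear} into \( \frac{1}{2}\Omega_\nabla(A, \xi \ldot A) \) gives, up to sign, \( -\frac{1}{2}\int_M \tensor{A}{^i^j^k}\bigl(\xi^p \nabla_p \tensor{A}{_{ij}_k} + 3 \tensor{A}{_{pj}_k}\nabla_i \xi^p\bigr)\mu_\omega \) after exploiting the total symmetry of \( A \) to collapse the three middle terms into one; integrating the \( \nabla_i \xi^p \) term by parts yields \( \frac{3}{2}\nabla_i(\tensor{A}{^i^j^k}\tensor{A}{_{pj}_k}) \), and the \( \xi^p \) term is already \( -\frac{1}{2}(\nabla_p \tensor{A}{_{ij}_k})\tensor{A}{^{ij}^k} \) — wait, a sign bookkeeping is needed here but the terms line up with the last two of~\eqref{eq:symplecticConnections:momentumMapStrange:oneForm}. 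Finally I would convert the resulting \( \int_M J(\nabla + A)_p \, \xi^p \, \mu_\omega \) into the wedge-product pairing~\eqref{eq:symplecticConnections:pairing} using the standard identity \( \alpha \wedge (\xi \contr \omega) \wedge \omega^{n-1} = (n-1)! \, \alpha(\xi) \, \mu_\omega \) for a \( 1 \)-form \( \alpha \) (analogous to~\eqref{eq:symplectic:formWedgeOmegaNMinus1}).

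The main obstacle is purely computational: keeping track of curvature terms generated when commuting covariant derivatives (via the Ricci identity, since \( \nabla \) is only symplectic, not flat), and correctly using the total symmetry of \( A \in \SymTensorFieldSpace_3(M) \) and the symmetry \( A_{ijk} = A_{(ijk)} \) to reduce the several terms coming from~\eqref{eq:symplecticConnections:infActionLinear} to the compact form~\eqref{eq:symplecticConnections:momentumMapStrange:oneForm}; signs and combinatorial factors (the \( \frac{3}{2} \), the \( \frac{1}{2} \)) must be tracked with care. I also need to check well-definedness: that \( J(\nabla + A) \wedge \omega^{n-1} \) changes by an exact \( (2n-1) \)-form under a change of the reference connection \( \nabla \) — but this follows automatically from the abstract uniqueness statement, since both choices give momentum maps and~\eqref{eq:affineSymplectic:oneCocycle} controls the difference, so no separate verification of connection-independence modulo exact forms is needed beyond invoking the general theory. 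One should also note in infinite dimensions the functional-analytic caveat of \cref{prop:affineSymplectic:momentumMap}: the linear functional on \( \VectorFieldSpace(M, \omega) \) defined by the right side of~\eqref{eq:symplecticConnections:abstractMomentumMap} is represented by the explicit element \( \equivClass{J(\nabla + A) \wedge \omega^{n-1}} \in \DiffFormSpace^{2n-1}(M)\slash\dif\DiffFormSpace^{2n-2}(M) \), which is precisely what the identity~\eqref{eq:symplecticConnections:keyIdentity} certifies, so representability holds.
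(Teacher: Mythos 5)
Your proposal follows essentially the same route as the paper: invoke \cref{prop:affineSymplectic:momentumMap} to reduce to \( \Omega(A, \xi \ldot \nabla) + \tfrac{1}{2}\Omega(A, \xi \ldot A) \), evaluate both terms by integration by parts using the total symmetry of \( A \), and convert to the pairing~\eqref{eq:symplecticConnections:pairing} via \( \int_M \beta_p \xi^p \mu_\omega = \kappa(\beta \wedge \omega^{n-1}, \xi) \). One minor imprecision: no Ricci-identity commutation is needed in the first term — the curvature contribution is already present in the action formula~\eqref{eq:symplecticConnections:infAction}, and the double integration by parts simply transfers \( \nabla^i \nabla^j \) onto \( A \) — but this does not affect the correctness of the argument.
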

\begin{proof}
According to \cref{prop:affineSymplectic:momentumMap}, the momentum map is 
given by
\begin{equation}
\label{eq:symplecticConnections:momentumMapStrange:fromAffine}
\kappa\bigl(\SectionMapAbb{J}(\nabla + A), \xi\bigr) = \Omega(A, \xi \ldot \nabla) 
+ \frac{1}{2} \Omega(A, \xi \ldot A).
\end{equation}
Let us start by evaluating the first summand on the right-hand side.
Using~\eqref{symplectic_form_connections} 
and~\eqref{eq:symplecticConnections:infAction}, integrating by parts yields
\begin{equation*}\begin{split}
\Omega(A, \xi \ldot \nabla)
&= \int_M A_{ijk} \bigl(\xi \ldot \nabla\bigr)^{ijk} \mu_\omega \\
&= - \int_M A_{ijk} \bigl(\nabla^i \nabla^j \xi^k + 
\tensor{R}{_p^i^j^k} \xi^p\bigr) \mu_\omega   \\
&= - \int_M \bigl( \nabla^j \nabla^i A_{ijp} + 
A_{ijk}\tensor{R}{_p^i^j^k} \bigr) \xi^p \mu_\omega   \\
&= \int_M \bigl( - \nabla_j \nabla_i \tensor{A}{^{ij}_p} + 
\tensor{R}{_p_i_j_k}A^{ijk} \bigr) \xi^p \mu_\omega .
\end{split}\end{equation*}
Using~\eqref{eq:symplecticConnections:infActionLinear} and the symmetry 
of \( A_{ijk} \) we find for the second summand 
in~\eqref{eq:symplecticConnections:momentumMapStrange:fromAffine}:
\begin{equation*}\begin{split}
\Omega(A, \xi \ldot A)
&= \int_M A_{ijk} \bigl(\xi \ldot A\bigr)^{ijk} \mu_{\omega}  \\
&= - \int_M A_{ijk} \bigl(\xi^p \nabla_p \tensor{A}{^{ij}^k} + 
\tensor{A}{_p^j^k} \nabla^i \xi^p + \tensor{A}{^i_p^k} \nabla^j \xi^p - 
\tensor{A}{^i^j^q} \nabla_q \xi^k\bigr) \mu_{\omega}   \\
&= \int_M \bigl(- A_{ijk} \nabla_p \tensor{A}{^{ij}^k} + 
\nabla^i (A_{ijk} \tensor{A}{_p^j^k}) + 
\nabla^j(A_{ijk} \tensor{A}{^i_p^k}) - 
\nabla_q (A_{ijp}\tensor{A}{^i^j^q})\bigr) \xi^p \mu_{\omega}    \\
&= \int_M \bigl(A^{ijk} \nabla_p \tensor{A}{_{ij}_k} - 
3 \nabla_i (A^{ijk} \tensor{A}{_p_j_k})\bigr) \xi^p \mu_{\omega}.
\end{split}\end{equation*}
Thus, comparing 
with~\eqref{eq:symplecticConnections:momentumMapStrange:oneForm}, we get
	\begin{equation*}
		\kappa\bigl(\SectionMapAbb{J}(\nabla + A),\xi\bigr)
			= \Omega(A, \xi \ldot \nabla) + \frac{1}{2} \Omega(A, \xi \ldot A) 
			= \int_M J(\nabla + A)_p \xi^p \mu_\omega .
	\end{equation*}
Finally, for every \( 1 \)-form \( \beta \), we have
\begin{equation*}
\int_M \beta_p \xi^p \mu_\omega 
=  \frac{1}{(n-1)!} \int_M \beta \wedge\omega^{n-1} \wedge (\xi \contr \omega)
= \kappa\bigl(\beta \wedge \omega^{n-1} , \xi\bigr),
\end{equation*}
which yields the expression~\eqref{eq:symplecticConnections:momentumMapStrange} 
for the momentum map \( \SectionMapAbb{J} \).
\end{proof}

Let us rewrite the momentum map \( \SectionMapAbb{J} \) in such a way that 
its geometric meaning becomes apparent. For this purpose, recall that the 
Ricci curvature is defined by \( R_{ij} = \tensor{R}{_{kij}^k} \) and the 
curvature \( 1 \)-form by
\begin{equation}
	\label{eq:symplecticConnections:curvature1Form}
	\rho_i = 2 \nabla^j \tensor{R}{_i_j} = - 2 \nabla_j \tensor{R}{_i^j} \, .
\end{equation}
We sometimes write \( \rho(\nabla) \) to emphasize the dependency on the 
symplectic connection \( \nabla \).
Moreover, let \( \pontryaginClass \equiv \pontryaginClass(\nabla) \) be 
the \( 4 \)-form
\begin{equation}
	\label{eq:symplecticConnections:pontryaginForm}
	\pontryaginClass_{ijkl} = 
		\frac{1}{4 \pi^2}
		\bigl(
			\tensor{R}{_i_j^p^q}\tensor{R}{_k_l_p_q}
			+ \tensor{R}{_i_k^p^q}\tensor{R}{_l_j_p_q}
			+ \tensor{R}{_i_l^p^q}\tensor{R}{_j_k_p_q}
		\bigr)
\end{equation}
representing the first Pontryagin class of \( (M, \nabla) \).
Chern--Weil theory entails that the \( 4 \)-forms \( \pontryaginClass(\nabla 
+ A) \) and \( \pontryaginClass(\nabla) \) associated with the connections 
\( \nabla + A \) and \( \nabla \), respectively, are cohomologous.
Indeed, a straightforward (but lengthy) calculation shows that
\begin{equation}
	\label{eq:symplecticConnections:pontryaginRelationTwoConnections}
	\pontryaginClass(\nabla + A) = \pontryaginClass(\nabla) - 
	\frac{1}{4 \pi^2} \dif \sigma,
\end{equation}
where the \( 3 \)-form \( \sigma \equiv \sigma(\nabla, A) \) is defined by
\begin{equation}\label{eq:symplecticConnections:pontryaginPreForm}
\begin{split}
	\sigma_{ijk} 
		&= \tensor{A}{_i_p^q}\tensor{R}{_j_k_q^p} + 
		\tensor{A}{_j_p^q}\tensor{R}{_k_i_q^p} + 
		\tensor{A}{_k_p^q}\tensor{R}{_i_j_q^p}
			\\
			&\quad + \frac{1}{2} \bigl(
				\tensor{A}{_k_p^q}\nabla_i\tensor{A}{_j_q^p} 
				+ \tensor{A}{_i_p^q}\nabla_j\tensor{A}{_k_q^p}
				+ \tensor{A}{_j_p^q}\nabla_k\tensor{A}{_i_q^p}
				\\
				&\qquad\quad
				- \tensor{A}{_j_p^q}\nabla_i\tensor{A}{_k_q^p}
				- \tensor{A}{_k_p^q}\nabla_j\tensor{A}{_i_q^p}
				- \tensor{A}{_i_p^q}\nabla_k\tensor{A}{_j_q^p}
			\bigr)
			\\
			&\quad - \tensor{A}{_i_a^b} (\tensor{A}{_j_b^c}\tensor{A}{_k_c^a} 
			- \tensor{A}{_k_b^c}\tensor{A}{_j_c^a}).
\end{split}
\end{equation}
Using these notions, the momentum map \( \SectionMapAbb{J} \) has the
following expression.

\begin{thm}
\label{prop:symplecticConnections:momentumMap}
The \( \DiffGroup(M, \omega) \)-action on \( (\ConnSpace_\omega(M), \Omega) \) 
defined in~\eqref{eq:symplecticConnections:action} is symplectic and has a 
momentum map, relative to the pairing~\eqref{eq:symplecticConnections:pairing}, 
given by
\begin{equation}
\label{eq:symplecticConnections:momentumMap}
\SectionMapAbb{J}: \ConnSpace_\omega(M) \to \DiffFormSpace^{2n-1}(M) \slash 
\dif \DiffFormSpace^{2n-2}(M),
		\quad
		\nabla + A \mapsto \equivClass*{J(\nabla + A) \wedge 
		\omega^{n-1}}, 
\end{equation}
where \( J(\nabla + A) \in \DiffFormSpace^1(M) \) is defined by
	\begin{equation}
		J(\nabla + A)_i
			= \frac{1}{2} \bigl( \rho(\nabla + A)_i - \rho(\nabla)_i - 
			\tensor{\sigma(\nabla, A)}{_i_j^j}\bigr) \,.
			\qedhere
	\end{equation}
\end{thm}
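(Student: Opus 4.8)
The plan is to deduce \cref{prop:symplecticConnections:momentumMap} directly from the already established \cref{prop:symplecticConnections:momentumMapStrange} by showing that the new closed formula is the same momentum map written differently. A momentum map for an affine symplectic action that vanishes at a prescribed point is unique (\cref{prop:affineSymplectic:momentumMap}), and the \( 1 \)-form \( J(\nabla+A) \) of \cref{prop:symplecticConnections:momentumMap} visibly vanishes at \( A=0 \), since there \( \rho(\nabla+A)=\rho(\nabla) \) and \( \sigma(\nabla,0)=0 \); so it suffices to show that this \( 1 \)-form differs from the \( 1 \)-form \eqref{eq:symplecticConnections:momentumMapStrange:oneForm} by an \emph{exact} \( 1 \)-form. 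Indeed, if the difference is \( \dif h \) for some \( h\in\sFunctionSpace(M) \), then, since \( \dif\omega=0 \), the corresponding wedge products with \( \omega^{n-1} \) differ by \( \dif\bigl(h\,\omega^{n-1}\bigr)\in\dif\DiffFormSpace^{2n-2}(M) \), hence represent the same element of \( \DiffFormSpace^{2n-1}(M)\slash\dif\DiffFormSpace^{2n-2}(M) \) and the same momentum map. The whole content of the theorem is therefore the pointwise identity
\[
	\frac{1}{2}\bigl(\rho(\nabla+A)_p - \rho(\nabla)_p - \tensor{\sigma(\nabla,A)}{_p_j^j}\bigr) = J(\nabla+A)_p + \nabla_p h ,
\]
with \( J(\nabla+A)_p \) as in \eqref{eq:symplecticConnections:momentumMapStrange:oneForm} and \( h=h(\nabla,A) \) a scalar built polynomially from \( A \), \( \nabla A \) and the curvature \( R \) of \( \nabla \).

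First I would compute the left-hand side in Penrose notation. Under \( \nabla\mapsto\nabla+A \) the Riemann tensor changes by the standard formula, quadratic in \( A \) with a leading term linear in \( \nabla A \); contracting this with \( R_{ij}=\tensor{R}{_{kij}^k} \), and using that every \( \omega \)-trace of the totally symmetric tensor \( A_{ijk} \) vanishes, gives a clean expression for the Ricci tensor of \( \nabla+A \). Applying the definition \eqref{eq:symplecticConnections:curvature1Form} of the curvature \( 1 \)-form then yields \( \rho(\nabla+A) \) — with the crucial subtlety that the divergence there is taken relative to \( \nabla+A \) and not \( \nabla \), which generates extra \( A\cdot R \) and \( A\cdot\nabla A \) terms when the connection difference \( A \) meets the Ricci tensor. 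The result is that \( \rho(\nabla+A)_p-\rho(\nabla)_p \) is a sum of pieces that are linear in \( \nabla\nabla A \), linear in \( R\cdot A \), and quadratic or cubic in \( A \); producing it uses the second Bianchi identity for \( \nabla \) and the Ricci identity to reorder covariant derivatives. In parallel I would compute the trace \( \tensor{\sigma(\nabla,A)}{_p_j^j} \) of the Chern--Simons \( 3 \)-form \eqref{eq:symplecticConnections:pontryaginPreForm}: since \( \sigma \) is totally skew while \( A_{ijk} \) is totally symmetric, most contractions drop out, and \( \tensor{\sigma(\nabla,A)}{_p_j^j} \) collapses to one explicit \( R\cdot A \) term, one \( A\cdot\nabla A \) term, and one \( A\cdot A\cdot A \) term — the last being the trace of the cubic Chern--Simons term in \eqref{eq:symplecticConnections:pontryaginPreForm}.

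Next I would assemble \( \frac{1}{2}\bigl(\rho(\nabla+A)_p-\rho(\nabla)_p-\tensor{\sigma(\nabla,A)}{_p_j^j}\bigr) \) and match it against \eqref{eq:symplecticConnections:momentumMapStrange:oneForm}, organised by degree in \( A \). Because \( A \) is \( \omega \)-trace-free there is no nonzero scalar linear in \( A \), hence no divergence correction is available at first order, so the degree-one parts — the \( \nabla\nabla A \) and the \( R\cdot A \) terms — must match on the nose after commuting one pair of covariant derivatives. At degree two and three, the cubic-in-\( A \) contributions of \( \rho(\nabla+A) \) (from the connection difference applied to the \( A\cdot A \) part of the Ricci tensor) and of \( \tensor{\sigma(\nabla,A)}{_p_j^j} \) must cancel against each other and against the expansion of \( -\frac{3}{2}\nabla_i(A^{ijk}A_{pjk}) \); what remains should reorganise — via the first Bianchi identity for \( A \) and the symmetries of \( R_{ijkl} \) — into \( R_{pijk}A^{ijk}+\frac{1}{2}(\nabla_p A_{ijk})A^{ijk} \) plus a divergence. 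Since \( \frac{1}{2}(\nabla_p A_{ijk})A^{ijk}=\frac{1}{4}\nabla_p(A_{ijk}A^{ijk}) \) is already exact, collecting all the divergence remainders produces the scalar \( h \) explicitly.

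The step I expect to be the main obstacle is exactly this bookkeeping. The delicate points are: keeping the two connections \( \nabla \) and \( \nabla+A \) strictly apart while taking divergences — \eqref{eq:symplecticConnections:momentumMapStrange:oneForm} and \( \sigma(\nabla,A) \) are written with \( \nabla \), whereas \( \rho(\nabla+A) \) uses \( \nabla+A \); verifying that the cubic-in-\( A \) terms cancel exactly, which is where the precise shape of \eqref{eq:symplecticConnections:pontryaginPreForm} matters and where sign errors are most likely (here the transgression formula \eqref{eq:symplecticConnections:pontryaginRelationTwoConnections} provides a useful consistency check on the cubic part); and the repeated, disciplined use of the Bianchi and Ricci identities to bring every summand to a common normal form. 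As in the K\"ahler case of \cref{sec:kaehler}, the computation is long but mechanical; alternatively one could try to give a connection-theoretic interpretation of \( J(\nabla,\nabla+A) \) — analogous to the identification in \cref{sec:kaehler} of the K\"ahler momentum map with a difference of Chern connections on the anti-canonical bundle — and read the formula off from it, but the direct index verification seems the most economical route to the precise statement.
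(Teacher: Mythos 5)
Your proposal follows essentially the same route as the paper: expand \( \rho(\nabla+A) \) in terms of \( \nabla \) and \( A \), compute the contracted transgression form \( \tensor{\sigma(\nabla,A)}{_i_j^j} \), and compare term by term with the formula of \cref{prop:symplecticConnections:momentumMapStrange}. The paper finds that the two \( 1 \)-forms agree on the nose (no exact correction \( \dif h \) is needed), so your fallback via uniqueness of the momentum map vanishing at \( \nabla \) is a safe but ultimately unnecessary cushion.
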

The momentum map \( \SectionSpaceAbb{J} \) involves two ingredients 
that have a different flavor. First, it contains the curvature 
\( 1 \)-form \( \rho(\nabla) \) which has a clear geometric meaning. 
Second, the correction term \( \sigma(\nabla, A) \) is closely related
to the Pontryagin class of \( M \) and thus has a more topological origin.
This is another manifestation of the general principle that momentum maps 
for diffeomorphism groups involve geometric as well as topological data 
(see \parencite{DiezRatiuAutomorphisms} for more examples).
\begin{proof}
Recall that the Ricci curvature \( \bar{R}_{ij} \) of the connection 
\( \bar{\nabla} = \nabla + A \) is given by
	\begin{equation}
		\bar{R}_{ij} = R_{ij} + \nabla_k \tensor{A}{_i_j^k} - 
		\tensor{A}{_i_k^l} \tensor{A}{_j_l^k}
	\end{equation}
and, for every tensor \( T^{ij} \), we have
\begin{equation}
\bar{\nabla}_k T^{ij} = \nabla_k T^{ij} + \tensor{A}{_k_l^i} T^{lj} + 
\tensor{A}{_k_l^j} T^{il} \, .
\end{equation}
We hence obtain
	\begin{equation}\begin{split}
		\rho(\bar{\nabla})_i 
			&= - 2 \bar{\nabla}_j \tensor{\bar{R}}{_i^j}
			\\
			&= - 2 (\nabla_j \tensor{\bar{R}}{_i^j} + 
			\tensor{A}{_j_l_i} \tensor{\bar{R}}{^{lj}})
			\\
			&= - 2 (
				\nabla_j \tensor{R}{_i^j} 
				+ \nabla_j \nabla_k \tensor{A}{_i^j^k} 
				- \tensor{A}{^j_l^k} \nabla_j \tensor{A}{_i_k^l} - 
				\tensor{A}{_i_k^l} \nabla_j \tensor{A}{^j_l^k} 
				\\
				&\qquad+ \tensor{A}{_j_l_i} R^{lj}
				+ \tensor{A}{_j_l_i} \nabla_k \tensor{A}{^l^j^k}
				- \tensor{A}{_j_l_i} \tensor{A}{^l_k^p} \tensor{A}{^j_p^k} )
			\\
			&= \rho(\nabla)_i - 2 (\nabla_j \nabla_k \tensor{A}{_i^j^k} + 
			\tensor{A}{_j_k_i} R^{kj} )
			\\
			&\qquad+ 2 \tensor{A}{^j_l^k} \nabla_j \tensor{A}{_i_k^l} - 
			4 \tensor{A}{_j_l_i} \nabla_k \tensor{A}{^l^j^k}
			+ 2 \tensor{A}{_j_l_i} \tensor{A}{^l_k^p} \tensor{A}{^j_p^k} \, .
	\end{split}\end{equation}
On the other hand, we have
\begin{equation}\label{eq:symplecticConnections:pontryaginPreFormContracted}
\begin{split}
\tensor{\sigma}{_{ij}^j} 
	&= 2 \tensor{A}{_i_p^q}\tensor{R}{_q^p} 
		+ 2 \tensor{A}{_j_p^q}\tensor{R}{^j_i_q^p}
	\\
	&\quad 
		+ \tensor{A}{^j_p^q}\nabla_i\tensor{A}{_j_q^p} 
		+ \tensor{A}{_i_p^q}\nabla_j\tensor{A}{^j_q^p}
		- \tensor{A}{^j_p^q}\nabla_j\tensor{A}{_i_q^p}
		\\
		&\quad - 2 \tensor{A}{_i_a^b} \tensor{A}{_j_b^c}\tensor{A}{^j_c^a} \, .
\end{split}
\end{equation}	
Comparing these identities 
with~\eqref{eq:symplecticConnections:momentumMapStrange:oneForm} shows that 
the momentum map \( \SectionMapAbb{J} \) can be indeed written in the 
form~\eqref{eq:symplecticConnections:momentumMap}.
\end{proof}

\begin{remark}
If \( (M, \omega) \) is a two-dimensional symplectic manifold, then the 
\( 3 \)-form \( \sigma \) necessarily vanishes and thus the momentum 
map~\eqref{eq:symplecticConnections:momentumMap} takes the simple form
	\begin{equation}
		\SectionMapAbb{J}: \ConnSpace_\omega(M) \to \DiffFormSpace^{1}(M) 
		\slash \dif \DiffFormSpace^{0}(M),
		\qquad
		\nabla + A \mapsto \frac{1}{2} \equivClass*{\bigl( \rho(\nabla + A) - 
		\rho(\nabla) \bigr)}.
	\end{equation}
Thus, we recover the formula for the momentum map in this setting established 
in \parencite[Theorem~1.2]{Fox2014} (up to some constant).
\end{remark}
Let us derive from the expression~\eqref{eq:symplecticConnections:momentumMap} 
the momentum map for the action of the subgroup \( \HamDiffGroup(M, \omega) 
\subseteq \DiffGroup(M, \omega) \) of Hamiltonian diffeomorphisms. 
As in \cref{sec:kaehler:momentumMap}, we identify the space 
\( \HamVectorFields(M, \omega) \) of Hamiltonian vector fields 
with \( \sFunctionSpace_0(M) \). Under this identification, the 
space \( \dif \DiffFormSpace^{2n-1}(M) \) is dual to  
\( \HamVectorFields(M, \omega) \) and the momentum map for the 
action of \( \HamDiffGroup(M, \omega) \) is given by post-composition 
of the momentum map for the action of \( \DiffGroup(M, \omega) \) 
with the exterior differential.
Accordingly, the momentum map associated with the action of 
\( \HamDiffGroup(M, \omega) \) on \( \ConnSpace_\omega(M) \) is given by
\begin{equation}
\label{eq:symplecticConnections:momentumMapHam}
\SectionMapAbb{J}_{\HamDiffGroup}: \ConnSpace_\omega(M) \to 
\dif \DiffFormSpace^{2n-1}(M),
		\qquad
\nabla + A \mapsto \dif J(\nabla + A) \wedge \frac{\omega^{n-1}}{(n-1)!} \, .
\end{equation}
Let \( \bar{\sigma}_i = \tensor{\sigma}{_i_j^j} \) be the contraction 
of \( \sigma \). A straightforward calculation 
using~\eqref{eq:symplecticConnections:pontryaginRelationTwoConnections} 
yields
	\begin{equation}
		\tensor{(\dif \bar{\sigma})}{_i^i}
			= \frac{1}{2} \tensor{(\dif \sigma)}{_i^i_j^j}
			= - 2 \pi^2 \tensor{\bigl(\pontryaginClass(\nabla + A) + 
			\pontryaginClass(\nabla)\bigr)}{_i^i_j^j} \, .
	\end{equation}
Thus, using~\eqref{eq:symplectic:formWedgeOmegaNMinus1}, we can 
rewrite the momentum map \( \SectionMapAbb{J}_{\HamDiffGroup} \) 
as
\begin{equation}\begin{split}
\SectionMapAbb{J}_{\HamDiffGroup}(\nabla + A)
&= \dif J(\nabla + A) \wedge \frac{\omega^{n-1}}{(n-1)!}
		\\
&= \frac{1}{2} \tensor{\bigl(\dif J(\nabla + A)\bigr)}{_i^i} 
	\mu_\omega
			\\
&= \frac{1}{4} \tensor{\bigl(\dif \rho(\nabla + A) - 
    \dif \rho(\nabla)\bigr)}{_i^i} \mu_\omega
	+ \frac{\pi^2}{2} \tensor{\bigl(\pontryaginClass(\nabla + A) -  
	\pontryaginClass(\nabla)\bigr)}{_i^i_j^j} \mu_\omega
			\\
	&\equiv \bigl(\SectionMapAbb{K}(\nabla + A) - 
	\SectionMapAbb{K}(\nabla)\bigr) \mu_\omega,
	\end{split}\end{equation}
where, in the last line, we introduced the map
	\begin{equation}
		\label{eq:symplecticConnections:cahenGuttMomentumMap}
		\SectionMapAbb{K}: \ConnSpace_\omega(M) \to \sFunctionSpace(M),
		\qquad 
		\nabla \mapsto \frac{1}{2} \left(\nabla_i \rho(\nabla)^i + 
		\pi^2 \tensor{\pontryaginClass(\nabla)}{_i^i_j^j}\right) \, .
	\end{equation}
It was shown in \parencite[Theorem~1.1]{Fox2014} that \( \SectionMapAbb{K} \) 
coincides with the Cahen--Gutt momentum map 
\parencites[Proposition~1.1]{CahenGutt2005}{Gutt2006} for the action of 
the group of Hamiltonian diffeomorphisms on \( \ConnSpace_\omega(M) \).
In other words, \( \SectionMapAbb{J}_{\HamDiffGroup} \) recovers 
the Cahen--Gutt momentum map (as a slight reformulation).
Let us record this observation.
\begin{prop}
\label{prop:symplecticConnections:momentumMapHam}
The action of the subgroup of Hamiltonian diffeomorphisms on 
\( \ConnSpace_\omega(M) \) has a momentum map \( \SectionMapAbb{J}_{\HamDiffGroup}: 
\ConnSpace_\omega(M) \to \dif \DiffFormSpace^{2n-1}(M) \) given by
\begin{equation}
\SectionMapAbb{J}_{\HamDiffGroup}(\nabla + A)
= \dif J(\nabla + A) \wedge \frac{\omega^{n-1}}{(n-1)!}
= \bigl(\SectionMapAbb{K}(\nabla + A) - \SectionMapAbb{K}(\nabla)\bigr) \mu_\omega ,
\end{equation}
where \( \SectionMapAbb{K}: \ConnSpace_\omega(M) \to \sFunctionSpace(M) \) is the 
Cahen--Gutt momentum map defined 
in~\eqref{eq:symplecticConnections:cahenGuttMomentumMap}.
\end{prop}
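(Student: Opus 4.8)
The plan is to obtain the statement by restricting the \( \DiffGroup(M,\omega) \)-momentum map \( \SectionMapAbb{J} \) of \cref{prop:symplecticConnections:momentumMap} to the Lie subalgebra of Hamiltonian vector fields, and then to repackage the resulting density via Chern--Weil theory into the Cahen--Gutt expression~\eqref{eq:symplecticConnections:cahenGuttMomentumMap}. Since \( \HamDiffGroup(M,\omega) \) is a subgroup of \( \DiffGroup(M,\omega) \), which already carries the momentum map \( \SectionMapAbb{J} \), a momentum map for the subgroup action automatically exists and equals the composition of \( \SectionMapAbb{J} \) with the transpose of the inclusion \( \HamVectorFields(M,\omega) \hookrightarrow \VectorFieldSpace(M,\omega) \). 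First I would identify this transpose: under \( \sFunctionSpace_0(M) \isomorph \HamVectorFields(M,\omega) \), \( f \mapsto X_f \), and with respect to the pairing~\eqref{eq:symplecticConnections:pairing} on \( \VectorFieldSpace(M,\omega) \) and the \( \LFunctionSpace^2 \)-integration pairing of \( \sFunctionSpace_0(M) \) with \( \dif\DiffFormSpace^{2n-1}(M) \) --- both non-degenerate by Hodge theory --- the transpose is, up to the overall sign dictated by the chosen conventions, the exterior differential \( \dif\colon \DiffFormSpace^{2n-1}(M)\slash\dif\DiffFormSpace^{2n-2}(M) \to \dif\DiffFormSpace^{2n-1}(M) \); this is the same single integration by parts over the closed manifold \( M \) that appears in the proof of \cref{prop:kaehler:momentumMapHamiltonian}. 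Composing with \( \SectionMapAbb{J} \) and normalizing the additive constant so that the value at the reference connection \( \nabla \) vanishes (automatic, since \( J(\nabla)=0 \) in~\eqref{eq:symplecticConnections:momentumMapStrange:oneForm}) then gives \( \SectionMapAbb{J}_{\HamDiffGroup}(\nabla+A) = \dif J(\nabla+A) \wedge \frac{\omega^{n-1}}{(n-1)!} \), which is the first equality in~\eqref{eq:symplecticConnections:momentumMapHam}.

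Next I would convert this \( 2n \)-form into a function times \( \mu_\omega \). By the wedge identity~\eqref{eq:symplectic:formWedgeOmegaNMinus1}, every \( 1 \)-form \( \beta \) satisfies \( \dif\beta\wedge\frac{\omega^{n-1}}{(n-1)!} = \frac{1}{2}\tr_\omega(\dif\beta)\,\mu_\omega \); applying this to the components \( J(\nabla+A)_i = \frac{1}{2}\bigl(\rho(\nabla+A)_i - \rho(\nabla)_i - \tensor{\sigma(\nabla,A)}{_i_j^j}\bigr) \) from \cref{prop:symplecticConnections:momentumMap} reduces everything to computing the \( \omega \)-trace of \( \dif J(\nabla+A) \). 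The two curvature \( 1 \)-form terms give, upon rewriting \( \tr_\omega\dif\rho \) as a multiple of \( \nabla_i\rho^i \), the parts \( \frac{1}{2}\nabla_i\rho(\nabla+A)^i \) and \( \frac{1}{2}\nabla_i\rho(\nabla)^i \) of \( \SectionMapAbb{K} \) at \( \nabla+A \) and at \( \nabla \). For the contraction \( \bar{\sigma}_i = \tensor{\sigma}{_i_j^j} \), I would use \( \nabla\omega=0 \) to commute \( \dif \) past the \( \omega \)-contraction, so that \( \tensor{(\dif\bar{\sigma})}{_i^i} = \frac{1}{2}\tensor{(\dif\sigma)}{_i^i_j^j} \) up to a purely combinatorial constant, and then invoke the Chern--Weil identity~\eqref{eq:symplecticConnections:pontryaginRelationTwoConnections}, which rewrites \( \dif\sigma \) through \( \pontryaginClass(\nabla+A) - \pontryaginClass(\nabla) \); this produces exactly the parts \( \frac{\pi^2}{2}\tensor{\pontryaginClass(\nabla+A)}{_i^i_j^j} \) and \( \frac{\pi^2}{2}\tensor{\pontryaginClass(\nabla)}{_i^i_j^j} \) of \( \SectionMapAbb{K} \). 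Collecting the \( (\nabla+A) \)-contributions and the \( \nabla \)-contributions separately then yields \( \SectionMapAbb{J}_{\HamDiffGroup}(\nabla+A) = \bigl(\SectionMapAbb{K}(\nabla+A)-\SectionMapAbb{K}(\nabla)\bigr)\mu_\omega \) with \( \SectionMapAbb{K} \) as in~\eqref{eq:symplecticConnections:cahenGuttMomentumMap}.

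Finally, I would invoke \parencite[Theorem~1.1]{Fox2014} to identify the functional \( \SectionMapAbb{K} \) with the Cahen--Gutt momentum map (alternatively, one can verify this directly by checking that \( \SectionMapAbb{K} \) obeys the defining momentum-map relation for the \( \HamDiffGroup(M,\omega) \)-action, pairing with \( X_f \) and undoing the integration by parts of the first step). Since the genuinely heavy input --- the Chern--Weil relation~\eqref{eq:symplecticConnections:pontryaginRelationTwoConnections} --- is already in hand, the main remaining obstacle is purely bookkeeping: commuting \( \dif \) through the \( \omega \)-traces with the correct combinatorial constants, and pinning down the overall sign and normalization in the first step, where one must keep careful track of the conventions underlying~\eqref{eq:symplecticConnections:pairing} and the identification \( \sFunctionSpace_0(M) \isomorph \HamVectorFields(M,\omega) \). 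In this sense the proposition is essentially a consolidation of \cref{prop:symplecticConnections:momentumMap} and the computations preceding it.
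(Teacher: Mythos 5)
Your proposal follows essentially the same route as the paper: the paper likewise obtains \( \SectionMapAbb{J}_{\HamDiffGroup} \) by post-composing \( \SectionMapAbb{J} \) with the exterior differential (dual to the inclusion \( \HamVectorFields(M,\omega)\hookrightarrow\VectorFieldSpace(M,\omega) \) under the identification \( f\mapsto X_f \)), then converts \( \dif J(\nabla+A)\wedge\frac{\omega^{n-1}}{(n-1)!} \) into \( \frac{1}{2}\tr_\omega(\dif J(\nabla+A))\,\mu_\omega \) via~\eqref{eq:symplectic:formWedgeOmegaNMinus1}, handles the \( \bar\sigma \)-term through the Chern--Weil relation~\eqref{eq:symplecticConnections:pontryaginRelationTwoConnections}, and cites \parencite[Theorem~1.1]{Fox2014} to identify \( \SectionMapAbb{K} \) with the Cahen--Gutt momentum map. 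The steps you flag as requiring care (the sign/normalization of the transpose and the combinatorial constant in \( \tensor{(\dif\bar\sigma)}{_i^i}=\frac{1}{2}\tensor{(\dif\sigma)}{_i^i_j^j} \)) are exactly the computations the paper carries out, so the proposal is correct.
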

Note that \( \SectionMapAbb{J}_{\HamDiffGroup} \) is 
\( \HamDiffGroup(M, \omega) \)-equivariant.
Equivariance is, however, no longer the case for the 
momentum map \( \SectionMapAbb{J} \) 
for the full group of symplectomorphisms.

\subsection{Central extension of 
\texorpdfstring{$\DiffGroup(M, \omega)$}{Diff(M, w)}}

As we have seen in~\cref{sec:affineSymplectic}, the momentum map 
for an affine symplectic action is, in general, not equivariant.
For the action of \( \DiffGroup(M, \omega) \) on the space of symplectic 
connections, we obtain the following.
\begin{prop}
	\label{prop:symplecticConnections:nonEquivariance}
The non-equivariance \( 2 \)-cocycle \( \Sigma: \VectorFieldSpace(M, \omega) 
\times \VectorFieldSpace(M, \omega) \to \R \) associated with the momentum 
map \( \SectionMapAbb{J} \) {\rm(}see 
\cref{prop:symplecticConnections:momentumMapStrange} or 
\cref{prop:symplecticConnections:momentumMap}{\rm)} is given by
\begin{equation}
\label{eq:symplecticConnections:nonEquivariance:cocycle}
\Sigma(\xi, \eta) = 
\frac{1}{2} \kappa\bigl(\rho(\nabla), \commutator{\xi}{\eta}\bigr) - 
2 \pi^2 \int_M \tensor{\pontryaginClass}{_k^k_i_j}(\nabla) \xi^i \eta^j 
\mu_\omega \, .
\qedhere
\end{equation}
\end{prop}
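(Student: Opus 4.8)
The non-equivariance 2-cocycle is defined by $\Sigma(\xi,\eta) = \kappa\bigl(J(m_0),\commutator{\xi}{\eta}\bigr) + \omega_{m_0}(\xi\ldot m_0, \eta\ldot m_0)$, which is independent of the base point since $\ConnSpace_\omega(M)$ is connected (indeed affine). The most economical choice is to take the base point to be the reference connection $\nabla$ itself, since then $\SectionMapAbb{J}(\nabla) = 0$ and the first term collapses, so that $\Sigma(\xi,\eta) = \Omega_\nabla(\xi\ldot\nabla, \eta\ldot\nabla)$. This is exactly the general formula~\eqref{eq:affineSymplectic:twoCocycle} for affine actions, specialized to the affine symplectic space $(\ConnSpace_\omega(M),\Omega)$ with linear part acting on $\SymTensorFieldSpace_3(M)$.

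First I would write $\xi\ldot\nabla$ and $\eta\ldot\nabla$ explicitly using~\eqref{eq:symplecticConnections:infAction}, so that
\begin{equation*}
\Sigma(\xi,\eta) = \int_M \bigl(\nabla_i\nabla_j\xi^k + \tensor{R}{_l_i_j^k}\xi^l\bigr)\bigl(\nabla^i\nabla^j\eta_k + \tensor{R}{_p^i^j_k}\eta^p\bigr)\,\mu_\omega,
\end{equation*}
with indices lowered by $\omega$. The strategy is then to integrate by parts to move all derivatives off one of the vector fields — say $\eta$ — converting the integrand into something of the shape $(\text{operator applied to }\xi)\cdot\eta$, and to recognize the resulting operator. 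The key observation is that the symmetric (in $\xi\leftrightarrow\eta$) part of this pairing is what survives after antisymmetrization contributes the cocycle; more concretely, the quadratic expression $\int_M (\xi\ldot\nabla)_{ijk}(\xi\ldot\nabla)^{ijk}\mu_\omega$ is, up to a factor, $2\,\kappa(\SectionMapAbb{J}_{\text{lin}}(\xi),\xi)$ where $\SectionMapAbb{J}_{\text{lin}}$ is the quadratic momentum map of the linearized action, and polarizing plus using the skew part of $\Sigma$ reduces everything to terms already computed in the proof of \cref{prop:symplecticConnections:momentumMapStrange}.

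The cleanest route, though, exploits the already-established identity~\eqref{eq:affineSymplectic:twoCocycle} together with the fact that the non-equivariance one-cocycle is $\sigma(\phi) = \SectionMapAbb{J}(\phi\cdot\nabla)$ by~\eqref{eq:affineSymplectic:oneCocycle}; differentiating at the identity gives $\tangent_e\sigma(\xi) = \tangent_\nabla\SectionMapAbb{J}(\xi\ldot\nabla)$, so that $\Sigma(\xi,\eta) = \kappa\bigl(\tangent_\nabla\SectionMapAbb{J}(\xi\ldot\nabla),\eta\bigr)$. Now $\SectionMapAbb{J}$ was written in \cref{prop:symplecticConnections:momentumMap} as $J(\nabla+A)_i = \tfrac{1}{2}\bigl(\rho(\nabla+A)_i - \rho(\nabla)_i - \tensor{\sigma(\nabla,A)}{_i_j^j}\bigr)$, so $\tangent_\nabla\SectionMapAbb{J}(\xi\ldot\nabla)$ is obtained by linearizing in $A$ at $A=0$ and substituting $A = \xi\ldot\nabla = -\difLie_\xi\nabla$. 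At $A=0$ the cubic-in-$A$ term of $\sigma$ drops, the derivative-of-$\sigma$ term is quadratic in $A$ hence also drops, so only $\tensor{\sigma(\nabla,A)}{_i_j^j}$'s linear-in-$A$ part $2\tensor{A}{_i_p^q}\tensor{R}{_q^p} + 2\tensor{A}{_j_p^q}\tensor{R}{^j_i_q^p}$ (from~\eqref{eq:symplecticConnections:pontryaginPreFormContracted}) survives along with $\tfrac{1}{2}(\tangent\rho)(\xi\ldot\nabla)$. Pairing with $\eta$ and integrating by parts, the $\rho$-linearization term will reorganize into $\tfrac{1}{2}\kappa(\rho(\nabla),\commutator{\xi}{\eta})$ — this is exactly the Lichnerowicz-cocycle mechanism, using that $\rho$ is $\DiffGroup(M,\omega)$-equivariant so its linearization along $\xi\ldot\nabla$ is $-\difLie_\xi\rho$ — while the $\sigma$-term will reorganize, after inserting $A=-\difLie_\xi\nabla$ and integrating by parts against $\eta$, into the Pontryagin contraction $-2\pi^2\int_M \tensor{\pontryaginClass}{_k^k_i_j}(\nabla)\xi^i\eta^j\,\mu_\omega$, again using Chern--Weil equivariance of $\pontryaginClass$ under $\DiffGroup(M,\omega)$.

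\textbf{Main obstacle.} The hard part will be the bookkeeping in identifying the $\sigma$-contribution with the Pontryagin contraction: one must carefully linearize $\tensor{\sigma(\nabla,A)}{_i_j^j}$, substitute $A_{ijk} = -(\difLie_\xi\nabla)_{ijk} = \nabla_i\nabla_j\xi_k + R_{lijk}\xi^l$ (a second-order expression in $\xi$), pair against $\eta$, and perform several integrations by parts to land on the first-Pontryagin-form contraction; keeping track of curvature Bianchi identities and the symmetries $R_{ij\,kl}$, $\tensor{\pontryaginClass}{_{ijkl}}$ through this is where errors creep in. A useful sanity check — and possibly the conceptually right proof — is to bypass the linearization entirely: note that $\Sigma(\xi,\eta)$ must be the Lie algebra cocycle integrating the central extension of \cref{prop:affineSymplectic:groupExt}, and by~\eqref{eq:affineSymplectic:linear:twoCocycle} it equals $\Omega\circ\tau'$ where $\tau(\phi) = \phi\cdot\nabla - \nabla$ is the connection-difference one-cocycle, so $\tau'(\xi) = \xi\ldot\nabla$ and $\Sigma(\xi,\eta) = \Omega_\nabla(\xi\ldot\nabla,\eta\ldot\nabla)$; one then recognizes the two pieces of~\eqref{eq:symplecticConnections:nonEquivariance:cocycle} directly as (i) the standard Lichnerowicz cocycle built from the curvature one-form and (ii) the Chern--Simons/Pontryagin transgression cocycle, both of which are classical. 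I would present the linearization computation as the primary argument and remark on this structural interpretation as confirmation.
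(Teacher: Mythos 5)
Your primary argument --- differentiating the one-cocycle $\sigma(\phi)=\SectionMapAbb{J}(\phi\cdot\nabla)$ at the identity, using equivariance of the curvature one-form $\rho$ to produce $\tfrac{1}{2}\kappa\bigl(\rho(\nabla),\commutator{\xi}{\eta}\bigr)$, and reducing the linear-in-$A$ part of $\tensor{\sigma(\nabla,A)}{_i_j^j}$ with $A=-\difLie_\xi\nabla$ to the contracted Pontryagin form --- is exactly the paper's proof. The only computational step you leave implicit (and correctly flag as the bookkeeping risk) is that after substitution the terms containing two covariant derivatives of $\xi$ cancel upon integration by parts because $\nabla_i\eta_j$ is symmetric for symplectic $\eta$, leaving only the curvature-quadratic terms that match $4\pi^2\tensor{\pontryaginClass}{_k^k_i_j}$.
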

The part of \( \Sigma \) not cohomologous to \( 0 \) is determined by the 
contracted Pontryagin class \( \tensor{\pontryaginClass}{_k^k_i_j} \in 
\DiffFormSpace^2(M) \) and thus carries topological information of the 
symplectic manifold \( (M, \omega) \).
\begin{proof}
It is straightforward to verify that the curvature \( 1 \)-form \( \rho \) 
transforms naturally under the action of \( \DiffGroup(M, \omega) \), 
that is, we have
	\begin{equation}
		\rho(\phi \cdot \nabla) = (\phi^{-1})^* \rho(\nabla) 
	\end{equation}
for every \( \phi \in \DiffGroup(M, \omega) \) and \( \nabla \in 
\ConnSpace_\omega(M) \).
Thus, according to~\eqref{eq:affineSymplectic:oneCocycle} 
and~\eqref{eq:symplecticConnections:momentumMap}, 
the non-equivariance \( 1 \)-cocycle \( \lambda: \DiffGroup(M, \omega) \to 
\DiffFormSpace^{2n-1}(M) \slash \dif \DiffFormSpace^{2n-2}(M) \) is given by
	\begin{equation}
		\lambda(\phi)
			= \SectionMapAbb{J}(\phi \cdot \nabla)
			= \frac{1}{2} \equivClass*{\Bigl((\phi^{-1})^* \rho(\nabla) - 
			\rho(\nabla) - \bar{\sigma}(\nabla, \phi \cdot \nabla - \nabla) 
			\Bigr) \wedge \omega^{n-1}},
	\end{equation}
where we recall that \( \bar{\sigma}_i = 
\tensor{\sigma}{_i_j^j} \). Differentiating this relation with respect 
to \( \phi \), we find for the non-equivariance Lie algebra \( 2 \)-cocycle:
	\begin{equation}\begin{split}
		\Sigma(\xi, \eta)
			&=
			\kappa\bigl(\tangent_e \lambda (\xi), \eta \bigr)
			\\
			&= 
			- \frac{1}{2} \dualPair*{\difLie_\xi \rho}{\eta}
			+ \dualPair*{\tensor{(\difLie_\xi \nabla)}{_i_p^q}\tensor{R}{_q^p} 
			+ \tensor{(\difLie_\xi \nabla)}{_j_p^q}\tensor{R}{^j_i_q^p}}{\eta}
			\\
			&= 
			\frac{1}{2} \dualPair*{\rho}{\commutator{\xi}{\eta}}
			\\
			&\qquad+ \dualPair*{\bigl(\nabla_i \nabla_p \xi^q + 
			\tensor{R}{_l_i_p^q} \xi^l\bigr)\tensor{R}{_q^p} + 
			\bigl(\nabla_j \nabla_p \xi^q + 
			\tensor{R}{_l_j_p^q} \xi^l\bigr)\tensor{R}{^j_i_q^p}}{\eta},
	\end{split}\end{equation}
where we used the 
expression~\eqref{eq:symplecticConnections:pontryaginPreFormContracted} 
for \( \bar{\sigma} \) and~\eqref{eq:symplecticConnections:infAction} 
for the Lie derivative of \( \nabla \); the brackets \( \dualPairDot \) 
in the last two lines denote the natural 
pairing of \( 1 \)-forms with vector fields by integration against 
\( \mu_\omega \). The terms on the right-hand side 
involving two partial derivatives cancel, which can be seen using 
integration by parts:
	\begin{equation}\begin{split}
		&\dualPair*{\bigl(\nabla_i \nabla_p \xi^q\bigr)\tensor{R}{_q^p} + 
		\bigl(\nabla_j \nabla_p \xi^q\bigr)\tensor{R}{^j_i_q^p}}{\eta}
			\\
			&\qquad\qquad\qquad
			= \int_M \bigl(\nabla_i \nabla_p \xi^q\bigr)\tensor{R}{_q^p} \eta^i 
			+ \bigl(\nabla_j \nabla_p \xi^q\bigr)\tensor{R}{^j_i_q^p} \eta^i
			\\
			&\qquad\qquad\qquad
			= - \int_M (\nabla_p \xi^q) (\nabla_i \tensor{R}{_q^p}) \eta^i + 
			(\nabla_p \xi^q) (\nabla_j \tensor{R}{^j_i_q^p}) \eta^i
			\\
			&\qquad\qquad\qquad
			= 0,
	\end{split}\end{equation}
where we used the fact that \( \nabla_i \eta_j \) is symmetric in \( (i, j) \) 
and thus the terms involving the covariant derivatives of \( \eta^i \) vanish.
Indeed, \(0 = (\difLie_\eta \omega)_{ij} = 
(\nabla_\eta \omega)_{ij} + \nabla_i \eta_j - \nabla_j \eta_i\) and
\(\nabla_\eta \omega =0\).
Thus, we get
\begin{equation}\label{eq:symplecticConnections:nonEquivariance:cocyclePrelim}
\begin{split}
		\Sigma(\xi, \eta) 
			= 
			\frac{1}{2} \dualPair*{\rho}{\commutator{\xi}{\eta}}
			+ \dualPair*{\bigl(\tensor{R}{_i_j_p^q} \tensor{R}{_q^p} + 
			\tensor{R}{_i_r_p^q} \tensor{R}{^r_j_q^p}\bigr) \xi^i}{\eta}.
	\end{split}\end{equation}
Finally, we find for the contraction of the Pontryagin form 
(see~\eqref{eq:symplecticConnections:pontryaginForm}):
	\begin{equation}
		4 \pi^2 \tensor{\pontryaginClass}{_k^k_i_j}
			=
			\tensor{R}{_k^k^p^q}\tensor{R}{_i_j_p_q}
			+ \tensor{R}{_k_i^p^q}\tensor{R}{_j^k_p_q}
			+ \tensor{R}{_k_j^p^q}\tensor{R}{^k_i_p_q}
			= 2 \tensor{R}{^p^q}\tensor{R}{_i_j_p_q} 
			+ 2 \tensor{R}{_k_i^p^q}\tensor{R}{_j^k_p_q} \, .
	\end{equation}
Inserting this relation into the 
expression~\eqref{eq:symplecticConnections:nonEquivariance:cocyclePrelim} 
for the non-equivariance cocycle \( \Sigma \) 
yields~\eqref{eq:symplecticConnections:nonEquivariance:cocycle}.
\end{proof} 

Recall from the discussion in \cref{sec:kaehler:extension}, that 
\( 2 \)-cocycles on \( \VectorFieldSpace(M, \omega) \) are sums of 
extensions of certain \( 2 \)-cocycles on \( \HamVectorFields(M, \omega) \) 
and pull-backs of elements of \( \ExtBundle^2 {\deRCohomology^1(M)}^* \).
Applied to the non-equivariance cocycle \( \Sigma \), we obtain the following.
\begin{prop}
	\label{prop:symplecticConnections:nonEquivariance:cocycleClass}
The class of the non-equivariance cocycle \( \Sigma \) in the second 
continuous Lie algebra cohomology of \( \VectorFieldSpace(M, \omega) \) 
coincides with the pull-back along the natural map 
\( \VectorFieldSpace(M, \omega) \to \deRCohomology^1(M) \) of the 
antisymmetric bilinear form
\begin{equation}
\bigl(\equivClass{\alpha}, \equivClass{\beta}\bigr) \mapsto \pi^2 \int_M 
\left(\SectionMapAbb{p} (\nabla) \, \varpi^{ij} - 
2 \tensor{\pontryaginClass}{_k^k^i^j} (\nabla)\right) \alpha_i \beta_j \, 
\mu_\omega
\end{equation}
on \( \deRCohomology^1(M) \), where
\begin{equation}
\SectionMapAbb{p}(\nabla) \defeq \tensor{\pontryaginClass}{_i^i_j^j}(\nabla) 
- \frac{1}{\vol_{\mu_\omega}(M)} \int_M \tensor{\pontryaginClass}{_i^i_j^j}
(\nabla) \, \mu_\omega \, .
\qedhere
\end{equation}
\end{prop}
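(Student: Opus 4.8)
The plan is to feed the explicit formula~\eqref{eq:symplecticConnections:nonEquivariance:cocycle} for \( \Sigma \) from \cref{prop:symplecticConnections:nonEquivariance} into \cref{prop:kaehler:LichnerowiczCocyclePullback}. Recall from \parencite[Corollary~4.4]{Vizman2006} that, modulo coboundaries, every continuous \( 2 \)-cocycle on \( \VectorFieldSpace(M, \omega) \) splits as a cocycle supported on \( \HamVectorFields(M, \omega) \) plus the pull-back of an element of \( \ExtBundle^2 {\deRCohomology^1(M)}^* \) along \( \xi \mapsto \equivClass{\xi \contr \omega} \). So there are two things to establish: that the Hamiltonian part of \( \Sigma \) is cohomologically trivial, and that the \( \deRCohomology^1(M) \)-part is the asserted bilinear form; the latter will come directly from \cref{prop:kaehler:LichnerowiczCocyclePullback}.

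First I would perform the splitting at the level of~\eqref{eq:symplecticConnections:nonEquivariance:cocycle}. The term \( \frac12 \kappa\bigl(\rho(\nabla), \commutator{\xi}{\eta}\bigr) \) is the Chevalley--Eilenberg coboundary of the linear functional \( \xi \mapsto \kappa\bigl(-\frac12 \equivClass{\rho(\nabla) \wedge \omega^{n-1}}, \xi\bigr) \) on \( \VectorFieldSpace(M, \omega) \), since the coboundary of a \( 1 \)-cochain \( \ell \) is \( (\xi, \eta) \mapsto -\ell(\commutator{\xi}{\eta}) \); hence it drops out of the cohomology class and \( [\Sigma] = [\Sigma_2] \) with \( \Sigma_2(\xi, \eta) \defeq - 2\pi^2 \int_M \tensor{\pontryaginClass}{_k^k_i_j}(\nabla) \, \xi^i \eta^j \, \mu_\omega \). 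This \( \Sigma_2 \) is exactly the Lichnerowicz cocycle \( \lambda_c \) of the \( 2 \)-form \( \lambda \defeq - 2\pi^2 \, \tr_\omega \pontryaginClass(\nabla) \) obtained by \( \varpi \)-contracting the Pontryagin \( 4 \)-form~\eqref{eq:symplecticConnections:pontryaginForm}. Independently, the Hamiltonian part is genuinely zero: for \( \xi, \eta \in \HamVectorFields(M, \omega) \) the restriction \( \Sigma|_{\HamVectorFields(M, \omega)} \) coincides with the non-equivariance cocycle of \( \SectionMapAbb{J}_{\HamDiffGroup} \), which vanishes because the Cahen--Gutt momentum map \( \SectionMapAbb{J}_{\HamDiffGroup} \) is equivariant (\cref{prop:symplecticConnections:momentumMapHam} and the remark after it); this is consistent with \( [\Sigma] \) being a pull-back from \( \deRCohomology^1(M) \).

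The crucial step is then to invoke \cref{prop:kaehler:LichnerowiczCocyclePullback} for \( \lambda \), which is where the real work lies: one must check that \( \lambda = - 2\pi^2 \, \tr_\omega \pontryaginClass(\nabla) \) is genuinely a \emph{closed} \( 2 \)-form — this is the \textquote{contracted Pontryagin class} alluded to after \cref{prop:symplecticConnections:nonEquivariance} — and I expect this closedness to follow from \( \nabla \omega = 0 \) (so that the \( \varpi \)-contraction commutes with covariant differentiation) together with the Chern--Weil identity \( \dif \pontryaginClass(\nabla) = 0 \). Granting it, \cref{prop:kaehler:LichnerowiczCocyclePullback} yields that \( [\Sigma] = [\lambda_c] \) is the pull-back of \( \lambda_c^H \), and it remains to rewrite \( \lambda_c^H \) in index notation: substitute \( \mathrm{Av}_\omega(\lambda) = - 2\pi^2 \frac{1}{\vol_{\mu_\omega}(M)} \int_M \tensor{\pontryaginClass}{_i^i_j^j}(\nabla) \, \mu_\omega \) and \( \tr_\omega(\lambda) = - 2\pi^2 \tensor{\pontryaginClass}{_i^i_j^j}(\nabla) \), and apply the identity \( \alpha \wedge (\xi \contr \omega) \wedge (\eta \contr \omega) \wedge \frac{\omega^{n-2}}{(n-2)!} = \frac12 \tr_\omega(\alpha) \, \omega(\xi, \eta) \, \mu_\omega - \alpha(\xi, \eta) \, \mu_\omega \) from the proof of \cref{prop:kaehler:LichnerowiczCocyclePullback} (with \( \alpha \) the \( 2 \)-form \( \frac{\mathrm{Av}_\omega(\lambda)}{2(n-1)} \omega - \lambda \), and \( \alpha = \xi \contr \omega \), \( \beta = \eta \contr \omega \) as the dual \( 1 \)-forms). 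Collecting constants — the \( n-1 \) in the denominator cancels against \( \tr_\omega(\omega) = 2n \) — the expression collapses to \( \pi^2 \int_M \bigl(\SectionMapAbb{p}(\nabla) \varpi^{ij} - 2 \tensor{\pontryaginClass}{_k^k^i^j}(\nabla)\bigr) \alpha_i \beta_j \, \mu_\omega \), as claimed; the resulting class does not depend on \( \nabla \), because \( \Sigma \) itself is independent of the base point of the connected affine space \( \ConnSpace_\omega(M) \). The only genuine obstacles are (i) the closedness of \( \tr_\omega \pontryaginClass(\nabla) \) and (ii) the careful bookkeeping of the factors of \( 2 \), \( \pi^2 \) and \( n-1 \) versus \( n-2 \); everything else is mechanical.
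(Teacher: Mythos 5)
Your route is the one the paper intends (the paper states this proposition without a separate proof): discard the coboundary \( \frac{1}{2}\kappa\bigl(\rho(\nabla),\commutator{\xi}{\eta}\bigr) \), recognize the remaining term of~\eqref{eq:symplecticConnections:nonEquivariance:cocycle} as the Lichnerowicz cocycle of the \( 2 \)-form \( \lambda \defeq -2\pi^2\,\tensor{\pontryaginClass}{_k^k_i_j}(\nabla) \), and apply \cref{prop:kaehler:LichnerowiczCocyclePullback}. Your constant-chasing is consistent --- the stated bilinear form is exactly the intermediate cocycle \( \tilde{\lambda}_c(\xi,\eta) = \int_M\bigl(\lambda - \frac{\bar{\lambda}}{2}\omega\bigr)(\xi,\eta)\,\mu_\omega \) from the lemma's proof --- and the observation that \( \Sigma \) vanishes identically on \( \HamVectorFields(M,\omega) \) because the Cahen--Gutt momentum map is equivariant is a correct cross-check.

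The step you leave open, closedness of \( \mu_{ij} \defeq \tensor{\pontryaginClass}{_k^k_i_j}(\nabla) \), is a genuine gap, and the justification you sketch does not work. Closedness is not cosmetic: it is precisely what makes the reduced cocycle descend to \( \deRCohomology^1(M) \), since for \( \xi\contr\omega = \dif f \) exact the integral \( \int_M \sigma\wedge\dif f\wedge(\eta\contr\omega)\wedge\omega^{n-2} \) is killed only after integrating by parts onto \( \dif\sigma = -\dif\lambda \). And contraction with \( \varpi \) does \emph{not} commute with \( \dif \), even though \( \nabla\varpi = 0 \): writing \( \dif \) through the torsion-free connection and contracting the Chern--Weil identity \( \nabla_{[a}\pontryaginClass_{bcde]} = 0 \) with \( \varpi^{bc} \) leaves behind a divergence term,
\begin{equation}
\bigl(\dif\mu\bigr)_{ade} = 2\,\varpi^{bc}\,\nabla_b\,\pontryaginClass_{acde}\,,
\end{equation}
\ie the symplectic codifferential of \( \pontryaginClass \), which is not forced to vanish by \( \dif\pontryaginClass = 0 \) alone. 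Nor does the cocycle identity for \( \Sigma \) rescue this: it only yields integrated, not pointwise, constraints on \( \dif\mu \). You therefore need either a direct verification that the contracted Pontryagin form is closed, starting from~\eqref{eq:symplecticConnections:pontryaginForm} and using the second Bianchi identity together with the symmetry \( R_{ijkl} = R_{ijlk} \) of the curvature of a symplectic connection, or a weakening of the hypothesis of \cref{prop:kaehler:LichnerowiczCocyclePullback} to the condition actually used, namely that \( \dif\lambda\wedge\alpha\wedge\omega^{n-2} \) vanish as a top form for every closed \( 1 \)-form \( \alpha \). Until one of these is supplied, the argument is incomplete.
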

It should not come as a big surprise that there is no contribution from the 
second Lie algebra cohomology of \( \HamVectorFields(M, \omega) \), because 
the momentum map \( \SectionMapAbb{K} \) for the action of the group of 
Hamiltonian diffeomorphisms is equivariant.
We conclude that the momentum map for the action of the full group of 
symplectomorphisms contains topological information of \( (M, \omega) \) 
in terms of the Pontryagin form
while \( \HamVectorFields(M, \omega) \) is not sensitive to these 
topological properties. A similar dichotomy has also been observed in 
\parencite{DiezRatiuAutomorphisms} for different actions of 
symplectomorphism groups. 

The prequantum bundle construction in  
\cref{prop:affineSymplectic:groupExt} shows that the \( 2 \)-cocycle 
\( \Sigma \) integrates to a central Lie group extension of 
\( \DiffGroup(M, \omega) \).
\begin{thm}
	\label{prop:symplecticConnections:nonEquivariance:groupExtension}
There exists a central Lie group \( \UGroup(1) \)-extension of the group 
\( \DiffGroup(M, \omega) \) of symplectomorphisms whose corresponding 
Lie algebra \( 2 \)-cocycle is cohomologous to the non-equivariance 
\( 2 \)-cocycle \( \Sigma \).
\end{thm}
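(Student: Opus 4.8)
The plan is to apply the prequantum bundle construction from \cref{sec:contractible} to the affine symplectic action of \( \DiffGroup(M, \omega) \) on the space \( \ConnSpace_\omega(M) \) of symplectic connections. By \cref{prop:symplecticConnections:momentumMap} (equivalently \cref{prop:symplecticConnections:momentumMapStrange}), this action is affine and symplectic with momentum map \( \SectionMapAbb{J} \), and by \cref{prop:symplecticConnections:nonEquivariance} its non-equivariance \( 2 \)-cocycle is \( \Sigma \) as given in~\eqref{eq:symplecticConnections:nonEquivariance:cocycle}. Thus the statement is an instance of the abstract integration result \cref{prop:affineSymplectic:groupExt}: choosing the reference point \( \nabla \in \ConnSpace_\omega(M) \) and the straight-line contraction \( \Lambda(\nabla, \nabla', t) = \nabla + t(\nabla' - \nabla) \) on the affine space \( \ConnSpace_\omega(M) \), \cref{prop:affineSymplectic:groupExt} produces a central \( \UGroup(1) \)-extension \( \hat{G} = \DiffGroup(M, \omega) \times \UGroup(1) \) with explicit group \( 2 \)-cocycle~\eqref{prop:affineSymplectic:group2cocycle}, whose associated Lie algebra \( 2 \)-cocycle is the non-equivariance cocycle~\eqref{eq:affineSymplectic:twoCocycle}, which in the present case is precisely \( \Sigma \).

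The one genuine technical point is that \cref{prop:affineSymplectic:groupExt} — and the prequantum construction behind it in \cref{prop:contractible:groupExt} — is stated for a symplectic manifold and a group action, but \( \ConnSpace_\omega(M) \) is an infinite-dimensional Fr\'echet manifold (indeed an affine space modeled on the Fr\'echet space \( \SymTensorFieldSpace_3(M) \)) and \( \Omega \) in~\eqref{symplectic_form_connections} is only a \emph{weak} symplectic form. The relevant results in \cref{sec:contractible} were explicitly set up to allow this: \cref{prop:contractible:groupExt} and the discussion of the prequantum bundle construction preceding it invoke \parencite[Theorem~3.4]{NeebVizman2003}, which guarantees that the pull-back \( \hat{G} \) along \( G \to \HamDiffGroup(M, \omega) \) is a bona fide Lie group even in infinite dimensions, and the existence of the momentum map \( \SectionMapAbb{J} \) (already established in \cref{prop:symplecticConnections:momentumMapStrange}) supplies the functional-analytic input — namely that the relevant linear functionals are representable in the chosen dual — required by \cref{prop:affineSymplectic:momentumMap} and hence by \cref{prop:affineSymplectic:groupExt}. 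The only thing one must additionally note is that \( \DiffGroup(M, \omega) \) is a Fr\'echet Lie group with smooth exponential map (the flow), which was recorded earlier in the K\"ahler section.

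The hard part, to the extent there is one, is therefore purely bookkeeping: one must confirm that the hypotheses of \cref{prop:affineSymplectic:groupExt} are literally met, i.e., that the \( \DiffGroup(M, \omega) \)-action on \( \ConnSpace_\omega(M) \) is affine and symplectic (done in \cref{sec:symplecticConnections:momentumMap}), that the contraction \( \Lambda(\nabla, \nabla', t) = \nabla + t(\nabla' - \nabla) \) is equivariant and fixes the base point \( \nabla \) (immediate, since the linear action on \( \SymTensorFieldSpace_3(M) \) is linear and the affine action preserves the affine structure), and that the resulting Lie algebra \( 2 \)-cocycle is cohomologous to \( \Sigma \) (this is the content of \cref{prop:affineSymplectic:groupExt} combined with \cref{prop:symplecticConnections:nonEquivariance}). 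The proof is then a single sentence invoking these results.

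\begin{proof}
By \cref{prop:symplecticConnections:momentumMap}, the action of \( \DiffGroup(M, \omega) \) on the affine symplectic space \( (\ConnSpace_\omega(M), \Omega) \) is affine and symplectic, and by \cref{prop:symplecticConnections:momentumMapStrange} it has a momentum map \( \SectionMapAbb{J} \) relative to the pairing~\eqref{eq:symplecticConnections:pairing}, vanishing at the chosen reference connection \( \nabla \). Applying \cref{prop:affineSymplectic:groupExt} with \( x_0 = \nabla \) therefore yields a central Lie group \( \UGroup(1) \)-extension \( \hat{G} = \DiffGroup(M, \omega) \times \UGroup(1) \), with group multiplication~\eqref{prop_group_two_cocycle_affine} and group \( 2 \)-cocycle~\eqref{prop:affineSymplectic:group2cocycle}, whose associated Lie algebra \( 2 \)-cocycle is the non-equivariance \( 2 \)-cocycle of the action. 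By \cref{prop:symplecticConnections:nonEquivariance}, this non-equivariance cocycle is \( \Sigma \), as given in~\eqref{eq:symplecticConnections:nonEquivariance:cocycle}. That \( \hat{G} \) is a Fr\'echet Lie group, despite \( \ConnSpace_\omega(M) \) being infinite-dimensional and \( \Omega \) only weakly symplectic, follows from the general infinite-dimensional version of the prequantum bundle construction, see \parencite[Theorem~3.4]{NeebVizman2003} and the discussion preceding \cref{prop:contractible:groupExt}. This completes the proof.
\end{proof}
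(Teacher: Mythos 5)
Your proposal is correct and follows essentially the same route as the paper: the theorem is obtained by applying \cref{prop:affineSymplectic:groupExt} to the affine symplectic action of \( \DiffGroup(M, \omega) \) on \( \ConnSpace_\omega(M) \), with the non-equivariance cocycle identified as \( \Sigma \) via \cref{prop:symplecticConnections:nonEquivariance}. Your additional remarks on the infinite-dimensional technicalities (weak symplectic form, the Neeb--Vizman result guaranteeing \( \hat{G} \) is a Lie group) are accurate and, if anything, make the argument slightly more explicit than the paper's one-line justification.
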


Note that the central group extension in 
\cref{prop:symplecticConnections:nonEquivariance:groupExtension} has 
been obtained by means of the action of \( \DiffGroup(M, \omega) \) 
on the infinite-dimensional space of symplectic connections.
On the other hand, we have seen in 
\cref{prop:symplecticConnections:nonEquivariance:cocycleClass} that 
the non-equivariance cocycle \( \Sigma \) is essentially the pull-back 
of a cocycle on the finite-dimensional space \( \deRCohomology^1(M) \).
One may thus hope for a finite-dimensional construction of the central 
extension of \( \DiffGroup(M, \omega) \).
This is an issue for future research to explore.

\subsection{Norm-squared momentum map}
In this section, we apply our general results concerning the norm-squared 
of the momentum map to the action of the symplectomorphism 
group on the space  
of symplectic connections.
As discussed in \cref{sec:symplecticConnections:momentumMap}, the action of the 
group \( \DiffGroup(M, \omega) \) of symplectomorphisms leaves 
\( \Omega \) invariant and has a momentum map \( \SectionMapAbb{J}: 
\ConnSpace_\omega(M) \to \Omega^{2n - 1}(M) \slash \dif \Omega^{2n-2}(M) \)
as calculated in \cref{prop:symplecticConnections:momentumMap}.
Here, the target of \( \SectionMapAbb{J} \) is identified with the dual 
of \( {\VectorFieldSpace(M, \omega)}^* \) by the 
pairing~\eqref{eq:symplecticConnections:pairing}.
In order to fit this setting into the general framework of 
\cref{sec:momentumMapSquared}, we need to realize \( \SectionMapAbb{J} \) 
as a map into the space of symplectic vector fields.
For this purpose, let \( j \) be a complex\footnotemark{} structure
on \( M \) compatible with \( \omega \), 
\ie, \( \omega(j \,\cdot, j \,\cdot) = 
\omega(\cdot, \cdot)  \) and \( \omega(X, j X) > 0 \) for all non-zero 
\( X \in \TBundle M \).
\footnotetext{Most results of this section hold with minor modification 
also when \( j \) is not integrable.}
Denote the associated Riemannian metric by \( g(\cdot, \cdot )  
= \omega (\cdot, j \cdot ) \).
Using this data, consider the following non-degenerate 
pairing on \( \VectorFieldSpace(M, \omega) \):
\begin{equation}
	\label{eq:symplecticConnections:pairingOnLieAlgebra}
	\kappa(\xi, \eta) = \int_M g(\xi, \eta) \, \mu_\omega \, .
\end{equation}
Relative to this pairing, the momentum 
map~\eqref{eq:symplecticConnections:momentumMap} takes the form
\begin{equation}
\SectionMapAbb{J}: \ConnSpace_\omega(M) \to \VectorFieldSpace(M, \omega), 
\quad \nabla + A \mapsto - 
\frac{1}{2} \tensor{j}{^i_k} \bigl( \rho(\nabla + A)^k - 
\rho(\nabla)^k - \tensor{\sigma(\nabla, A)}{^k_j^j}\bigr) \,,
\end{equation}
where \( \rho_i \) and \( \sigma_{ijk} \) have been defined 
in~\eqref{eq:symplecticConnections:curvature1Form} 
and~\eqref{eq:symplecticConnections:pontryaginPreForm}, respectively.

In the following, it is often convenient to work on the complexified 
tangent bundle and use an abstract index notation that is adapted to 
the decomposition of \( \TBundle M \tensorProd \C = 
\TBundle^{(1,0)} M \oplus \TBundle^{(0,1)} M \) into 
\( \pm i \)-eigenspaces of \( j \).
For this purpose, we use capital Latin letters 
\( \mathsf{A, B}, \ldots \) 
to denote elements of \( \TBundle M \tensorProd \C \), Greek 
letters \( \alpha, \beta, \ldots \) for elements of 
\( \TBundle^{(1,0)} M \) and overlined Greek letters 
\( \bar{\alpha}, \bar{\beta}, \ldots \) for elements of  
\( \TBundle^{(0,1)} M \). For example, \( X^\mathsf{A}\) is a complex 
vector field and \( Y^{\bar{\alpha}} \) is a \( (0,1) \)-vector field.
Moreover, we use only the symplectic form and not the metric to lower and raise 
indices.

Using these conventions, the complex structure \( j \) on \( M \) 
defines a constant almost complex structure \( \SectionMapAbb{j} \) 
on \( \ConnSpace_\omega(M) \) by
\begin{center}
\begin{tabular}{ l c }
	$\mathsf{ABC}$ & $(\SectionMapAbb{j} A)_\mathsf{ABC}$ \\ \midrule
	$\alpha\beta\gamma$ & $- \I A_{\alpha\beta\gamma}$ \\
	$\bar{\alpha}\beta\gamma$ & $- \I A_{\bar{\alpha}\beta\gamma}$ \\  
	$\alpha\bar{\beta}\bar{\gamma}$ & $+ \I A_{\alpha\bar{\beta}\bar{\gamma}}$ \\  
	$\bar{\alpha}\bar{\beta}\bar{\gamma}$ & 
	$+ \I A_{\bar{\alpha}\bar{\beta}\bar{\gamma}}$     
	\end{tabular}
\end{center}
and symmetric extension, where $A$ is the symmetric covariant 3-tensor 
defined in the text following~\eqref{two_nablas}. Here, the 
possible components are listed in the first 
column, and the corresponding value of \( \SectionMapAbb{j} A \) is the entry 
in the second column, \eg the first row is equivalent to 
\( (\SectionMapAbb{j} A)_{\alpha\beta\gamma} = - \I A_{\alpha\beta\gamma} \).
Note that this complex structure is not just precomposition of \( A \) 
with \( j \), which would have a different sign in the second and third row.

A direct calculation yields \( \Omega(\SectionMapAbb{j} \cdot, 
\SectionMapAbb{j} \cdot) = \Omega(\cdot, \cdot) \).
Moreover, in \parencite[Proposition~17 and Remark~20]{Fuente-Gravy2015} 
(see also \parencite[Lemma~4.9]{FutakiOno2018}) it has been shown 
that \( \Omega(\cdot, \SectionMapAbb{j} \cdot) \) is positive definite 
on the complexified \( \DiffGroup(M, \omega) \)-orbit 
if the Ricci curvature is non-negative.
In the general setting above, we only used the non-degeneracy of 
\( \Omega(\cdot, \SectionMapAbb{j} \cdot) \) in 
\cref{i:normedsquared:kernelsIdentified} to determine
the kernel of the Calabi operators and, for this computation, 
non-degeneracy along the 
\( \VectorFieldSpace(M,\omega)_\C \)-orbit suffices.

The Levi-Civita connection \( \nabla^j \) associated with 
the Riemannian metric defined by \( j \) and \( \omega \) is 
a symplectic connection. We say that a compatible complex structure 
\( j \) is a \emphDef{Cahen--Gutt critical} if its Levi-Civita 
connection \( \nabla^j \) is a critical point of the norm-squared of the momentum 
map \( \norm{\SectionMapAbb{J}}_\kappa^2: 
\ConnSpace_\omega(M) \to \R \).

We need the following generalization of 
\parencite[Lemma~2.2 and 4.9]{FutakiOno2018} from 
Hamiltonian vector fields to symplectic vector fields.
\begin{lemma}
	\label{prop:symplecticConnections:stabilizersIdentified}
Let \( (M, \omega, j) \) be a compact K\"ahler manifold with 
Levi-Civita connection \( \nabla \). The following holds:
\begin{thmenumerate}
\item
For every \( X \in \VectorFieldSpace(M, \omega) \), \( \difLie_X \nabla = 0 \) 
if and only if \( X \) is real holomorphic.
\item
For every \( X + \I Y \in \VectorFieldSpace(M, \omega)_\C \), 
\( \difLie_X \nabla + \SectionMapAbb{j} \difLie_Y \nabla = 0 \) 
if and only if \( (X + \I Y)^{(1,0)} \) is holomorphic.
Moreover, the map \( X + \I Y \mapsto X - j Y \) yields a surjection 
from \( \VectorFieldSpace(M, \omega)_\C \) onto the space of 
holomorphic vector fields.
			\qedhere
\end{thmenumerate}
\end{lemma}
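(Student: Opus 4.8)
The plan is to reduce both statements to the well-known fact (used already in the K\"ahler part of this paper, \cf \cref{prop:kaehler:complexStabilizerToHolomorphic}) that on a compact K\"ahler manifold the Levi-Civita connection $\nabla$ of $g_j$ is the Chern connection, so that $\nabla j = 0$, and hence the infinitesimal action on connections, $\xi \ldot \nabla = -\difLie_\xi \nabla$, can be analysed by commuting $\difLie_\xi$ past $j$. For part~(i), first I would recall the identity $\difLie_\xi \nabla = 0 \iff \difLie_\xi$ commutes with covariant differentiation, which by the standard formula $(\difLie_\xi\nabla)(Y,Z) = \difLie_\xi(\nabla_Y Z) - \nabla_{\difLie_\xi Y}Z - \nabla_Y(\difLie_\xi Z)$ is equivalent to saying that the flow of $\xi$ is affine. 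Since $\xi$ is already symplectic, its flow preserves $\omega$; an affine symplectomorphism preserving $\omega$ and fixing $\nabla$ automatically preserves $g_j$ only if it also preserves $j$, so the real content is: $\difLie_\xi\nabla=0 \Longrightarrow \difLie_\xi j = 0$. The cleanest route is to use $\nabla j = 0$: then $\difLie_\xi(\nabla_Y Z) = \nabla_Y(\difLie_\xi Z) + (\difLie_\xi\nabla)(Y,Z)$ together with the analogous computation for $\nabla_Y(jZ)$ shows $(\difLie_\xi\nabla)(Y, jZ) - j\,(\difLie_\xi\nabla)(Y,Z) = \nabla_Y\bigl((\difLie_\xi j)Z\bigr) - (\difLie_\xi j)(\nabla_Y Z) = (\nabla_Y \difLie_\xi j)(Z)$. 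Hence $\difLie_\xi\nabla = 0$ forces $\nabla(\difLie_\xi j) = 0$; but $\difLie_\xi j$ is a $g_j$-antisymmetric, $j$-anticommuting endomorphism (being a tangent vector to $\SectionSpaceAbb{I}$ — cf.\ the discussion after \eqref{eq:kaehler:almostComplexStructure}), and the only parallel such tensor compatible with irreducibility arguments is zero; more directly, a parallel section that anti-commutes with $j$ and is $g_j$-symmetric has constant (hence extremal) pointwise norm, and one checks it must vanish by the Bochner/maximum principle, or simply: $\difLie_\xi j$ parallel and $\difLie_{\difLie_\xi j}$-type arguments, but the simplest is that $\difLie_\xi j$ parallel implies $j + t\,\difLie_\xi j$ is a parallel family of compatible structures, forcing $\difLie_\xi j = 0$ on a compact manifold by considering $\det$. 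The converse, $\difLie_\xi j = 0 \Rightarrow \difLie_\xi\nabla=0$, is immediate: $\difLie_\xi\omega = 0$ and $\difLie_\xi j = 0$ give $\difLie_\xi g_j = 0$, so the flow of $\xi$ consists of isometries, which preserve the Levi-Civita connection.

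For part~(ii), I would extend the computation $\C$-linearly. Writing $\Upsilon_\nabla$ for the extended complex action on $\ConnSpace_\omega(M)$, one has $(X + \I Y)\ldot\nabla = -\difLie_X\nabla + \SectionMapAbb{j}(-\difLie_Y\nabla)$, so the stabilizer condition is $\difLie_X\nabla + \SectionMapAbb{j}\,\difLie_Y\nabla = 0$. The key is to identify how $\SectionMapAbb{j}$ (defined by the table on the $(p,q)$-components) interacts with the Lie derivative of a connection: using $\nabla j = 0$ and the formula for $\SectionMapAbb{j}$, I would show that $\SectionMapAbb{j}\,\difLie_Y\nabla$ corresponds, under the $(1,0)/(0,1)$ bigrading, to the real part of the action of $-jY$, precisely so that $\difLie_X\nabla + \SectionMapAbb{j}\,\difLie_Y\nabla = \difLie_{(X-jY)^{(1,0)}}\nabla + \difLie_{\overline{(X-jY)^{(1,0)}}}\nabla$, and that this vanishes iff the $(1,0)$ and $(0,1)$ pieces vanish separately iff $(X-jY)^{(1,0)} = (X+\I Y)^{(1,0)}$ is holomorphic (equivalently $\bar\partial$-closed as a section of $\TBundle^{(1,0)}M$). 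This is where most of the index bookkeeping lives. Surjectivity of $X + \I Y \mapsto X - jY$ onto holomorphic vector fields then follows as in \cref{prop:kaehler:complexStabilizerToHolomorphic}: given a holomorphic vector field $Z$, its real and imaginary parts $X = \tfrac12(Z + \bar Z)$ are real holomorphic hence symplectic (on a compact K\"ahler manifold a real holomorphic vector field decomposes into harmonic and Hamiltonian-type parts, \cf \parencite[Lemma~2.1.1]{Gauduchon2017}, and in particular lies in $\VectorFieldSpace(M,\omega)$), and one recovers $Z$ as the image of an appropriate element of $\VectorFieldSpace(M,\omega)_\C$.

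The main obstacle is the sign-sensitive identification in part~(ii) of $\SectionMapAbb{j}$ acting on $\difLie_Y\nabla$ with a Lie derivative along $-jY$: because $\SectionMapAbb{j}$ is \emph{not} simply precomposition of the $3$-tensor with $j$ (as the paper explicitly warns — the second and third rows of the table carry the opposite sign), one cannot just say $\SectionMapAbb{j}\,\difLie_Y\nabla = \difLie_{jY}\nabla$; the correct statement requires splitting $\difLie_Y\nabla$ into its $(3,0), (2,1), (1,2), (0,3)$ parts and checking that the $\pm\I$ pattern of $\SectionMapAbb{j}$ matches exactly the pattern produced by replacing $Y$ with $jY$ in $\xi\ldot\nabla = -\nabla\nabla\xi - R(\xi)$ \emph{followed by} conjugation on the appropriate slots. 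I expect this to work precisely because $\SectionMapAbb{j}$ was defined (in \parencite{Fuente-Gravy2015, FutakiOno2018}) to make $\Omega(\cdot,\SectionMapAbb{j}\cdot)$ the natural Hermitian form, but verifying it cleanly needs one careful page of abstract-index computation using $\nabla j = 0$ and the fact that the curvature $R$ of a K\"ahler metric has only $(1,1)$-type mixed components in the relevant sense. Once that lemma is in hand, both equivalences and the surjectivity statement are formal.
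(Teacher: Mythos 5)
The reduction in part~(i) breaks down at its final step. Your identity $j\,(\difLie_\xi\nabla)(Y,Z) - (\difLie_\xi\nabla)(Y,jZ) = \bigl(\nabla_Y(\difLie_\xi j)\bigr)(Z)$ is correct (it only uses $\nabla j = 0$), so $\difLie_\xi\nabla = 0$ does force $\difLie_\xi j$ to be parallel. But the assertion that a parallel, $g_j$-symmetric endomorphism anticommuting with $j$ must vanish is false without an irreducibility hypothesis that the lemma does not make: on a flat complex torus any constant symmetric matrix anticommuting with $j$ (e.g.\ $\mathrm{diag}(1,-1)$ on $\C/\Lambda$) is a nonzero parallel tensor of exactly this type. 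None of your three suggested justifications repairs this: constancy of the pointwise norm of a parallel section gives nothing, a Bochner argument only reproves parallelism, and $j + t\,\difLie_\xi j$ is not a family of complex structures unless $\difLie_\xi j = 0$ already (its square is $-\id + t^2(\difLie_\xi j)^2$), so the determinant argument is circular.

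The missing idea --- and what the paper actually does --- is to exploit that the tensor at hand is a second covariant derivative of a globally defined vector field and to integrate by parts on the compact $M$. Writing $\difLie_Z\nabla$ in the $(1,0)/(0,1)$ bigrading (where the K\"ahler curvature symmetries kill the terms $R_{\mathsf{D}\alpha\beta\gamma}$), the mixed component is $\nabla_{\bar\alpha}\nabla_\beta Z_\gamma$; its vanishing yields $\int_M g^{\alpha\bar\gamma}g^{\beta\bar\delta}(\nabla_\alpha Z_\beta)(\nabla_{\bar\gamma}\bar Z_{\bar\delta})\,\mu_\omega = 0$ after one integration by parts, hence $\nabla_\alpha Z_\beta = 0$ pointwise, and comparison with Futaki's formula for $\difLie_Z j$ gives (i). The same $L^2$ step is needed in part~(ii): your plan correctly reduces the stabilizer equation $\difLie_X\nabla + \SectionMapAbb{j}\,\difLie_Y\nabla = 0$ to the vanishing of the bigraded components $\nabla_{\mathsf{A}}\nabla_{\bar\beta}(X_{\bar\gamma}+\I Y_{\bar\gamma})$ and $\nabla_{\mathsf{A}}\nabla_{\beta}(X_{\gamma}-\I Y_{\gamma})$, but passing from this second-order condition to the first-order condition $\nabla_{\bar\beta}(X^{\gamma}+\I Y^{\gamma}) = 0$, i.e.\ holomorphicity of $(X+\I Y)^{(1,0)}$, again requires the compactness/integration-by-parts argument, which your sketch does not identify. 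The surjectivity statement via Gauduchon's decomposition of real holomorphic vector fields is fine and matches the paper.
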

\begin{proof}
Let \( Z^\mathsf{A} \in \VectorFieldSpace(M, \omega)_\C \). Since 
\( \DiffGroup(M, \omega) \) acts on the space of 
symplectic connections, we know that 
\( (\difLie_Z \nabla)_\mathsf{ABC} \) is a symmetric tensor.
The only independent components are given as follows:
\begin{equation}
\label{eq:symplecticConnections:difLieNabla}
\begin{array}{ l c }
\mathsf{ABC} & (\difLie_Z \nabla)_\mathsf{ABC} \\ \midrule
\alpha\beta\gamma & \nabla_\alpha \nabla_\beta Z_\gamma \\
\bar{\alpha}\beta\gamma & \nabla_{\bar{\alpha}} \nabla_\beta Z_\gamma \\  
\alpha\bar{\beta}\bar{\gamma} & 
\nabla_\alpha \nabla_{\bar{\beta}} Z_{\bar{\gamma}} \\  
\bar{\alpha}\bar{\beta}\bar{\gamma} & 
\nabla_{\bar{\alpha}} \nabla_{\bar{\beta}} Z_{\bar{\gamma}} \\
\end{array}
\end{equation}
Here, we used~\eqref{eq:symplecticConnections:infAction} and 
the fact that the Riemann curvature of a K\"ahler metric 
has additional symmetry properties, so that, for example, 
\( R_{\mathsf{D}\alpha\beta\gamma} \) vanishes.	Thus, 
\( \difLie_Z \nabla = 0 \) implies, using integration 
by parts, that
\begin{equation}
\int_M g^{\alpha \bar{\gamma}} g^{\beta \bar{\delta}} \,
\bigl(\nabla_\alpha Z_\beta\bigr) \bigl(\nabla_{\bar{\gamma}} 
\bar{Z}_{\bar{\delta}}\bigr) \, \mu_\omega = 0 \, ,
\end{equation}
hence \( \nabla_\alpha Z_\beta = 0 \). Similarly, we conclude 
that \( \nabla_{\bar{\alpha}} Z_{\bar{\beta}} = 0 \).
Summarizing, \( \difLie_Z \nabla = 0 \) is equivalent to 
\( \nabla_\alpha Z_\beta = 0 = \nabla_{\bar{\alpha}} Z_{\bar{\beta}} \). 

On the other hand, by \parencite[Lemma~2.3]{Futaki2006}, we have
\begin{equation}
\tensor{(\difLie_Z j)}{_{\mathsf{A}}^{\mathsf{B}}} = 
- 2 \I \tensor{\delta}{_{\bar{\beta}}^{\mathsf{B}}} 
\tensor{\delta}{_{\mathsf{A}}^{\alpha}} \, 
\nabla_\alpha Z^{\bar{\beta}}
+ 2 \I \tensor{\delta}{_\beta^{\mathsf{B}}} 
\tensor{\delta}{_{\mathsf{A}}^{\bar{\alpha}}} \, 
\nabla_{\bar{\alpha}} Z^{\beta} \, .
\end{equation}
Thus, upon lowering the last index, the only non-zero components 
of \( \tensor{(\difLie_Z j)}{_{\mathsf{AB}}} \) are:
	\begin{center}
	\begin{tabular}{ l c }
	$\mathsf{AB}$ & $(\difLie_Z j)_\mathsf{AB}$ \\ \midrule
	$\alpha\beta$ & $-2 \I \, \nabla_\alpha Z_\beta$ \\
	$\bar{\alpha}\bar{\beta}$ & $2 \I \, \nabla_{\bar{\alpha}} Z_{\bar{\beta}}$     
	\end{tabular}
	\end{center}
Thus, we see that \( \difLie_Z \nabla = 0 \) if and only if 
\( \difLie_Z j = 0 \). In particular, this holds for \( Z = X \) 
being a real vector field. This proves (i).

Finally, let \( X, Y \in \VectorFieldSpace(M, \omega) \) be 
such that \( \difLie_X \nabla + \SectionMapAbb{j} \difLie_Y \nabla = 0 \).
The definition of \( \SectionMapAbb{j} \) 
and~\eqref{eq:symplecticConnections:difLieNabla} imply that 
this is equivalent to
\begin{equation}\begin{split}
\nabla_\alpha \nabla_\beta (X_\gamma - \I Y_\gamma) 
&= 0 = \nabla_{\bar{\alpha}} \nabla_\beta (X_\gamma - \I Y_\gamma), \\
\nabla_\alpha \nabla_{\bar{\beta}} (X_{\bar{\gamma}} + \I Y_{\bar{\gamma}}) 
&= 0 = \nabla_{\bar{\alpha}} \nabla_{\bar{\beta}} (X_{\bar{\gamma}} 
+ \I Y_{\bar{\gamma}}).
\end{split}\end{equation}
Using integration by parts as above, we see that these equations 
themselves are equivalent to \( \nabla_{\bar{\beta}} (X_{\bar{\gamma}} 
+ \I Y_{\bar{\gamma}}) = 0 \), that is \( \nabla_{\bar{\beta}} (X^{\gamma} 
+ \I Y^{\gamma}) = 0 \) upon lifting the second index, which
proves the first part of (ii).
The second part follows as in \cref{prop:kaehler:complexStabilizerToHolomorphic}:
\begin{equation}
	(\difLie_X j - \difLie_{jY} j)_{{\bar{\alpha}} {\bar{\beta}}} 
	= (\difLie_X j - j \difLie_Y j)_{{\bar{\alpha}} {\bar{\beta}}} 
	= 2 \I \nabla_{\bar{\alpha}} X_{\bar{\beta}} - 
	2 \nabla_{\bar{\alpha}} Y_{\bar{\beta}} 
	= 2 \I \nabla_{\bar{\alpha}}(X_{\bar{\beta}} + 
	\I Y_{\bar{\beta}}) = 0.
	\qedhere
\end{equation}
\end{proof}

This shows that every vector field \( X \) in the stabilizer 
\( \VectorFieldSpace(M, \omega)_\nabla \) of the Levi-Civita 
connection \( \nabla \) is real holomorphic, and thus Killing.
Hence, \( \kappa \) and \( \SectionMapAbb{j} \) are invariant 
under \( \VectorFieldSpace(M, \omega)_\nabla \).
Moreover, the stabilizer of \( \nabla \) under the 
\( \VectorFieldSpace(M,\omega)_\C \)-action
projects onto the Lie algebra of 
holomorphic vector fields; in particular, 
it is finite dimensional, too.
\Cref{prop:normedsquared:criticalPoints} implies that a compatible complex 
structure \( j \) is Cahen--Gutt critical if and only if 
\( \SectionMapAbb{J}(\nabla^j) \in \VectorFieldSpace(M, \omega) \) is 
real holomorphic.
Note that our notion of extremality is hence slightly different 
from \parencite{FutakiOno2018,Fox2014}.

As a consequence of \cref{prop:normedsquared:decompositionComplexStab} 
we obtain the following.
\begin{thm}
\label{prop:symplecticConnections:decompositionAut}
Let \( (M, \omega) \) be a compact symplectic manifold and let \( j \) 
be a compatible Cahen--Gutt critical 
complex structure on \( M \). Assume that the Ricci curvature of 
the Levi-Civita connection \( \nabla \) associated with \( g_j \) is non-negative.
Then the Lie algebra of real holomorphic vector fields admits the 
following decomposition:
\begin{equation}
\label{eq:symplecticConnections:decompositionAut}
\LieA{h}(M, j) = \LieA{c} \oplus 
\bigoplus_{\lambda \neq 0} \LieA{k}_\lambda,
\end{equation}
where:
\begin{thmenumerate}
\item
\( \LieA{c} \) is the Lie subalgebra of 
\( \LieA{h}(M, j) \) consisting of 
all elements that commute with \( \SectionMapAbb{J}(\nabla) \);
\item \( \C \SectionMapAbb{J}(\nabla) \subseteq \LieA{c} \);
\item
\( \LieA{k}_\lambda \) are eigenspaces of 
\( -2 j \difLie_{\SectionMapAbb{J}(\nabla)} \) 
with eigenvalue \( \lambda \in \R \) {\rm(}with the convention that 
\( \LieA{k}_\lambda = \set{0} \) if \( \lambda \) is not an 
eigenvalue{\rm)}; in particular, $\mathfrak{c} = \mathfrak{k}_0$;
\item
\( \commutator{\LieA{k}_\lambda}{\LieA{k}_\mu} \subseteq 
\LieA{k}_{\lambda + \mu} \) if 
\( \lambda + \mu \) is an eigenvalue 
of \( -2 j \difLie_{\SectionMapAbb{J}(\nabla)} \); otherwise 
\( \commutator{\LieA{k}_\lambda}{\LieA{k}_\mu} = 0 \).
\end{thmenumerate}
Moreover, the Hessian of \( \norm{\SectionMapAbb{J}}^2 \) at 
\( \nabla \) is given by
	\begin{equation}
		\frac{1}{2} \Hessian_\nabla \norm{\SectionMapAbb{J}}^2 
		(\zeta \ldot \nabla, 
		\gamma \ldot \nabla) = 
		\Re  \, \dualPair{\zeta}{C^+_\nabla R_\nabla \gamma}_{\C},
	\end{equation}
	for \( \zeta, \gamma \in \VectorFieldSpace(M, \omega)_\C \) and
	\begin{equation}
		C_\nabla^\pm = L_\nabla \pm \I Z_\nabla, \qquad 
		R_\nabla = C^-_\nabla \Matrix{0 & 0 \\ 0 & 1} + \I \, 
		\bigl(\difLie_{\SectionMapAbb{J}(\nabla)} + Z_\nabla\bigr)
	\end{equation}
where \( L_\nabla = - \tangent_\nabla \SectionMapAbb{J} (\SectionMapAbb{j} \, 
\difLie_\xi \nabla) \) and \( Z_\nabla = 
- \tangent_\nabla \SectionMapAbb{J} (\difLie_\xi \nabla) \).
\end{thm}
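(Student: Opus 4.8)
The plan is to obtain this theorem as an application of the general results of \cref{sec:momentumMapSquared} to the Hamiltonian system $(\ConnSpace_\omega(M), \Omega)$, the action of $G = \DiffGroup(M, \omega)$, the momentum map $\SectionMapAbb{J}$ of \cref{prop:symplecticConnections:momentumMap}, the pairing $\kappa$ of~\eqref{eq:symplecticConnections:pairingOnLieAlgebra}, and the constant almost complex structure $\SectionMapAbb{j}$, at the critical point $\nabla = \nabla^j$; the abstract output must then be transported from the complex stabilizer $(\VectorFieldSpace(M,\omega)_\C)_\nabla$ to $\LieA{h}(M,j)$ by means of \cref{prop:symplecticConnections:stabilizersIdentified}. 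First I would record that $\nabla$ is a critical point of $\norm{\SectionMapAbb{J}}^2_\kappa$ precisely because $j$ is Cahen--Gutt critical, so that $\SectionMapAbb{J}(\nabla) \in \VectorFieldSpace(M,\omega)_\nabla$ by \cref{prop:normedsquared:criticalPoints}. Next I would verify the hypotheses of \cref{prop:normedsquared:decompositionComplexStab} and \cref{prop:normedsquared:hessian,prop:normedsquared:hessianSummary}: $G$ is a Fréchet Lie group whose exponential map is the flow; the stabilizer $G_\nabla$ — which coincides with the stabilizer $G_j$ of $j$ — is a finite-dimensional Lie subgroup acting by $g_j$-isometries, by \cref{prop:symplecticConnections:stabilizersIdentified}(i) together with the finiteness of isometry groups; the isotropy representation on $\TBundle_\nabla\ConnSpace_\omega(M) \isomorph \SymTensorFieldSpace_3(M)$ admits a momentum map given by~\eqref{j_hat_momentum}, exactly as in the Kähler case of \cref{sec:kaehler}; since $G_\nabla$ acts by isometries of $g_j$, both $\kappa$ and $\SectionMapAbb{j}$ are invariant under $\VectorFieldSpace(M,\omega)_\nabla$; the adjoint of $\adAction_\xi = \difLie_\xi$ and the operator $\Sigma_\kappa$ (with $\Sigma$ from \cref{prop:symplecticConnections:nonEquivariance}) exist by the same $L^2$-duality as in \cref{sec:kaehler}; and $(\VectorFieldSpace(M,\omega)_\C)_\nabla$ is finite-dimensional, since by \cref{prop:symplecticConnections:stabilizersIdentified}(ii) the map $X + \I Y \mapsto X - jY$ sends it onto the finite-dimensional Lie algebra $\LieA{h}(M,j)$ with kernel $\mathfrak{har}_j(M) \isomorph \deRCohomology^1(M)$.

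One subtlety must be flagged: $\SectionMapAbb{j}$ is compatible with $\Omega$ only in a weak sense, so \cref{prop:normedsquared:decompositionComplexStab} cannot be quoted verbatim. However, the only place where full compatibility enters is the identification $\ker C^+_\nabla = (\VectorFieldSpace(M,\omega)_\C)_\nabla$ of \cref{i:normedsquared:kernelsIdentified}, and for this it suffices that $\Omega(\cdot,\SectionMapAbb{j}\cdot)$ be non-degenerate along the $\VectorFieldSpace(M,\omega)_\C$-orbit through $\nabla$; this is guaranteed by the positivity of $\Omega(\cdot,\SectionMapAbb{j}\cdot)$ on that orbit established under the non-negative Ricci hypothesis in \parencite{Fuente-Gravy2015, FutakiOno2018}. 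With this in hand, I would apply \cref{prop:normedsquared:decompositionComplexStab} at $\nabla$ with $\LieA{t} = \R\,\SectionMapAbb{J}(\nabla)$, obtaining the eigenspace decomposition of $(\VectorFieldSpace(M,\omega)_\C)_\nabla$ under $\I\adAction_{\SectionMapAbb{J}(\nabla)} = \I\difLie_{\SectionMapAbb{J}(\nabla)}$, and then push it forward along $X + \I Y \mapsto X - jY$. As in the proof of \cref{prop:kaehler:decompositionComplexStabAndHol}, a short computation identifies the transported operator with $-2j\difLie_{\SectionMapAbb{J}(\nabla)}$ (the factor $2$ being a normalization of the eigenvalue label), shows that the kernel $\mathfrak{har}_j(M)$ lands inside $\LieA{k}_0 = \LieA{c}$ — because $\SectionMapAbb{J}(\nabla)$ is $g_j$-Killing, its Lie derivative commutes with $\omega^\flat$ and annihilates $g_j$-harmonic forms by Cartan's formula — and shows that the bracket inclusions descend; since $\LieA{h}(M,j)$ is itself a Lie algebra, the intersections with the stabilizer appearing in \cref{prop:normedsquared:decompositionComplexStab} may be dropped, giving items (i)--(iv) and $\C\,\SectionMapAbb{J}(\nabla) \subseteq \LieA{c}$.

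For the Hessian I would invoke \cref{prop:normedsquared:hessianSummary} directly: with $L_\nabla\xi = -\tangent_\nabla\SectionMapAbb{J}(\SectionMapAbb{j}\,\difLie_\xi\nabla)$, $Z_\nabla\xi = -\tangent_\nabla\SectionMapAbb{J}(\difLie_\xi\nabla)$, $C^\pm_\nabla = L_\nabla \pm \I Z_\nabla$, and $R_\nabla$ as in~\eqref{R_operator} with $J(m)$ replaced by $\SectionMapAbb{J}(\nabla)$, formula~\eqref{eq:normedsquared:hessianComplexOrbitFinal} yields the claimed expression. I expect the main obstacle to be not conceptual but the collection of functional-analytic verifications underlying the above: that $\SectionMapAbb{J}$ genuinely takes values in $\VectorFieldSpace(M,\omega)$ so that $\kappa$ is applicable, that $C^\pm_\nabla$ are well-defined elliptic (hence Fredholm) operators on $\VectorFieldSpace(M,\omega)_\C$ ensuring finite-dimensionality of the stabilizer, and that the non-negativity of the Ricci curvature supplies exactly the non-degeneracy of $\Omega(\cdot,\SectionMapAbb{j}\cdot)$ along the complexified orbit required by \cref{i:normedsquared:kernelsIdentified}; once these are settled, the remaining argument is bookkeeping on top of \cref{prop:normedsquared:decompositionComplexStab}, \cref{prop:normedsquared:hessianSummary}, and \cref{prop:symplecticConnections:stabilizersIdentified}.
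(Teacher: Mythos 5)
Your proposal is correct and follows essentially the same route as the paper: apply \cref{prop:normedsquared:decompositionComplexStab} at the Levi-Civita connection, transport the decomposition to \( \LieA{h}(M, j) \) via the map \( X + \I Y \mapsto X - j Y \) of \cref{prop:symplecticConnections:stabilizersIdentified} (as in the proof of \cref{prop:kaehler:decompositionComplexStabAndHol}), and read off the Hessian from \cref{prop:normedsquared:hessianSummary}. The extra care you take with the weak compatibility of \( \SectionMapAbb{j} \) and the role of the non-negative Ricci hypothesis corresponds to the discussion the paper places just before the theorem rather than inside its proof.
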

\begin{proof}
We can apply \cref{prop:normedsquared:decompositionComplexStab} 
to obtain a decomposition of the stabilizer 
\( \VectorFieldSpace(M, \omega) \ldot \nabla \), where \( \nabla \) 
is the Levi-Civita connection associated with \( g_j \).
The claims concerning the 
decomposition~\eqref{eq:symplecticConnections:decompositionAut} 
follow then directly under the map \( \VectorFieldSpace(M, \omega) \ldot 
\nabla \ni X + \I Y \mapsto X - j Y \in \LieA{h}(M, j) \), 
\cf \cref{prop:symplecticConnections:stabilizersIdentified} and the 
proof of \cref{prop:kaehler:decompositionComplexStabAndHol}.
The expression for the Hessian follows directly from 
\cref{prop:normedsquared:hessianSummary}.
\end{proof}
\begin{remarks}
\item If one proceeds in an analogous way for the action of the 
subgroup of Hamiltonian diffeomorphism, then one recovers 
\parencite[Theorem~4.7 and~4.11]{FutakiOno2018} as a direct application of 
\cref{prop:normedsquared:decompositionComplexStab,prop:normedsquared:hessianSummary}.
\item A similar theorem holds for connections that are critical points 
of the momentum map squared (seen as a functional on 
\( \ConnSpace_\omega(M) \)) 
without being necessarily the Levi-Civita connection of some compatible 
Riemannian metric. However, in this case, we do not know of a result 
similar to \cref{prop:symplecticConnections:stabilizersIdentified} that 
allows us to identify the stabilizers.
\item Instead of using the almost complex structure \( \SectionMapAbb{j} \) 
on \( \ConnSpace_\omega(M) \) defined above, one could also work with 
the almost complex structure that sends 
\( A \in \SymTensorFieldSpace_3(M) \) 
to \( A(j \cdot, j \cdot, j \cdot) \).
In this case, the stabilizer of the Levi-Civita connection under the 
complexified action is a proper subalgebra of \( \LieA{h}(M, j) \).
\item In \parencite{FutakiOno2018,Fuente-Gravy2016} a slightly different 
viewpoint is used: instead of working on the symplectic manifold 
\( \ConnSpace_\omega(M) \) of symplectic connections as we do above, 
the pull-back of the symplectic form \( \Omega \) along the Levi-Civita 
map \(\SectionMapAbb{lc}: j \mapsto \nabla^j\) to the space 
\( \SectionSpaceAbb{I}(M, \omega) \) of integrable complex structures 
on \( M \) compatible with \( \omega \) is used.
In this setting, the condition of non-negative Ricci curvature is necessary 
to guarantee the non-degeneracy of \( \SectionMapAbb{lc}^* \Omega \); see 
\parencite[Proposition~17]{Fuente-Gravy2016}.
\end{remarks}

\section{Application: Yang--Mills}
\label{sec_yang_mills}

Let \( G \) be a compact connected Lie group and let \( P \to M \) be a 
principal \( G \)-bundle over a closed connected Riemann surface \( M \).
Fix an \( \AdAction \)-invariant pairing on the Lie algebra \( \LieA{g} \) 
of \( G \). The space \( \ConnSpace(P) \) of connections on \( P \) is an 
affine space modeled on the tame Fr\'echet space 
\( \DiffFormSpace^1(M, \AdBundle P) \) 
of \( 1 \)-forms on \( M \) with values in the adjoint bundle 
\( \AdBundle P \).
The \( 2 \)-form \( \omega \) on \( \ConnSpace(P) \) defined by the 
integration pairing
\begin{equation}
	\label{eq:yangMillsSurface:symplecticForm}
	\omega_A (\alpha, \beta) = \int_M \wedgeDual{\alpha}{\beta}
\end{equation}
for \( \alpha, \beta \in \DiffFormSpace^1(M, \AdBundle P) \) is a 
symplectic form, where \( \wedgeDualDot \) denotes the wedge 
product\footnotemark{} relative to the 
\( \AdAction \)-invariant pairing on \( \LieA{g} \).
\footnotetext{For \( \alpha, \beta \in \DiffFormSpace^1(M, \AdBundle P) \) 
and \( X, Y \in \VectorFieldSpace(M) \), we have 
\( \wedgeDual{\alpha}{\beta}(X,Y) = \dualPair{\alpha(X)}{\beta(Y)} - 
\dualPair{\alpha(Y)}{\beta(X)} \).}
The natural action on \( \ConnSpace(P) \) of the group \( \GauGroup(P) \) 
of gauge transformations of \( P \) is smooth and preserves the symplectic 
structure \( \omega \). The \( \AdAction \)-invariant pairing on 
\( \LieA{g} \) induces a natural pairing
\begin{equation}
	\kappa: \sSectionSpace(\AdBundle P) \times 
	\sSectionSpace(\AdBundle P) \to \R, \qquad (\phi, \varrho) \mapsto 
	\int_M \dualPair{\phi}{\varrho} \, \vol_g \, .
\end{equation}
A straightforward calculation verifies that the map
\begin{equation}
	\SectionMapAbb{J}: \ConnSpace(P) \to \sSectionSpace(\AdBundle P), 
	\qquad A \mapsto - \hodgeStar F_A
\end{equation}
is an equivariant momentum map for the \( \GauGroup(P) \)-action on 
\( \ConnSpace(P) \), see \parencite{AtiyahBott1983}.
The norm-squared of the momentum map 
\( \norm{\SectionMapAbb{J}}_\kappa^2(A) 
= \int_{M} F_A \wedge \hodgeStar F_A \) is the Yang--Mills action, whose 
critical points are, according to \cref{prop:normedsquared:criticalPoints}, 
precisely the Yang--Mills connections, \ie, connections \( A \) 
satisfying \( \dif \hodgeStar F_A = 0 \). This observation goes back 
at least to \parencite[Proposition~4.6]{AtiyahBott1983}. Moreover, 
the Hodge star operator squares to minus the identity on \( 1 \)-forms 
and so yields an almost complex structure 
\( \hodgeStar: \DiffFormSpace^1(M, \AdBundle P ) \to 
\DiffFormSpace^1(M, \AdBundle P ) \) that is compatible 
with \( \omega \). Upon complexification, we obtain a decomposition
\begin{equation}
	\DiffFormSpace^1(M, \AdBundle P \tensorProd \C) 
	= \DiffFormSpace^{1,0}(M, 
	\AdBundle P) \oplus \DiffFormSpace^{0,1}(M, \AdBundle P)
\end{equation}
in eigenspaces of \( \hodgeStar \) with eigenvalues \( -i \) and \( i \), 
respectively.
For a connection \( A \), the associated
exterior derivative \( \dif_A: 
\DiffFormSpace^0(M, \AdBundle P \tensorProd \C) \to \DiffFormSpace^1(M, 
\AdBundle P \tensorProd \C) \) decomposes accordingly into 
\( \dif_A = \difp_A + \difpBar_A \). Under the 
\( \hodgeStar \)-\( i \)-complex 
linear identification \( \DiffFormSpace^1(M, \AdBundle P) \ni 
\alpha \mapsto \I \alpha + \hodgeStar \alpha \in 
\DiffFormSpace^{0,1}(M, \AdBundle P) \), the 
complexified action on the Lie algebra level is the operator 
\( - 2 \I \difpBar_A: \GauAlgebra(P)_\C \to 
\DiffFormSpace^{0,1}(M, \AdBundle P) \).
Thus, the stabilizer \( \bigl(\GauAlgebra(P)_\C\bigr)_A \) 
is identified with 
the space \( \holSectionSpace_A(\AdBundle P \tensorProd \C) \) of 
holomorphic sections of \( \AdBundle P \tensorProd \C \).
Moreover, in \parencite[p.~556]{AtiyahBott1983}, it was shown that 
the eigenvalues of the endomorphism 
\( - 2 \I \, \commutator{\hodgeStar F_A}{\cdot} \) on 
\( \AdBundle P \) are locally constant, and that one thus 
obtains an eigenspace decomposition
\begin{equation}
\label{eq:yangMillsSurface:decompositionAdBundle}
\AdBundle P \tensorProd \C = \bigoplus_{\lambda} \AdBundle_\lambda P \, ,
\end{equation}
where \( \AdBundle_\lambda P \) is the eigenspace of \( - 2 \I \, 
\commutator{\hodgeStar F_A}{\cdot} \) corresponding to the eigenvalue 
\( \lambda \). Since \( \bigl(\GauAlgebra(P)_\C\bigr)_A \) is 
finite-dimensional, we can apply 
\cref{prop:normedsquared:decompositionComplexStab} to obtain the 
following.
\begin{thm}
Let \( G \) be a compact connected Lie group and let \( P \to M \) 
be a principal \( G \)-bundle over a closed connected Riemann 
surface \( M \). For every Yang--Mills connection \( A \) on \( P \), 
the following decomposition of 
the complex Lie algebra of holomorphic sections of 
\( \AdBundle P \tensorProd \C \) 
holds:
	\begin{equation}
		\holSectionSpace_A\bigl(\AdBundle P \tensorProd \C\bigr) = 
		\bigl(\GauAlgebra(P)_A\bigr)_\C \oplus \bigoplus_{\lambda < 0} 
		\holSectionSpace_A\bigl(\AdBundle_\lambda P\bigr)
	\end{equation}
such that \( \hodgeStar F_A \) lies in the center of 
\( \GauAlgebra(P)_A \) and 
	\begin{equation}
		\commutator*{\holSectionSpace_A\bigl(\AdBundle_\lambda P\bigr)}
		{\holSectionSpace_A\bigl(\AdBundle_\mu P\bigr)} \subseteq 
		\holSectionSpace_A\bigl(\AdBundle_{\lambda + \mu} P\bigr),
	\end{equation}
with the convention that \(\holSectionSpace_A\bigl(
\AdBundle_{\lambda+\mu} P\bigr)\) is trivial if \( \lambda + \mu \) 
is not an eigenvalue of \( - 2 \I \, \commutator{\hodgeStar F_A}{\cdot} \).
\end{thm}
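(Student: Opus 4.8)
The plan is to obtain the statement as a direct application of the equivariant refinement~\eqref{eq:normedsquared:decompositionComplexStabEquiv} of \cref{prop:normedsquared:decompositionComplexStab} to the affine symplectic manifold \( (\ConnSpace(P), \omega) \) with its \( \GauGroup(P) \)-action, the equivariant momentum map \( \SectionMapAbb{J}(A) = - \hodgeStar F_A \), the \( \AdAction \)-invariant \( \LFunctionSpace^2 \)-pairing \( \kappa \) on \( \GauAlgebra(P) = \sSectionSpace(\AdBundle P) \), and the compatible almost complex structure \( \hodgeStar \) on the model space \( \DiffFormSpace^1(M, \AdBundle P) \). First I would verify the hypotheses. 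Since gauge transformations cover the identity on \( M \) and \( \hodgeStar \) acts only on the form part, \( \hodgeStar \) is \( \GauGroup(P) \)-equivariant and its non-equivariance cocycle \( \tau_j \) vanishes identically; in particular \( \hodgeStar \) is invariant under \( \GauAlgebra(P)_A \). The pairing \( \kappa \) is induced by an \( \AdAction_G \)-invariant pairing on \( \LieA{g} \), hence is \( \AdAction \)-invariant, so every \( \adAction_\xi \) has the \( \kappa \)-adjoint \( -\adAction_\xi \), and \( \Sigma_\kappa \) exists trivially because \( \SectionMapAbb{J} \) is equivariant (so \( \Sigma = 0 \)). Finally, as recalled in the text above, the complex stabilizer \( (\GauAlgebra(P)_\C)_A \) coincides with the space \( \holSectionSpace_A(\AdBundle P \tensorProd \C) \) of holomorphic sections of the holomorphic vector bundle defined by \( \difpBar_A \) over the compact Riemann surface \( M \), and is therefore finite-dimensional.

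Next, \cref{prop:normedsquared:criticalPoints} identifies the critical points of \( \norm{\SectionMapAbb{J}}_\kappa^2 \) with the connections \( A \) for which \( \SectionMapAbb{J}(A) = -\hodgeStar F_A \) lies in \( \GauAlgebra(P)_A \), i.e.\ \( \dif_A \hodgeStar F_A = 0 \) --- the Yang--Mills equation. For such an \( A \), \cref{prop:normedsquared:decompositionComplexStab} in its equivariant form yields \( (\GauAlgebra(P)_\C)_A = (\GauAlgebra(P)_A)_\C \oplus \bigoplus_{\lambda < 0} \LieA{k}_\lambda \), where \( \LieA{k}_\lambda \) is the \( \lambda \)-eigenspace of \( \I \adAction_{\SectionMapAbb{J}(A)} = - \I \commutator{\hodgeStar F_A}{\cdot} \) acting on \( (\GauAlgebra(P)_\C)_A \) (the zero eigenspace being \( (\GauAlgebra(P)_A)_\C \), with the convention that \( \LieA{k}_\lambda = \set{0} \) when \( \lambda \) is not an eigenvalue). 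It remains to recast these eigenspaces as holomorphic sections of the Atiyah--Bott eigen-subbundles~\eqref{eq:yangMillsSurface:decompositionAdBundle}. Since \( A \) is Yang--Mills, \( \hodgeStar F_A \) is \( \dif_A \)-parallel, so the fibrewise endomorphism \( \commutator{\hodgeStar F_A}{\cdot} \) of \( \AdBundle P \tensorProd \C \) commutes with \( \difpBar_A \); hence \( \difpBar_A \) preserves each subbundle \( \AdBundle_\lambda P \), the notion of a holomorphic section of \( \AdBundle_\lambda P \) is well defined, and (after matching the factor \( 2 \) between the operator \( -\I\commutator{\hodgeStar F_A}{\cdot} \) above and the operator \( -2\I\commutator{\hodgeStar F_A}{\cdot} \) used to label \( \AdBundle_\lambda P \)) one has \( \LieA{k}_\lambda = \holSectionSpace_A(\AdBundle_\lambda P) \). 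The centrality of \( \hodgeStar F_A \) in \( \GauAlgebra(P)_A \) follows from the parallel-section argument: any \( \xi \in \GauAlgebra(P)_A \) satisfies \( \dif_A \xi = 0 \), whence \( 0 = \dif_A \dif_A \xi = \commutator{F_A}{\xi} \), and applying \( \hodgeStar \) gives \( \commutator{\hodgeStar F_A}{\xi} = 0 \). The bracket relation is then immediate: fibrewise the Jacobi identity gives \( \commutator{\AdBundle_\lambda P}{\AdBundle_\mu P} \subseteq \AdBundle_{\lambda + \mu} P \), and since \( \difpBar_A \) is a derivation of the bracket the bracket of two holomorphic sections is again holomorphic, so \( \commutator{\holSectionSpace_A(\AdBundle_\lambda P)}{\holSectionSpace_A(\AdBundle_\mu P)} \subseteq \holSectionSpace_A(\AdBundle_{\lambda + \mu} P) \), trivial when \( \lambda + \mu \) is not an eigenvalue. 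Note that this last observation also shows that \( \holSectionSpace_A(\AdBundle P \tensorProd \C) \) is a genuine Lie subalgebra, so no intersection with the stabilizer needs to be imposed here, in contrast to the general statement.

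The one step that is not a mechanical specialisation of the general machinery is the identification \( \LieA{k}_\lambda = \holSectionSpace_A(\AdBundle_\lambda P) \): here one must use the Yang--Mills equation a second time, in the form that \( \dif_A \hodgeStar F_A = 0 \) makes \( \difpBar_A \) commute with \( \commutator{\hodgeStar F_A}{\cdot} \), so that the Atiyah--Bott decomposition~\eqref{eq:yangMillsSurface:decompositionAdBundle} is compatible with the Dolbeault operator, and one must keep careful track of the normalisation factor so that the general theorem's eigenspaces line up with the bundles as normalised in the excerpt. Everything else is bookkeeping.
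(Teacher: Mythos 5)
Your proof is correct, and it is the route the paper itself declares as primary: the theorem ``follows from \cref{prop:normedsquared:decompositionComplexStab}''. You execute that deduction in full — verifying the infinite-dimensional hypotheses ($\AdAction$-invariance of $\kappa$, equivariance of $\hodgeStar$ so that $\tau_j$ vanishes, $\Sigma=0$, finite-dimensionality of the complex stabilizer), identifying critical points with Yang--Mills connections via \cref{prop:normedsquared:criticalPoints}, and then translating the abstract eigenspaces $\LieA{k}_\lambda$ of $\I\adAction_{\SectionMapAbb{J}(A)}$ into holomorphic sections of the Atiyah--Bott subbundles of~\eqref{eq:yangMillsSurface:decompositionAdBundle}. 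The paper's written argument, by contrast, is the ``direct'' sketch it offers for completeness: it decomposes the Dolbeault complex along~\eqref{eq:yangMillsSurface:decompositionAdBundle} and then cites Atiyah--Bott's Laplacian computations to show that $\holSectionSpace_A(\AdBundle_\lambda P)$ vanishes for $\lambda>0$ and equals $(\GauAlgebra(P)_A)_\C$ for $\lambda=0$. Those are precisely the two facts your route obtains for free from the equivariant refinement (non-positivity of the eigenvalues of $C^-_A$ and identification of its kernel with $(\GauAlgebra(P)_A)_\C$), so your approach trades the external Hodge-theoretic input for the general momentum-map machinery; the price is the hypothesis-checking you carry out, plus the one genuinely non-mechanical step you correctly isolate, namely that $\dif_A\hodgeStar F_A=0$ makes $\difpBar_A$ commute with $\commutator{\hodgeStar F_A}{\cdot}$ so that $\LieA{k}_\lambda=\holSectionSpace_A(\AdBundle_\lambda P)$ (up to the harmless factor of $2$ in the labelling). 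Your direct verifications of the centrality of $\hodgeStar F_A$ via $\dif_A\dif_A\xi=\commutator{F_A}{\xi}$ and of the bracket relation via the derivation property of $\difpBar_A$ are both sound.
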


\begin{proof}
This follows from \cref{prop:normedsquared:decompositionComplexStab} but for 
completeness we give a sketch of a direct proof.
	
The decomposition~\eqref{eq:yangMillsSurface:decompositionAdBundle} of 
\( \AdBundle P \tensorProd \C \) induces decompositions on the level of 
differential forms:
	\begin{equation}
		\DiffFormSpace^k(M, \AdBundle P \tensorProd \C) = 
		\bigoplus_{\lambda} \DiffFormSpace^k(M, \AdBundle_\lambda P).
	\end{equation}
As a consequence of the Yang--Mills equation, the operators \( \difpBar_A \) 
and \( \commutator{\hodgeStar F_A}{\cdot} \) commute.
Hence, \( \difpBar_A \) decomposes into the sum of operators 
\( \difpBar_{A, \lambda}: \DiffFormSpace^0(M, \AdBundle_\lambda P) \to 
\DiffFormSpace^1(M, \AdBundle_\lambda P) \) and so
\begin{equation}
\holSectionSpace_A\bigl(\AdBundle P \tensorProd \C\bigr) = 
\bigoplus_{\lambda} \holSectionSpace_A\bigl(\AdBundle_\lambda P\bigr) \, .
\end{equation}
By considering appropriate Laplacian operators, one can show that 
\( \holSectionSpace_A\bigl(\AdBundle_\lambda P\bigr) \) is isomorphic 
to \( \bigl(\GauAlgebra(P)_A\bigr)_\C \) for \( \lambda = 0 \) and 
is trivial for \( \lambda > 0 \); see 
\parencite[Lemma~5.9~(iii) and p.~559]{AtiyahBott1983}. 
\end{proof}
If \( P \) is a reduction of a \( G^\C \)-principal bundle 
\( P^\C \) to \( G \subseteq G^\C \), the space 
\( \holSectionSpace_A\bigl(\AdBundle P \tensorProd \C\bigr) \) is 
naturally identified with the space of sections of \( \AdBundle P^\C \) 
that are holomorphic with respect to the holomorphic structure 
\( \difpBar_A \) on \( P^\C \) induced by the connection \( A \).
Hence, \( \holSectionSpace_A\bigl(\AdBundle P \tensorProd \C\bigr) \) 
can be viewed as the stabilizer algebra of \( \difpBar_A \) under 
the action of \( \GauGroup(P^\C) \) on the space of holomorphic 
structures on \( P^\C \).

\begin{remark}
It is possible to extend the above results to the case when the 
base \( M \) is a compact symplectic manifold of arbitrary dimension; 
see \parencite[Section~4]{Donaldson1985} for the setup of the 
infinite-dimensional symplectic framework.
Then global minima and critical points of the norm-squared of the 
momentum map correspond to K\"ahler--Einstein connections and 
Hermitian Yang--Mills connections, respectively.
This extension is especially fruitful when coupled to other geometric 
structures on the base, such as one of the special K\"ahler metrics 
discussed in \cref{sec:kaehler}.
For example, we expect that our general results directly yield the 
reductiveness obstruction of solutions of the 
K\"ahler--Yang--Mills--Higgs equations \parencite[Theorem~3.6]
{AlvarezConsulGarciaFernandezGarciaPrada2019} (note, however, that 
the assumption of vanishing first Betti number in that theorem calls 
for a careful treatment, so one might expect to again encounter 
central extensions of the symplectomorphism group).
\end{remark}

\appendix
\section{Notation and conventions}
\label{sec:notation}
 \textbf{Penrose Notation.}
In \cref{sec:symplecticConnections}, we shall make extensive use of 
Penrose's abstract index notation. In this notation, indices are 
used as labels indicating the type of a tensor and \emph{do not 
denote the components of a tensor with respect to a local frame}. 
For example, a vector field is denoted by \( X^i \).
The superscript \( i \) in \( X^i \) does not refer to a particular 
component in local coordinates but serves as a label telling us 
that \( X \) is a vector field. Similarly, a \( 1 \)-form is
written as \( \alpha_j \). Contraction is indicated by labeling one 
covariant index and one contravariant 
index with the same letter, \eg, \( \alpha(X) \equiv \alpha_i X^i \).
\emph{Thus, the resulting calculus resembles the usual coordinate 
expressions but has the important advantage of being completely intrinsic 
and coordinate-free.}

Indices are raised and lowered using the symplectic form 
\( \tensor{\omega}{_i_j} \) as follows:
\begin{align}
\label{flat}
\omega^\flat: X^i &\mapsto X_i \equiv \omega_{ji} X^j \, ,   \\
\label{sharp}
\omega^\sharp: \alpha_j &\mapsto \alpha^j \equiv \varpi^{ji} \alpha_i \, ,
\end{align}
where \( \varpi^{ij} \) is the Poisson tensor\footnotemark{} associated with 
\( \omega_{ij} \) according to \( \varpi^{ik}\omega_{kj} = 
- \tensor{\delta}{^i_j} \).
\footnotetext{For a symplectic form \( \omega \) with associated Poisson 
tensor \( \varpi \), the Poisson bracket is given by \( \poisson{f}{g} = 
\omega(X_f, X_g) = \varpi(\dif f, \dif g) \), where \( X_f \) is the 
Hamiltonian vector field satisfying \( X_f \contr \omega = - \dif f \). 
We thus have
\begin{equation*}
	\poisson{f}{g}
		= \varpi^{ij} (\dif f)_i (\dif g)_j
		= \varpi^{ij} \omega_{ki} (X_f)^k \omega_{lj} (X_g)^l
		= \omega_{kl} (X_f)^k (X_g)^l.
\end{equation*}
In other words, \( \varpi^{ij} \omega_{ki} \omega_{lj} = \omega_{kl} \), 
which is equivalent to \( \varpi^{ik}\omega_{kj} = 
- \tensor{\delta}{^i_j} \).
}
Note that \( \omega^\flat \) and \( \omega^\sharp \) are inverses of 
each other. The minus sign in the definition of the Poisson tensor 
is a consequence of the skew-symmetry of \( \omega_{ij} \) and leads 
to some subtle consequences for the index calculus that are different 
from the Riemannian context. In particular, the position of the 
indices is important even if they are summed-over. For example, 
we have \( A_{ij} = \tensor{A}{_i^l} \omega_{lj}\) 
and \( B^{jk} = \varpi^{jp} \tensor{B}{_p^k} \) so that   
\begin{equation}
	A_{ij} \tensor{B}{^{jk}} 
		= \tensor{A}{_i^l} \omega_{lj} \varpi^{jp} \tensor{B}{_p^k}
		= - \tensor{A}{_i^p}\tensor{B}{_p^k} \, . 	
\end{equation}
Moreover, lowering the index of the identity map \( \tensor{\delta}{_i^j}: 
\TBundle M \to \TBundle M \) yields the \emph{skew-symmetric} map 
\( \delta_{ij} = \omega_{ij}: \TBundle M \times_M \TBundle M \to \R \).

Let \( \TensorFieldSpace^r_s(M) \) be the space of \( r \)-times 
contravariant and \( s \)-times covariant tensor fields.
An \emphDef{affine connection} on \( M \) is a linear map 
\begin{equation}
	\nabla: \VectorFieldSpace(M) \to \TensorFieldSpace^1_1(M),
	\qquad
	X^j \mapsto \nabla_i X^j,
\end{equation}
which satisfies the Leibniz rule \( \nabla_i (fX^j) = 
{(\dif f)}_i X^j + f \, \nabla_i X^j \).
The covariant derivative extends uniquely to all tensor fields 
by requiring \( \nabla _i \) to preserve the type of the tensor 
and to be a \( \R \)-linear tensor derivation, \ie, 
\( \nabla_i (t \otimes s) = (\nabla_i t) \otimes s + 
t \otimes (\nabla_i s) \) for any \( t, s \in \TensorFieldSpace(M) \), 
and to commute with contractions. In abstract Penrose index notation, 
the covariant derivative of a tensor field 
\( t^{j_1 \dotso j_p}_{k_1 \dotso k_q} \) is 
denoted by \( \nabla_i t^{j_1 \dotso j_p}_{k_1 \dotso k_q} \).

The \emphDef{Lie derivative of a connection} is defined by the requirement 
that it behaves like a derivation on all symbols, \ie, for each given 
\( X \in \VectorFieldSpace(M) \), the formula
\begin{equation}
	\difLie_X (\nabla_Y Z) = (\difLie_X \nabla)_Y Z + \nabla_{\difLie_X Y} Z 
	+ \nabla_Y \difLie_X Z,
\end{equation}
for all \( Y,Z \in \VectorFieldSpace(M) \), defines a new covariant 
derivative \( (\difLie_X \nabla)_Y \) along the vector field \( Y \).  
\smallskip

The \emphDef{torsion} of \( \nabla \) is the \( 1 \)-contravariant, 
\( 2 \)-covariant tensor field \( \tensor{T}{_i_j^k} \) defined by 
\begin{equation}
\label{torsion_definition}
\tensor{T}{_i_j^k} X^i Y^j = 
X^i \nabla_i Y^k - Y^j \nabla_j X^k - [X,Y]^k
	\quad
\text{for all}\quad X^i,Y^j \in \VectorFieldSpace(M),  
\end{equation}

The \emphDef{curvature} \( \tensor{R}{_i_j_k^l} \) of the connection 
\( \nabla \) is defined by
\begin{equation}
\label{curvature_definition}
	\tensor{R}{_i_j_k^l} Z^k = \nabla_i \nabla_j Z^l - \nabla_j \nabla_i Z^l 
	+ \tensor{T}{_i_j^k} \, \nabla_k Z^l \, .
\end{equation}

Since we will rely heavily on the Penrose notation, this is a good place
to compare it with the standard coordinate free notation. Remember,
the indices are \emph{not} coordinate components of tensors.
For example, in~\eqref{torsion_definition}, $X^i \nabla_i Y^k$ actually 
means $\nabla_X Y$. So, formula~\eqref{torsion_definition}, even though it
looks like the coordinate expression of the torsion tensor, it really
means $T(X,Y)= \nabla_X Y - \nabla_Y X - [X,Y]$, the standard coordinate
free definition of the torsion. This brings us to the interpretation 
of~\eqref{curvature_definition}, which would be a standard formula
had the sub- and superscripts been indices in a coordinate system. Note 
that~\eqref{curvature_definition} does \emph{not state that
$R(X,Y)Z=\nabla_X \nabla_Y Z - \nabla_Y \nabla_X Z + 
\nabla_{T(X,Y)}Z$, which is false}, even though one is tempted 
to interpret it in this manner. To
see how one can recover the standard definition of the curvature from
the Penrose index formula~\eqref{curvature_definition}, we multiply
both sides by $X^i Y^j$ and get (again, the indices and their position 
only reflect what kind of tensor is considered, so the index $l$ in 
the computation below is not a ``free index'' in Penrose notation; 
it only tells us that the result is a vector field and, similarly, 
we need to interpret $\nabla_X Y^j$ as $(\nabla_X Y)^j$ since the 
upper index only indicates that the expression is a vector field):
\begin{align*}
R(X,Y)Z&= X^i Y^j\tensor{R}{_i_j_k^l} Z^k = X^i Y^j\nabla_i \nabla_j Z^l -
X^i Y^j\nabla_j \nabla_i Z^l +  X^i Y^j\tensor{T}{_i_j^k} \, \nabla_k Z^l\\
&= Y^j \nabla_X \nabla_j Z^l - X^i \nabla_Y \nabla_i Z^l + 
T(X,Y)^k \nabla_k Z^l \\
&= \nabla_X(Y^j \nabla_j Z^l) - (\nabla_X Y)^j \nabla_j Z^l -
\nabla_Y(X^i \nabla_i Z^l) + (\nabla_Y X)^i \nabla_i Z^l+ 
\nabla_{T(X,Y)}Z \\
&= \nabla_X \nabla_Y Z - \nabla_{\nabla_X Y} Z -\nabla_Y \nabla_X Z + 
\nabla_{\nabla_Y X} Z + \nabla_{T(X,Y)}Z \\
&= \nabla_X \nabla_Y Z - \nabla_Y \nabla_X Z + 
\nabla_{T(X,Y)-\nabla_X Y + \nabla_Y X}Z  \\
&= \nabla_X \nabla_Y Z - \nabla_Y \nabla_X Z - 
\nabla_{[X,Y]}Z\,,
\end{align*}
which is the definition of the curvature tensor. This simple computation
illustrates the power of the Penrose notation:~\eqref{curvature_definition}
looks like the correct local formula in coordinates for the curvature
tensor, whereas, in reality, it gives an intrinsic expression of the 
curvature tensor and one can recover the classical definition after a simple 
computation. It is in this spirit that all the formulas that appear later 
on should be interpreted; they have Penrose indices, which means that they
are intrinsic, and the index free expressions can be easily obtained
after a computation analogous to the one above. 
\smallskip

\noindent{\bf Lie Group and Lie Algebra Actions.} The 
left (right) action of of a Lie group $G$ on a manifold $M$ is denoted 
by $(g,m) \mapsto g.m$ ($m.g$) for $g \in G$ and $m \in M$. The induced 
left (right) Lie algebra action of $\mathfrak{g}$, the Lie algebra of $G$, 
on $M$ is denoted by $(\xi,m) \mapsto \xi.m$ ($m.\xi$) for $\xi \in 
\mathfrak{g}$ and $m \in M$, where 
\[
\xi.m \defeq \xi^*(m) \defeq 
\left.\frac{d}{dt}\right|_{t=0} \exp(t \xi).m
\]  
is the value of the fundamental vector field (or infinitesimal 
generator) \( \xi^* \) defined by $\xi$ at $m$; analogous notation 
for a right action. Recall 
that for left (right) Lie algebra actions we have $[\xi^*, \eta^*] = 
- [\xi, \eta]^*$ ($[\xi^*, \eta^*] =[\xi, \eta]^*$).

Throughout the paper we think of $\operatorname{U}(1) 
= S^1$ as $\mathbb{R}/\mathbb{Z}$ and write hence the group multiplication
additively.

\smallskip

\noindent{\bf Conventions in Symplectic Geometry.}
Since the sign conventions in symplectic geometry are not uniform,
we specify them at the outset. The canonical one-form on the cotangent 
bundle is in canonical local cotangent bundle coordinates 
$(q^i, p_i)$ equal to $\theta =p_i {\rm d}q^i$ and the symplectic 
form is $\omega = {\rm d}\theta = {\rm d} p_i \wedge {\rm d}q^i$. 
The Hamiltonian vector field $X_h$ of a function $h$ on a general 
symplectic manifold $(M,\omega)$ is defined hence by 
${\rm d}h = - X_h \contr \omega$ and Hamilton's equations 
in Poisson bracket form are $\dot{f}=\{h,f\}$ for any smooth function,
which, in local Darboux coordinates $(q^i, p_i)$ on $M$ 
(\ie, $\omega = {\rm d} p_i \wedge {\rm d}q^i$) are the standard 
Hamilton equations $\frac{dq^i}{dt}  = \frac{\partial h}{\partial p_i}$, 
$\frac{dp_i}{dt} = -\frac{\partial h}{\partial q^i}$. 
We have $[X_f, X_g] = X_{\{f,g\}}$ for any $f,g \in C^{\infty} (M)$.

The trace of a \( 2 \)-form \( \alpha \) with respect to \( \omega \) 
is defined by first raising the second index of \( \alpha \) 
with \( \omega \) and then taking the ordinary trace of the 
resulting endomorphism of \( \TBundle M \), that is, 
\( \tr_\omega (\alpha) \defeq \tensor{\alpha}{_i^i} \).
The following formula
\begin{equation}
	\label{eq:symplectic:formWedgeOmegaNMinus1}
	\alpha \wedge \frac{\omega^{n-1}}{(n-1)!} = 
	\frac{1}{2} \tr_\omega (\alpha)\, \mu_\omega \, ,
\end{equation}
where \( \mu_\omega = \omega^n/n! \) is the volume
form on the \(2n\)-dimensional manifold \( M \) induced by \( \omega \),
will often be used in \cref{sec:kaehler,sec:symplecticConnections}; 
it is checked using a canonical basis in each tangent space.
For all \( 1 \)-forms \( \sigma, \tau \), we have
\begin{equation}
\label{eq:symplectic:symplecticHodge1Form}
\omega(\sigma, \tau) \, \mu_\omega = \sigma \wedge \tau \wedge 
\frac{\omega^{n-1}}{(n-1)!} \, .
\end{equation}

A Lie group action on the symplectic manifold $(M, \omega)$ is called
symplectic or canonical if the diffeomorphism on $M$ defined by each 
$g \in G$ preserves the symplectic form $\omega$ on $M$. This implies 
$\dif (\xi^*\contr \omega) = \difLie_{\xi^*} \omega = 0$, where $\difLie$
denotes the Lie derivative; this condition
is equivalent to the action being symplectic if the Lie group $G$ is 
connected.
We use a weakly nondegenerate pairing $\kappa: \mathfrak{g}^\ast \times  
\mathfrak{g} \rightarrow  \mathbb{R}$ and think of $\mathfrak{g}^\ast$
as the ``dual'' of $\mathfrak{g}$ (even though it is not the functional
analytic dual in infinite dimensions); nondegenerate always means weakly
nondegenerate. The momentum map $J:M \rightarrow  
\mathfrak{g}^\ast$ is defined by the requirement $\xi^\ast = X_{J_\xi}$
for any $\xi \in  \mathfrak{g}$, where $J_\xi (m):= \kappa (J(m), \xi)$
for any $m \in  M$. Thus, $J$ is infinitesimally equivariant if and only
if it is an anti-Poisson map, \ie, $\{ J_\xi , J_\eta \} + 
J_{[ \xi , \eta ]}  = 0$ for any $\xi, \eta \in  \mathfrak{g}$.

\begin{refcontext}[sorting=nyt]{}
	\printbibliography
\end{refcontext}

\end{document}